\newtheorem{lemma}{Lemma}[section]
\newtheorem{theorem}[lemma]{Theorem}
\newtheorem{conjecture}[lemma]{Conjecture}
\newtheorem{corollary}[lemma]{Corollary}
\newtheorem{proposition}[lemma]{Proposition}
\newtheorem{example}[lemma]{Example}
\newtheorem{remark}[lemma]{Remark}
\newtheorem{definition}[lemma]{Definition}
\newtheorem{problem}[lemma]{Problem}
\newcommand{\comment}[1]{}
\newcommand{\A}{\mathbb{A}}
\newcommand{\C}{\mathbb{C}}
\newcommand{\N}{\mathbb{N}}
\newcommand{\Z}{\mathbb{Z}}
\newcommand{\Q}{\mathbb{Q}}
\newcommand{\CA}{\mathcal{A}}
\newcommand{\CE}{\mathcal{E}}
\newcommand{\CK}{\mathcal{K}}
\newcommand{\CL}{\mathcal{L}}
\newcommand{\CU}{\mathcal{U}}
\newcommand{\Shuf}{\mathcal{S}}
\newcommand{\SBim}{\mathrm{SBim}}
\newcommand{\Hilb}{\mathrm{Hilb}}
\newcommand{\one}{\mathbbm{1}}
\newcommand{\CS}{\EuScript C}
\newcommand{\DS}{\EuScript D}
\newcommand{\Hom}{\operatorname{Hom}}
\newcommand{\Br}{\operatorname{Br}}
\newcommand{\Coh}{\operatorname{Coh}}
\newcommand{\D}{\operatorname{D}}
\newcommand{\Sk}{\operatorname{Sk}}
\newcommand{\Tr}{\operatorname{Tr}}
\newcommand{\ee}{\mathbf{e}}
\newcommand{\dd}{\mathbf{d}}
\newcommand{\cox}{\text{cox}}
\newcommand{\ABr}{\mathrm{ABr}}
\newcommand{\AH}{\mathrm{AH}}
\newcommand{\ASBim}{\mathrm{ASBim}}
\newcommand{\R}{\mathbb{R}} 
\newcommand{\tR}{\widetilde{R}}
\newcommand{\bstar}{\overline{\star}}
\newcommand{\aaa}{\mathbf{a}}
\newcommand{\bb}{\mathbf{b}}
\newcommand{\cc}{\mathbf{c}}
\newcommand{\loc}{\mathrm{loc}}
\newcommand{\Comm}{\mathrm{Comm}}
\newcommand{\FComm}{\mathrm{FComm}}
\newcommand{\MF}{\mathrm{MF}}
\newcommand{\EE}{\mathscr{E}}
\author{Eugene Gorsky}
\address{Department of Mathematics, University of California\\ One Shields Avenue, Davis CA 94702}
\email{egorskiy@math.ucdavis.edu}
\author{Andrei Negu\cb{t}}
\address{MIT, Department of Mathematics, Cambridge, MA, USA}
\address{Simion Stoilow Institute of Mathematics, Bucharest, Romania}
\email{anegut@mit.edu}
\title{The Trace of the affine Hecke category}
\begin{document}

\begin{abstract}

We compare the (horizontal) trace of the affine Hecke category with the elliptic Hall algebra, thus obtaining an ``affine" version of the construction of \cite{GHW}. Explicitly, we show that the aforementioned trace is generated by the objects $E_{\dd} = \text{Tr}(Y_1^{d_1} \dots Y_n^{d_n} T_1 \dots T_{n-1})$ as $\dd = (d_1,\dots,d_n) \in \mathbb{Z}^n$, where $Y_i$ denote the Wakimoto objects of \cite{Elias} and $T_i$ denote Rouquier complexes. We compute certain categorical commutators between the $E_{\dd}$'s and show that they match the categorical commutators between the sheaves $\CE_{\dd}$ on the flag commuting stack, that were considered in \cite{Hecke}. At the level of $K$-theory, these commutators yield a certain integral form $\widetilde{\CA}$ of the elliptic Hall algebra, which we can thus map to the $K$-theory of the trace of the affine Hecke category.

\end{abstract}

\maketitle

\section{Introduction}

\subsection{Motivation}

The HOMFLY-PT skein algebra $\text{Sk}(\mathbb{T})$ of the torus is generated by framed links in the thickened torus $\mathbb{T}\times [0,1]$ modulo the skein relation (which depends on a parameter $q$), and the multiplication is given by vertical stacking:
$$
\mathbb{T}\times [0,1]\sqcup \mathbb{T}\times [1,2]\to \mathbb{T}\times [0,2].
$$ Morton and Samuelson (\cite{MS}) proved  that:
\begin{equation}
\label{eqn:ms}
\text{Sk}(\mathbb{T}) \cong \CE_{q,q^{-1}}
\end{equation}
where $\CE_{q_1,q_2}$ is the {\em elliptic Hall algebra} of Burban and Schiffmann (\cite{BS}). Under the isomorphism \eqref{eqn:ms}, the $(m,n)$ torus knot goes to the generator $P_{m,n} \in \CE_{q,q^{-1}}$, for any coprime integers $m$ and $n$. The skein algebra is naturally bigraded by $H_1(\mathbb{T})\simeq \Z^2$, and $P_{m,n}$ has bidegree $(m,n)$. In this paper we will mostly focus on the {\em positive half} \ $\text{Sk}_+(\mathbb{T})$ of the skein algebra generated by $P_{m,n}$ with $n>0$. 

\bigskip

\textbf{The main purpose of this paper is to propose a way to categorify \eqref{eqn:ms}}.

\bigskip

For the left-hand side of \eqref{eqn:ms}, we recall that the {\em affine braid group} $\ABr_n$ can be defined as the mapping class group of the annulus $\mathbb{A}$ with $n$ marked points. Thus, an affine braid naturally lives inside $\mathbb{A}\times [0,1]$, and we can consider its {\em annular closure} in $\mathbb{A}\times S^1\simeq \mathbb{T}\times [0,1]$. In other words, conjugacy classes of affine braids naturally correspond to links in the thickened torus. Furthermore, the skein multiplication corresponds to placing one annulus inside another and considering the homomorphism $\ABr_n\times \ABr_k\to \ABr_{n+k}$. The affine braid group  $\ABr_n$ is naturally graded by the rotation number $m$ around the annulus, and corresponds to the component of $\Sk_{+}(\mathbb{T})$ of bidegree $(m,n)$.  

The affine braid group $\ABr_n$ is categorified by $\ASBim_n$, the category of affine Soergel bimodules \cite{Elias,MT}. The operation of annular closure corresponds to the notion of (derived) {\em horizontal trace} $\Tr$ developed in 
\cite{BHLZ,BPW,GHW,GW}. Therefore, based on the discussion above, we propose to categorify the skein of the torus by the category $\Tr(\ASBim_n)$: 
\begin{center}
\begin{tikzcd}
\ASBim_n \arrow{r} \arrow{d} & \Tr(\ASBim_n) \arrow[dotted]{d}\\
\ABr_n \arrow{r} & \Sk_{+}(\mathbb{T})
\end{tikzcd}
\end{center}
(the diagram above is for motivational purposes only: the vertical arrows denote ``categorification" and the horizontal arrows denote ``closing annular braids").

For the right-hand side of \eqref{eqn:ms}, we recall that Schiffmann and Vasserot (\cite{SV}) explained that (a certain integral form of) $\CE_{q_1,q_2}$ can be naturally mapped to:
$$
K = \bigoplus_{n=0}^\infty K_{\C^* \times \C^*}(\text{Comm}_n)
$$
where the commuting stack $\text{Comm}_n$ is defined in \eqref{eqn:commuting stack}. It is natural to categorify $K$ by the derived category of equivariant coherent sheaves on the derived stack $\text{Comm}_n$. Therefore, one approach to categorifying \eqref{eqn:ms} would be to rigorously state and prove the following. 

\begin{problem}
\label{prob:main}

Construct a functor:
\begin{equation}
\label{eqn:functor}
\Tr(\ASBim_n) \longrightarrow D^b_{\C^* \times \C^*}(\Coh(\Comm_n))
\end{equation}
and identify its essential image. 

\end{problem}

In the following Subsections, we will explain some constraints and properties that one expects from the functor \eqref{eqn:functor}, and perform the construction at the level of $K$-theory (see Theorem \ref{thm:main}). An analogue of the functor \eqref{eqn:functor} was constructed by Oblomkov and Rozansky using the language of matrix factorizations (see Subsection \ref{subsec:OR} for details).

\subsection{Affine Soergel bimodules}

The extended affine braid group $\ABr_n$ of type $\widetilde{A_{n-1}}$ can be interpreted as the group of $n$-strand braids in the punctured plane or, equivalently, braids with a pole. We will denote the pole by a thick line. 
The generators of $\ABr_n$ are denoted by $\{\sigma_i\}_{i \in \{1,\dots,n-1\}}$ and $\{y_i\}_{i \in \{1,\dots,n\}}$, and they are represented in Figure \ref{fig: affine braid} below:
\begin{figure}[ht!]
\begin{tikzpicture}
\draw[line width=3] (0,0)--(0,2);
\draw (1,0)--(1,2);
\draw (2,1) node {$\cdots$};
\draw (4,0) .. controls (4,1) and (3,1).. (3,2);
\draw[line width=5,white] (3,0) .. controls (3,1) and (4,1).. (4,2);
\draw (3,0) .. controls (3,1) and (4,1).. (4,2);
\draw (5,1) node {$\cdots$};
\draw (6,0)--(6,2);
\draw (1,-0.2) node {$\scriptsize{1}$};
\draw (3,-0.2) node {$\scriptsize{i}$};
\draw (4,-0.2) node {$\scriptsize{i+1}$};
\draw (6,-0.2) node {$\scriptsize{n}$};
\end{tikzpicture}
\qquad \qquad \qquad
\begin{tikzpicture}
\draw (-0.2,1)..controls (-0.2,1.4) and (4,1.6)..(4,2);
\draw[line width=5,white] (0,0)--(0,2);
\draw[line width=3] (0,0)--(0,2);
\draw[line width=5,white] (1,0)--(1,2);
\draw (1,0)--(1,2);
\draw (2,1) node {$\cdots$};
\draw[line width=5,white] (3,0)--(3,2);
\draw (3,0)--(3,2);
 \draw [line width=5,white ](-0.2,1)..controls (-0.2,0.6) and (4,0.4)..(4,0);
\draw (-0.2,1)..controls (-0.2,0.6) and (4,0.4)..(4,0);
\draw (5,1) node {$\cdots$};
\draw (6,0)--(6,2);
\draw (1,-0.2) node {$\scriptsize{1}$};
\draw (3,-0.2) node {$\scriptsize{i-1}$};
\draw (4,-0.2) node {$\scriptsize{i}$};
\draw (6,-0.2) node {$\scriptsize{n}$};
\end{tikzpicture}
\captionsetup{justification=centering}
\caption{Generators of the affine braid group: $\sigma_i$ (left) and $y_i$ (right)}
\label{fig: affine braid}
\end{figure}
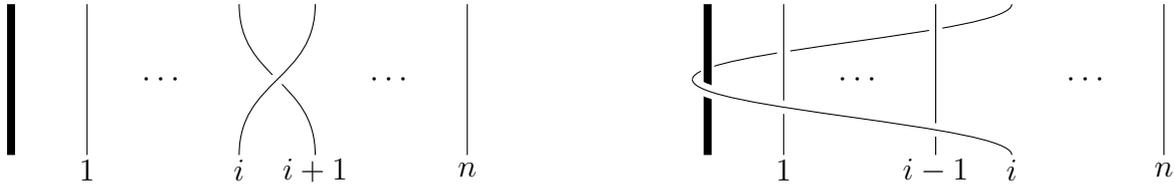

In \cite{Elias},  Elias constructed a monoidal categorification of $\ABr_n$ using the homotopy category of complexes of affine Soergel bimodules $\CK(\ASBim_n)$, see Section \ref{sec: affine sbim} for details. In particular, $\sigma_i$ correspond to standard Rouquier complexes $T_i$
(\cite{Rouquier}), while $y_i$ correspond to the so-called Wakimoto objects $Y_i$. The objects $Y_i$ commute and so for any vector of integers $\dd=(d_1,\ldots,d_n)$ one has a well-defined object $Y^{\dd} = Y_1^{d_1}\cdots Y_n^{d_n}\in \CK(\ASBim_n)$, also called a Wakimoto object in \cite{Elias}. More generally, we can tensor Wakimoto objects by the Rouquier complexes of Coxeter braids, and denote the resulting object by:
\begin{equation}
    \label{eqn:coxeter braid}
Y^{\dd} T_{\cox_n} = Y_1^{d_1} \dots Y_n^{d_n} T_1 \dots T_{n-1} \in \CK(\ASBim_n)
\end{equation}
Consider any composition $n_1+\dots+n_r=n$ and any $r$-tuple of integer vectors: 
\begin{equation}
\label{eqn:vector of integers}
\Big\{ \dd^i = (d_{n_1+\dots+n_{i-1}+1},\dots,d_{n_1+\dots+n_i}) \Big\}_{1\leq i \leq r}
\end{equation}
Then we may generalize \eqref{eqn:coxeter braid} by considering the objects:
\begin{multline}
    \label{eqn:general braid}
Y^{\dd^1} T_{\cox_{r_1}} \star \dots \star Y^{\dd^r} T_{\cox_{n_r}}  : = \\ Y_1^{d_1} \dots Y_n^{d_n} \prod_{i=1}^r T_{n_1+\dots+n_{i-1}+1} \dots T_{n_1+\dots+n_i-1} \in \CK(\ASBim_n)
\end{multline}

\begin{remark}

We expect that the objects \eqref{eqn:general braid} can be obtained from the objects \eqref{eqn:coxeter braid} by successive applications of yet-undefined bifunctors:
\begin{equation}
\label{eqn:unconstructed product before tr}
\CK(\ASBim_n) \otimes \CK(\ASBim_k) \xrightarrow{\star} \CK(\ASBim_{n+k})
\end{equation}
whose action on objects matches the homomorphisms $\ABr_n\times \ABr_k\to \ABr_{n+k}$ described in the previous Subsection. If $\alpha\in \CK(\ASBim_n)$ and $\beta\in \CK(\ASBim_k)$ are images of affine braids, then the object $\alpha\star \beta$ is well defined in $\CK(\ASBim_{n+k})$ (this follows from the braid relations). However, defining $\star$ on morphisms remains an open problem. 

\end{remark}

\subsection{Derived trace}

We will use the formalism of derived horizontal traces developed in \cite{GHW}. Given a monoidal dg category $\CS$, one constructs another dg category $\Tr(\CS)$ with the following properties:
\begin{itemize}
\item[(a)] There is a well defined dg functor $\Tr:\CS\to \Tr(\CS)$. In particular, morphisms in $\CS$ correspond to morphisms in $\Tr(\CS)$ and their cones are sent to cones.
\item[(b)] Assuming that $\CS$ has duals, one has isomorphisms $\Tr(XY)\simeq \Tr(YX)$ which are natural in both $X$ and $Y$.
\item[(c)] The endomorphism (dg) algebra of $\Tr(\one)$ is isomorphic to the Hochschild homology of $\CS$ equipped with the shuffle product.
\end{itemize}

We refer to \cite{GHW} and Section \ref{sec: trace} for more details. Property (b) allows us to interpret the traces of objects corresponding to braids in $\CK(\ASBim_n)$ as their annular closures, as in Figure \ref{fig: closing affine braid}. As the complement of the closure of the pole in $\left(\R^2\setminus\{*\}\right)\times \R$ is homeomorphic to the thickened torus $\mathbb{T}$, annular closures such as Figure \ref{fig: closing affine braid} yield elements of $\text{Sk}(\mathbb{T})$.

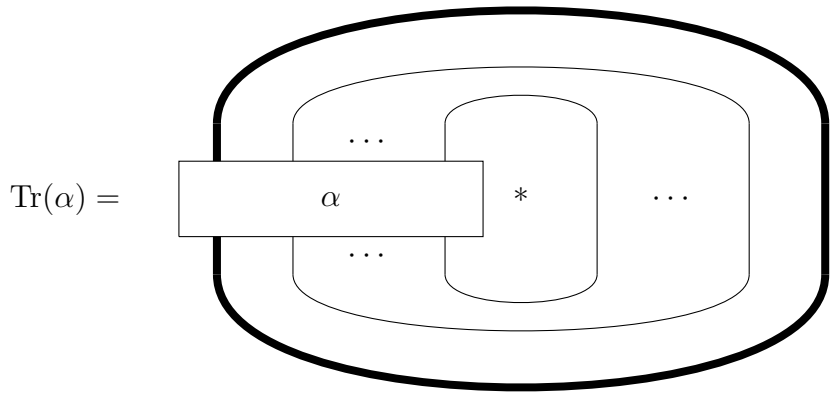
\begin{figure}[ht!]
\begin{tikzpicture}
\draw (-2,1) node {$\Tr(\alpha)=$};
\draw [line width=3] (0,0)--(0,0.5);
\draw (1,0)--(1,0.5);
\draw (2,0.25) node {$\cdots$};
\draw (3,0)--(3,0.5);
\draw (-0.5,0.5)--(-0.5,1.5)--(3.5,1.5)--(3.5,0.5)--(-0.5,0.5);
\draw (1.5,1) node {$\alpha$};
\draw [line width=3] (0,1.5)--(0,2);
\draw (1,1.5)--(1,2);
\draw (2,1.75) node {$\cdots$};
\draw (3,1.5)--(3,2);
\draw [line width=3] (0,2)..controls (0,4) and (8,4)..(8,2);
\draw (1,2)..controls (1,3) and (7,3)..(7,2);
\draw (3,2)..controls (3,2.5) and (5,2.5)..(5,2);
\draw (5,0)--(5,2);
\draw (7,0)--(7,2);
\draw  [line width=3] (8,0)--(8,2);
\draw (6,1) node {$\cdots$};
\draw [line width=3] (0,0)..controls (0,-2) and (8,-2)..(8,0);
\draw (1,0)..controls (1,-1) and (7,-1)..(7,0);
\draw (3,0)..controls (3,-0.5) and (5,-0.5)..(5,0);
\draw (4,1) node {*};
\end{tikzpicture}
\captionsetup{justification=centering}
\caption{Closing an affine braid}
\label{fig: closing affine braid}
\end{figure}

Another important feature of $\Tr(\CS)$ is its universal trace-like property. Assume that $\DS$ is another dg category and $F:\CS\to \DS$ is a trace-like dg functor (see \cite{GHW} for the precise definitions), in particular, one has isomorphisms $F(XY)\simeq F(YX)$ which are natural in $X$ and $Y$. Then $F$ factors through $\Tr(\CS)$ and fits into the commutative diagram:
\begin{center}
    \begin{tikzcd}
     \CS \arrow{dr}{F} \arrow{r}{\Tr}& \Tr(\CS)\arrow[dashed]{d}{\widetilde{F}}\\
         &   \DS
    \end{tikzcd}
\end{center}

In this paper we will be mostly interested in the trace of the dg category $\CS  = \CK(\ASBim_n)$ which we will abbreviate as $\Tr(\ASBim_n)$. The traces of the  objects \eqref{eqn:coxeter braid} and \eqref{eqn:general braid} will be denoted as follows:
\begin{equation}
\label{eqn:coxeter trace}
E_{\dd}:=\Tr(Y^{\dd}T_{\cox_n})
\end{equation}
and, in the notation of \eqref{eqn:vector of integers}:
\begin{equation}
\label{eqn:general trace}
E_{\dd^1}\star \cdots  \star E_{\dd^r}:=\Tr\left(Y^{\dd_1}T_{\cox_{n_1}} \star \cdots\star Y^{\dd^r} T_{\cox_{n_r}}\right).
\end{equation}
Similarly to \eqref{eqn:unconstructed product before tr} we expect that there exist bifunctors:
\begin{equation}
\label{eqn:unconstructed product}
\Tr(\ASBim_n) \otimes \Tr(\ASBim_k) \xrightarrow{\star} \Tr(\ASBim_{n+k}),
\end{equation}
Using the universal property of $\Tr(\CS)$, we can rephrase Problem \ref{prob:main} as follows:

\begin{problem}
\label{problem:construct}
Construct a trace-like functor:
\begin{equation}
\label{eqn:problem}
\ASBim_n \longrightarrow D^b_{\C^* \times \C^*}(\Coh(\Comm_n))
\end{equation}
and identify its essential image. 
\end{problem}

\subsection{The commuting and flag commuting stacks}

As we will recall in Section \ref{sec:eha}, the commuting stack $\Comm_n$ of \eqref{eqn:commuting stack} parametrizes pairs of commuting $n \times n$ matrices modulo the general linear group $GL_n$. Similarly, the flag commuting stack $\FComm_n^{\bullet}$ of \eqref{eqn:flag commuting stack} parametrizes pairs of commuting upper triangular matrices (whose diagonals are $(x,\dots,x)$ and $(0,\dots,0)$, respectively, for any $x \in \C$) modulo the Borel subgroup $B_n$ of upper triangular matrices. There is a proper morphism: 
\begin{equation}
\label{eq: fcomm to comm}
\pi: \FComm_n^{\bullet} \to \Comm_n
\end{equation}
 Any stack which is a quotient by the Borel subgroup (in this case, $\FComm_n^{\bullet}$) comes endowed with line bundles $\CL_1,\dots,\CL_n$ that arise from the $n$ diagonal characters of $B_n$. Thus, for all vectors of integers $\dd = (d_1,\dots,d_n)$, we may consider:
\begin{equation}
\label{eqn:coxeter sheaf}
\CE_{\dd} = \pi_* (\CL_1^{d_1} \dots \CL_n^{d_n}) \in D^b_{\C^* \times \C^*}(\Coh(\Comm_n))
\end{equation}
Let us consider the categorical version of the Hall product studied by Schiffmann-Vasserot \cite{SV}, namely the bifunctor:
\begin{equation}
    \label{eqn:hall product}
D^b_{\C^* \times \C^*}(\Coh(\Comm_n)) \otimes D^b_{\C^* \times \C^*}(\Coh(\Comm_k)) \stackrel{\star}\rightarrow D^b_{\C^* \times \C^*}(\Coh(\Comm_{n+k}))
\end{equation}
given by the formula:
$$
\alpha \star \beta = Rp_{1*} \left(Lp_2^*(\alpha \boxtimes \beta) \right)
$$
Above, we let $\Comm_{n,k}$ be the stack parametrizing pairs of commuting $(n,k)$-block upper triangular matrices, modulo simultaneous conjugation by the natural maximal parabolic subgroup of $GL_{n+k}$, and consider the maps:
$$
\begin{tikzcd}
& \Comm_{n,k}  \arrow[ld, swap,"p_1"] \arrow[rd, "p_2"] & \\
\Comm_{n+k}  & & \Comm_n \times \Comm_k 
\end{tikzcd}
$$
which remember the various diagonal blocks of the matrices parametrized by $\Comm_{n,k}$. By repeated applications of the operation \eqref{eqn:hall product} to the objects \eqref{eqn:coxeter sheaf}, we may construct the following objects for any collection of integers \eqref{eqn:vector of integers}:
\begin{equation}
\label{eqn:general sheaf}
\CE_{\dd^1} \star \dots \star \CE_{\dd^r} \in D^b_{\C^* \times \C^*}(\Coh(\Comm_n))
\end{equation}

\bigskip

\noindent \textbf{We expect the functor \eqref{eqn:functor} to intertwine the products \eqref{eqn:unconstructed product} and \eqref{eqn:hall product}, and to send:
$$
E_{\dd} \text{ of \eqref{eqn:coxeter trace}} \quad \mapsto \quad \CE_{\dd} \text{ of \eqref{eqn:coxeter sheaf}} 
$$
As a consequence, the functor \eqref{eqn:functor} would send the objects \eqref{eqn:general trace} to \eqref{eqn:general sheaf}.}

\bigskip

\subsection{} Let us reformulate the main result of \cite{Hecke}, which inspired  the present paper.

\begin{theorem}[\cite{Hecke}]
\label{thm: hecke}
For any $\dd=(d_1,\ldots,d_n) \in \Z^n$ and $k \in \Z$, there exists a collection of objects $g_0, \dots, g_n \in D^b_{\C^* \times \C^*}(\Coh(\Comm_n))$ with the following properties:
\begin{itemize}
    \item $g_0=\CE_{(k)} \star \CE_{\dd}$ and $g_n=\CE_{\dd} \star \CE_{(k)}$ 
    \item For all $i\in \{1,\ldots,n\}$ there exist morphisms:
    $$\begin{cases}
    g_{i-1}\rightarrow g_i & \text{if}\ k \geq d_i\\
    g_{i-1}\leftarrow g_i & \text{if}\ k \leq d_i
    \end{cases}
    $$
    which are mutually inverse isomorphisms if $k = d_i$.
    
    \item The cones of morphisms in the previous bullet are filtered by
    \begin{equation}
    \label{eqn:cone geom}
    [\C_{q_2} \xrightarrow{0} \C_1] \cdot \begin{cases}
    \CE_{(d_1,\ldots,d_{i-1},k-a,d_{i}+a,d_{i+1},\ldots,d_n)},\ 1\le a\le k-d_i &  \text{if}\ k>d_i\\
    \CE_{(d_1,\ldots,d_{i-1},d_i-a,k+a,d_{i+1},\ldots,d_n)},\ 1\le a\le d_i-k &  \text{if}\ k<d_i
    \end{cases}
    \end{equation}
    
\end{itemize}

\noindent In the formula above, $\C_{\chi}$ denotes the one-dimensional vector space, viewed as a representation of $\C^* \times \C^*$ through the character $\chi$, and we write $q_1$ and $q_2$ for the elementary characters of the two factors of $\C^*$. The asymmetry between $q_1$ and $q_2$ in \eqref{eqn:cone geom} is due to the fact that $\FComm_n^{\bullet}$ parametrizes pairs of commuting matrices, the first of which is allowed to have scalar diagonal, but the second of which is forced to have zero diagonal.

\end{theorem}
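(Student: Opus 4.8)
The plan is to prove the statement geometrically, by interpolating between the two Hall products $\CE_{(k)}\star\CE_{\dd}$ and $\CE_{\dd}\star\CE_{(k)}$ through a chain of objects, each realized as a pushforward of a line bundle from a flag commuting stack, and by reducing the comparison of consecutive terms of the chain to an $\mathfrak{sl}_2$-type computation on a single Hecke correspondence. Concretely, I would take
\[
g_i \ := \ \CE_{(d_1,\dots,d_i,\,k,\,d_{i+1},\dots,d_n)} \ = \ R\pi_*\big(\CL_1^{d_1}\cdots\CL_i^{d_i}\cdot\CL_{i+1}^{k}\cdot\CL_{i+2}^{d_{i+1}}\cdots\CL_{n+1}^{d_n}\big),\qquad 0\le i\le n,
\]
namely the object obtained from $\CE$ by inserting $k$ into slot $i+1$ of the weight vector, so that by construction $g_{i-1}$ and $g_i$ differ only in slots $i$ and $i+1$, where they carry $(k,d_i)$ and $(d_i,k)$ respectively. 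The first task is to identify the two extremes of the chain with the asserted Hall products, $g_0\cong\CE_{(k)}\star\CE_{\dd}$ and $g_n\cong\CE_{\dd}\star\CE_{(k)}$; this is a base-change statement for the Hall correspondence $\Comm_{1,n}$. Since $\FComm_1^{\bullet}=\Comm_1$, the (derived) fibre product of $\Comm_{1,n}$ with $\FComm_1^{\bullet}\times\FComm_n^{\bullet}$ over $\Comm_1\times\Comm_n$ is $\FComm_{n+1}^{\bullet}$, the composite $\FComm_{n+1}^{\bullet}\to\Comm_{n+1}$ coincides with $\pi$, and propagating the line bundle with weight vector $(k,d_1,\dots,d_n)$ through the correspondence together with the projection formula yields $\CE_{(k)}\star\CE_{\dd}\cong R\pi_*(\CL_1^{k}\CL_2^{d_1}\cdots\CL_{n+1}^{d_n})=g_0$; the case of $g_n$ is symmetric.

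The heart of the proof is the elementary swap $g_{i-1}\leftrightarrow g_i$. For this I would factor $\pi=\bar\pi_i\circ\rho_i$, where $Z_i$ is the partial flag commuting stack obtained by forgetting the $i$-th subspace $V_i$ of the tautological flag, $\rho_i\colon\FComm_{n+1}^{\bullet}\to Z_i$ is the resulting Hecke correspondence, and $\bar\pi_i\colon Z_i\to\Comm_{n+1}$. Along $\rho_i$ the line bundle $\CL_i\CL_{i+1}$ descends, while $\CL_i\CL_{i+1}^{-1}$ restricts to $\OC(\pm 1)$ on the $\mathbb{P}^1$-fibres, so that $R\rho_{i*}$ of the two line bundles in question is governed fibrewise by Borel--Weil--Bott applied to $\OC_{\mathbb{P}^1}(k-d_i)$ and $\OC_{\mathbb{P}^1}(d_i-k)$. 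The morphism $g_{i-1}\leftrightarrow g_i$ is the canonical map furnished by the Hecke correspondence $\rho_i$: it is built from the unit/counit of the adjunction for $\rho_i$ and from powers of the tautological section of $\CL_i\CL_{i+1}^{-1}$ induced by the second matrix, whose strictly upper triangular part carries $\CL_{i+1}$ into $\CL_i$ up to a $q_2$-twist. Its direction is dictated by $\mathrm{sgn}(k-d_i)$, which controls which of the two line bundles is effective along the $\mathbb{P}^1$-fibres, and when $k=d_i$ the two line bundles defining $g_{i-1}$ and $g_i$ coincide on the nose, so that $g_{i-1}=g_i$ and the map is the identity.

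To compute the cone I would resolve this morphism along $\rho_i$ by a Koszul-type argument: the relevant section $s$ --- regular once one passes to the derived stack --- cuts out the divisor $Z(s)$ on which the second matrix fails to identify the two adjacent steps of the flag, and $\Cone(g_{i-1}\to g_i)$ is then an iterated extension of the pushforwards of $\CL_1^{d_1}\cdots\CL_i^{k-a}\CL_{i+1}^{d_i+a}\cdots\CL_{n+1}^{d_n}\otimes\OC_{Z(s)}$, suitably twisted, for $1\le a\le k-d_i$. Identifying each such pushforward with $[\C_{q_2}\xrightarrow{0}\C_1]\cdot\CE_{(d_1,\dots,d_{i-1},k-a,d_i+a,d_{i+1},\dots,d_n)}$ is a direct equivariant computation; the appearance of $q_2$ rather than $q_1$ is forced by the fact that $Z(s)$ originates from the second matrix --- the one whose diagonal must vanish --- so that the two directions transverse to the Hecke locus carry the characters $q_1$ and $q_2$ asymmetrically (the case $k<d_i$ being symmetric under $k\leftrightarrow d_i$, which exchanges the two extremes and reverses all arrows). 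Finally, since each elementary swap alters only two adjacent slots, the swaps for $i=1,\dots,n$ are mutually independent and concatenate into the desired chain $g_0-g_1-\cdots-g_n$, with the $i$-th arrow pointing from $g_{i-1}$ to $g_i$ exactly when $k\ge d_i$.

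The chief obstacle is that $\rho_i$ is not a genuine $\mathbb{P}^1$-bundle: the commuting stacks are singular and the fibres of the Hecke correspondence jump in dimension over deeper strata, so that neither base change nor the projective bundle formula may be applied uncritically. The standard remedy, as in \cite{SV} and \cite{Hecke}, is to run the entire argument with the derived enhancements of $\Comm_n$ and $\FComm_n^{\bullet}$ and with derived fibre products, establishing the Tor-independence needed for the fibrewise Borel--Weil--Bott computations to globalize; a secondary difficulty is the equivariant bookkeeping that pins the correction factor down precisely as $[\C_{q_2}\xrightarrow{0}\C_1]$ and not its $q_1\leftrightarrow q_2$ mirror, together with the verification --- via the base change of the first paragraph --- that $g_0$ and $g_n$ are genuinely the two Hall products and not merely derived thickenings thereof.
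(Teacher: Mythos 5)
Your chain is anchored on the identification $g_0=\CE_{(k,d_1,\dots,d_n)}\cong\CE_{(k)}\star\CE_{\dd}$ (and its mirror $g_n=\CE_{(\dd,k)}\cong\CE_{\dd}\star\CE_{(k)}$), and this is where the argument genuinely fails. The derived fibre product of $\Comm_{1,n}$ with $\FComm_1^{\bullet}\times\FComm_n^{\bullet}$ over $\Comm_1\times\Comm_n$ is \emph{not} $\FComm_{n+1}^{\bullet}$: the scalar-diagonal condition on the first matrix is imposed separately on the two diagonal blocks, so the fibre product parametrizes upper triangular commuting pairs whose first matrix has diagonal $(x_1,x_2,\dots,x_2)$ with $x_1$ and $x_2$ \emph{independent}, whereas $\FComm_{n+1}^{\bullet}$ is the closed substack $x_1=x_2$. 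Hence $\CE_{(k)}\star\CE_{\dd}$ and $\CE_{(k,\dd)}$ have different supports and are not isomorphic; this is already visible in $K$-theory, where \eqref{eqn:rel shuf} (equivalently \eqref{eqn:rel a 1}) gives $R_{(k,d_1,\dots,d_n)}-q_1q_2\,R_{(k-1,d_1+1,\dots,d_n)}=(1-q_1)\,R_{(k)}R_{(d_1,\dots,d_n)}$, so concatenation and Hall product differ. The same defect propagates through the middle of the chain: with your $g_i$'s, the asserted cones would force, already for $n=1$ and $k=d+1$, that $R_{(d+1)}R_{(d)}$ be a $\Z[q_1^{\pm1},q_2^{\pm1}]$-multiple of $R_{(d,d+1)}$, which contradicts \eqref{eqn:rel a 2}. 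So no choice of maps between your objects can satisfy both the endpoint and the cone conditions of the theorem.

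The fix is to keep the new one-dimensional slot with its own eigenvalue: the correct $g_i$ is the pushforward of $\CL_1^{d_1}\cdots\CL_i^{d_i}\CL_{i+1}^{k}\CL_{i+2}^{d_{i+1}}\cdots\CL_{n+1}^{d_n}$ from the stack of upper triangular commuting pairs whose first matrix has diagonal $(x_2,\dots,x_2,x_1,x_2,\dots,x_2)$ (the free eigenvalue $x_1$ sitting in slot $i+1$) and whose second matrix is strictly upper triangular --- i.e.\ precisely the derived fibre product you wrote down, with the extra eigenvalue left unconstrained. With that definition the base-change argument at the two ends does produce the Hall products, and your $\mathbb{P}^1$/Borel--Weil--Bott swap along the correspondence forgetting $V_i$ is the right mechanism for the elementary step (the section responsible for the correction terms comes from the entry of the second, traceless matrix, which is the source of the asymmetric $\C_{q_2}$). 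This is the geometric counterpart of the objects $G_i=T_1^{-1}\cdots T_i^{-1}\,Y_1^{d_1}\cdots Y_i^{d_i}Y_{i+1}^{k}\cdots Y_{n+1}^{d_n}\,T_i\cdots T_1T_2\cdots T_n$ in Theorem \ref{thm: one step commutation}, whose traces for $0<i<n$ are likewise \emph{not} plain $E$'s. Note finally that the paper does not reprove this statement: it quotes Theorem 1.1 of \cite{Hecke} and indicates the translation via \eqref{eqn:correspondence}, so a direct proof on $\Comm_n$ along your lines would be a different route, but it needs the corrected intermediate objects to have a chance of working.
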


There are certain differences between the formulation of Theorem \ref{thm: hecke} and that of Theorem 1.1 of \cite{Hecke}, which we will now explain. In the language of \emph{loc. cit.} (applied to the situation of the surface $S = \A^2$), the symbols $\CE_{(d_1,\dots,d_n)}$ should be interpreted as functors:
$$
\D^b_{\C^* \times \C^*}(\Coh(\Hilb_k)) \longrightarrow \D^b_{\C^* \times \C^*}(\Coh(\Hilb_{k+n} \times \A^1))
$$
for all $k \in \N$, where $\Hilb_k$ denotes the Hilbert scheme of $k$ points on $\A^2$, and $\A^1 \hookrightarrow \A^2$ denotes the $x$-axis. If one composes the functor above with push-forward along $\A^1$, one obtains functors:
$$
\D^b_{\C^* \times \C^*}(\Coh(\Hilb_k)) \longrightarrow \D^b_{\C^* \times \C^*}(\Coh(\Hilb_{k+n}))
$$
which satisfy the relations postulated in the three bullets in Theorem \ref{thm: hecke}. The match between this result and the formulation in Theorem \ref{thm: hecke} is completed by the correspondence:
\begin{multline}
\label{eqn:correspondence}
\Big\{\text{objects in }D^b_{\C^* \times \C^*}(\text{Coh}(\Comm_n)) \Big\} \quad \leadsto \\ \Big\{ \text{functors } \D^b_{\C^* \times \C^*}(\Coh(\Hilb_k)) \rightarrow \D^b_{\C^* \times \C^*}(\Coh(\Hilb_{k+n})) \text{ for all }k \in \N \Big\}
\end{multline}
that underlies the philosophy of \cite{SV}. Under the correspondence above, the categorified Hall product $\star$ on the left-hand side corresponds to composition of functors on the right-hand side. Of course, the discussion above is not a proof that Theorem \ref{thm: hecke} follows from Theorem 1.1 of \cite{Hecke}, but it indicates the means through which to adapt the language of \emph{loc. cit.} to establish Theorem \ref{thm: hecke} (see \cite{PZ} for an example of this principle).

\begin{remark}

In fact, the results of \cite{Hecke} yield an explicit geometric construction of the objects $g_i$, and morphisms between them. In the present paper, we will mirror this construction for objects in the category $\CK(\ASBim_n)$.

\end{remark}

\subsection{Results}
We are ready to describe our main results. First, we describe the generators of the dg category $\Tr(\ASBim_n)$.

\begin{theorem}
\label{thm: gens}
The objects $E_{\dd^1}\star \cdots \star E_{\dd^r}$ of \eqref{eqn:general trace}, as $\dd^1,\dots,\dd^r$ run over all collections of integers \eqref{eqn:vector of integers}, generate $\Tr(\ASBim_n)$. That is, any object of $\Tr(\ASBim_n)$
can be presented as a bounded twisted complex with entries given by finite direct sums of $E_{\dd^1}\star \cdots \star E_{\dd^r}$ and their shifts.
\end{theorem}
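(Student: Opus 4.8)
The plan is to reduce the statement to a generation result inside $\CK(\ASBim_n)$ itself, and then transport it through the trace functor using property (a) from the ``Derived trace'' subsection (the trace functor sends cones to cones and morphisms to morphisms). Concretely, I would first establish that the category $\CK(\ASBim_n)$ is generated, as a triangulated (dg) category closed under shifts and cones, by the objects $Y^{\dd^1} T_{\cox_{n_1}} \star \cdots \star Y^{\dd^r} T_{\cox_{n_r}}$ of \eqref{eqn:general braid}, ranging over all compositions $n_1 + \dots + n_r = n$ and all integer vectors $\dd^i$. Granting this, one applies $\Tr$: every object of $\Tr(\ASBim_n)$ that lies in the image of $\Tr$ is then a twisted complex built from the $E_{\dd^1} \star \cdots \star E_{\dd^r}$, and a general object of $\Tr(\ASBim_n)$ is (by construction of the derived horizontal trace as a suitable quotient/completion, as in \cite{GHW}) built from objects in the image of $\Tr$ via the same operations, so the conclusion follows.

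For the generation statement inside $\CK(\ASBim_n)$, the key structural input is Elias's description of affine Soergel bimodules in \cite{Elias}: the indecomposable Bott--Samelson-type objects are generated by the Rouquier complexes $T_i$ and the Wakimoto objects $Y_i$, and more precisely any affine Soergel bimodule can be resolved by complexes of Wakimoto objects tensored with (finite) Rouquier complexes. I would argue in two steps. First, a single Rouquier complex for an arbitrary element of the finite Weyl group $S_n$ can be obtained from the Coxeter element complex $T_1 \cdots T_{n-1}$ together with the invertibility of all $T_i$ (so one can both add and delete crossings), and by taking cones one gets all the ``finite'' Bott--Samelson objects; combining with arbitrary Wakimoto twists $Y^{\dd}$ and the (conjectural but, for images of braids, well-defined) external products $\star$ of \eqref{eqn:unconstructed product before tr}, one reaches the objects \eqref{eqn:general braid}. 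Second, one checks that these objects, under cones and shifts, recover every object of $\CK(\ASBim_n)$ — this is where one invokes that the monoidal category $\CK(\ASBim_n)$ is generated by $\{T_i^{\pm 1}\}$ and $\{Y_i^{\pm 1}\}$, which is essentially the content of Elias's monoidal categorification of $\ABr_n$.

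The main obstacle I anticipate is the honest manipulation of the external product $\star$ of \eqref{eqn:unconstructed product before tr}, which the paper itself flags as only partially defined (fine on objects that are images of affine braids, problematic on morphisms). So the argument cannot simply say ``apply $\star$ to a twisted complex''; instead I expect one must work with explicit resolutions. The cleanest route is probably: build each generator of $\ASBim_n$ (i.e., each $B_i = B_{s_i}$ and each Wakimoto) as an explicit finite twisted complex in the $E_{\dd^1} \star \cdots$ — or rather in the pre-trace versions $Y^{\dd^1} T_{\cox} \star \cdots$ — using the known Rouquier-type resolutions, verify that these resolutions only ever combine objects that are genuinely images of braids (so $\star$ is legitimate), and then observe that closing up under cones and shifts inside the homotopy category is automatic. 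Transporting to the trace is then formal via property (a). The bookkeeping for the block-triangular compositions and the precise indexing in \eqref{eqn:general braid} will be routine but somewhat lengthy; I would organize it by induction on $n$ and on the length of the Weyl group element, peeling off one $\cox$ block at a time.
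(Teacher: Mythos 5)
There is a genuine gap, and it sits in the very first step of your plan. You propose to prove that the objects $Y^{\dd^1}T_{\cox_{n_1}}\star\cdots\star Y^{\dd^r}T_{\cox_{n_r}}$ of \eqref{eqn:general braid} already generate $\CK(\ASBim_n)$ as a pre-triangulated category, and then to push this through $\Tr$. That generation statement is false for $n\geq 3$. The objects \eqref{eqn:general braid} are exactly the $Y^{\dd}T_w$ with $w$ a subword of the Coxeter element $s_1\cdots s_{n-1}$, so there are only $2^{n-1}$ allowed finite parts $w$, whereas by Theorem \ref{thm: rouquier generate} one has $G(\CK(\ASBim_n))=\AH_n$, which has the Bernstein basis $\{Y^{\dd}T_w\}_{\dd\in\Z^n,\,w\in S_n}$ over $\Z[q^{\pm 1}]$. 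If your generators sufficed, every class, e.g.\ $[T_{s_2s_1}]$ for $n=3$, would lie in the $\Z[q^{\pm1}]$-span of the classes $[Y^{\dd}T_c]$ with $c$ a subword of $s_1s_2$, which it does not. So no amount of careful handling of $\star$ (which, incidentally, is not really the problem here: the objects \eqref{eqn:general braid} are honest products of Rouquier complexes in $\CK(\ASBim_n)$) can make the first step work, and the second step then has nothing to transport.

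The missing idea is that the reduction from arbitrary $w\in S_n$ to subwords of the Coxeter element can only happen \emph{after} passing to the trace, using the cyclicity $\Tr(XY)\simeq\Tr(YX)$, which your argument never invokes. The paper's route is: (i) prove that $\CK(\ASBim_n)$ is generated by $Y^{\dd}T_w$ for \emph{all} $w\in S_n$ (Proposition \ref{prop:generate}, via Theorem \ref{thm: rouquier generate}, $\Omega=Y_1T_1^{-1}\cdots T_{n-1}^{-1}$, and the skein triangles \eqref{eq: skein 0}--\eqref{eq: skein}); (ii) inside $\Tr(\ASBim_n)$, take a reduced word for $w$ with at most one $s_1$ (and inductively at most one $s_{k+1}$ to the right of a subword of $s_1\cdots s_k$), cyclically rotate factors using the trace relation, and use Lemma \ref{lem: push through w} to commute Wakimoto objects past permutations supported on the first or last strands, at the cost of twisted complexes of further $Y^{\dd'}T_u$'s; an induction on the block structure then lands exactly on the generators \eqref{eqn:general trace}. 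Your closing remark about inducting on $n$ and on the length of $w$ gestures in this direction, but without the cyclic trace moves and the push-through lemma the induction has no mechanism to decrease the finite part $w$, so the proof as proposed does not go through.
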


Next, we describe some interesting morphisms and relations between these generators.

\begin{theorem}
\label{thm:rel 1}
(Theorem \ref{lem: cone alpha})
For all integers $d_1,\dots,d_n$ and all $i \in \{1,\dots,n-1\}$, there is a map: 
\begin{equation}
\label{eqn:intro map}
E_{(d_1,\ldots,d_i,d_{i+1},\ldots,d_n)}\to E_{(d_1,\ldots,d_{i}-1,d_{i+1}+1,\ldots,d_n)}
\end{equation}
with the cone filtered by two copies of: 
$$
E_{(d_1,\ldots,d_i)}\star E_{(d_{i+1},\ldots,d_n)}.
$$
\end{theorem}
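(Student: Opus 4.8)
The plan is to work entirely inside $\CK(\ASBim_n)$ and produce a map between Rouquier-type complexes that, after applying the trace functor $\Tr$ and using the cyclicity isomorphisms of property (b), descends to \eqref{eqn:intro map}. The object $E_{\dd}$ is $\Tr(Y_1^{d_1}\cdots Y_n^{d_n} T_1 \cdots T_{n-1})$, so the starting point is to understand how the Wakimoto objects $Y_i, Y_{i+1}$ interact with the Coxeter braid $T_{\cox_n} = T_1 \cdots T_{n-1}$ and, more specifically, how changing $(d_i, d_{i+1})$ to $(d_i - 1, d_{i+1} + 1)$ can be realized by a morphism of complexes. The natural tool is the defining exact triangle for a single Rouquier complex $T_j$: in the Soergel world one has $B_j \to T_j \to R(1)[?]$ (or its dual), i.e. $T_j$ sits in a triangle with the identity bimodule and the Bott--Samelson generator $B_j$. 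Tensoring the Wakimoto object $Y^{\dd}$ against such a triangle, and commuting $Y_i$'s past $T_j$'s using Elias's Wakimoto calculus (the $Y_i$ satisfy controlled commutation relations with the $T_j$, with ``error terms" built from $B_j$'s), should produce a two-step filtration whose outer terms are the two $E_{\dd}$-type objects and whose associated graded involves a $B_i$-colored piece.

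First I would isolate the local picture at strands $i, i+1$: write $Y_1^{d_1}\cdots Y_n^{d_n} T_1 \cdots T_{n-1}$ and slide $Y_i^{d_i} Y_{i+1}^{d_{i+1}}$ rightward until they meet $T_i$. Using the identity $Y_i Y_{i+1}^{-1}$-type conjugation by $T_i$ (the Wakimoto objects are designed precisely so that $T_i$ conjugates $Y_i \mapsto Y_{i+1}$ up to a $B_i$-correction), I expect a short complex
$$
\Big[\, Y^{\dd} T_{\cox_n} \;\longrightarrow\; Y^{\dd'} T_{\cox_n} \,\Big]
$$
with $\dd' = (d_1,\dots,d_i-1,d_{i+1}+1,\dots,d_n)$, whose cone is built from $Y^{\dd} B_i$-type terms tensored with the rest of the Coxeter braid. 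The key computation is then to identify $\Tr$ of that $B_i$-colored cone. Here I would use cyclicity: $B_i = B_i \star B_i$ up to a shift and a grading-shift summand (the ``$B_i \otimes_{R^{s_i}} B_i \cong B_i \oplus B_i(2)$" relation), which lets one split a $B_i$ appearing in the trace, and then re-express $\Tr(Y^{\dd} B_i (\text{rest}))$ as $\Tr(Y^{(d_1,\dots,d_i)} T_{\cox_{i}}) \star \Tr(Y^{(d_{i+1},\dots,d_n)} T_{\cox_{n-i}})$ — i.e. the $B_i$ ``cuts" the closed annular braid into two annular braids on $i$ and $n-i$ strands. The ``two copies" in the statement should come from the rank-two decomposition of $B_i \otimes_{R^{s_i}} B_i$ (equivalently, the two-term structure of $B_i$ as a complex after the relevant trace identification), producing a two-step filtration of the cone by a single $E_{(d_1,\dots,d_i)}\star E_{(d_{i+1},\dots,d_n)}$ each.

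I expect the main obstacle to be bookkeeping the interaction of $Y_i$ with the \emph{full} Coxeter braid $T_1\cdots T_{n-1}$ rather than just $T_i$: the Wakimoto object $Y_i$ does not commute with $T_j$ for $j = i-1$ or $j=i$, and naively sliding it across generates a cascade of lower-order terms. The clean way around this is to use the known formula (from \cite{Elias}) expressing $T_{\cox_n}$ conjugation on the lattice of Wakimoto objects as a cyclic shift $Y_i \mapsto Y_{i+1}$ (indices mod $n$) \emph{up to} explicitly controlled $B_j$-corrections, and to organize the argument so that only the correction at the single index $i$ survives in the relevant degree while all others are absorbed into contractible pieces or into the two outer terms. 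A secondary technical point is checking that the morphism \eqref{eqn:intro map} is the image under $\Tr$ of an honest chain map (not merely a map in a localized category); this should follow from the fact that it is assembled from the structure maps of Rouquier complexes and the Wakimoto commutation 2-morphisms, all of which are genuine bimodule maps, together with property (a) of the trace. Once the cone is identified with a two-step extension of $E_{(d_1,\dots,d_i)}\star E_{(d_{i+1},\dots,d_n)}$ by itself, the theorem follows; the mirror of this whole computation is exactly the geometric construction of the $g_i$'s and their cones in \cite{Hecke}, which provides an independent sanity check on the combinatorics and on the appearance of the shift $[\C_{q_2}\xrightarrow{0}\C_1]$ (here internalized as the grading/homological shifts carried by the $B_i$-splitting).
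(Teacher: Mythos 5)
Your endpoint is right, but the two mechanisms at the heart of your proposal are not the ones that make the argument work, and one of them is actually false. The paper's proof of Theorem \ref{lem: cone alpha} never touches the filtration $T_i=q^{-1}[B_i\to\one]$ and the cone of the map $E_{\dd}\to E_{\dd-\ee_i+\ee_{i+1}}$ is not built out of $B_i$-colored terms. Instead, one uses the categorified skein relation \eqref{eq: skein}, $[T_i^2\to \one]\simeq [qT_i\xrightarrow{x_i-x_{i+1}}q^{-1}T_i]$, packaged as Lemma \ref{lem: skein 1}: there is a chain map $T_iY_i\to Y_{i+1}T_i$ whose cone is $[qY_i\xrightarrow{x_i-x_{i+1}}q^{-1}Y_i]$. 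One pulls a single factor $Y_i$ out of $Y^{\dd}$, cycles it around the trace (it commutes on the nose with $T_{i+1},\dots,T_{n-1}$, so no cascade of Wakimoto correction terms ever arises; your worry about sliding past the whole Coxeter braid is moot), and applies this map at the single slot $T_i$. The cone terms then carry the identity bimodule in place of the crossing $T_i$, i.e.\ they are two shifted copies of $\Tr(Y^{\dd-\ee_i}T_1\cdots T_{i-1}\,Y_i\,T_{i+1}\cdots T_{n-1})$, and it is precisely this deletion of the crossing that makes the annular closure factor as $E_{(d_1,\dots,d_i)}\star E_{(d_{i+1},\dots,d_n)}$.

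By contrast, your central identification $\Tr(Y^{\dd}B_i(\text{rest}))\simeq E_{(d_1,\dots,d_i)}\star E_{(d_{i+1},\dots,d_n)}$ is not correct: $B_i=\tR\otimes_{\tR^{s_i}}\tR$ joins strands $i$ and $i+1$, so a closed diagram containing $B_i$ does not split into two independent annular closures on the first $i$ and last $n-i$ strands; the object that cuts the closure is $\one$ (no crossing at position $i$), not $B_i$. Relatedly, the ``two copies'' do not come from the decomposition $B_iB_i\simeq B_i(1)\oplus B_i(-1)$; they come from the two-term complex $[q\one\xrightarrow{x_i-x_{i+1}}q^{-1}\one]$ of \eqref{eq: skein 0}--\eqref{eq: skein}, with the connecting map translated to $x_i-x_n$ via Corollary \ref{cor: rouquier auto}, which is exactly why the statement says ``filtered by two copies'' rather than ``isomorphic to a direct sum''. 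So the proposal has a genuine gap at its key step; repairing it amounts to replacing the $B_i$-based bookkeeping by the skein triangle plus trace cyclicity as above.
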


When the objects $E_{\dd}$ of $\Tr(\ASBim_n)$ are replaced by $\CE_{\dd}$ of $D^b_{\C^* \times \C^*}(\text{Coh}(\Comm_n))$ (see Theorem \ref{thm: hecke}), the result of Theorem \ref{thm:rel 1} was proved in Proposition 3.9 of \cite{Hecke} (in the language of functors from $\Hilb_k$ to $\Hilb_{n+k} \times \A^1$, which can be easily adapted to the language of objects on $\Comm_n$ via correspondence \eqref{eqn:correspondence}).

\begin{theorem}
\label{thm:rel 2}
(Theorem \ref{thm: one step commutation}) For any $\dd = (d_1,\dots,d_n) \in \Z^n$ and $k \in \Z$, there exists a collection of objects $G_0,\dots,G_n \in \ASBim_n$ with the following properties:

\begin{itemize}
    
    \item $\emph{Tr}(G_0)=E_{(k)} \star E_{\dd}$ and $\emph{Tr}(G_n)=E_{\dd} \star E_{(k)}$ 
    
    \item For all $i\in \{1,\ldots,n\}$ there exist chain maps in $\CK(\ASBim_n)$:
    $$
    \begin{cases}
    G_{i-1}\xrightarrow{\varphi_i} G_i & \text{if}\ k \geq d_i\\
    G_{i-1}\xleftarrow{\overline{\varphi}_i} G_i & \text{if}\ k \leq d_i
    \end{cases}
    $$
    which are mutually inverse isomorphisms if $k = d_i$.
    
    \item $\emph{Tr}(\emph{Cone}(\varphi_i))$ and $\emph{Tr}(\emph{Cone}(\overline{\varphi}_i))$ are filtered by:
    \begin{equation}
    \label{eqn:cone top}
    [\C_{q} \xrightarrow{0} \C_{q^{-1}}] \cdot \begin{cases}
    E_{(d_1,\ldots,d_{i-1},k-a,d_{i}+a,d_{i+1},\ldots,d_n)},\ 1\le a\le k-d_i &  \text{if}\ k>d_i\\
    E_{(d_1,\ldots,d_{i-1},d_i-a,k+a,d_{i+1},\ldots,d_n)},\ 1\le a\le d_i-k &  \text{if}\ k<d_i
    \end{cases}
    \end{equation}
    respectively. Above, $q$ denotes the internal grading on Soergel bimodules.
    
\end{itemize}

\end{theorem}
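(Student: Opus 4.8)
The plan is to construct the objects $G_i \in \CK(\ASBim_n)$ explicitly as Rouquier-complex-type twisted complexes, mirroring the geometric construction of the $g_i$ in \cite{Hecke}, and then to transport all of the required morphisms and cone computations through the trace functor $\Tr$ using properties (a)--(c) of Section \ref{sec: trace}. The key point is that the statement at the level of $\Comm_n$ involves the categorified Hall product $\star$, which under $\Tr$ should correspond to the (expected) product \eqref{eqn:unconstructed product}; but since we only need the equalities $\Tr(G_0) = E_{(k)} \star E_{\dd}$ and $\Tr(G_n) = E_{\dd} \star E_{(k)}$, I would take these as \emph{definitions} on the right-hand side — i.e. the objects $E_{(k)} \star E_{\dd}$ are defined via \eqref{eqn:general trace} as traces of the braid $Y_1^k Y_2^{d_1} \cdots Y_{n+1}^{d_n} T_{\cox_1} \star T_{\cox_n}$ — so the first bullet becomes a matter of unwinding notation rather than constructing the unconstructed bifunctor. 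The interesting content is the second and third bullets.

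First I would set up the local model: for fixed $i$, the only strands that interact nontrivially are strands $i$ and $i+1$ (or rather the strand carrying $Y^k$ and the strand carrying $Y^{d_i}$ after the braiding has moved it into position), so the whole computation reduces to a two-strand affine Soergel bimodule calculation, which is the affine Hecke analogue of Proposition 3.9 of \cite{Hecke} — exactly the statement of Theorem \ref{thm:rel 1} (= Theorem \ref{lem: cone alpha}). The objects $G_i$ interpolate between $E_{(k)} \star E_{\dd}$ and $E_{\dd} \star E_{(k)}$ by successively sliding the $Y^k$-strand past the strands carrying $Y^{d_1}, \dots, Y^{d_n}$; at the $i$-th stage, sliding past the $Y^{d_i}$-strand produces, via the Wakimoto commutation relations of \cite{Elias} together with the Rouquier complex combinatorics, precisely the map \eqref{eqn:intro map}-type morphism $\varphi_i$ (resp. $\overline{\varphi}_i$) whose direction depends on the sign of $k - d_i$ and which is invertible when $k = d_i$ (this is where the Wakimoto objects $Y_i Y_j \simeq Y_j Y_i$ commutation and the vanishing of the relevant bimodule map comes in). So the second bullet is proved by iterating the two-strand case.

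For the third bullet, the cone of $\varphi_i$ is again a local computation: by Theorem \ref{thm:rel 1} applied in the two-strand window, $\Cone(\varphi_i)$ in $\CK(\ASBim_n)$ is filtered by two copies of an object of the form (Rouquier complex)$\,\star\,$(split piece), and applying $\Tr$ — using property (b) to rewrite $\Tr(XY) \simeq \Tr(YX)$ so as to collect the split factors, and using that $\Tr$ sends cones to cones by property (a) — yields the claimed filtration of $\Tr(\Cone(\varphi_i))$ by the $E_{(\ldots,k-a,d_i+a,\ldots)}$ tensored with $[\C_q \xrightarrow{0} \C_{q^{-1}}]$. The internal-grading factor $[\C_q \to \C_{q^{-1}}]$ arises from the Koszul-type two-term complex $R \xrightarrow{0} R(\text{shift})$ built into the Rouquier complex $T_i$ (equivalently, the Bott–Samelson bimodule $B_i$ sits in an exact triangle with shifts of $R$), and the asymmetry of $q$ vs.\ $q^{-1}$ in \eqref{eqn:cone top} versus $q_1$ vs.\ $q_2$ in \eqref{eqn:cone geom} is explained by the dictionary between the homological/internal grading on $\CK(\ASBim_n)$ and the two equivariant $\C^*$-weights, exactly as in the remark following Theorem \ref{thm: hecke}.

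The main obstacle I anticipate is bookkeeping the interaction between the three gradings — homological degree in the twisted complex, internal ($q$) degree on Soergel bimodules, and the ``rotation number'' grading that corresponds to the $\Z$-grading by $\dd$ — through the Wakimoto commutation relations, and making sure the maps $\varphi_i$ glue to honest chain maps (not just maps in the homotopy category) so that their cones are well-defined on the nose, as needed to match \cite{Hecke}. Concretely, the hard step is verifying that sliding $Y^k$ past $Y^{d_i}$ across a Rouquier crossing produces the asserted two-step filtration \emph{with the correct grading shifts} and that the boundary map in the filtration vanishes (the ``$\xrightarrow{0}$'' in \eqref{eqn:cone top}); this is the affine refinement of Proposition 3.9 of \cite{Hecke} and I would expect it to occupy the bulk of Section \ref{thm: one step commutation}, with the globalization to all $n$ strands being a routine induction once the two-strand window is understood.
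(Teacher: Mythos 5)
Your global strategy (interpolating objects $G_i$ obtained by sliding the $Y^k$-strand past the strands carrying $Y^{d_1},\dots,Y^{d_n}$, with each step a local two-strand computation whose cone is then pushed through $\Tr$) is exactly the paper's, and your reading of the first bullet as essentially definitional is fine modulo the braid identity $T_1\cdots T_n\cdots T_1=T_n\cdots T_1\cdots T_n$ needed for $\Tr(G_n)$. However, there is a genuine gap in your identification of the local model. You claim the two-strand window is ``exactly the statement of Theorem \ref{thm:rel 1}'' and that the third bullet follows by applying that theorem. It does not: Theorem \ref{lem: cone alpha} produces a map $E_{\dd}\to E_{\dd-\alpha_i}$ whose cone is \emph{two copies of a $\star$-product} $E_{(d_1,\dots,d_i)}\star E_{(d_{i+1},\dots,d_n)}$ (the Coxeter braid is disconnected by deleting a crossing), whereas the cone of $\varphi_i$ in \eqref{eqn:cone top} is filtered by $|k-d_i|$ \emph{single} length-$(n+1)$ objects $E_{(\dots,k-a,d_i+a,\dots)}$. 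The correct local input is a different family of exact triangles: the maps $Y_i\to Y_{i+1}'$ and $Y_i'\to Y_{i+1}$ for the twisted lattice $Y_i'=T_i^{-1}Y_iT_i$ (Lemma \ref{lem: y prime}), iterated $|k-d_i|$ times to give a chain map $Y_i^aY_{i+1}^b\to T_i^{-1}Y_i^bY_{i+1}^aT_i$ whose cone is filtered by $|a-b|$ two-term complexes with differentials $x_i-x_{i+1}+m\delta$ (Lemmas \ref{lem: y swap} and \ref{lem: y swap long}). This is also what forces the direction of $\varphi_i$ versus $\overline\varphi_i$ to depend on the sign of $k-d_i$. Your route through Theorem \ref{thm:rel 1} cannot produce the filtration \eqref{eqn:cone top}.

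Second, you correctly flag the vanishing of the connecting differential as the hard point, but the mechanism you propose (a zero differential ``built into the Rouquier complex $T_i$'') is wrong: inside $\CK(\ASBim_n)$ the connecting maps are $x_i-x_{i+1}+m\delta$, which are nonzero. They die only after applying $\Tr$, because the trace relation \eqref{eq: trace relation} identifies the left and right $\widetilde{R}$-actions, so by Lemma \ref{lem: endo trace rouquier} the action on $E_{\dd}$ factors through the quotient where $\delta=0$ and $x_i=x_{i+1}$. Without this cocenter argument the third bullet does not follow.
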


By comparing Theorem \ref{thm: hecke} with Theorem \ref{thm:rel 2}, we may summarize our expectations about Problem \ref{prob:main} as follows. Note that the equivariant parameters $q_1$ and $q_2$ must be specialized to $q^{-2}$ and $q^2$, respectively, and we need to multiply $E_{(d_1,\dots,d_n)}$ by $q^{1-n}$ in order for relations \eqref{eqn:cone top} to perfectly match \eqref{eqn:cone geom}.

\begin{conjecture}
\label{conj: main}
The functor \eqref{eqn:functor} should:
\begin{itemize}
    \item[(a)] send $E_{\dd^1}\star \cdots \star E_{\dd^r}$ to $\CE_{\dd^1}\star \cdots \star \CE_{\dd^r}$
    \item[(b)] send $\Tr(G_i)$ to $g_i$, in the notation of Theorems \ref{thm: hecke} and \ref{thm:rel 2}
    \item[(c)] send the traces of the morphisms between the $G_i$'s constructed in Theorem \ref{thm:rel 2} to the morphisms between the $g_i$'s constructed in Theorem \ref{thm: hecke}.
\end{itemize}
\end{conjecture}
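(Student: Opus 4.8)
The plan has two stages: first, construct the trace-like dg functor of Problem~\ref{problem:construct}, i.e.\ a functor $\ASBim_n \to \Db_{\torus}(\Coh(\Comm_n))$ inducing \eqref{eqn:functor}; and second, verify (a)--(c) by transporting through it the explicit complexes and chain maps produced in Theorems~\ref{thm:rel 1} and \ref{thm:rel 2}, matching them with the geometric data of Theorem~\ref{thm: hecke}. For the construction I would follow the matrix-factorization philosophy of Oblomkov--Rozansky: one expects a monoidal functor from $\CK(\ASBim_n)$ to a category of $\torus$-equivariant matrix factorizations over a ``flag Hilbert scheme'' type space $\{(X,Y,\text{flag})\}$ attached to $\FComm_n^{\bullet}$, whose potential measures the failure of $[X,Y]=0$ away from the flag; passing to the horizontal trace and then applying the ``cohomology of a matrix factorization'' functor (a Kn\"orrer-type step imposing $[X,Y]=0$) should yield genuine coherent sheaves on $\Comm_n$. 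Equivalently one can attempt a direct construction, sending the Bott--Samelson generators of $\ASBim_n$ to their categorical characteristic varieties, the Wakimoto objects $Y_i$ to the line bundles $\CL_i$ of \eqref{eqn:coxeter sheaf}, and the Coxeter Rouquier complex $T_{\cox_n}$ to the push-forward $\pi_*$ along \eqref{eq: fcomm to comm}; one then checks the affine Soergel relations and the trace-like property, and invokes the universal property of $\Tr$ recalled above.

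Granting such a functor, part (a) follows from the dictionary $Y_i \leftrightarrow \CL_i$, $T_{\cox_n}\leftrightarrow \pi_*$: we get $\Tr(Y^{\dd} T_{\cox_n})\mapsto \pi_*(\CL_1^{d_1}\cdots \CL_n^{d_n}) = \CE_{\dd}$, so closing the Coxeter braid categorifies the flag resolution $\FComm_n^{\bullet}\to \Comm_n$. For general $r$ one needs the (still-to-be-constructed) products \eqref{eqn:unconstructed product} to intertwine the Hall product \eqref{eqn:hall product}, which should reduce to comparing the parabolic-induction correspondence $\Comm_{n,k}$ with the convolution geometry underlying the shuffle/$\star$ product on traces, in the spirit of Schiffmann--Vasserot and \cite{GHW}; then (a) for all $r$ follows from the case $r=1$ by induction. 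Parts (b) and (c) are then essentially formal: a dg functor sends cones to cones and finite direct sums to finite direct sums, so applying it to the chain maps $\varphi_i$ (resp.\ $\overline{\varphi}_i$) of Theorem~\ref{thm:rel 2} produces morphisms whose sources and targets are the images of $\Tr(G_i)$ and whose cones are filtered by the images of the shifted objects in \eqref{eqn:cone top}; the substitutions $q_1\mapsto q^{-2}$, $q_2\mapsto q^2$ and the overall twist by $q^{1-n}$ turn \eqref{eqn:cone top} into \eqref{eqn:cone geom}. Since $\Tr(G_0)\mapsto \CE_{(k)}\star \CE_{\dd}=g_0$ by (a), propagating along the $\varphi_i$ identifies the image of $\Tr(G_i)$ with $g_i$ and the traces of the $\varphi_i$ with the geometric morphisms, giving (b) and (c); the only point needing care is that the three bullets of Theorem~\ref{thm: hecke} characterize the $g_i$ up to the relevant isomorphism, so that matching the defining data really pins down the image.

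The main obstacle is the first stage -- actually producing the functor \eqref{eqn:functor} -- and in particular the fact, flagged after \eqref{eqn:unconstructed product before tr} and \eqref{eqn:unconstructed product}, that the convolution bifunctors $\star$ are not yet defined on morphisms, so one cannot even formulate full compatibility with \eqref{eqn:hall product} until those are built. For this reason the realistic first step, and the one the present paper carries out, is to run the whole argument after applying the $K$-theoretic Euler characteristic: the objects $E_{\dd}$, $G_i$ and the chain maps of Theorem~\ref{thm:rel 2} descend to classes and relations in $K_0(\Tr(\ASBim_n))$, which assemble into the integral form $\widetilde{\CA}$ of the elliptic Hall algebra and hence map to the classes $[\CE_{\dd}]\in K^{\torus}(\Comm_n)$ obeying the $K$-theoretic shadow of Theorem~\ref{thm: hecke}. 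This establishes the $K$-theoretic version of Conjecture~\ref{conj: main} (Theorem~\ref{thm:main}) unconditionally, and upgrading it from $K_0$ to the derived category is exactly the open content of Problem~\ref{prob:main}.
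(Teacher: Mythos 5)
The statement you are asked about is a \emph{conjecture}: the paper offers no proof of it, and indeed cannot, since the functor \eqref{eqn:functor} whose behavior it describes has not been constructed (that construction is exactly Problem \ref{prob:main}, which the paper leaves open). Your text is therefore correctly read not as a proof but as a research outline, and to your credit you say so explicitly in your last paragraph: you identify the construction of the functor as the main obstacle, you note that the $\star$ bifunctors \eqref{eqn:unconstructed product before tr} and \eqref{eqn:unconstructed product} are not yet defined on morphisms, and you correctly observe that what the paper actually establishes is the $K$-theoretic shadow (Theorem \ref{thm: main}), via Theorems \ref{thm:rel 1} and \ref{thm:rel 2} on the categorical side matched against relations \eqref{eqn:rel a 1}--\eqref{eqn:rel a 2}. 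This matches the paper's own framing of the conjecture as the expected bridge between Theorem \ref{thm:rel 2} and Theorem \ref{thm: hecke}.

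Two caveats on the parts of your outline that you present as ``essentially formal.'' First, your dictionary ``$T_{\cox_n}\leftrightarrow \pi_*$'' is only heuristic: $\pi_*$ is a functor between categories over different stacks, not an object of $D^b_{\C^*\times\C^*}(\Coh(\Comm_n))$, so the precise expectation is that the \emph{object} $\Tr(Y^{\dd}T_{\cox_n})$ is sent to the \emph{object} $\pi_*(\CL_1^{d_1}\cdots\CL_n^{d_n})$; turning the dictionary into a functor is the whole difficulty. Second, parts (b) and (c) do not follow formally from the existence of the functor plus part (a). A dg functor sends cones to cones, so the images of the $\varphi_i$ would have the right sources, targets, and cone filtrations; but a morphism is not determined by its cone (nor by the three bullets of Theorem \ref{thm: hecke}, which constrain but do not characterize the maps $g_{i-1}\to g_i$). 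Part (c) asserts a match between two \emph{specific} constructions --- the chain maps of Lemma \ref{lem: y swap long} on the Soergel side and the explicit geometric morphisms of \cite{Hecke} on the coherent side --- and verifying that match would require computing both in compatible terms, not just comparing cones. This is precisely why the statement is a conjecture rather than a corollary of Theorems \ref{thm: hecke} and \ref{thm:rel 2}.
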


Theorem \ref{thm: gens} and part (a) of the conjecture immediately imply the following:

\begin{corollary}
\label{cor:intro}
Assuming Conjecture \ref{conj: main}, the essential image of the functor \eqref{eqn:functor} is a certain subcategory of $D^b_{\C^* \times \C^*}(\Coh(\Comm_n))$ generated by the objects $\CE_{\dd^1}\star \cdots \star \CE_{\dd^r}$. 
\end{corollary}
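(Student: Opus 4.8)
The plan is to deduce the statement formally from Theorem \ref{thm: gens} and part (a) of Conjecture \ref{conj: main}, using only that \eqref{eqn:functor} is a dg functor (as is implicit in the framework of derived horizontal traces).

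First, by Theorem \ref{thm: gens} every object of $\Tr(\ASBim_n)$ is isomorphic to a bounded twisted complex whose entries are finite direct sums of shifts of objects $E_{\dd^1}\star\cdots\star E_{\dd^r}$. A dg functor $F$ induces a functor on twisted complexes: it commutes with shifts, preserves finite direct sums, and sends the cone of a closed degree-zero morphism to the cone of its image. Hence $F$ applied to any object of $\Tr(\ASBim_n)$ is again a bounded twisted complex with entries finite direct sums of shifts of the objects $F(E_{\dd^1}\star\cdots\star E_{\dd^r})$. By Conjecture \ref{conj: main}(a), $F(E_{\dd^1}\star\cdots\star E_{\dd^r}) = \CE_{\dd^1}\star\cdots\star\CE_{\dd^r}$. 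Letting $\EE_n \subseteq D^b_{\C^* \times \C^*}(\Coh(\Comm_n))$ denote the smallest full subcategory containing all the $\CE_{\dd^1}\star\cdots\star\CE_{\dd^r}$ and closed under shifts, finite direct sums, cones (and, in the idempotent-complete setting, direct summands), we conclude that the essential image of \eqref{eqn:functor} is contained in $\EE_n$.

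Second, taking $r=1$ in Conjecture \ref{conj: main}(a) exhibits each $\CE_{\dd}$ in the essential image, and the general case of (a) exhibits each $\CE_{\dd^1}\star\cdots\star\CE_{\dd^r}$ there as well. Thus the essential image is a full subcategory that contains all the generating objects of $\EE_n$ and is contained in $\EE_n$; declaring ``generated by the $\CE_{\dd^1}\star\cdots\star\CE_{\dd^r}$'' to mean ``closed under the operations listed above'', this is precisely the assertion of the corollary. No computation is involved beyond these bookkeeping steps.

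The one point that genuinely needs care --- and the reason the statement reads ``a certain subcategory $\dots$ generated by'' rather than ``the subcategory generated by'' --- is that the essential image of a dg functor need not be closed under cones: a morphism in $D^b_{\C^* \times \C^*}(\Coh(\Comm_n))$ between two objects in the image need not itself be the image of a morphism in $\Tr(\ASBim_n)$. So the essential image is sandwiched between the set $\{\CE_{\dd^1}\star\cdots\star\CE_{\dd^r}\}$ and its closure $\EE_n$, and one cannot assert equality with $\EE_n$ without more. Upgrading to ``the essential image equals $\EE_n$'' would require knowing that \eqref{eqn:functor} is full on the relevant morphism complexes --- that is, that all morphisms among the $\CE_{\dd}$ (such as those of Theorem \ref{thm: hecke}) arise as traces of morphisms in $\CK(\ASBim_n)$; parts (b) and (c) of Conjecture \ref{conj: main} supply such morphisms but do not by themselves account for all of them, so this fullness is the real obstacle to the sharper statement.
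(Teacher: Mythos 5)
Your argument is exactly the paper's: the corollary is deduced immediately from Theorem \ref{thm: gens} (generation of $\Tr(\ASBim_n)$ by the objects $E_{\dd^1}\star\cdots\star E_{\dd^r}$) together with Conjecture \ref{conj: main}(a), with the functor carrying twisted complexes to twisted complexes. Your closing remark correctly identifies why the statement is phrased as ``a certain subcategory generated by'' rather than claiming the essential image equals the full triangulated closure, which matches the paper's cautious wording.
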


\subsection{$K$-theory and the Elliptic Hall algebra}

We will now summarize the consequences of the results and conjectures in the previous Subsection to the level of Grothendieck groups (which we denote by $G$ instead of $K_0$ throughout the paper). In particular, the functor  \eqref{eqn:functor} should yield a linear map:
\begin{equation}
\label{eqn:functor k-theory}
G(\Tr(\ASBim_n)) \longrightarrow K_{\C^*}(\Comm_n)
\end{equation}
where the right-hand side is $K$-theory equivariant with respect to the anti-diagonal subtorus $\C^* \hookrightarrow \C^* \times \C^*$ (in other words, $q_1q_2 = 1$; the reason for this is that $q_1q_2$ measures the homological degree on the category $\CK(\ASBim_n)$, and this grading must be killed in order for the functor \eqref{eqn:functor} to descend to the Grothendieck groups). Corollary \ref{cor:intro} would imply that the image of \eqref{eqn:functor k-theory} should coincide with the linear span $\CA_n$ of the $K$-theory classes of the complexes of sheaves \eqref{eqn:general sheaf}. Moreover:
$$
\CA = \bigoplus_{n=0}^{\infty} \CA_n
$$
is an algebra under the $K$-theoretic version of the Hall product \eqref{eqn:hall product}, and the map \eqref{eqn:functor k-theory} should intertwine this multiplication with that of the yet-unconstructed \eqref{eqn:unconstructed product}. By definition, $\CA$ is the $\Z[q_1^{\pm 1}, q_2^{\pm 1}]$-algebra generated by the elements:
\begin{equation}
\label{eqn:generators of eha}
[\CE_{\dd} ] \in K_{\C^* \times \C^*}(\Comm_n)
\end{equation}
as $\dd$ runs over $\Z^n$. In Section \ref{sec:eha}, we will recall a computational tool called the shuffle algebra, which could in principle allow one to describe the full set of relations between the generators \eqref{eqn:generators of eha}. However, in practice these relations are quite complicated, and we will contend with defining a pre-quotient:
\begin{equation}
    \label{eqn:pre-quotient}
\widetilde{\CA} \twoheadrightarrow \CA 
\end{equation}
which is generated by symbols $E_{\dd}$, but modulo the simpler relations \eqref{eqn:rel a 1}--\eqref{eqn:rel a 2}.  In particular, \eqref{eqn:rel a 2} is a $K$-theoretic version of Theorem \ref{thm: hecke}.

As a consequence of Theorems \ref{thm:rel 1} and \ref{thm:rel 2}, we obtain the following.
\begin{theorem}
\label{thm: main}
There exists a algebra homomorphism:
$$
\widetilde{\CA}\Big|_{(q_1,q_2)\to (q^{-2},q^{2})} \longrightarrow \bigoplus_{n=0}^{\infty} G(\Tr(\ASBim_n))
$$
\end{theorem}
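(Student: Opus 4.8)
### Proof proposal

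The plan is to construct the homomorphism by invoking the presentation of $\widetilde{\CA}$ and checking that the defining relations \eqref{eqn:rel a 1}--\eqref{eqn:rel a 2} are satisfied after passing to Grothendieck groups. Since $\widetilde{\CA}$ is, by construction, the free $\Z[q_1^{\pm 1},q_2^{\pm 1}]$-algebra on symbols $E_{\dd}$ (one for each $\dd \in \Z^n$, all $n$) modulo the relations \eqref{eqn:rel a 1}--\eqref{eqn:rel a 2}, it suffices to (i) specify where the generators go, and (ii) verify each relation in $\bigoplus_n G(\Tr(\ASBim_n))$. For (i), I would send the symbol $E_{\dd}$ to the class $q^{1-n}[E_{\dd}] = q^{1-n}[\Tr(Y^{\dd}T_{\cox_n})]$, the normalization dictated by the discussion preceding Conjecture \ref{conj: main}; the specialization $(q_1,q_2)\to(q^{-2},q^{2})$ is forced because on $\CK(\ASBim_n)$ the product $q_1q_2$ corresponds to homological degree, which is killed in $G$. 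The multiplication on the target is the (still-conjectural as a functor, but unproblematic on classes of braid-like objects, cf.\ the Remark after \eqref{eqn:unconstructed product before tr}) $\star$-product \eqref{eqn:unconstructed product}; I would note that on the classes $[E_{\dd^1}\star\cdots\star E_{\dd^r}]$ this is well defined since these are traces of genuine affine braids, so $\star$ on objects is unambiguous.

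The core of the argument is the verification of the relations. The ``easy'' relations \eqref{eqn:rel a 1} (presumably: commutation of the $Y_i$'s, i.e.\ $E_{\dd}$ only depends on $\dd$ as an unordered-within-blocks datum up to the $q$-shifts, plus basic compatibilities of the Coxeter-braid factorization) follow directly from the corresponding identities in $\CK(\ASBim_n)$ — the $Y_i$ commute by \cite{Elias}, and $\Tr$ sends these isomorphisms to isomorphisms. The substantive relation \eqref{eqn:rel a 2}, which is the $K$-theoretic shadow of Theorem \ref{thm: hecke}, is exactly what Theorem \ref{thm:rel 2} delivers: given $\dd$ and $k$, the chain of objects $G_0,\dots,G_n \in \CK(\ASBim_n)$ with $\Tr(G_0)=E_{(k)}\star E_{\dd}$, $\Tr(G_n)=E_{\dd}\star E_{(k)}$, the maps $\varphi_i$ (or $\overline\varphi_i$), and the identification \eqref{eqn:cone top} of $\Tr(\Cone(\varphi_i))$ as an iterated extension of shifted copies of $[\C_q \xrightarrow{0}\C_{q^{-1}}]\cdot E_{\bullet}$. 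In $G(\Tr(\ASBim_n))$, each triangle $\Tr(G_{i-1})\to\Tr(G_i)\to\Tr(\Cone(\varphi_i))\to$ gives $[\Tr(G_i)] - [\Tr(G_{i-1})] = [\Tr(\Cone(\varphi_i))]$, and the two-term complex $[\C_q\xrightarrow{0}\C_{q^{-1}}]$ contributes the scalar $(q^{-1}-q)$ (up to an overall sign/shift convention I would fix once and carry through). Telescoping over $i=1,\dots,n$ and inserting the $q^{1-n}$ normalizations yields precisely relation \eqref{eqn:rel a 2} after the specialization $q_1=q^{-2}$, $q_2=q^{2}$ (note $q_2-q_1 \rightsquigarrow q^2-q^{-2}$, matching $[\C_{q_2}\xrightarrow{0}\C_1]$ in \eqref{eqn:cone geom} up to a unit); Theorem \ref{thm:rel 1} handles the residual relation with $n_1=i$, $n_2=n-i$ in the same way, the cone being two copies of $E_{(d_1,\dots,d_i)}\star E_{(d_{i+1},\dots,d_n)}$.

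Two bookkeeping points deserve care. First, one must check that the assignment is independent of the order in which a given monomial $E_{\dd^1}\star\cdots\star E_{\dd^r}$ is built up and of the chosen representatives — this is automatic because on the Grothendieck group the $\star$-product of classes of braid objects is associative and well defined (inherited from associativity of the corresponding maps $\ABr_n\times\ABr_k\to\ABr_{n+k}$ together with property (b) of $\Tr$, which gives $\Tr(XY)\simeq\Tr(YX)$). Second, one must match the grading conventions so that the internal $q$ on Soergel bimodules, the homological shift, and the equivariant characters $q_1,q_2$ line up; I would devote a short lemma to pinning down that $[\C_q \xrightarrow{0}\C_{q^{-1}}]$ acts as multiplication by $q^{-1}-q$ on $G$ and that this is the image of $q_2-q_1$ (equivalently $q^2 - q^{-2}$, absorbing a unit $-q^{-1}$), reconciling \eqref{eqn:cone top} with \eqref{eqn:cone geom}.

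The main obstacle I anticipate is not conceptual but is the faithful bookkeeping of normalizations: getting the powers of $q$ in front of each $E_{\dd}$, the sign in the two-term complex, and the $(q_1,q_2)\to(q^{-2},q^2)$ substitution all mutually consistent so that \eqref{eqn:rel a 2} comes out on the nose rather than up to an unexplained unit. A secondary subtlety is ensuring that what we call the $\star$-product on $G(\Tr(\ASBim_n))$ — needed even to state that the map is an \emph{algebra} homomorphism — is legitimately defined despite the bifunctor \eqref{eqn:unconstructed product} being unconstructed; the resolution is that on the span of classes of traces of affine braids (which is all we need, by Theorem \ref{thm: gens} these even generate everything) the product is forced by the braid-group homomorphisms and property (b), so no new input is required. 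Once these conventions are fixed, the proof is a direct translation of Theorems \ref{thm:rel 1} and \ref{thm:rel 2} into the relations presenting $\widetilde{\CA}$.
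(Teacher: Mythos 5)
Your proposal is correct and follows essentially the paper's route: one defines the map on the generators of $\widetilde{\CA}$ via the normalization $\EE_{(d_1,\dots,d_n)}\mapsto q^{1-n}[E_{(d_1,\dots,d_n)}]$, verifies relation \eqref{eqn:rel a 1} by decategorifying Theorem \ref{thm:rel 1} (Theorem \ref{lem: cone alpha}) and relation \eqref{eqn:rel a 2} by telescoping the triangles of Theorem \ref{thm:rel 2} (Theorem \ref{thm: one step commutation}), the two-term complexes with zero differential contributing the expected units after the specialization $(q_1,q_2)\to(q^{-2},q^{2})$. One remark on a slip: relation \eqref{eqn:rel a 1} is not a commutativity-of-$Y_i$'s statement, but precisely the relation $\EE_{\dd}-q_1q_2\,\EE_{\dd-\alpha_i}=(1-q_1)\,\EE_{\dd[1,i]}\EE_{\dd[i+1,n]}$ categorified by Theorem \ref{thm:rel 1}; since you invoke that theorem for the ``residual relation'' anyway, this is a labeling confusion rather than a gap.

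The one substantive difference concerns the point you flag as a secondary subtlety, namely the algebra structure on the target. Arguing that the $\star$-product is ``forced'' on classes of braid traces inside $G(\Tr(\ASBim_n))$ does not by itself settle well-definedness (one must know the product respects all relations already holding among these classes in the Grothendieck group). The paper sidesteps this by factoring through the cocenters: Theorem \ref{thm:surj} constructs the surjective algebra homomorphism $\widetilde{\CA}\big|_{(q_1,q_2)\to(q^{-2},q^{2})}\to\bigoplus_n \Tr(\AH_n)$, where the multiplication is the honest skein product $\AH_n\times\AH_k\to\AH_{n+k}$ on cocenters, and then composes with the canonical map $\Tr(\AH_n)=\Tr(G(\ASBim_n))\to G(\Tr(\ASBim_n))$ of diagram \eqref{eq: K0 tr}. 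The verification of the relations there is exactly your argument, using that the proofs of Proposition \ref{prop:generate}, Lemma \ref{lem: trace generate} and Theorems \ref{thm:rel 1}--\ref{thm:rel 2} only use skein exact sequences and $\Tr(XY)\simeq\Tr(YX)$, hence descend to identities in $\Tr(\AH_n)$. If you want to state the result with target $\bigoplus_n G(\Tr(\ASBim_n))$ directly, as you do, you should either import this cocenter detour or otherwise justify the product on the target.
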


The specialization of parameters $(q_1,q_2)\to (q^{-2},q^{2})$ is a consequence of the complicated matching of gradings of both sides which was already visible in \cite{GNR} for finite Hecke categories. The category $\ASBim_n$ and its trace are naturally graded (by so-called $q$-grading), so the corresponding homotopy categories are {\em bigraded} by the $q$-grading and the homological grading. This gives the Grothendieck group $G(\Tr(\ASBim_n))$ the structure of a module over $\Z[q^{\pm 1}]$. 

On the other hand, the derived category of $\Comm_n$ is {\em triply graded} by the homological and  two equivariant gradings. The corresponding Grothendieck group, as explained above, is then a module over $\Z[q_1^{\pm 1},q_2^{\pm 1}]$. 

The conjectured functor between the two derived categories mixes the gradings in a complicated way: the homological grading on the affine Hecke side matches a linear combination of the homological and equivariant gradings on the geometric side. Thus, on the level of the Grothendieck group we are required to collapse one of the equivariant gradings.

\begin{remark}
It is expected that one could equip $\Tr(\ASBim_n)$ with an additional {\em third grading} which would map the geometric side. In this case, one should be able to state Theorem \ref{thm: main} without specialization of equivariant parameters. 
\end{remark}

It is natural to conjecture that the homomorphism in Theorem \ref{thm: main} factors through the quotient \eqref{eqn:pre-quotient}. If true, then we would dare to expect that the functor \eqref{eqn:functor k-theory} is an isomorphism onto its image $\CA$.

\begin{remark}

We note that $\widetilde{\CA}$ and $\CA$ are different only as $\Z[q_1^{\pm 1}, q_2^{\pm 1}]$-algebras, but we will actually show in Section \ref{sec:eha} that they are isomorphic once we tensor with $\Q(q_1,q_2)$ (upon this localization, they will also be isomorphic to the elliptic Hall algebra $\CE_{q_1,q_2}$ of \cite{BS}, thus tying everything in with \eqref{eqn:ms}). So relations \eqref{eqn:rel a 1} and \eqref{eqn:rel a 2} describe the full ideal of relations among the elements \eqref{eqn:generators of eha} over the field $\Q(q_1,q_2)$. Of course, the operation of tensoring with $\Q(q_1,q_2)$ does not make much sense at the categorical level.

\end{remark}

Alternatively, we can interpret Theorem \ref{thm: main} as a statement about the cocenter of type $A$ affine Hecke algebras, which is defined as: 
\begin{equation}
\label{eqn:trace algebra}
\Tr(\AH_n)=\AH_n/[\AH_n,\AH_n].
\end{equation}
The results of He and Nie \cite{HN} give an explicit basis of $\Tr(\AH_n)$ (see Theorem \ref{th: trace hecke}). However, to the best of our knowledge the multiplicative structure of the algebra $\bigoplus_{n} \Tr(\AH_n)$ has not been yet described in this basis.

\begin{theorem}
\label{thm:surj intro}
(Theorem \ref{thm:surj}) There is a surjective algebra homomorphism: 
$$
\widetilde{\CA}\Big|_{(q_1,q_2)\rightarrow (q^{-2},q^{2})}\to \bigoplus_{n=0}^{\infty} \Tr(\AH_n)
$$
compatible with the natural maps:
$$
\Tr(\AH_n)=\Tr(G(\ASBim_n))\to G(\Tr(\ASBim_n)).
$$
\end{theorem}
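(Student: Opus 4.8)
The plan is to deduce this statement from Theorem \ref{thm: main} by passing from the Grothendieck group of the trace category to the trace (cocenter) of the Grothendieck group. First I would invoke Theorem \ref{thm: main}, which produces an algebra homomorphism $\widetilde{\CA}|_{(q_1,q_2)\to(q^{-2},q^2)} \to \bigoplus_n G(\Tr(\ASBim_n))$. The key categorical input is the identification $\Tr(\AH_n) = \Tr(G(\ASBim_n))$: the affine Hecke algebra $\AH_n$ is the Grothendieck group of $\CK(\ASBim_n)$ as an algebra under convolution, and its cocenter (Hochschild zeroth homology) coincides with the decategorified trace. Then one uses the natural decategorification map $\Tr(G(\ASBim_n)) \to G(\Tr(\ASBim_n))$ arising from property (a) of the derived trace (the functor $\Tr: \CS \to \Tr(\CS)$ descends to Grothendieck groups, and conjugate objects $XY$, $YX$ have the same trace, so the induced map on $K_0$ factors through the cocenter). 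Composing, one would like to read off a map $\widetilde{\CA} \to \bigoplus_n \Tr(\AH_n)$, but the arrow goes the wrong way; instead I would argue directly that the classes $[E_{\dd}]$, and more generally $[E_{\dd^1} \star \cdots \star E_{\dd^r}]$, already live in the image of $\Tr(\AH_n)$ inside $G(\Tr(\ASBim_n))$, because $Y^{\dd} T_{\cox_n}$ is (up to shift) the class of an honest object of $\CK(\ASBim_n)$, hence defines a class in $\AH_n$, whose image under the cocenter map is $[E_{\dd}]$.

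The substance of the argument is then \emph{surjectivity} onto $\bigoplus_n \Tr(\AH_n)$, and for this I would use the explicit basis of the cocenter of the affine Hecke algebra provided by He--Nie \cite{HN} (recalled as Theorem \ref{th: trace hecke} in the body). The basis elements are indexed by (good) minimal-length representatives of conjugacy classes in the extended affine Weyl group; concretely, every such class is represented by an element of the form $Y^{\dd} w$ for $w$ a Coxeter-type element, and the corresponding basis vector of $\Tr(\AH_n)$ is the image of $Y^{\dd} T_w$. The Coxeter braid $T_{\cox_n} = T_1 \cdots T_{n-1}$ realizes the "full cycle" class, and the products $Y^{\dd^1} T_{\cox_{n_1}} \star \cdots \star Y^{\dd^r} T_{\cox_{n_r}}$ realize the classes associated to block-decomposed cycle types; matching these against the He--Nie list (using that conjugacy in the extended affine symmetric group is governed by cycle type together with the rotation numbers $\sum_i d_i$ on each cycle) shows the images of the $E_{\dd^1} \star \cdots \star E_{\dd^r}$ span $\Tr(\AH_n)$ over $\Z[q^{\pm 1}]$. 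This is precisely the decategorified shadow of Theorem \ref{thm: gens}. The compatibility clause of the statement — that the homomorphism is compatible with $\Tr(\AH_n) = \Tr(G(\ASBim_n)) \to G(\Tr(\ASBim_n))$ — is then immediate from the construction, since by design the generator $E_{\dd}$ of $\widetilde{\CA}$ maps to the class of $Y^{\dd} T_{\cox_n}$ on both sides.

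To organize this cleanly I would (i) recall the He--Nie basis and pin down the bijection between its indexing set and block cycle types with rotation data; (ii) check that the map $\widetilde{\CA} \to \bigoplus_n \Tr(\AH_n)$ sending $E_{\dd} \mapsto [Y^{\dd} T_{\cox_n}]$ is well defined, i.e. that relations \eqref{eqn:rel a 1}--\eqref{eqn:rel a 2} hold in the cocenter — here the $K$-theoretic version of Theorem \ref{thm:rel 2}, obtained by applying $G(\Tr(-))$ to the filtered cones, does the job, and the specialization $(q_1,q_2) \to (q^{-2},q^2)$ is forced exactly as in Theorem \ref{thm: main}; (iii) prove surjectivity via step (i); and (iv) note compatibility. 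The main obstacle I anticipate is step (iii): one must be careful that the He--Nie minimal-length class representatives really are (conjugate to) the elements $Y^{\dd^1} \cox_{n_1} \cdots Y^{\dd^r} \cox_{n_r}$ and that no integral-lattice subtlety (the extended, as opposed to non-extended, affine Weyl group, and the $\Z[q^{\pm1}]$- versus $\Z$-structure) obstructs spanning; a secondary subtlety is that the product $\star$ on $\bigoplus_n \Tr(\AH_n)$ induced from \eqref{eqn:unconstructed product} must be shown to agree with the parabolic-induction product implicit in the He--Nie picture, so that "algebra homomorphism" is meaningful. Both of these are decategorifications of facts already established (or assumed) at the categorical level, so I expect them to be tractable, if technical.
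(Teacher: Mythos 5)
Your overall architecture (send $\EE_{\dd}$ to the class of $Y^{\dd}T_{\cox_n}$ in the cocenter, verify relations \eqref{eqn:rel a 1}--\eqref{eqn:rel a 2}, then prove surjectivity from a generation statement) is the right one, but your step (ii) has a genuine gap: you propose to check the relations ``in the cocenter'' by applying $G(\Tr(-))$ to the filtered cones of Theorem \ref{thm:rel 2}. That only yields the relations in $G(\Tr(\ASBim_n))$, and since the comparison map $\Tr(\AH_n)=\Tr(G(\ASBim_n))\to G(\Tr(\ASBim_n))$ is not known to be injective, a relation that holds after pushing forward along it does not give a relation in $\Tr(\AH_n)$ itself --- which is what well-definedness of a homomorphism into $\bigoplus_n\Tr(\AH_n)$ requires. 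The paper closes exactly this point in the proof of Theorem \ref{thm:surj}: the constructions in Theorems \ref{lem: cone alpha} and \ref{thm: one step commutation} (and in Proposition \ref{prop:generate} and Lemma \ref{lem: trace generate}) use only the skein exact sequences \eqref{eq: skein 0}--\eqref{eq: skein} and the isomorphisms $\Tr(XY)\simeq \Tr(YX)$; the former decategorify to the quadratic relation in $\AH_n$ and the latter to $[xy]=[yx]$ in the cocenter, so every step of those arguments can be repeated verbatim inside $\Tr(\AH_n)$, giving \eqref{eqn:rel a 1}--\eqref{eqn:rel a 2} there after the substitution $E_{(d_1,\dots,d_n)}=q^{n-1}\EE_{(d_1,\dots,d_n)}$ (a normalization you omit) and $(q_1,q_2)\mapsto (q^{-2},q^{2})$.

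For surjectivity you take a genuinely different route from the paper: you match the images of the $E_{\dd^1}\star\cdots\star E_{\dd^r}$ against the He--Nie basis of $\Tr(\AH_n)$, which requires knowing that the braid $y^{\dd^1}\cox_{n_1}\cdots y^{\dd^r}\cox_{n_r}$ is conjugate in $\ABr_n$ (not merely that its image is conjugate in $\widetilde{S_n}$) to the positive braid lift of a minimal length representative; this is precisely what the proof of Theorem \ref{thm: convex} supplies, so your route works once you import that topological/braid-group conjugacy, and you correctly flag it as the main obstacle. The paper instead gets surjectivity more cheaply, by observing that the proofs of Proposition \ref{prop:generate} and Lemma \ref{lem: trace generate} again use only the skein triangles and cyclicity, hence decategorify to express every class $[T_v]$, $v\in\widetilde{S_n}$, as a $\Z[q^{\pm 1}]$-combination of the classes of the $E_{\dd^1}\star\cdots\star E_{\dd^r}$, with no appeal to minimal length representatives or to the He--Nie basis. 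Your secondary worry about the product is unnecessary: the skein product $\AH_n\times\AH_k\to\AH_{n+k}$ manifestly descends to cocenters, so the algebra structure on the target is defined without reference to the unconstructed bifunctor \eqref{eqn:unconstructed product}.
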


\subsection{Comparison with Oblomkov-Rozansky theory}
\label{subsec:OR}

In a series of papers \cite{OR1,OR2,OR3} Oblomkov and Rozansky constructed yet another categorification of the affine Hecke algebra using a sophisticated monoidal category of matrix factorizations which we will denote by $\MF_n$. It is expected to be closely related to the the geometric realization of the affine Hecke algebra using the derived category of the Springer resolution, constructed by Bezrukavnikov and Riche \cite{Bez,BezR}.
In this short section we review the relation between their results and this paper.

The main result of \cite{OR1} constructs a homomorphism from $\ABr_n$ to $\MF_n$: to every affine braid one can associate an object of $\MF_n$, the product of braids corresponds to the product in $\MF_n$, and the braid relations are satisfied. In particular, there are analogues of the objects $Y^{\dd_1}T_{\cox_{n_1}} \star \cdots\star Y^{\dd^r} T_{\cox_{n_r}}$ in $\MF_n$. Moreover, there is a natural (dg) functor: 
$$
F: \MF_n\to D^b_{\C^* \times \C^*}(\Coh(\Comm_n))
$$
The main result of \cite{OR2} shows that $F(\text{analogue of }Y^{\dd_1}T_{\cox_{n_1}} \star \cdots\star Y^{\dd^r} T_{\cox_{n_r}})=\CE_{\dd^1} \star \dots \star \CE_{\dd^r}$, confirming the analogue of Conjecture \ref{conj: main}(a) in Oblomkov-Rozansky theory.

To the best of our knowledge, it is not known whether the categories  $\MF_n$ and $\ASBim_n$ are equivalent (for their finite analogues see \cite{OR3}), or whether the functor $F$ is trace-like in the sense of \cite{GHW}. Nevertheless, one can ask whether $\Tr(\MF_n)$ is generated by the traces of the analogues of the objects $Y^{\dd_1}T_{\cox_{n_1}} \star \cdots\star Y^{\dd^r} T_{\cox_{n_r}}$, or whether the analogues of Theorems \ref{thm:rel 1} and \ref{thm:rel 2}, or Conjectures \ref{conj: main}(b,c) hold in $\Tr(\MF_n)$. 

\subsection{Acknowledgments} 

We thank Roman Bezrukavnikov, Ben Elias, Matt Hogancamp, Ivan Losev, Alexei Oblomkov, Lev Rozansky, Kostya Tolmachov, Paul Wedrich and Yu Zhao for useful discussions. E. G. was partially supported by the NSF FRG grant DMS-1760329. A. N. was supported by the NSF grants DMS-1760264 and DMS-1845034, as well as the Alfred P. Sloan Foundation and the MIT Research Support Committee.

\section{An algebra of many faces}
\label{sec:eha}

\subsection{Motivation from geometry} The main purpose of the present paper is to consider the category of affine Soergel bimodules $\ASBim_n$ (which will be defined in Section \ref{sec:category}) and describe its horizontal trace. To get a feel of what the answer should look like, we recall that the horizontal trace of the category of finite Soergel bimodules $\SBim_n$ was computed in \cite{GHW}, where it was shown that:
\begin{equation}
\label{eqn:equivalence sbim}
\Tr(\SBim_n) \ \cong \ D^b_{\C^* \times \C^*}(\text{Coh}(\text{Hilb}_n))
\end{equation}
In the right-hand side, we encounter the derived category of $\C^* \times \C^*$ equivariant coherent sheaves on the Hilbert scheme of $n$ points on $\mathbb{A}^2$. For general reasons that have been explored in \cite{OR1,OR2,OR3}, one would therefore expect a connection between:
\begin{equation}
\label{eqn:equivalence asbim}
\Tr(\ASBim_n) \quad \text{and} \quad D^b_{\C^* \times \C^*}(\text{Coh}(\text{Comm}_n))
\end{equation}
where in the right-hand side, we encounter the commuting stack:
\begin{equation}
\label{eqn:commuting stack}
\text{Comm}_n = V/GL_n 
\end{equation}
In the formula above, we set:
\begin{equation}
\label{eqn:def of v}
V = \Big\{ X,Y \in \text{Mat}_{n \times n}, \ [X,Y] = 0 \Big\}
\end{equation}
and define the action $GL_n \curvearrowright V$ is by simultaneous conjugation. The action $\C^* \times \C^* \curvearrowright V$ by rescaling the $X$ and $Y$ matrices commutes with the action of $GL_n$, and thus descends to an action $\C^* \times \C^* \curvearrowright \text{Comm}_n$, which defines the right-hand side of \eqref{eqn:equivalence asbim}. 

\begin{remark}
\label{rem:dg comm}

The reason we do not claim \eqref{eqn:equivalence asbim} as an equivalence is that, as opposed from the well-understood category in the right-hand side of \eqref{eqn:equivalence sbim}, the derived category of coherent sheaves on the commuting stack is very hard to describe. Moreover, one should not consider $V$ as a singular affine scheme, but as the dg scheme cut out by the equations:
\begin{equation}
\label{eqn:equations}
\sum_{a=1}^n (x_{ia} y_{aj} - y_{ia} x_{aj}) = 0, \qquad \forall \ i,j \in \{1,\dots,n\}
\end{equation}
from affine space with coordinates $\{x_{ij}, y_{ij}\}_{1\leq i,j \leq n}$. Thus, $\Comm_n$ should be interpreted as a dg stack, and its derived category is defined accordingly. More precisely, the collection of equations \eqref{eqn:equations} is a section of the standard $\mathfrak{gl}_n$-bundle over the $GL_n$-equivariant $2n^2$ dimensional affine space that parametrizes the entries of the matrices $X$ and $Y$.

\end{remark}

\subsection{K-theory} Things become a bit more manageable if we pass to the Grothendieck groups of the categories in \eqref{eqn:equivalence asbim}, so our concrete aim is to understand the relation between: 
\begin{equation}
\label{eqn:conjecture asbim}
G(\Tr(\ASBim_n)) \quad \text{and} \quad K_{\C^* \times \C^*}(\text{Comm}_n)
\end{equation}
While still not completely understood, we can shed some light on the right-hand side of \eqref{eqn:conjecture asbim} using the language of shuffle algebras, as follows. We have the closed embedding $j : \text{point} \hookrightarrow V$ of the point $X=Y=0$. The pull-back morphism takes the form:
\begin{multline}
\label{eqn:restriction}
K_{\C^* \times \C^*}(\text{Comm}_n) = K_{GL_n \times \C^* \times \C^*}(V) \xrightarrow{j^!} \\ \rightarrow K_{GL_n \times \C^* \times \C^*}(\text{point}) = \Z[q_1^{\pm 1}, q_2^{\pm 1}][z_1^{\pm 1}, \dots, z_n^{\pm 1}]^{\text{sym}}
\end{multline}
where $q_1,q_2$ are the inverses of the standard characters of $\C^* \times \C^*$, $z_1, \dots, z_n$ are characters of an arbitrary maximal torus of $GL_n$, and the superscript $\text{sym}$ denotes symmetric functions in the variables $z_1,\dots,z_n$. The localization theorem implies that, if we consider $j^!$ over the field $\Q(q_1,q_2)$, then we obtain an isomorphism:
$$
K_{\C^* \times \C^*}(\text{Comm}_n) \bigotimes_{\Z[q_1^{\pm 1}, q_2^{\pm 1}]} \Q(q_1,q_2)  \cong \Q(q_1,q_2)[z_1^{\pm 1}, \dots, z_n^{\pm 1}]^{\text{sym}}
$$
As shown in \cite{VV}, $K_{\C^* \times \C^*}(\text{Comm}_n)$ is a free $\Z[q_1^{\pm 1}, q_2^{\pm 1}]$-module, so we have an injection:
$$
K_{\C^* \times \C^*}(\text{Comm}_n) \stackrel{\tilde{\iota}}\hookrightarrow \Q(q_1,q_2)[z_1^{\pm 1}, \dots, z_n^{\pm 1}]^{\text{sym}}
$$

\begin{proposition}
\label{prop: zhao wheel}
(\cite{Zhao2}) The image of the map $\tilde{\iota}$ lands in the vector subspace:
$$
\Shuf_n \subset \Q(q_1,q_2)[z_1^{\pm 1}, \dots, z_n^{\pm 1}]^{\emph{sym}}
$$
of symmetric Laurent polynomials $F(z_1,\dots,z_n)$ which satisfy the wheel conditions (\cite{FHHSY}):
\begin{equation}
\label{eqn:wheel}
F(x,xq_1,xq_1q_2,z_4,\dots,z_n) = F(x,xq_2,xq_1q_2,z_4,\dots,z_n) = 0
\end{equation}

\end{proposition}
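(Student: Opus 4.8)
The plan is to compute the restriction $j^!(\mathcal F)$ explicitly enough to exhibit an Euler-class factor that forces the wheel conditions, using the a priori integrality of $j^!$ provided by \cite{VV} as a crutch. Since the involution of $\Comm_n$ exchanging the two commuting matrices $X$ and $Y$ intertwines the $\C^* \times \C^*$-action with the transposition $q_1 \leftrightarrow q_2$, it suffices to prove the first equality in \eqref{eqn:wheel}, i.e.\ that $F(x,xq_1,xq_1q_2,z_4,\dots,z_n)=0$ for $F = \tilde{\iota}(\mathcal F)$; equivalently, that $j^!(\mathcal F)$, viewed inside $\Z[q_1^{\pm 1},q_2^{\pm 1}][z_1^{\pm 1},\dots,z_n^{\pm 1}]$, lies in the prime ideal $\mathfrak p_W$ generated by $a := z_2 - q_1 z_1$ and $b := z_3 - q_1 q_2 z_1$ (the ideal of a smooth, irreducible, codimension-$2$ subvariety $W$; note the quotient by $\mathfrak p_W$ is a Laurent polynomial ring, so $\mathfrak p_W$ is exactly the kernel of the substitution $z_1 \mapsto x$, $z_2 \mapsto xq_1$, $z_3 \mapsto xq_1q_2$).

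First I would unwind the derived geometry. By Remark \ref{rem:dg comm}, $\Comm_n = [V/GL_n]$ is the derived zero locus, inside the smooth quotient stack $\mathfrak M_n := [\mathrm{Mat}_{n\times n}^{\oplus 2}/GL_n]$, of the section $s \colon (X,Y) \mapsto [X,Y]$ of the bundle $\mathcal N := \mathrm{End}(\mathcal W) \otimes q_1^{-1}q_2^{-1}$, where $\mathcal W$ is the tautological rank-$n$ bundle. The point $X=Y=0$ determines compatible closed embeddings $j \colon B GL_n \hookrightarrow \Comm_n$ and $j_{\mathfrak M} \colon B GL_n \hookrightarrow \mathfrak M_n$, with $j_{\mathfrak M} = \iota \circ j$ for $\iota \colon \Comm_n \hookrightarrow \mathfrak M_n$ the (quasi-smooth) inclusion. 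Because $\mathfrak M_n$ is the quotient of a smooth affine variety by $GL_n$, it is regular and equivariantly contractible onto $B GL_n$; hence $\iota_*\mathcal F$ is perfect on $\mathfrak M_n$ and its restriction along $j_{\mathfrak M}$ is a genuine Laurent polynomial $P_{\mathcal F} \in \Z[q_1^{\pm 1},q_2^{\pm 1}][z_1^{\pm 1},\dots,z_n^{\pm 1}]^{\mathrm{sym}}$. Using the self-intersection isomorphism $L\iota^* \iota_* \mathcal O_{\Comm_n} \cong \Lambda^\bullet_{\mathcal O}(\mathcal N^\vee)$ for the quasi-smooth embedding $\iota$, together with the contribution of the virtual normal complex of $j$, one arrives at a formula of the shape
\begin{equation*}
j^!(\mathcal F) = P_{\mathcal F} \cdot \Xi, \qquad \Xi = \prod_{1 \le i,j \le n} \frac{(1 - q_1 z_i/z_j)(1 - q_2 z_i/z_j)}{1 - q_1 q_2 z_i/z_j},
\end{equation*}
valid up to an invertible monomial and up to the precise normalization of $j^!$ (to which I return below).

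Now I would carry out the local analysis at $W$. Localizing $\Z[q_1^{\pm 1},q_2^{\pm 1}][z_1^{\pm 1},\dots,z_n^{\pm 1}]$ at $\mathfrak p_W$ gives a $2$-dimensional regular local ring $\mathcal O_W$ with maximal ideal $\mathfrak m_W = (a,b)$. Among the factors of $\Xi$, exactly $1 - q_1 z_1/z_2 = a/z_2$ and $1 - q_2 z_2/z_3 = (b - q_2 a)/z_3$ vanish along $W$, while in the denominator exactly $1 - q_1 q_2 z_1/z_3 = b/z_3$ does, and every other factor is a unit in $\mathcal O_W$; hence $\Xi = a(b - q_2 a)\,u/b$ for some $u \in \mathcal O_W^\times$. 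By \cite{VV}, $K_{\C^* \times \C^*}(\Comm_n)$ is $\Z[q_1^{\pm 1},q_2^{\pm 1}]$-free and $\tilde{\iota} = j^!$ takes values in genuine Laurent polynomials, so $j^!(\mathcal F) = P_{\mathcal F}\Xi$ lies in $\mathcal O_W$; since $b$ is prime in $\mathcal O_W$ and divides neither $a$, nor $b - q_2 a$ (which is $\equiv -q_2 a \bmod b$), nor the unit $u$, we conclude $b \mid P_{\mathcal F}$ in $\mathcal O_W$. Writing $P_{\mathcal F} = b\widetilde P$ with $\widetilde P \in \mathcal O_W$, this gives $j^!(\mathcal F) = \widetilde P\,a(b - q_2 a)\,u \in \mathfrak m_W^2 \subset \mathfrak m_W$, which is precisely the first wheel condition; the second is obtained by the identical argument with $q_1$ and $q_2$ interchanged.

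The main obstacle is the equivariant/derived bookkeeping behind the displayed formula for $j^!(\mathcal F)$: deciding whether $j^!$ denotes the derived pullback or its right adjoint, hence how the relative dualizing complex of the quasi-smooth embedding $j$ enters, and thus the exact shape of the Euler ratio $\Xi$. The argument above is nonetheless robust to this ambiguity: however the three resonant factors $1 - q_1 z_1/z_2$, $1 - q_2 z_2/z_3$ and $1 - q_1 q_2 z_1/z_3$ are distributed between the numerator and denominator of $\Xi$, the two ``$q$-simple'' ones together contribute vanishing of order $2$ along $W$ while the ``$q_1q_2$'' one contributes only order $1$, and combining this with the integrality statement of \cite{VV} turns the resulting indeterminacy into genuine vanishing along $W$. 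Two secondary points to verify with care: that $L\iota^* \iota_* \mathcal O_{\Comm_n}$ really is the Koszul algebra on $\mathcal N^\vee$ — immediate from the dg presentation of Remark \ref{rem:dg comm} — and that restriction to the origin on the smooth stack $\mathfrak M_n$ genuinely lands in $\Z[q_1^{\pm 1},q_2^{\pm 1}][z_1^{\pm 1},\dots,z_n^{\pm 1}]^{\mathrm{sym}}$ with no denominators, which uses only the regularity of $\mathfrak M_n$.
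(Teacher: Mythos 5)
There is a genuine gap, and it sits exactly at the displayed formula $j^!(\mathcal F)=P_{\mathcal F}\cdot \Xi$. If $P_{\mathcal F}=j_{\mathfrak M}^{*}(\iota_{*}\mathcal F)$ is the restriction to the origin of the pushforward to the smooth ambient stack $\mathfrak M_n=[\mathrm{Mat}_{n\times n}^{\oplus 2}/GL_n]$, then the self-intersection formula for the quasi-smooth embedding $\iota$ only produces the Euler class of the obstruction bundle $\mathcal N$, i.e. $P_{\mathcal F}=(Lj^{*}\mathcal F)\cdot\prod_{a,b}\bigl(1-q_1q_2\,z_a/z_b\bigr)$; the ambient factors $\prod_{i,j}(1-q_1z_i/z_j)(1-q_2z_i/z_j)$ never enter. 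You can check against \eqref{eqn:r d} and the computation in Proposition \ref{prop:contains} that the normalization actually used for $\tilde{\iota}$ in this paper is ``push forward to $\mathfrak M_n$ and restrict to the origin'', so that $\tilde{\iota}(\mathcal F)$ is $P_{\mathcal F}$ itself (no $\Xi$ at all); under the other standard normalization (naive derived pullback) one gets $P_{\mathcal F}$ divided by the $q_1q_2$-factors only. In either case the two numerator factors $1-q_1z_1/z_2$ and $1-q_2z_2/z_3$, which are the sole source of your order-two vanishing along $W$ against the single pole $1-q_1q_2z_1/z_3$, are simply absent, and the local count at $\mathfrak p_W$ gives no vanishing whatsoever (in the paper's normalization there is not even a pole for integrality to cancel). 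The hedge at the end does not repair this: the problem is not how the three resonant factors are distributed between numerator and denominator of $\Xi$, but that two of them are not there.

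A sign that the argument proves too much: nothing in it uses the commutator equations beyond their equivariant weights, so the same reasoning would show that an arbitrary symmetric Laurent polynomial (the restriction of an arbitrary class on $\mathfrak M_n$) satisfies \eqref{eqn:wheel}, which is false already for $F=1$, $n\ge 3$. The wheel conditions are precisely the nontrivial constraint distinguishing the image of $\tilde{\iota}$ inside the localized $K$-theory; for the basic classes $R_{\dd}$ they hold by a delicate cancellation between the kernel factors and the denominators $1-q_1q_2z_{i+1}/z_i$ in \eqref{eqn:r d}, and for the whole image they are the theorem of \cite{Zhao2}, whose proof is genuinely geometric (an analysis of sheaves on the surface along loci with special length-three behaviour), not a pole count plus the freeness statement of \cite{VV}. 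The paper itself offers no proof and simply cites \cite{Zhao2}, so your proposal has to stand alone; the symmetry reduction via the involution swapping $X$ and $Y$ and the local algebra at $\mathfrak p_W$ are fine, but the Euler-class bookkeeping they rest on is incorrect, and no purely formal integrality-plus-localization argument of this shape can yield the statement.
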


In light of Proposition \ref{prop: zhao wheel} above, we conclude that we have a map:
\begin{equation}
\label{eqn:iota}
K_{\C^* \times \C^*}(\text{Comm}_n) \stackrel{\iota}\hookrightarrow \Shuf_n
\end{equation}
The take-away of the present Subsection is that the right-hand side of \eqref{eqn:conjecture asbim} is a certain $\Z[q_1^{\pm 1}, q_2^{\pm 1}]$-integral form of the $\Q(q_1,q_2)$-vector space $\Shuf_n$, a claim which uses Theorem 2.5 of \cite{Shuf} to establish the fact that $\iota$ becomes an isomorphism upon tensoring with $\Q(q_1,q_2)$.

\subsection{The shuffle algebra} There is an interesting convolution product (\cite{SV2}) on:
\begin{equation}
\label{eqn:k}
K = \bigoplus_{n=0} K_{\C^* \times \C^*}(\text{Comm}_n)
\end{equation}
which is none other than the $K$-theoretic shadow of \eqref{eqn:hall product}. Meanwhile, we may consider the so-called shuffle algebra:
$$
\Shuf = \bigoplus_{n = 0}^{\infty} \Shuf_n
$$
which is endowed with the following shuffle product (\cite{FHHSY}):
\begin{equation}
\label{eqn:shuf prod}
F(z_1,\dots,z_n) * F'(z_1,\dots,z_{n'}) = 
\end{equation}
$$
= \text{Sym} \left[ \frac {F(z_1,\dots,z_n)F'(z_{n+1},\dots,z_{n+n'})}{n!n'!} \prod_{1\leq i \leq n < j \leq n+n'} \frac {\left(1 - \frac {z_iq_1}{z_j} \right)\left(1 - \frac {z_iq_2}{z_j} \right)\left(1 - \frac {z_jq_1q_2}{z_i} \right)}{1 - \frac {z_i}{z_j}} \right]
$$
where $\text{Sym}$ denotes symmetrization over all $(n+n')!$ permutations of the variables. The maps \eqref{eqn:iota} give rise to an algebra homomorphism:
$$
K \stackrel{\iota}\hookrightarrow \Shuf
$$
It is clear from the discussion above that the shuffle algebra $\Shuf$ provides a good computational model for the $K$-theory groups of commuting stacks. 

\subsection{Constructing subalgebras} The power of the shuffle algebra is that it allows one to construct many interesting elements. For example, for any $\dd = (d_1,\dots,d_n) \in \Z^n$ the element:
\begin{equation}
\label{eqn:r d}
R_{\dd} = (1-q_1)^{n-1} (1-q_2)^n \cdot 
\end{equation}
$$
\text{Sym} \left[ \frac {z_1^{d_1}\dots z_n^{d_n}}{\left(1 - \frac {z_2 q_1q_2}{z_1} \right) \dots \left(1 - \frac {z_n q_1q_2}{z_{n-1}} \right)} \right. \\ \left. \prod_{1 \leq i < j \leq n} \frac {\left(1 - \frac {z_iq_1}{z_j} \right)\left(1 - \frac {z_iq_2}{z_j} \right)\left(1 - \frac {z_jq_1q_2}{z_i} \right)}{1 - \frac {z_i}{z_j}} \right]
$$
clearly lies in $\Shuf_n \cap \Z[q_1^{\pm 1}, q_2^{\pm 1}][z_1^{\pm 1}, \dots, z_n^{\pm n}]^{\text{sym}}$.

\begin{definition}
\label{def:subalg}

Consider the $\Z[q_1^{\pm 1}, q_2^{\pm 1}]$-subalgebra $\CA \subset \Shuf$ generated by $R_{\dd}$, as $\dd \in \Z^n$.

\end{definition}

\begin{proposition}
\label{prop:contains}

We have $\CA \subseteq \emph{Im }\iota$, so we may think of $\CA$ as a subalgebra of $K$.

\end{proposition}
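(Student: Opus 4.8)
The statement follows once we exhibit each generator $R_{\dd}$ in the image of $\iota$, since $\iota$ is an algebra homomorphism (so $\mathrm{Im}\,\iota$ is a $\Z[q_1^{\pm1},q_2^{\pm1}]$-subalgebra of $\Shuf$) and $\CA$ is by definition generated by the $R_{\dd}$. Thus the plan is to identify the explicit shuffle element $R_{\dd}$ of \eqref{eqn:r d} with the image under $\iota$ of a genuinely integral class on the commuting stack, namely the class $[\CE_{\dd}] = [\pi_*(\CL_1^{d_1}\cdots\CL_n^{d_n})] \in K_{\C^*\times\C^*}(\Comm_n)$ of \eqref{eqn:coxeter sheaf}: since $\pi\colon\FComm_n^{\bullet}\to\Comm_n$ is proper, $R\pi_*$ of a line bundle is a well-defined object of $D^b_{\C^*\times\C^*}(\Coh(\Comm_n))$, so $[\CE_{\dd}]$ is an honest element of the left-hand side of \eqref{eqn:iota}.

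The core step is therefore the computation of $\iota([\CE_{\dd}])$, equivalently of $\tilde\iota([\CE_{\dd}]) = j^![\CE_{\dd}]$, the restriction to the torus-fixed point $X=Y=0$. I would carry this out by derived base change along the Cartesian square with $\pi$ on one side and the closed point $j\colon \mathrm{pt}/GL_n \hookrightarrow \Comm_n$ on the other. Presenting $\FComm_n^{\bullet}$ as parametrizing data $(x\in\A^1,\ F_\bullet\in GL_n/B_n,\ A,B)$ with $A,B$ flag-nilpotent, $[A,B]=0$, and $\pi$ sending this to $(xI+A,\,B)$, one sees that the classical fiber over $0$ is all of $GL_n/B_n$, with a Koszul-type derived structure recording the commutator equations \eqref{eqn:equations} together with the normal directions to the image of $\pi$ in $V$. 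Then $j^![\CE_{\dd}]$ is the $\C^*\times\C^*$-equivariant Euler characteristic of this derived fiber with coefficients in the restricted line bundles, and I would evaluate it by Atiyah–Bott/Lefschetz localization on $GL_n/B_n$: the $T$-fixed points are indexed by $S_n$, producing the unnormalized symmetrization $\mathrm{Sym}[\cdots]$ of \eqref{eqn:r d} over the $n!$ permutations; each summand is the ratio of the line-bundle weight $z_1^{d_1}\cdots z_n^{d_n}$ by the $K$-theoretic Euler class of the virtual tangent complex of the fiber, which yields exactly $\frac{1}{\prod_{k}(1 - z_{k+1}q_1q_2/z_k)}\prod_{i<j}\frac{(1-z_iq_1/z_j)(1-z_iq_2/z_j)(1-z_jq_1q_2/z_i)}{1-z_i/z_j}$; and the trivially-acting directions (the free parameter $x$ and the diagonal torus) contribute the overall scalar $(1-q_1)^{n-1}(1-q_2)^n$. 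Comparing with \eqref{eqn:r d} gives $j^![\CE_{\dd}] = R_{\dd}$ up to an explicit invertible monomial in $\Z[q_1^{\pm1},q_2^{\pm1}]$, hence $R_{\dd}\in\mathrm{Im}\,\iota$. Alternatively, this identification of $\iota([\CE_{\dd}])$ with the shuffle element $R_{\dd}$ is a reformulation of the shuffle-algebra description of the $K$-theoretic Hall action of the $\CE_{\dd}$ established in \cite{SV} and \cite{Shuf}, and one may quote those results in place of redoing the localization. Either way, since $\mathrm{Im}\,\iota$ is closed under the shuffle product, $\CA\subseteq\mathrm{Im}\,\iota$, and composing with the inverse of $\iota$ on its image realizes $\CA$ as a subalgebra of $K$.

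The main obstacle is rigor in the dg/stacky setting: $\Comm_n$ and $\FComm_n^{\bullet}$ are not quasi-smooth in general, so the base change and localization must be run with virtual structure sheaves, the derived fiber of $\pi$ over $0$ must be pinned down together with its full cotangent complex (including the contributions of the trivially-acting directions), and the overall monomial normalization must be tracked carefully — the reason I would, if possible, lean on the established shuffle-theoretic computation in \cite{Shuf} rather than re-deriving it. The remaining point, checking that the localization output equals the closed-form $\mathrm{Sym}[\cdots]$ in \eqref{eqn:r d}, is a routine but slightly delicate combinatorial identity of the type familiar from \cite{VV}.
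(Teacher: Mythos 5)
Your proposal is correct and follows essentially the same route as the paper: both reduce to the identity $R_{\dd}=\iota(\pi_*(\CL_1^{d_1}\cdots\CL_n^{d_n}))$ and verify it by restricting to the origin and comparing Euler-class contributions of $U$ versus $V$, with your Atiyah--Bott localization over the $S_n$-fixed points of $GL_n/B_n$ being the same computation as the paper's ``induce from $B_n$- to $GL_n$-equivariant $K$-theory'' step (divide by the Weyl denominator and symmetrize). The paper likewise only sketches this bookkeeping and leaves the details as an exercise, so your level of rigor matches its proof.
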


\begin{proof} The proof is an adaptation of the main construction of \cite{Hecke}. Specifically, consider the following version of the flag commuting stack:
\begin{equation}
\label{eqn:flag commuting stack}
\text{FComm}^\bullet_n = U/B_n
\end{equation}
where:
\begin{equation}
\label{eqn:def of u}
U = \left\{X = \begin{pmatrix} x & * & \dots & * \\ 0 & x & \dots & * \\ \vdots & \vdots & \ddots & \vdots \\ 0 & 0 & \dots & x \end{pmatrix}, Y = \begin{pmatrix} 0 & * & \dots & * \\ 0 & 0 & \dots & * \\ \vdots & \vdots & \ddots & \vdots \\ 0 & 0 & \dots & 0 \end{pmatrix}, \ [X,Y] = 0 \right\}
\end{equation}
(the number $x$ and the over-diagonal entries are arbitrary) and the subgroup $B_n \subset GL_n$ of upper triangular matrices acts on $U$ by simultaneous conjugation. Strictly speaking, $U$ is defined as the dg scheme cut out by the equations:
$$
\sum_{a = i+1}^{j-1} (x_{ia} y_{aj} - y_{ia} x_{aj}) = 0, \qquad \forall \ i+1 < j \in \{1,\dots,n\}
$$
from the affine space with coordinates $\{x,x_{ij}, y_{ij}\}_{1 \leq i < j \leq n}$. There is a natural map:
$$
\text{FComm}^\bullet_n \xrightarrow{\pi} \text{Comm}_n
$$
induced by the inclusion of the set of pairs of upper triangular matrices in the set of pairs of all matrices. Then Proposition \ref{prop:contains} follows from the claim that for all $d_1,\dots,d_n \in \Z$:
\begin{equation}
\label{eqn:claim}
R_{(d_1,\dots,d_n)} = \iota(\pi_*(\CL^{d_1}_1 \dots \CL_n^{d_n}))
\end{equation}
where the line bundles $\CL_1, \dots, \CL_n$ on $\text{FComm}_n^\bullet$ are induced from the standard diagonal characters of the Borel subgroup $B_n$. Equality \eqref{eqn:claim} is a straightforward computation involving push-forward and pull-back morphisms; we will explain the main idea of the proof, and leave the details as an exercise to the interested reader (see \cite[Proposition 2.7]{Wheel} for a closely related computation). Since $\iota$ is induced by the pull-back morphism \eqref{eqn:restriction}, one computes the right-hand side of \eqref{eqn:claim} as the succession of the following steps:

\begin{itemize}[leftmargin=*]

\item Push-forward the $B_n$-equivariant line bundle $\CL_1^{d_1} \dots \CL_n^{d_n}$ from $U$ to $V$ 

\item Restrict to the origin, yielding a Laurent polynomial in the characters $z_1,\dots,z_n$ of $B_n$

\item Induce from a $B_n$-equivariant coherent sheaf to a $GL_n$-equivariant coherent sheaf

\end{itemize}

One can switch the order of the first two operations, at the cost of multiplying with the following Euler class right after the second step:
\begin{equation}
\label{eqn:euler class}
\prod_{1 < i+1 < j \leq n} \left(1 - \frac {z_jq_1q_2}{z_i} \right)
\end{equation}
This is due to the fact that we impose fewer quadratic equations on $U$ than on $V$. With this in mind, let us start from the $B_n$ character of the restriction of the line bundle $\CL_1^{d_1} \dots \CL_n^{d_n}$ to the origin of $U$, namely $z_1^{d_1} \dots z_n^{d_n}$. We must multiply this monomial by:
$$
\prod_{1 \leq i < j \leq n} \left[ \left(1 - \frac {z_iq_1}{z_j} \right)\left(1 - \frac {z_iq_2}{z_j} \right)\right]
$$
which is due to the fact that the lower-diagonal entries are missing when defining $U$ but are present when defining $V$. As explained above, we should also multiply the answer by \eqref{eqn:euler class}. Finally, the third bullet above involves dividing by the Weyl denominator:
$$
\prod_{1\leq i < j \leq n} \left(1 - \frac {z_i}{z_j} \right)
$$
and symmetrizing. The result of these operations is precisely $R_{(d_1,\dots,d_n)}$ of \eqref{eqn:r d}. 

\end{proof}

\subsection{Relations in $\CA$} The following relation in $\Shuf$ is easy to prove:
\begin{equation}
\label{eqn:rel shuf} 
R_{(d_1,\dots,d_i,d_{i+1},\dots,d_n)} - q_1 q_2 \cdot R_{(d_1,\dots,d_i-1,d_{i+1}+1,\dots,d_n)} = (1-q_1) R_{(d_1,\dots,d_i)} R_{(d_{i+1},\dots,d_n)}
\end{equation}
for all $d_1,\dots,d_n\in \Z$ and $i \in \{1,\dots,n-1\}$. Therefore, if we define for all $(m,n) \in \Z \times \N$:
$$
H_{m,n} = R_{(d_1,\dots,d_n)} \quad \text{where} \quad d_i = \left \lfloor \frac {mi}n \right \rfloor - \left \lfloor \frac {m(i-1)}n \right \rfloor 
$$
it is easy to conclude from \eqref{eqn:rel shuf} that $\CA$ is generated by the $H_{m,n}$'s as an algebra. Indeed, using \eqref{eqn:rel shuf} one can express all $R_{\dd}$ via $H_{|\dd|,n}$ (where $|\dd| = d_1+\dots+d_n$) and products of smaller $R_{\dd'}$. Moreover, it was shown in \cite{W} that for all $(m,n), (m',n') \in \Z \times \N$ we have:
\begin{equation}
\label{eqn:comm}
[H_{m,n}, H_{m',n'}] = \mathop{\sum^{n_1+\dots+n_t = n}_{\frac {m_1}{n_1} \leq \dots \leq \frac {m_t}{n_t}}}^{m_1+\dots+m_t = m} \text{coefficient} \cdot H_{m_1,n_1} \dots H_{m_t,n_t}
\end{equation}
for various coefficients in $\Z[q_1^{\pm 1}, q_2^{\pm 1}]$. Using this relation, one obtains the PBW theorem:
\begin{equation}
\label{eqn:pbw}
\CA = \mathop{\bigoplus^{n_1+\dots+n_t = n}_{\frac {m_1}{n_1} \leq \dots \leq \frac {m_t}{n_t}}}^{m_1+\dots+m_t = m} \Z[q_1^{\pm 1}, q_2^{\pm 1}] \cdot H_{m_1,n_1} \dots H_{m_t,n_t}
\end{equation}
(since $H_{m,n}$ commutes with $H_{m',n'}$ if $\frac mn = \frac {m'}{n'}$, it does not matter how we order $H_{m,n}$'s with the same slope in \eqref{eqn:pbw}). Our interest in the subalgebra $\CA$ is motivated by the following.

\begin{conjecture}
\label{conj:iso}

There is an isomorphism $\CA\Big|_{(q_1,q_2)\to (q^{-2},q^{2})} \cong \bigoplus_{n=0}^\infty G(\Tr(\ASBim_n))$. 

\end{conjecture}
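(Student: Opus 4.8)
The plan is to bootstrap from Theorem \ref{thm: main}, which already furnishes an algebra homomorphism $\phi\colon\widetilde{\CA}|_{(q_1,q_2)\to(q^{-2},q^2)}\to\bigoplus_{n\ge 0}G(\Tr(\ASBim_n))$ sending each symbol $E_{\dd}$ to the class $[\Tr(Y^{\dd}T_{\cox_n})]$ (with the $q^{1-n}$ normalization discussed above Conjecture \ref{conj: main}). Three further things would upgrade this to the claimed isomorphism: (i) $\phi$ annihilates $\ker(\widetilde{\CA}\twoheadrightarrow\CA)$, hence descends to $\bar\phi\colon\CA|_{(q_1,q_2)\to(q^{-2},q^2)}\to\bigoplus_n G(\Tr(\ASBim_n))$; (ii) $\bar\phi$ is surjective; (iii) $\bar\phi$ is injective. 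Point (ii) is the easy one; (i) and (iii) together amount to pinning down $G(\Tr(\ASBim_n))$ exactly, which is the $K$-theoretic content of Problem \ref{prob:main}.

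\emph{Surjectivity.} By Theorem \ref{thm: gens}, every object of $\Tr(\ASBim_n)$ is a bounded twisted complex built from the $E_{\dd^1}\star\cdots\star E_{\dd^r}$, so their classes span $G(\Tr(\ASBim_n))$ over $\Z[q^{\pm1}]$. Feeding the one-step map of Theorem \ref{thm:rel 1} — which decategorifies to relation \eqref{eqn:rel shuf} at $q_1q_2=1$ — into an induction on $r$ and on $\max_i|\dd^i|$ rewrites each such class as a $\Z[q^{\pm1}]$-combination of products $[E_{m_1,n_1}\star\cdots\star E_{m_t,n_t}]$ of the distinguished classes $[E_{m,n}]:=\phi(H_{m,n})$, where we regard $H_{m,n}$ as the corresponding $E_{\dd}$-symbol in $\widetilde{\CA}$. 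As those lie in the image of $\phi$, the map $\phi$ is surjective, and so is $\bar\phi$ once (i) holds.

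\emph{Descent to $\CA$.} Here one must categorify the relations that cut $\widetilde{\CA}$ down to $\CA$, i.e.\ produce, inside the derived trace $\Tr(\ASBim_n)$, cone decompositions that decategorify to generators of $\ker(\widetilde{\CA}\twoheadrightarrow\CA)$ — exactly as Theorems \ref{thm:rel 1} and \ref{thm:rel 2} categorify \eqref{eqn:rel shuf} and \eqref{eqn:comm}. Since these relations are ``integral'' — they vanish over $\Q(q_1,q_2)$, by the Remark following Theorem \ref{thm: main} — they are invisible in the He--Nie presentation of the bare cocenter $\bigoplus_n\Tr(\AH_n)$ of Theorem \ref{th: trace hecke}, and must genuinely come from the derived nature of $\Tr$: they hold in $G(\Tr(\ASBim_n))$ but need not hold in $\bigoplus_n\Tr(\AH_n)$ (which is why Theorem \ref{thm:surj intro} is only a surjection). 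If one grants Conjecture \ref{conj: main}(a), of course, every relation of $\CA$ is pulled back through \eqref{eqn:functor k-theory} and the point is automatic.

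\emph{Injectivity, and the main obstacle.} On the source the structure is explicit: the PBW theorem \eqref{eqn:pbw} makes $\CA|_{(q_1,q_2)\to(q^{-2},q^2)}$ a free $\Z[q^{\pm1}]$-module on the slope-ordered monomials $H_{m_1,n_1}\cdots H_{m_t,n_t}$, $\tfrac{m_1}{n_1}\le\cdots\le\tfrac{m_t}{n_t}$, graded by $m=\sum m_i$ and $n=\sum n_i$. So (iii) is the assertion that in each bidegree $(m,n)$ the classes $[E_{m_1,n_1}\star\cdots\star E_{m_t,n_t}]$ of weakly increasing slope are $\Z[q^{\pm1}]$-independent in $G(\Tr(\ASBim_n))$, i.e.\ that this piece carries no relations beyond those already forced. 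The clean route is to construct the $K$-theory map \eqref{eqn:functor k-theory}, $\kappa\colon G(\Tr(\ASBim_n))\to K_{\C^*}(\Comm_n)$, compatibly with $\bar\phi$ and with the shuffle embedding $\iota$ of \eqref{eqn:iota} (this is the $K$-theoretic part of Problem \ref{prob:main}); then $\iota\circ\kappa\circ\bar\phi$ is the tautological inclusion of $\CA|_{(q_1,q_2)\to(q^{-2},q^2)}$ into the specialized shuffle algebra, which is injective because $\CA$ is by construction a $\Z[q_1^{\pm1},q_2^{\pm1}]$-free subalgebra of $\Shuf$ — modulo checking that passing to $q_1q_2=1$ creates no torsion in the cokernels, for which the freeness of $K_{\C^*\times\C^*}(\Comm_n)$ from \cite{VV} should suffice. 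Lacking such a $\kappa$, one is forced to prove PBW by hand inside $G(\Tr(\ASBim_n))$: run the cone filtrations of Theorems \ref{thm:rel 1} and \ref{thm:rel 2} as a straightening algorithm on arbitrary products of the $E_{\dd}$, show it terminates at slope-ordered monomials, and — the delicate step — show these remain independent, e.g.\ by exhibiting enough functionals on $G(\Tr(\ASBim_n))$ that separate them, such as annular HOMFLY-PT invariants of the corresponding links in the thickened torus. This independence is precisely the difficulty of identifying the essential image in Problem \ref{prob:main}, which is still open, and is the reason the statement is posed as a conjecture.
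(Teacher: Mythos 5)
This statement is labeled a conjecture in the paper, and the paper offers no proof of it: the authors' own position is that Theorem \ref{thm:main} (the surjection from $\widetilde{\CA}|_{(q_1,q_2)\to(q^{-2},q^2)}$ onto $\bigoplus_n G(\Tr(\ASBim_n))$) together with Proposition \ref{prop:iso loc} (the localized isomorphism $\widetilde{\CA}_{\loc}\cong\CA_{\loc}$) is only ``a good approximation'' to the conjecture. Your write-up is therefore correctly not a proof but a roadmap, and as a roadmap it is accurate: your surjectivity step is exactly the paper's argument (Theorem \ref{thm: gens} plus the straightening supplied by Theorems \ref{thm:rel 1} and \ref{thm:rel 2}, which is how Theorems \ref{thm:surj} and \ref{thm:main} are proved), and you correctly locate the two genuinely open points — descent of $\phi$ through $\ker(\widetilde{\CA}\twoheadrightarrow\CA)$, which is $\Q(q_1,q_2)$-torsion and hence invisible to Proposition \ref{prop:iso loc}, and injectivity of the resulting map, which amounts to the $K$-theoretic content of Problem \ref{prob:main}.

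Two cautions on the parts you sketch as ``clean routes.'' First, the map $\kappa$ of \eqref{eqn:functor k-theory} is nowhere constructed in the paper (only conjectured), so the composition $\iota\circ\kappa\circ\bar\phi$ cannot currently be invoked; and even granting $\kappa$, the specialization $q_1q_2=1$ is more delicate than ``no torsion in the cokernels'' — the shuffle-algebra structure constants and the wheel conditions \eqref{eqn:wheel} degenerate there, so freeness of $K_{\C^*\times\C^*}(\Comm_n)$ over $\Z[q_1^{\pm1},q_2^{\pm1}]$ does not by itself give injectivity of $\CA|_{q_1q_2=1}\hookrightarrow\Shuf|_{q_1q_2=1}$. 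Second, for the ``by hand'' route it is worth noting a resource you do not exploit: by Corollary \ref{cor: convex paths} the He--Nie basis of $\bigoplus_n\Tr(\AH_n)$ is indexed by the same convex paths as the PBW basis \eqref{eqn:pbw} of $\CA$, so a graded comparison could in principle settle both descent and injectivity into $\bigoplus_n\Tr(\AH_n)$; the残 remaining obstruction would then be injectivity of the canonical surjection $\Tr(\AH_n)=\Tr(G(\ASBim_n))\to G(\Tr(\ASBim_n))$, a derived-trace question the paper leaves open. None of this is a defect of your analysis — it confirms that the statement cannot presently be proved by the methods of the paper — but these are the precise points where any future proof must do new work.
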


\subsection{Understanding relations} Unless we had a way to connect $G(\Tr(\ASBim_n))$ to the shuffle algebra directly, it seems like one's best bet to prove Conjecture \ref{conj:iso} is to understand $\CA$ by generators and relations. The most direct way is to think of the $H_{m,n}$'s as generators and \eqref{eqn:comm} as relations, but the latter are not explicit and so difficult to check in the trace of the category $\ASBim_n$. The notion below is the ``next best thing".

\begin{definition}
\label{def:a}

(\cite{Hecke}) Consider the algebra: 
\begin{equation}
\label{eqn:a}
\widetilde{\CA} = \Z[q_1^{\pm 1}, q_2^{\pm 1}] \Big \langle \EE_{(d_1,\dots,d_n)} \Big \rangle_{n \in \N, d_1,\dots,d_n \in \Z} \Big/ \text{relations \eqref{eqn:rel a 1}--\eqref{eqn:rel a 2}}
\end{equation}
where for all $d_1,\dots,d_n\in \Z$ and $i \in \{1,\dots,n-1\}$ we set:
\begin{equation}
\label{eqn:rel a 1}
\EE_{(d_1,\dots,d_i,d_{i+1},\dots,d_{n})} - q_1q_2 \cdot \EE_{(d_1,\dots,d_i-1,d_{i+1}+1,\dots,d_{n})} = (1-q_1) \EE_{(d_1,\dots,d_i)} \EE_{(d_{i+1},\dots,d_{n})}
\end{equation}
and for all $d_1,\dots,d_n,k \in \Z$ we set:
\begin{equation}
\label{eqn:rel a 2}
\Big[ \EE_{(k)}, \EE_{(d_1,\dots,d_n)} \Big] = (q_2-1) \sum_{i=1}^n \begin{cases} \sum_{a = 1}^{k-d_i} \EE_{(d_1,\dots,d_{i-1},k-a,d_i+a,d_{i+1},\dots,d_n)} &\text{if } k \geq d_i \\ - \sum_{a = 1}^{d_i-k} \EE_{(d_1,\dots,d_{i-1},d_i-a,k+a,d_{i+1},\dots,d_n)} &\text{if } k \leq d_i \end{cases}
\end{equation}
\end{definition}

\begin{proposition}
\label{prop:surj}

There is a surjective algebra morphism $\widetilde{\CA} \twoheadrightarrow \CA$ induced by $\EE_{\dd} \mapsto R_{\dd}$.

\end{proposition}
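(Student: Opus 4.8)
The plan is to build the homomorphism directly on generators and then check well-definedness. Since $\widetilde{\CA}$ is, by construction, a quotient of the free $\Z[q_1^{\pm 1},q_2^{\pm 1}]$-algebra on the symbols $\EE_{\dd}$, there is a unique algebra map from that free algebra into $\Shuf$ sending each $\EE_{\dd}$ to $R_{\dd}$; it takes values in $\CA$ by Definition \ref{def:subalg}, and it descends to $\widetilde{\CA}$ precisely when the elements $R_{\dd}$ satisfy relations \eqref{eqn:rel a 1} and \eqref{eqn:rel a 2} inside $\Shuf$. Granting that, surjectivity is automatic: $\CA$ is \emph{defined} to be the subalgebra generated by the $R_{\dd}$, and these all lie in the image. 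So the whole content of the proposition is the verification of the two families of relations for the shuffle elements $R_{\dd}$.

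First I would dispose of relation \eqref{eqn:rel a 1}, which is literally the shuffle identity \eqref{eqn:rel shuf}. I would prove the latter by the usual ``unlinking'' manipulation inside the symmetrization \eqref{eqn:r d}: from $z_i^{d_i}z_{i+1}^{d_{i+1}}-q_1q_2 z_i^{d_i-1}z_{i+1}^{d_{i+1}+1}=z_i^{d_i-1}z_{i+1}^{d_{i+1}}(z_i-q_1q_2 z_{i+1})$ one sees that the left-hand side of \eqref{eqn:rel shuf} is the symmetrization of the kernel of $R_{\dd}$ with the single factor $(1-q_1q_2 z_{i+1}/z_i)^{-1}$ removed; removing it severs the only ``bond'' linking the variable blocks $\{z_1,\dots,z_i\}$ and $\{z_{i+1},\dots,z_n\}$, so the symmetrization factors as the shuffle product of the two shorter kernels times the required cross terms in $\zeta$. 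The only bookkeeping is the matching of scalar prefactors, which holds because $(1-q_1)^{n-1}(1-q_2)^n=(1-q_1)\cdot(1-q_1)^{i-1}(1-q_2)^{i}\cdot(1-q_1)^{n-i-1}(1-q_2)^{n-i}$.

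The substantive step is relation \eqref{eqn:rel a 2}, which I would derive from Theorem \ref{thm: hecke} by decategorification. Under the identification of Proposition \ref{prop:contains} --- concretely the equality \eqref{eqn:claim}, matching the class of $\CE_{\dd}=\pi_*(\CL_1^{d_1}\cdots\CL_n^{d_n})$ with $R_{\dd}\in\Shuf_n$ --- the three bullets of Theorem \ref{thm: hecke} become, after applying the $K$-theory class map and using that $[\C_{q_2}\xrightarrow{0}\C_1]$ has class $1-q_2$ (up to sign) while the class of a cone is a difference of classes, a telescoping chain from $[g_0]=R_{(k)}\ast R_{\dd}$ to $[g_n]=R_{\dd}\ast R_{(k)}$ whose consecutive differences are $\pm(1-q_2)\sum_a R_{\dd'}$, with $\dd'$ running over exactly the vectors in \eqref{eqn:cone geom}. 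Summing these differences over $i$ while keeping track of the two cases $k\geq d_i$ and $k\leq d_i$ produces precisely the right-hand side of \eqref{eqn:rel a 2}, coefficient $q_2-1$ included. Alternatively, \eqref{eqn:rel a 2} can be checked head-on inside $\Shuf$ --- the algebraic shadow of the geometry in \cite{Hecke}: since $R_{(k)}=(1-q_2)z_1^k$ has degree one, both shuffle products are single-variable insertions, their difference equals $(1-q_2)\sum_{l} z_l^k\cdot R_{\dd}(\text{the $n$ variables other than }z_l)\cdot\bigl[\prod_{j\neq l}\zeta(z_l/z_j)-\prod_{j\neq l}\zeta(z_j/z_l)\bigr]$, and one then telescopes the bracket using the identity $\zeta(u)-\zeta(1/u)=(1-q_1)(1-q_2)(1-q_1q_2)\tfrac{1+u}{1-u}$, reorganizing the resulting residues into the double sum of \eqref{eqn:rel a 2}.

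The hard part is entirely relation \eqref{eqn:rel a 2}: no idea beyond \cite{Hecke} is needed, but carefully matching the homological and equivariant gradings, the signs produced by the cone classes, and the ranges of the inner summation --- between the geometric statement of Theorem \ref{thm: hecke} (or the residue computation) and the algebraic identity \eqref{eqn:rel a 2} --- is where all the care must be spent. Relation \eqref{eqn:rel a 1} and surjectivity are routine by comparison.
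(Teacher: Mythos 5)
Your proposal is correct and follows essentially the same route as the paper: the entire content is the verification that the elements $R_{\dd}$ satisfy relations \eqref{eqn:rel a 1} and \eqref{eqn:rel a 2}, the first being the elementary shuffle identity \eqref{eqn:rel shuf} and the second being the main computation of \cite{Hecke}, with surjectivity automatic from Definition \ref{def:subalg}. The extra detail you supply (the unlinking of the $(1-q_1q_2z_{i+1}/z_i)^{-1}$ factor, and the direct residue computation for \eqref{eqn:rel a 2} via $\zeta(u)-\zeta(1/u)=(1-q_1)(1-q_2)(1-q_1q_2)\tfrac{1+u}{1-u}$) is sound but goes beyond what the paper records, which simply cites \cite{Hecke} for both relations.
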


\begin{proof} The fact that \eqref{eqn:rel a 1} and \eqref{eqn:rel a 2} hold with $\EE$'s replaced by $R$'s was proved in \cite{Hecke} (the former of these is precisely \eqref{eqn:rel shuf}, which is a simple exercise). 
\end{proof} 

\begin{remark} 

The fact that relation \eqref{eqn:rel a 2} holds with $\EE$'s replaced by $R$'s was used in \cite{BHMPS} as a step in their proof of the Extended Delta Conjecture.

\end{remark} 

With this in mind, we are ready to state the main result of the present paper. 

\begin{theorem}
\label{thm:main}

There is a surjective homomorphism: 
$$
\widetilde{\CA} \Big|_{(q_1,q_2)\to (q^{-2},q^{2})}  \twoheadrightarrow \bigoplus_{n=0}^\infty G(\Tr(\ASBim_n)).
$$

\end{theorem}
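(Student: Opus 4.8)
The plan is to define a homomorphism $\Phi$ on generators by
$$
\Phi:\ \EE_{\dd}\ \longmapsto\ q^{1-n}\,[E_{\dd}]\ \in\ G(\Tr(\ASBim_n)),\qquad \dd=(d_1,\dots,d_n),
$$
and, on a word $\EE_{\dd^1}\cdots\EE_{\dd^r}$ with $n=n_1+\dots+n_r$, by $q^{1-n}[E_{\dd^1}\star\cdots\star E_{\dd^r}]$. This assignment makes sense because $Y^{\dd^1}T_{\cox_{n_1}}\star\cdots\star Y^{\dd^r}T_{\cox_{n_r}}$ is a genuine, bracketing-independent object of $\CK(\ASBim_n)$ — the image of an affine braid — and $\Tr$ is a dg functor sending cones to cones, so the class descends to Grothendieck groups. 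The overall factor $q^{1-n}$ and the substitution $(q_1,q_2)\to(q^{-2},q^2)$ are dictated by the grading comparison discussed after Conjecture \ref{conj: main}: the product $q_1q_2$ records the homological grading on $\CK(\ASBim_n)$, which is lost in the Grothendieck group, forcing $q_1q_2=1$, while $q_1=q^{-2}$ is the residual normalization of the Soergel $q$-grading (as in \cite{GNR}). Finally one must equip $\bigoplus_n G(\Tr(\ASBim_n))$ with a $\Z[q^{\pm1}]$-algebra structure under which these classes multiply by concatenation; this is the structure inherited from the cocenter algebra $\bigoplus_n \Tr(\AH_n)=\bigoplus_n\Tr(G(\ASBim_n))$ along the natural map $\Tr(G(\ASBim_n))\to G(\Tr(\ASBim_n))$, which is surjective by Theorem \ref{thm: gens}.

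The content is then to verify that $\Phi$ kills the relations \eqref{eqn:rel a 1} and \eqref{eqn:rel a 2}. For \eqref{eqn:rel a 1} I would invoke Theorem \ref{thm:rel 1}: the map $E_{(d_1,\dots,d_i,d_{i+1},\dots,d_n)}\to E_{(d_1,\dots,d_i-1,d_{i+1}+1,\dots,d_n)}$ has cone filtered by two copies of $E_{(d_1,\dots,d_i)}\star E_{(d_{i+1},\dots,d_n)}$ that differ by the homological-plus-internal shift packaged in $[\C_q\xrightarrow{0}\C_{q^{-1}}]$; passing to Grothendieck classes (and using $[\Cone(f)]=[\mathrm{target}]-[\mathrm{source}]$, together with the fact that $[\C_q\xrightarrow{0}\C_{q^{-1}}]$ has class $q^{-1}-q$) gives $[E_{(d_1,\dots,d_i,d_{i+1},\dots,d_n)}]-[E_{(d_1,\dots,d_i-1,d_{i+1}+1,\dots,d_n)}]=(q-q^{-1})\,[E_{(d_1,\dots,d_i)}\star E_{(d_{i+1},\dots,d_n)}]$; multiplying by $q^{1-n}$ and using $q^{1-i}\cdot q^{1-(n-i)}=q\cdot q^{1-n}$ and $(1-q^{-2})q=q-q^{-1}$ turns this into $\Phi$ of \eqref{eqn:rel a 1} at $(q_1,q_2)=(q^{-2},q^2)$. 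The argument for \eqref{eqn:rel a 2} is parallel, using Theorem \ref{thm:rel 2}: there $\Tr(\Cone(\varphi_i))$ and $\Tr(\Cone(\overline{\varphi}_i))$ are filtered by $[\C_q\xrightarrow{0}\C_{q^{-1}}]\cdot E_{(\dots)}$, so telescoping $\sum_{i=1}^n\big([\Tr(G_i)]-[\Tr(G_{i-1})]\big)=[\Tr(G_n)]-[\Tr(G_0)]$, inserting $[\Tr(G_0)]=[E_{(k)}\star E_{\dd}]$ and $[\Tr(G_n)]=[E_{\dd}\star E_{(k)}]$ from the first bullet, rescaling, and using $q^{-1}(q^2-1)=q-q^{-1}$ yields exactly $\Phi$ of \eqref{eqn:rel a 2}. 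Hence $\Phi$ factors through $\widetilde{\CA}\big|_{(q_1,q_2)\to(q^{-2},q^2)}$.

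Surjectivity is then immediate: by Theorem \ref{thm: gens} every object of $\Tr(\ASBim_n)$ is a finite twisted complex built from shifts of the $E_{\dd^1}\star\cdots\star E_{\dd^r}$, so their classes generate $G(\Tr(\ASBim_n))$ over $\Z[q^{\pm1}]$, and all of them lie in the image of $\Phi$. (Carrying out the same computation inside $\bigoplus_n\Tr(\AH_n)$ rather than $\bigoplus_n G(\Tr(\ASBim_n))$ likewise gives Theorem \ref{thm:surj intro}.) I expect the real difficulty to sit upstream of this argument, in Theorems \ref{thm:rel 1}, \ref{thm:rel 2} and the generation statement Theorem \ref{thm: gens}; granting those, the two points here that still need care are: (i) the grading bookkeeping — making sure that the homological and internal shifts in the cones of Theorems \ref{thm:rel 1}--\ref{thm:rel 2}, combined with the rescaling $\EE_{\dd}\rightsquigarrow q^{1-n}E_{\dd}$ and the collapse $q_1q_2=1$, reproduce exactly the coefficients $q_1q_2$, $1-q_1$ and $q_2-1$ of \eqref{eqn:rel a 1}--\eqref{eqn:rel a 2}; and (ii) the (softer) verification that the concatenation product on $\bigoplus_n G(\Tr(\ASBim_n))$ is well defined and associative, i.e.\ that the cocenter-algebra structure really descends along $\Tr(G(\ASBim_n))\to G(\Tr(\ASBim_n))$ — which in turn relies on the associativity of $\star$ on braid-images and on the partially defined module action $\Tr(\ASBim_n)\otimes\CK(\ASBim_k)\to\Tr(\ASBim_{n+k})$.
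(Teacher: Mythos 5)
Your proposal is correct and follows essentially the same route as the paper: the paper first proves Theorem \ref{thm:surj} (a surjective homomorphism onto $\bigoplus_n\Tr(\AH_n)$ with the skein product, using the rescaling $E_{(d_1,\dots,d_n)}=q^{n-1}\EE_{(d_1,\dots,d_n)}$ and the exact sequences of Theorems \ref{lem: cone alpha} and \ref{thm: one step commutation} to match relations \eqref{eqn:rel a 1}--\eqref{eqn:rel a 2} at $(q_1,q_2)=(q^{-2},q^2)$), and then composes with the natural surjection $\Tr(\AH_n)=\Tr(G(\ASBim_n))\to G(\Tr(\ASBim_n))$ from \eqref{eq: K0 tr}, which is exactly your resolution of point (ii). Your grading bookkeeping in point (i) checks out against the paper's computation.
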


\subsection{Comparing $\CA$ with $\widetilde{\CA}$} In the remainder of the present Section, we will explain why Theorem \ref{thm:main} is a good approximation to Conjecture \ref{conj:iso}. After all, at first glance, it seems like the algebra $\widetilde{\CA}$ only captures ``some" of the relations which hold in the algebra $\CA$. However, we will now show that all relations which are not thus captured are actually contained in the $\Q(q_1,q_2)$-torsion. More precisely, we will prove the following result.

\begin{proposition}
\label{prop:iso loc}

If we consider the localized algebra: 
$$
\CA_{\emph{loc}} = \CA \bigotimes_{\Z[q_1^{\pm 1}, q_2^{\pm 1}]} \Q(q_1,q_2)
$$
(and analogously with $\widetilde{\CA}$ replaced by $\CA$), then the map:
$$
\widetilde{\CA}_{\emph{loc}} \rightarrow \CA_{\emph{loc}}
$$
induced by Proposition \ref{prop:surj} is an isomorphism. 

\end{proposition}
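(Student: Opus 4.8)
The plan is to show that the surjection $\widetilde{\CA}_{\loc} \twoheadrightarrow \CA_{\loc}$ of Proposition \ref{prop:surj} is injective after localization, by producing an inverse — equivalently, by showing that $\widetilde{\CA}_{\loc}$ is ``no bigger'' than $\CA_{\loc}$, i.e. that it already has the PBW-type presentation \eqref{eqn:pbw}. The key observation is that relations \eqref{eqn:rel a 1} and \eqref{eqn:rel a 2} are precisely the $\widetilde{\CA}$-lifts of \eqref{eqn:rel shuf} and of the $K$-theoretic Hecke relation, and that these are exactly the relations needed to run the combinatorial reduction that proves PBW for $\CA$. So the strategy is: (i) define elements $h_{m,n} \in \widetilde{\CA}$ by the same formula $h_{m,n} = \EE_{(d_1,\dots,d_n)}$ with $d_i = \lfloor mi/n\rfloor - \lfloor m(i-1)/n\rfloor$; (ii) use relation \eqref{eqn:rel a 1} to show, by induction on $n$ and on a suitable measure of ``how far $\dd$ is from a staircase'', that every $\EE_{\dd}$ lies in the subalgebra of $\widetilde{\CA}$ generated by the $h_{m,n}$'s — this is the same argument as in the paragraph after \eqref{eqn:rel shuf}, now carried out in $\widetilde{\CA}$ rather than in $\Shuf$, so the $h_{m,n}$ generate $\widetilde{\CA}$ as an algebra; (iii) show that after localization the $h_{m,n}$ satisfy the commutator relations \eqref{eqn:comm}; (iv) conclude by a dimension count.

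For step (iii), the point is that \eqref{eqn:comm} for the $R$-elements is a theorem of \cite{W} in $\Shuf$, hence holds in $\CA$; we must lift it to $\widetilde{\CA}_{\loc}$. Here I would argue as follows. The bracket $[h_{m,n}, h_{m',n'}]$, computed in $\widetilde{\CA}$, can be reduced using relations \eqref{eqn:rel a 1}--\eqref{eqn:rel a 2} to a $\Z[q_1^{\pm 1},q_2^{\pm 1}]$-linear combination of monomials $h_{m_1,n_1}\cdots h_{m_t,n_t}$ with $\sum n_j = n+n'$, $\sum m_j = m+m'$: indeed \eqref{eqn:rel a 2} handles the case $n' = 1$ directly (it expresses $[\EE_{(k)},\EE_{\dd}]$ as a sum of single $\EE$'s), and the general case follows by induction on $n'$ after writing $h_{m',n'}$, via \eqref{eqn:rel a 1}, in terms of products $\EE_{(k)}\cdot(\text{lower})$. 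Thus $[h_{m,n},h_{m',n'}]$ lies in the span of such monomials \emph{in $\widetilde{\CA}$}, with some coefficients $c^{\widetilde{\CA}}_{(m_j,n_j)}$. Applying the surjection to $\CA$ and invoking linear independence of the monomials there — which is \eqref{eqn:pbw} — forces $c^{\widetilde{\CA}}_{(m_j,n_j)}$ to be the same as the coefficients in \eqref{eqn:comm}, \emph{provided} the monomials are already linearly independent in $\widetilde{\CA}_{\loc}$. This is the place where one genuinely needs localization and the structure of $\Shuf$: I would deduce independence from the fact that, by steps (ii) and the just-established reduction, $\widetilde{\CA}_{\loc}$ is spanned by the PBW monomials $h_{m_1,n_1}\cdots h_{m_t,n_t}$ with $m_1/n_1 \le \dots \le m_t/n_t$, so if there were a nontrivial relation among them it would descend to a relation in $\CA_{\loc}$, contradicting \eqref{eqn:pbw} tensored with $\Q(q_1,q_2)$.

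Finally (step iv): we now know $\widetilde{\CA}_{\loc}$ is spanned by the ordered PBW monomials in the $h_{m,n}$, that these monomials map to the corresponding monomials in $\CA_{\loc}$, and that the latter form a $\Q(q_1,q_2)$-basis of $\CA_{\loc}$ by \eqref{eqn:pbw}. A surjection of $\Q(q_1,q_2)$-vector spaces that carries a spanning set bijectively onto a basis is an isomorphism; hence $\widetilde{\CA}_{\loc} \xrightarrow{\sim} \CA_{\loc}$, which is the claim. The main obstacle is step (iii) — more precisely, the circularity one must avoid between ``the PBW monomials span $\widetilde{\CA}_{\loc}$'' and ``they are linearly independent there.'' The spanning statement is elementary (it only uses \eqref{eqn:rel a 1}, \eqref{eqn:rel a 2}, and induction), so the logical order must be: first prove spanning in $\widetilde{\CA}$ (unlocalized), then prove the commutator reduction in $\widetilde{\CA}$, then pass to $\CA$ to \emph{identify} the coefficients and simultaneously obtain independence in $\widetilde{\CA}_{\loc}$ by comparison, and only then assemble the isomorphism. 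One should double-check that relation \eqref{eqn:rel a 1} really does let one rewrite an arbitrary $\EE_{\dd}$ in terms of staircase $\EE$'s and products of shorter ones — the same ``moving a box from slot $i$ to slot $i+1$ decreases a convexity defect'' induction as in \cite{Hecke} — and that no extra relations beyond \eqref{eqn:rel a 1}, \eqref{eqn:rel a 2} are needed for the reduction, which is exactly why $\widetilde{\CA}$ was defined with just these two families.
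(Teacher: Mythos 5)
Your steps (i), (ii) and the closing linear-algebra observation are sound: relation \eqref{eqn:rel a 1} alone does show, by the same induction as in the paragraph following \eqref{eqn:rel shuf}, that every $\EE_{\dd}$ is a polynomial in the staircase elements $h_{m,n}$, and a surjection carrying a spanning set onto a basis is indeed an isomorphism. The genuine gap is step (iii), and it is precisely the circularity you flag but do not resolve. To identify the coefficients of $[h_{m,n},h_{m',n'}]$ by pushing forward to $\CA_{\loc}$, the monomials appearing in your reduction must be linearly independent in $\CA_{\loc}$; but \eqref{eqn:pbw} gives independence only of the \emph{ordered} monomials, and producing an expression of the commutator in terms of ordered monomials (with slopes confined to the interval $[\tfrac{m'}{n'},\tfrac{m}{n}]$, which is what makes the straightening terminate) is exactly the PBW-type structural statement you are trying to prove. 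Your proposed fix --- ``first prove the commutator reduction in $\widetilde{\CA}$, then pass to $\CA$'' --- does not explain how to prove that reduction: the Leibniz-rule induction on $n'$ only expresses the commutator as \emph{some} polynomial in the generators, and reorganizing such a polynomial into ordered PBW monomials is equivalent to knowing the full ideal of relations among the $\EE_{(k)}$'s, i.e.\ the Drinfeld-type presentation of the elliptic Hall algebra. That is a hard theorem, not a formal consequence of \eqref{eqn:rel a 1}--\eqref{eqn:rel a 2}. (A smaller slip in the same step: \eqref{eqn:rel a 1} alone does \emph{not} write $h_{m',n'}$ in terms of products $\EE_{(k)}\cdot(\text{lower})$; it only reduces $h_{m',n'}$ to $\EE_{(0^{n'-1},m')}$ modulo such products, and eliminating that last term requires \eqref{eqn:rel a 2} together with inverting $(1-q_2)\bigl((q_1q_2)^{-1}+\dots+(q_1q_2)^{m'}\bigr)$, hence localization --- this is the content of Proposition \ref{prop:1}.)

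For comparison, the paper takes a route that outsources the hard structural input rather than re-deriving it: it first shows (Proposition \ref{prop:1}) that $\widetilde{\CA}_{\loc}$ is generated by the length-one elements $\EE_{(m)}$, so that $\CU_{\loc}\to\widetilde{\CA}_{\loc}$ is surjective; it then adjoins commuting derivations $\partial_k$ (equivalently, Heisenberg-type elements $H_k$) to all the algebras in the chain, so that the composite $\CU^{\text{ext}}_{\loc}\to\Shuf^{\text{ext}}$ is an isomorphism by the results of \cite{F,S} combined with \cite{Shuf}; injectivity of the middle map then follows formally from surjectivity of the first, and one descends to the unextended algebras by killing the ideal $(H_1,H_2,\dots)$, using freeness over $\Q(q_1,q_2)[H_1,H_2,\dots]$. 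If you wish to keep your direct PBW strategy, you would have to import or reprove an equivalent structural result; it cannot be extracted from \eqref{eqn:rel a 1}--\eqref{eqn:rel a 2} by the induction as sketched.
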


The proof of the result above will occupy the remainder of the present Section. We will need to define one more algebra, which is known in the literature as the positive half of the Ding-Iohara-Miki/quantum toroidal $\mathfrak{gl}_1$ algebra.

\begin{definition}
\label{def:quantum toroidal}
Consider the algebra:
\begin{equation}
\label{eqn:quantum toroidal}
\CU = \Z[q_1^{\pm 1}, q_2^{\pm 1}] \Big \langle E_m\Big \rangle_{m \in \Z} \Big/ \text{relations \eqref{eqn:rel tor 1}--\eqref{eqn:rel tor 2}}
\end{equation}
where we consider the formal series $E(z) = \sum_{m \in \Z} \frac {E_m}{z^m}$ and let:
\begin{equation}
\label{eqn:rel tor 1} 
E(z) E(w) (z - wq_1)(z - wq_2) \left(z - \frac w{q_1q_2} \right) = E(w) E(z) (zq_1 - w)(zq_2 - w) \left(\frac z{q_1q_2} - w \right) 
\end{equation}
and:
\begin{equation}
\label{eqn:rel tor 2}
[[E_{m+1},E_{m-1}],E_m]  = 0
\end{equation}
for all $m \in \Z$.

\end{definition}

\begin{proposition}

The assignment $\{E_m \mapsto \EE_{(m)}\}_{m \in \Z}$ yields an algebra homomorphism:
$$
\CU \rightarrow \widetilde{\CA}
$$

\end{proposition}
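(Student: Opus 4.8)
The plan is to verify directly that the elements $\EE_{(m)} \in \widetilde{\CA}$ satisfy relations \eqref{eqn:rel tor 1} and \eqref{eqn:rel tor 2}; since $\CU$ is presented by these relations, this suffices, and the only tools at our disposal are the defining relations \eqref{eqn:rel a 1}--\eqref{eqn:rel a 2} of $\widetilde{\CA}$. The cubic relation \eqref{eqn:rel tor 2} is then a short computation: \eqref{eqn:rel a 2} with $n=1$, $k=m+1$, $d_1=m-1$ gives $[\EE_{(m+1)},\EE_{(m-1)}]=(q_2-1)\big(\EE_{(m,m)}+\EE_{(m-1,m+1)}\big)$, and applying \eqref{eqn:rel a 2} again with $n=2$ and $k=m$ one finds $[\EE_{(m)},\EE_{(m,m)}]=0$ (all inner sums are empty, since $k$ equals both entries) and $[\EE_{(m)},\EE_{(m-1,m+1)}]=0$ (the $i=1$ and $i=2$ contributions are $\pm(q_2-1)\EE_{(m-1,m,m+1)}$ and cancel). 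Hence $[[\EE_{(m+1)},\EE_{(m-1)}],\EE_{(m)}]=0$.

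The bulk of the work is the quadratic relation \eqref{eqn:rel tor 1}. I would introduce the generating series $\EE(z)=\sum_m\EE_{(m)}z^{-m}$ and $\EE_2(z,w)=\sum_{x,y}\EE_{(x,y)}z^{-x}w^{-y}$. Relation \eqref{eqn:rel a 1} with $n=2$ packages into the identity
\[
\EE_2(z,w)\Big(1-q_1q_2\,\tfrac{w}{z}\Big)=(1-q_1)\,\EE(z)\EE(w),
\]
and \eqref{eqn:rel a 2} with $n=1$ translates into an explicit formula for $[\EE(z),\EE(w)]$ as a $\Z[q_1^{\pm1},q_2^{\pm1}]$-linear combination of the series $\EE_{(x,y)}z^{-a}w^{-b}$. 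Dividing \eqref{eqn:rel tor 1} by $z^3$ rewrites it as $\EE(z)\EE(w)\,\zeta(\tfrac{w}{z})=\EE(w)\EE(z)\,\widetilde{\zeta}(\tfrac{w}{z})$, where $\zeta(u)=(1-uq_1)(1-uq_2)(1-\tfrac{u}{q_1q_2})$ and $\widetilde{\zeta}(u)=(1-\tfrac{u}{q_1})(1-\tfrac{u}{q_2})(1-uq_1q_2)$; substituting $\EE(w)\EE(z)=\EE(z)\EE(w)-[\EE(z),\EE(w)]$ reduces the relation to
\[
\EE(z)\EE(w)\,(\zeta-\widetilde{\zeta})\big(\tfrac{w}{z}\big)\;+\;[\EE(z),\EE(w)]\,\widetilde{\zeta}\big(\tfrac{w}{z}\big)=0.
\]

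The crucial observation is the factorization
\[
\zeta(u)-\widetilde{\zeta}(u)=\frac{(q_1-1)(q_2-1)(q_1q_2-1)}{q_1q_2}\,u(1+u),
\]
whose right-hand side is divisible by $1-q_1$. This divisibility is what licenses feeding the factor $(1-q_1)\EE(z)\EE(w)$ occurring in the first summand into the packaged form of \eqref{eqn:rel a 1}, replacing it by a multiple of $\EE_2(z,w)$; the step is essential, since $\widetilde{\CA}$ is a priori not torsion-free over $\Z[q_1^{\pm1},q_2^{\pm1}]$ and one may not simply divide by $1-q_1$. After this substitution, and after inserting the explicit formula for $[\EE(z),\EE(w)]$, the left-hand side becomes an explicit $\Z[q_1^{\pm1},q_2^{\pm1}]$-combination of the generators $\EE_{(x,y)}$ weighted by monomials in $z,w$, and what remains is to check that the coefficient of each $\EE_{(x,y)}$ vanishes identically. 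I expect this last coefficient bookkeeping to be the main obstacle: it is entirely elementary but somewhat lengthy, amounting to a rational-function identity.

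Finally, it is worth noting that this last step is, internal to $\widetilde{\CA}$, the exact analogue of the classical check that the shuffle elements $R_{(m)}$ satisfy the Ding--Iohara--Miki relations, and it rests on the same identities. One cannot, however, shortcut the argument by importing the statement from $\CA$ through the surjection of Proposition \ref{prop:surj}: that map is not injective (the algebras $\widetilde{\CA}$ and $\CA$ differ integrally), so the verification genuinely has to be carried out inside $\widetilde{\CA}$.
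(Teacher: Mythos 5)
Your verification of the cubic relation \eqref{eqn:rel tor 2} is complete and correct, and your setup for the quadratic relation \eqref{eqn:rel tor 1} --- the packaging $(1-q_1q_2\tfrac{w}{z})\EE_2(z,w)=(1-q_1)\EE(z)\EE(w)$, the factorization $\zeta(u)-\widetilde{\zeta}(u)=\tfrac{(q_1-1)(q_2-1)(q_1q_2-1)}{q_1q_2}u(1+u)$, and the point that divisibility by $1-q_1$ is what licenses the substitution --- is also correct. (For the record, the paper does not carry out this computation at all: it simply cites Proposition 4.8 of \cite{Hecke}.)

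However, the final step of your plan fails as stated: after the two substitutions the expression is \emph{not} a combination of the $\EE_{(x,y)}$ whose coefficients vanish identically. Take the coefficient of $z^{0}w^{0}$ in your reduced identity. The term $\EE(z)\EE(w)(\zeta-\widetilde{\zeta})(\tfrac{w}{z})$ becomes
$$
-\tfrac{(q_2-1)(q_1q_2-1)}{q_1q_2}\Bigl[\EE_{(-1,1)}+(1-q_1q_2)\EE_{(-2,2)}-q_1q_2\EE_{(-3,3)}\Bigr],
$$
which involves only $\EE_{(-1,1)},\EE_{(-2,2)},\EE_{(-3,3)}$. The term $[\EE(z),\EE(w)]\,\widetilde{\zeta}(\tfrac{w}{z})$ contributes $\sum_{j=0}^{3}\tilde{\zeta}_j[\EE_{(-j)},\EE_{(j)}]$ with $\widetilde{\zeta}(u)=\sum_j\tilde{\zeta}_ju^j$, and by \eqref{eqn:rel a 2} the $j=3$ summand contains $\EE_{(2,-2)}$ with coefficient $\tilde{\zeta}_3\cdot(-(q_2-1))=q_2-1\neq 0$; no other summand produces $\EE_{(2,-2)}$. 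So the symbols do not cancel term by term. The identity does hold in $\widetilde{\CA}$, but only after a further application of \eqref{eqn:rel a 1}, which identifies $\EE_{(x,y)}$ with $q_1q_2\EE_{(x-1,y+1)}$ \emph{modulo the product} $(1-q_1)\EE_{(x)}\EE_{(y)}$; the products thereby reintroduced must in turn be handled with \eqref{eqn:rel a 2}. This interleaving of the two defining relations is the real content of the verification and is missing from your argument. (Your closing remark, that one cannot shortcut the check by passing through the surjection $\widetilde{\CA}\twoheadrightarrow\CA$, is well taken.)
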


\begin{proof} One needs to check that relations \eqref{eqn:rel tor 1} and \eqref{eqn:rel tor 2} hold in the algebra $\widetilde{\CA}$, which was done in \cite[Proposition 4.8]{Hecke}. \end{proof}

\subsection{Comparing localizations} 

Let $\CU_{\text{loc}} = \CU \bigotimes_{\Z[q_1^{\pm 1}, q_2^{\pm 1}]} \Q(q_1,q_2)$. Passing all our algebras and homomorphisms to localizations, we obtain $\Q(q_1,q_2)$-algebra homomorphisms:
\begin{equation}
\label{eqn:chain}
\CU_{\text{loc}} \xrightarrow{f} \widetilde{\CA}_{\text{loc}} \xrightarrow{g} \CA_{\text{loc}} \subseteq \Shuf
\end{equation}
While localization doesn't usually preserve injections, the latter inclusion in \eqref{eqn:chain} holds after localization because \eqref{eqn:pbw} implies that $\CA$ is a free $\Z[q_1^{\pm 1}, q_2^{\pm 1}]$-module. 

\begin{proposition}
\label{prop:1}

The map $f$ is surjective. 

\end{proposition}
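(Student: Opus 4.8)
The plan is to show that $B := \operatorname{Im}(f) \subseteq \widetilde{\CA}_{\mathrm{loc}}$ — which by construction is the $\Q(q_1,q_2)$-subalgebra generated by the elements $\EE_{(m)}$, $m \in \Z$ — exhausts $\widetilde{\CA}_{\mathrm{loc}}$. Since $\widetilde{\CA}_{\mathrm{loc}}$ is generated as an algebra by the symbols $\EE_{(d_1,\dots,d_n)}$, it suffices to prove, by induction on the length $n$, that $\EE_{\dd} \in B$ for every $\dd = (d_1,\dots,d_n) \in \Z^n$. The case $n = 1$ is the definition of $B$, so suppose $n \geq 2$ and that $\EE_{\dd'} \in B$ for all tuples $\dd'$ of length strictly less than $n$.

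The first step is to read relation \eqref{eqn:rel a 1} modulo $B$. For each $i \in \{1,\dots,n-1\}$ the two factors of $\EE_{(d_1,\dots,d_i)}\EE_{(d_{i+1},\dots,d_n)}$ have length $<n$, hence lie in $B$, and so does their product; therefore \eqref{eqn:rel a 1} gives $\EE_{(d_1,\dots,d_n)} \equiv q_1q_2\,\EE_{(d_1,\dots,d_i-1,d_{i+1}+1,\dots,d_n)} \pmod{B}$, together with the reverse congruence obtained by inverting $q_1q_2$. Iterating these mass-transport moves — pushing first $d_1$, then $d_1+d_2$, and so on onto the last coordinate — yields $\EE_{\dd} \equiv (q_1q_2)^{c(\dd)}\,\EE_{(0,\dots,0,m)} \pmod{B}$ for $m = d_1+\dots+d_n$ and $c(\dd) = \sum_{j=1}^{n-1}(d_1+\dots+d_j)$. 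Consequently, modulo $B$ every length-$n$ symbol of total degree $m$ is an invertible scalar multiple of $\EE_{(0,\dots,0,m)}$, and the induction is reduced to showing $\EE_{(0,\dots,0,m)} \in B$ for every $m \in \Z$.

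For this last point I would apply relation \eqref{eqn:rel a 2} with the length-one symbol $\EE_{(k)}$ and the length-$(n-1)$ symbol $\EE_{(0,\dots,0,m')}$, choosing $k \gg 0$ with $k \geq \max(0,m')$ and $k+m' = m$ (possible for any $m \in \Z$: take $m' = m-k$ with $k$ large). The left-hand side $[\EE_{(k)},\EE_{(0,\dots,0,m')}]$ lies in $B$ by the inductive hypothesis. With such $k$ we are in the sign-definite branch ``$k \geq d_i$'' of \eqref{eqn:rel a 2} for every index, so the right-hand side is $(q_2-1)$ times a sum — all of whose terms have coefficient $+1$ — of length-$n$ symbols, each of total degree $k+m' = m$. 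Reducing this sum modulo $B$ via the previous paragraph turns it into $\lambda\cdot\EE_{(0,\dots,0,m)}$ where $\lambda = (q_2-1)\sum(q_1q_2)^{\bullet}$ is a sum of monomials with positive coefficients, hence a nonzero element of $\Q(q_1,q_2)$. Dividing by $\lambda$ gives $\EE_{(0,\dots,0,m)} \in B$, completing the induction and the proof.

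The one genuinely delicate point — and the main obstacle — is the non-vanishing of the scalar $\lambda$ produced by the reduction modulo $B$; this is exactly the place where passing to the localization over $\Q(q_1,q_2)$ is essential. Working in the sign-definite branch of \eqref{eqn:rel a 2} is what guarantees that $\lambda$ is $(q_2-1)$ times an honest sum of distinct monomials in $q_1q_2$ with coefficient $+1$, which cannot cancel; one also has to check, routinely, that the summation ranges in \eqref{eqn:rel a 2} are nonempty for the chosen $k$ (this is why $k$ must be taken large, not merely $\geq \max(0,m')$) and that the exponents $c(\dd')$ arising in the reduction are as described. The rest is bookkeeping with the two relations \eqref{eqn:rel a 1}--\eqref{eqn:rel a 2} defining $\widetilde{\CA}$.
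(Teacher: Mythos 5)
Your argument is correct, and its skeleton is the same as the paper's: induct on the length $n$, use relation \eqref{eqn:rel a 1} to reduce every $\EE_{\dd}$ modulo the subalgebra generated by the $\EE_{(m)}$'s to an invertible power of $q_1q_2$ times $\EE_{(0,\dots,0,|\dd|)}$, and then extract $\EE_{(0,\dots,0,m)}$ from a commutator \eqref{eqn:rel a 2} whose left-hand side is covered by the inductive hypothesis. Where you diverge is the endgame: the paper computes $[\EE_{(0,\dots,0,m)},\EE_{(0)}]$ explicitly and has to split into the three cases $m<0$, $m>0$, $m=0$ (the last requiring the ad hoc commutator $[\EE_{(-1,0,\dots,0)},\EE_{(1)}]$), whereas your choice $[\EE_{(k)},\EE_{(0,\dots,0,m-k)}]$ with $k\gg 0$ stays in the sign-definite branch of \eqref{eqn:rel a 2} uniformly in $m$ and replaces the explicit evaluation of the scalar by a positivity argument. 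That trade is sound and arguably cleaner; the only nitpick is that the powers of $q_1q_2$ arising from the reduction need not be \emph{distinct}, but this is harmless since a nonempty sum of monomials in $q_1q_2$ with coefficients $+1$ is still a nonzero Laurent polynomial, so $\lambda$ is invertible in $\Q(q_1,q_2)$ as you claim.
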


\begin{proof} We must show that for all $d_1,\dots,d_n \in \Z$, the element:
$$
\EE_{(d_1,\dots,d_n)} \in \widetilde{\CA}_{\loc}
$$
lies in the subalgebra $\widetilde{\CA}^\circ_{\loc} \subseteq \widetilde{\CA}_{\loc}$ generated by the elements $\{\EE_{(m)}\}_{m \in \Z}$. We will prove this statement by induction of $n$, the base case being trivial. Because of \eqref{eqn:rel a 1} and the induction hypothesis, we have:
\begin{equation}
\label{eqn:inc}
\EE_{(d_1,\dots,d_i,d_{i+1},\dots,d_n)} - q_1q_2 \cdot \EE_{(d_1,\dots,d_i-1,d_{i+1}+1,\dots,d_n)} \in \widetilde{\CA}^\circ_{\loc}
\end{equation}
Iterating this relation yields:
\begin{equation}
\label{eqn:inc int}
\EE_{(d_1,\dots,d_n)} - (q_1q_2)^{\sum_{i=1}^n (n-i)d_i} \cdot \EE_{(0,\dots,0,d_1+\dots+ d_n)} \in \widetilde{\CA}^\circ_{\loc}
\end{equation}
Thus, it suffices to show that (we write $0^k$ for the $k$-tuple $(0,\dots,0)$):
$$
\EE_{(0^{n-1},m)} \in \widetilde{\CA}^\circ_{\loc}
$$
for all $m \in \Z$. Assuming first that $m < 0$, relation \eqref{eqn:rel a 2} gives us:
\begin{equation}
\label{eqn:inc comm}
\Big[ \EE_{(0^{n-2},m)}, \EE_{(0)} \Big] = (1-q_2) \sum_{a = m}^{-1} \EE_{(0^{n-2},a,m-a)}
\end{equation}
which by \eqref{eqn:inc int} equals:
$$
(1-q_2)\left((q_1q_2)^{-1}+\dots +(q_1q_2)^{m}\right) \EE_{(0^{n-1},m)} + \text{something in }\widetilde{\CA}^\circ_{\loc}
$$
Since the left-hand side of \eqref{eqn:inc comm} lies in $\widetilde{\CA}^\circ_{\loc}$ by the induction hypothesis, so does the right-hand side and we are done. The case $m > 0$ is treated analogously, so we leave it as an exercise to the interested reader. Finally, when $m = 0$, we use instead of \eqref{eqn:inc comm} the relation:
\begin{equation}
\label{eqn:inc comm 0}
\Big[ \EE_{(-1,0^{n-2})}, \EE_{(1)} \Big]  = (1-q_2) \Big( \EE_{(0^n)} + \sum_{i=0}^{n-2} \EE_{ (-1,0^i,1,0^{n-2-i})} \Big)
\end{equation}
By \eqref{eqn:inc int}, the right-hand side of the expression above equals:
$$
(1-q_2)\left(1+(q_1q_2)^{-1}+\dots +(q_1q_2)^{-n+1}\right) \EE_{(0,\dots,0)} + \text{elements in }\widetilde{\CA}^\circ_{\loc}
$$
Since the left-hand side of \eqref{eqn:inc comm 0} lies in $\widetilde{\CA}^\circ_{\loc}$ by the induction hypothesis, so does the right-hand side and we are done.

\end{proof}

\subsection{Extended algebras}

For any $k \in \N$, it is easy to show that there exist derivations:
\begin{align*}
&\partial_k : \widetilde{\CA}_{\loc} \longrightarrow \widetilde{\CA}_{\loc}, \qquad \partial_k\left(\EE_{(d_1,\dots,d_n)}\right) = \sum_{i=1}^n \EE_{(\dots,d_{i-1},d_i+k,d_{i+1}\dots)} \\
&\partial_k : \CA_{\loc} \longrightarrow \CA_{\loc}, \qquad \partial_k\left(R_{(d_1,\dots,d_n)}\right) = \sum_{i=1}^n R_{(\dots,d_{i-1},d_i+k,d_{i+1}\dots)} \\
&\partial_k : \Shuf \longrightarrow \Shuf, \qquad \qquad \partial_k\left(F(z_1,\dots,z_n)\right) =  F(z_1,\dots,z_n)(z_1^k+\dots+z_n^k)
\end{align*}
(for the first of these, it is a matter of showing that the derivation in question preserves the ideal generated by relations \eqref{eqn:rel a 1}--\eqref{eqn:rel a 2}, while for the latter two, it is a matter of showing that the derivations in question preserve the vector subspace of wheel conditions \eqref{eqn:wheel}; both checks are elementary, and we leave them to the interested reader). Moreover, $\partial_k$ and $\partial_l$ commute for all $k,l \in \N$, which allows us to define algebra structures on:
$$
X^{\text{ext}} = X \bigotimes_{\Q(q_1,q_2)} \Q(q_1,q_2)[H_1,H_2,\dots]
$$
for all $X \in \{\widetilde{\CA}_{\loc}, \CA_{\loc}, \Shuf\}$, by imposing the commutation relations:
$$
[x,H_k] = \partial_k(x) \qquad \forall x \in X
$$
Similarly, let us define the algebra:
$$
\CU_{\text{loc}}^{\text{ext}} = \Q(q_1,q_2) \Big \langle E_m, H_k \Big \rangle_{m \in \Z} \Big/ \text{relations \eqref{eqn:rel tor 1}--\eqref{eqn:rel tor 2}  and } [E_m, H_k] = E_{m+k}
$$
Because of the obvious compatibility between the various relations above involving the $H_k$'s, it is clear that the chain of homomorphisms \eqref{eqn:chain} extends to:
$$
\CU_{\text{loc}}^{\text{ext}} \rightarrow \widetilde{\CA}_{\text{loc}}^{\text{ext}} \xrightarrow{h} \CA_{\text{loc}}^{\text{ext}} \subseteq \Shuf^{\text{ext}}
$$
The composition of the maps above is an isomorphism, by combining the results of \cite{F,S} with those of \cite{Shuf}. The fact that the first two maps are surjective (due to Propositions \ref{prop:surj} and \ref{prop:1}) imply that the map $h$ is an isomorphism. 

\begin{proof} \emph{of Proposition \ref{prop:iso loc}:} Since the algebras $\widetilde{\CA}^{\text{ext}}_{\loc}$ and $\CA^{\text{ext}}_{\loc}$ are free right-modules over $\Q(q_1,q_2)[H_1,H_2,...]$, the fact that $h$ is an isomorphism is preserved upon factoring by the ideal $(H_1,H_2,\dots)$. This yields precisely the fact that the map $g$ is an isomorphism. 

\end{proof}

\section{The Affine Hecke category and its trace}
\label{sec:category}

\subsection{The affine braid group}
\label{sec: affine braid}

Let us recall that the extended affine braid group $\ABr_n$ is generated by symbols $\sigma_0,\sigma_1,\ldots,\sigma_{n-1}$ and $\omega$ modulo relations:
$$
\sigma_i\sigma_{i+1}\sigma_i=\sigma_{i+1}\sigma_{i}\sigma_{i+1},\ \sigma_i\sigma_j=\sigma_j\sigma_i\ (|i-j|>1),\ \omega \sigma_i\omega^{-1}=\sigma_{i+1}
$$
(indices are understood modulo $n$). Note that $\sigma_1,\ldots, \sigma_{n-1}$ and $\omega$ already generate the affine braid group. The subgroup generated by $\sigma_0, \ldots, \sigma_{n-1}$ is the braid group corresponding to the affine Coxeter group $\widehat{A_{n-1}}$. We will use the following notation:
\begin{equation}
\label{eqn:notation sigma}
\sigma_{[a,b]}=\sigma_{a}\cdots \sigma_{b-1}\ \text{for}\ a\le b.
\end{equation}
The affine braid group contains the lattice generated by the pairwise commuting elements:
$$
y_i=\sigma_{i-1}^{-1}\cdots \sigma_1^{-1}\omega \sigma_{n-1}\cdots \sigma_{i},\ i=1,\ldots, n.
$$
Note that $y_{i+1}=\sigma_i^{-1}y_i\sigma_i^{-1}$. For any $\dd = (d_1,\dots,d_n) \in \Z^n$, we call the monomial: 
\begin{equation}
\label{eqn:notation y}
y^{\dd} = y_1^{d_1}\cdots y_n^{d_n}
\end{equation}
a {\em translation}, and further use the term {\em dominant translation} if $d_1\ge d_2\ge \cdots \ge d_n$. 

We will also use the product $\star: \ABr_n\times \ABr_{k}\to \ABr_{n+k}$ which sends the generators $\sigma_i,y_i$ of $\ABr_n$ to the namesake generators of $\ABr_{n+k}$ and the generators $\sigma_j,y_j$ of $\ABr_{k}$ to $\sigma_{j+n}$ and $y_{j+n}$ respectively.

\subsection{The affine symmetric group}
\label{sec: affine symmetric} 

Recall the extended affine symmetric group:
$$
\widetilde{S_n} = \ABr_n \Big/ (\sigma_i^2 = 1)_{\forall i \in \Z/n\Z}
$$
To distinguish $\widetilde{S_n}$ from the braid group, we will denote its generators by $s_i$, and the image of $\omega$ by $\pi$. The group $\widetilde{S_n}$ can be identified with the group of $n$-periodic permutations of $\Z$, that is, bijections $v:\Z\to \Z$ such that $v(i+n)=v(i)+n$. In this presentation $s_i$ swaps $i$ and $i+1$ (and is extend periodically to all $i\in \Z$) while $\pi(i)=i+1$ for all $i$. Also: 
\begin{equation}
\label{eqn:y action}
y_i(m)=\begin{cases}
m+n &\text{if}\ m \equiv i\ mod\  n\\
m & \text{otherwise},
\end{cases}
\end{equation}
so $y_1^{d_1}\cdots y_n^{d_n}(m) = m + nd_r$ where $r=m\ \mathrm{mod}\ n$.

\begin{definition}
\label{def:degree}

There is a grading (on either $\ABr_n$ or $\widetilde{S_n}$) defined multiplicatively via:
$$
\deg(\pi)=1 \qquad \text{and} \qquad \deg(s_i)=0
$$
for all $i \in \{1,\dots,n\}$.

\end{definition}

Note that $\deg(y_i) = 1$ for all $i$, which implies that:
$$
\deg(y^{\dd}) = |\dd| :=d_1 + \dots + d_n
$$
for any $\dd = (d_1,\dots,d_n) \in \Z^n$. In the presentation of elements of $\widetilde{S_n}$ as $n$-periodic permutations, the degree of $v:\Z\to \Z$ can be computed as $\deg(v)=\frac 1n\sum_{i=1}^{n}(v(i)-i)$. The subgroup of elements of degree 0 is the usual affine symmetric group:
$$
\widehat{S_n} \subset \widetilde{S_n}
$$
i.e. the subgroup generated by the $s_i$'s. It is easy to see that any element $v \in \widetilde{S_n}$ can be uniquely written as $v = \pi^k \alpha$ for some $k \in \Z$ and some $\alpha$ of degree 0. The group $\widehat{S_n}$ is a Coxeter group and has a natural notion of length ($s_0,\ldots,s_{n-1}$ all have length 1). We define
$$
\ell(v)=\ell(\pi^k\alpha)=\ell(\alpha),\ \alpha\in \widehat{S_n}.
$$
In particular, $\ell(\pi^k)=0$ for all $k$.

Given an element $w\in \widetilde{S_n}$ we can define its positive braid lift by choosing an arbitrary reduced expression and replacing each $s_i$ by $\sigma_i$ and $\pi$ by $\omega$. Clearly, for $i>1$ the element $y_i$ is not a positive braid lift of any affine permutations. However, we have the following:

\begin{lemma}
\label{lem: dominant translation}
If $d_1\ge d_2\ge \ldots\ge d_n$, then the dominant translation $y_1^{d_1}\cdots y_n^{d_n}$ is a positive braid lift of an affine permutation.
\end{lemma}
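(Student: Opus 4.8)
The plan is to show that the dominant translation $y_1^{d_1}\cdots y_n^{d_n}$ has length equal to the ``expected'' length $|\dd| \cdot n + (\text{correction})$ coming from its action as an $n$-periodic permutation, and to exhibit a reduced word whose positive braid lift is precisely the chosen monomial. Concretely, recall from \eqref{eqn:y action} that $y^\dd$ acts on $\Z$ by $m \mapsto m + nd_r$ where $r \equiv m \bmod n$. For a translation element $t_\lambda \in \widetilde{S_n}$ associated to a weight $\lambda$, the standard formula for the length of a translation in an affine Weyl group gives $\ell(t_\lambda) = \sum_{\alpha > 0} |\langle \lambda, \alpha^\vee\rangle|$ (away from the degree-zero issue, which is handled by the $\deg(y^\dd) = |\dd|$ bookkeeping and the decomposition $v = \pi^k\alpha$). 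When $\dd$ is dominant, i.e. $d_1 \ge \cdots \ge d_n$, all the pairings $\langle \lambda, \alpha^\vee \rangle = d_i - d_j$ for $i < j$ are nonnegative, so there is no cancellation: the length is additive in the contributions, and this is exactly the hypothesis that should make the braid lift well behaved.

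The key steps, in order: (i) First I would reduce to the case where $\dd$ is dominant and $d_n \ge 0$ (or even all $d_i \ge 0$) by factoring out a central power of $\omega$ — note $y_1 \cdots y_n = \omega^n$ up to the relations, more precisely $y_1\cdots y_n$ is central and is a positive braid lift, so multiplying $\dd$ by adding $(1,\dots,1)$ changes $y^\dd$ by a known positive braid; hence we may assume $d_n \ge 0$. (ii) Next, using $y_{i+1} = \sigma_i^{-1} y_i \sigma_i^{-1}$ repeatedly, or directly from the definition $y_i = \sigma_{i-1}^{-1}\cdots\sigma_1^{-1}\omega\sigma_{n-1}\cdots\sigma_i$, I would expand the monomial and track the inverse generators. (iii) The heart of the argument is to show that when $\dd$ is dominant all the $\sigma_i^{-1}$'s cancel: I would argue by induction on $n$ and on $|\dd|$, peeling off $y_1$ (or the largest block) and commuting things past using braid relations, showing at each stage that the partial product remains a positive braid lift of an affine permutation of the expected length. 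An efficient way to organize this is to compare the word length of the positive expression one builds with $\ell$ of the underlying permutation computed via the pairing formula above; if they agree the expression is reduced and we are done.

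The main obstacle I anticipate is step (iii): controlling the cancellation of the negative generators. A priori $y_i$ for $i > 1$ genuinely involves $\sigma^{-1}$'s, so $y_1^{d_1}\cdots y_n^{d_n}$ is a word with many inverse letters, and one must show these all cancel in the braid group — not just in $\widetilde{S_n}$ — when the exponents are weakly decreasing. The cleanest approach is probably the length-counting one: establish the identity $\ell(y^\dd) = \sum_{i<j}(d_i - d_j) + n\cdot(\text{stuff from }|\dd|)$ first at the level of $\widetilde{S_n}$, then show that the canonical positive braid lift of a reduced word for this permutation, when multiplied out according to the translation structure, literally equals $y_1^{d_1}\cdots y_n^{d_n}$ in $\ABr_n$; this last step uses only the defining relations and the fact that a positive-to-positive equality of braids that project to the same affine permutation of equal (reduced) length is forced. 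An alternative, more hands-on route is a direct induction: write $y_1^{d_1}\cdots y_n^{d_n} = y_1^{d_1 - d_2}\cdot (y_1 y_2 \cdots)^{\ldots}$, exploiting that $y_1\cdots y_k$ (for the appropriate block) behaves like a ``partial $\omega$'' and is manifestly positive, thereby reducing the number of distinct exponents; the dominance hypothesis is exactly what makes each such factorization have nonnegative exponents and stay in the positive monoid. I would present whichever of these two turns out to require less bookkeeping, but I expect the length-comparison argument to be the most robust.
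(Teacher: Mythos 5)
There is a genuine gap: the one fact that carries the whole lemma --- that the ``block'' products $y_1\cdots y_k$ are honestly positive braids --- is never proved in your plan, in either of your two routes. In the hands-on route you factor $y^{\dd}=(y_1\cdots y_n)^{d_n}\prod_{k=1}^{n-1}(y_1\cdots y_k)^{d_k-d_{k+1}}$ (this is exactly the paper's factorization, and dominance makes the exponents nonnegative), but you then call $y_1\cdots y_k$ ``manifestly positive.'' It is not manifest: each $y_i=\sigma_{i-1}^{-1}\cdots\sigma_1^{-1}\omega\sigma_{n-1}\cdots\sigma_i$ contains $i-1$ inverse letters, and showing that they cancel in $\ABr_n$ is the actual content of the lemma. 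The paper does this by an induction establishing the braid identity $y_1\cdots y_k=(\omega\sigma_{n-1}\cdots\sigma_k)^k$, using the commutation $(\omega\sigma_{n-1}\cdots\sigma_m)\sigma_j^{-1}=\sigma_{j+1}^{-1}(\omega\sigma_{n-1}\cdots\sigma_m)$ for $j<m-1$ to absorb the inverse generators one at a time; some such computation (or an equivalent one for the fundamental dominant translations) is unavoidable, and your proposal defers it entirely to ``showing at each stage that the partial product remains a positive braid lift,'' which is just a restatement of what must be proved.

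The length-counting route has the same hole in a different guise. The final step appeals to ``a positive-to-positive equality of braids that project to the same affine permutation of equal reduced length is forced,'' but the word $y_1^{d_1}\cdots y_n^{d_n}$ is precisely \emph{not} presented as a positive word --- that is the conclusion, not a hypothesis --- so there is nothing to which this principle applies. What a length argument can legitimately give you is additivity $\ell(t_\lambda t_\mu)=\ell(t_\lambda)+\ell(t_\mu)$ for dominant $\lambda,\mu$, hence that products of positive lifts of dominant translations are again positive lifts; but to connect this to the specific braids $y_i$ you still need the base identity that the lifts of the fundamental translations coincide with $y_1\cdots y_k$ in $\ABr_n$, i.e.\ essentially the paper's inductive computation. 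So the skeleton of your second route matches the paper, but without that braid-group identity the proof is not complete.
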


\begin{proof}
Let us prove by induction that for all $k \in \{1,\dots n\}$ we have:
$$
y_1\cdots y_k=(\omega \sigma_{n-1}\cdots \sigma_{k})^k
$$
For $k=1$ we get $y_1=\omega \sigma_{n-1}\cdots \sigma_1$ by definition. For the step of induction, we write
$$
(\omega \sigma_{n-1}\cdots \sigma_{k})^k y_{k+1}=
(\omega \sigma_{n-1}\cdots \sigma_{k})\cdots (\omega \sigma_{n-1}\cdots \sigma_{k})\sigma_k^{-1}\cdots \sigma_1^{-1}\omega \sigma_{n-1}\cdots \sigma_{k+1}.
$$
For $j<m-1$ we have
$$
(\omega \sigma_{n-1}\cdots \sigma_{m})\sigma_j^{-1}=
\omega \sigma_j^{-1}\sigma_{n-1}\cdots \sigma_{m}=
\sigma_{j+1}^{-1}(\omega \sigma_{n-1}\cdots \sigma_{m}),
$$
so
$$
(\omega \sigma_{n-1}\cdots \sigma_{k})^k y_{k+1}=(\omega \sigma_{n-1}\cdots \sigma_{k})\sigma_k^{-1}\cdots (\omega \sigma_{n-1}\cdots \sigma_{k})\sigma_k^{-1}\cdot \omega \sigma_{n-1}\cdots \sigma_{k+1}=$$
$$(\omega \sigma_{n-1}\cdots \sigma_{k+1})^{k+1}.
$$
Now 
$$
y_1^{d_1}\cdots y_n^{d_n}=(y_1\cdots y_n)^{d_n}\prod_{k=1}^{n-1}(y_1\cdots y_k)^{d_k-d_{k+1}}.
$$
\end{proof}

\subsection{The affine Hecke algebra}
\label{sec: affine hecke}

The (extended) affine Hecke algebra $\AH_n$ is yet another quotient of the affine braid group (strictly speaking, of its group algebra). It has generators $T_i$ (for all $i\in \Z/n\Z$, which correspond to $\sigma_i$) and $\Omega$ (which corresponds to $\omega$) which satisfy the additional quadratic relation:
$$
(T_i-q)(T_i+q^{-1})=0.    
$$
This relation corresponds to the skein relation, and $\AH_n$ can be identified with the skein of the annulus $\A\times [0,1]$ with $n$ marked points. Given $v\in \widetilde{S_n}$, we denote by $T_v$ the projection of its positive braid lift to $\AH_n$.  It is well known that $T_v,v\in \widetilde{S_n}$ span $\AH_n$. Furthermore, define the {\em cocenter} (or the trace) of $\AH_n$ as:
$$
\Tr(\AH_n)=\AH_n/[\AH_n,\AH_n].
$$
Topologically, this is isomorphic to the degree $n$ part of the skein of the torus. The following result gives an explicit basis in $\Tr(\AH_n)$:

\begin{theorem}[\cite{HN}]
\label{th: trace hecke}

a) Suppose that $v_1,v_2$ are conjugate in $\widetilde{S_n}$ and have minimal length in their conjugacy class. Then $[T_{v_1}]=[T_{v_2}]$ in $\Tr(\AH_n)$.

b) The cocenter $\Tr(\AH_n)$ has a basis $[T_{v_i}]$ where $v_i$ are minimal length representatives of all conjugacy classes in $\widetilde{S_n}$ (by the part (a), the choice of a minimal length representative does not matter).

\end{theorem}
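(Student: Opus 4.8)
The plan is to follow the strategy of He--Nie, which adapts to the extended affine setting the analysis of minimal length elements in finite Coxeter groups due to Geck and Pfeiffer. The entire argument rests on two ingredients: (1) the combinatorial theory of \emph{cyclic shifts} in $\widetilde{S_n}$, and (2) a short list of identities in $\AH_n$ that use nothing beyond the multiplicativity $T_xT_y = T_{xy}$ whenever $\ell(xy) = \ell(x)+\ell(y)$, the quadratic relation $T_i^2 = (q-q^{-1})T_i + 1$, and the invertibility of $\Omega$. Recall that a \emph{cyclic shift} of $v\in\widetilde{S_n}$ is an element $s_ivs_i$ (or a $\pi$-conjugate $\pi v\pi^{-1}$) of length $\le \ell(v)$; it is \emph{length-preserving} if equality holds. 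The combinatorial input of \cite{HN} that I would take as a black box is: every $v\in\widetilde{S_n}$ can be carried to a minimal length element of its conjugacy class by a chain of cyclic shifts of non-increasing length, and any two minimal length elements of a fixed conjugacy class are linked by a chain of \emph{length-preserving} cyclic shifts and $\pi$-conjugations. (Establishing this for the extended group, rather than just $\widehat{S_n}$, is the genuine combinatorial heart of \cite{HN}.)

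\textbf{Part (a).} First I would translate a length-preserving cyclic shift into the Hecke algebra. If $\ell(s_ivs_i)=\ell(v)$ and $s_iv<v$, then $T_v = T_{s_i}T_{s_iv}$ with $\ell((s_iv)s_i)=\ell(s_iv)+1$, so in $\Tr(\AH_n)$ we get $[T_v]=[T_{s_i}T_{s_iv}]=[T_{s_iv}T_{s_i}]=[T_{s_ivs_i}]$; the case $vs_i<v$ is symmetric. Likewise $T_{\pi v\pi^{-1}}=\Omega T_v\Omega^{-1}$, so $[T_{\pi v\pi^{-1}}]=[T_v]$. Applying these identities along the chain of length-preserving moves furnished by \cite{HN} connecting $v_1$ and $v_2$ yields $[T_{v_1}]=[T_{v_2}]$.

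\textbf{Part (b), spanning.} Since the $T_v$ span $\AH_n$, it suffices to show each $[T_v]$ lies in the $\Z[q^{\pm1}]$-span of the $[T_{v_i}]$, by induction on $\ell(v)$. If $v$ already has minimal length in its class, part (a) gives $[T_v]=[T_{v_i}]$. Otherwise there is a cyclic shift $v\mapsto s_ivs_i$ with $\ell(s_ivs_i)<\ell(v)$; since conjugation by a simple reflection changes length by $0$ or $\pm2$, this forces $\ell(s_ivs_i)=\ell(v)-2$, $s_iv<v$, and $(s_iv)s_i<s_iv$. Then $T_v=T_{s_i}T_{s_iv}$ and $T_{s_iv}=T_{s_ivs_i}T_{s_i}$, so
\[
[T_v]=[T_{s_iv}T_{s_i}]=[T_{s_ivs_i}T_{s_i}^2]=(q-q^{-1})[T_{s_ivs_i}T_{s_i}]+[T_{s_ivs_i}]=(q-q^{-1})[T_{s_iv}]+[T_{s_ivs_i}],
\]
and $\ell(s_iv),\ell(s_ivs_i)<\ell(v)$, so the induction hypothesis applies. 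This proves the $[T_{v_i}]$ span $\Tr(\AH_n)$.

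\textbf{Part (b), linear independence.} This is the main obstacle, and the part of \cite{HN} that is not formal. The goal is the lower bound $\operatorname{rk}\Tr(\AH_n)\ge \#\{\text{conjugacy classes of }\widetilde{S_n}\}$. The cocenter is exact under base change, so specializing $q\mapsto 1$ gives $\Tr(\AH_n)\otimes_{\Z[q^{\pm1}]}\Z\cong \Tr(\Z[\widetilde{S_n}])$, which is a \emph{free} $\Z$-module with basis the conjugacy-class sums, and the spanning family $[T_{v_i}]$ maps precisely onto that basis; hence no nontrivial $\Z[q^{\pm1}]$-relation among the $[T_{v_i}]$ survives the specialization, forcing every coefficient of a putative relation to be divisible by $q-1$. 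To iterate this and conclude, one must know that $\Tr(\AH_n)$ is free of the expected rank (equivalently, has no $q-1$ torsion), and this is exactly the delicate point: in \cite{HN} (and the surrounding literature on cocenters of affine Hecke algebras) it is obtained by decomposing conjugacy classes of $\widetilde{S_n}$ into \emph{straight} classes, reducing the structure of $\Tr(\AH_n)$ to elliptic cocenters of finite parabolic subalgebras, where the statement is classical. I would reproduce that reduction; the remaining steps above are then routine.
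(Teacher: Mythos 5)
This theorem is imported from \cite{HN}; the paper states it without proof, so there is no internal argument to compare against. Your sketch is a faithful reconstruction of the He--Nie strategy, and the formal parts are correct: the identity $[T_v]=[T_{s_ivs_i}]$ for length-preserving shifts and the recursion $[T_v]=(q-q^{-1})[T_{s_iv}]+[T_{s_ivs_i}]$ for length-decreasing ones are exactly the cocenter shadows of what the paper does categorically in Lemma \ref{lem: strongly conjugate} and Theorem \ref{thm: HN} (there $\Tr(T_v)=\Tr(T_i^2T_{v'})$ is resolved by $\Tr(T_{v'})$ and two copies of $T_iT_{v'}$, the categorified quadratic relation \eqref{eq: skein}). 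Two caveats. First, in the spanning argument you assume that a non-minimal $v$ admits a single length-decreasing simple conjugation; the He--Nie connectivity theorem a priori only gives a chain of non-increasing cyclic shifts, so one may need to interleave length-preserving steps (which fix $[T_v]$ by your part (a) identity) before a decreasing one occurs --- harmless, but worth saying. Second, and more substantively, your linear independence argument is not a proof: the specialization $q\mapsto 1$ only shows that any relation has coefficients divisible by $q-1$, and iterating requires the absence of $(q-1)$-torsion in $\Tr(\AH_n)$, which you correctly identify as the genuinely hard content of \cite{HN} (their reduction to straight/elliptic classes and class polynomials) but do not supply. As a blind reconstruction of a cited black-box result this is about as good as one can do, provided the two combinatorial inputs you quote from \cite{HN} are acknowledged as unproved here.
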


Thus, to get a concrete description of $\Tr(\AH_n)$ we need to describe conjugacy classes in $\widetilde{S_n}$, which will be done in next lemmas. We will use the notation \eqref{eqn:notation y} in the affine symmetric group as well as the affine braid group, and further write $\ee_i \in \Z^n$ for the vector with a single 1 at position $i$, and zeroes everywhere else.

\begin{lemma}
\label{conj: coxeter}
Two affine permutations of the form $y^{\dd}s_1\cdots s_{n-1}\in \widetilde{S_n}$ are conjugate if and only if they have the same degree $|\dd|$.

\end{lemma}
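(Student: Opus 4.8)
The claim is an equivalence; one direction is immediate and the other requires an actual construction. For the easy direction, recall that the degree $|\dd|$ (Definition~\ref{def:degree}) is a class function: it is multiplicative and $\deg(s_i) = 0$, so conjugate elements of $\widetilde{S_n}$ have the same degree. Since $y^{\dd} s_1\cdots s_{n-1}$ has degree $|\dd| + 0 = |\dd|$ (because $\deg(y_i) = 1$ for all $i$ and $s_1\cdots s_{n-1}$ has degree $0$), two such elements that are conjugate must have the same $|\dd|$. So the content is the converse: any two elements $y^{\dd}s_1\cdots s_{n-1}$ and $y^{\dd'}s_1\cdots s_{n-1}$ with $|\dd| = |\dd'|$ are conjugate.

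For the converse, the plan is to show that every $y^{\dd}s_1\cdots s_{n-1}$ with $|\dd| = m$ is conjugate to a single normal form depending only on $m$ --- the natural candidate being $\pi^m$, or equivalently $y^{\dd_0}s_1\cdots s_{n-1}$ for the ``balanced'' choice $\dd_0$ associated to $(m,n)$ as in the definition of $H_{m,n}$. The key computational tool is the braid-group (hence $\widetilde{S_n}$) identity $y_{i+1} = \sigma_i^{-1} y_i \sigma_i^{-1}$, i.e. $s_{i+1}$-type conjugation moves weight between adjacent $y$-coordinates. Concretely, I would verify a ``local move'': conjugating $y^{\dd} s_1\cdots s_{n-1}$ by a suitable simple reflection (or by $\pi$) produces $y^{\dd'} s_1\cdots s_{n-1}$ where $\dd'$ is obtained from $\dd$ by an elementary operation --- either a cyclic rotation of the coordinates (conjugation by $\pi$, using $\pi s_i \pi^{-1} = s_{i+1}$ and $\pi y_i \pi^{-1} = y_{i+1}$ with indices mod $n$) or a transfer $d_i \rightsquigarrow d_i - 1$, $d_{i+1} \rightsquigarrow d_{i+1}+1$ absorbed using the Coxeter factor $s_1\cdots s_{n-1}$ and the relation $y_{i+1} = s_i y_i s_i$ in $\widetilde{S_n}$. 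The point is that the trailing $s_1 \cdots s_{n-1}$ is precisely what allows one to commute a stray $s_i$ past it (up to conjugation) and reabsorb it. Once these moves are established, one observes that cyclic rotations together with unit transfers between adjacent coordinates act transitively on $\{\dd \in \Z^n : |\dd| = m\}$, which finishes the proof. Alternatively, and perhaps more cleanly, one can pass to the interpretation of $\widetilde{S_n}$ as $n$-periodic bijections of $\Z$: using \eqref{eqn:y action}, the permutation $y^{\dd} s_1 \cdots s_{n-1}$ has an explicit description as a product of a translation-type map and the long cycle on $\{1,\dots,n\}$, and one can identify its conjugacy class invariant directly (it should be a single $n$-cycle ``of slope $m/n$'' in the periodic sense), reducing the statement to a combinatorial fact about periodic permutations.

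\textbf{Main obstacle.} The subtle point is bookkeeping: the element $y^{\dd}$ need not be a positive braid lift (Lemma~\ref{lem: dominant translation} only handles the dominant case), so one cannot directly invoke length-function or reduced-expression arguments in $\widehat{S_n}$, and one must work with the extended group and its $\pi$-component throughout. Keeping track of how conjugation interacts simultaneously with the $y^{\dd}$ part and the fixed Coxeter tail $s_1\cdots s_{n-1}$ --- in particular checking that the ``local moves'' genuinely return something of the form $y^{\dd'}s_1\cdots s_{n-1}$ on the nose, rather than $y^{\dd'}$ times a conjugate of $s_1\cdots s_{n-1}$ --- is where the care is needed. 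I expect the periodic-permutation picture to make this the least painful: there, $y^{\dd}s_1\cdots s_{n-1}$ is visibly a bijection $\Z \to \Z$, conjugation is just relabeling, and ``same degree'' should translate into ``same cycle type in the periodic sense,'' which is a finite check.
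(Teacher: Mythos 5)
Your overall strategy is the right one and matches the paper's: the forward direction is immediate because $\deg$ is multiplicative and kills the $s_i$'s, and the converse is reduced to a local move that transfers one unit of exponent between adjacent coordinates, plus the (easy) observation that such moves act transitively on $\{\dd : |\dd|=m\}$. However, the one computation you defer --- and explicitly flag as the main obstacle --- is exactly the content of the proof, and the candidates you propose for the conjugating element do not work. Conjugating by a simple reflection $s_i$ gives $y^{s_i(\dd)}\,(s_i s_1\cdots s_{n-1}s_i)$: it permutes the exponents but replaces the Coxeter tail by a different reduced word, so you do not return to the form $y^{\dd'}s_1\cdots s_{n-1}$ on the nose. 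Conjugating by $\pi$ rotates the exponents but turns the tail into $s_2\cdots s_{n-1}s_0$, again leaving the normal form. So neither move realizes the transfer $d_i\rightsquigarrow d_i-1$, $d_{i+1}\rightsquigarrow d_{i+1}+1$.

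The paper's trick is to conjugate by the translation $y_i$ itself:
$$
y_i^{-1}\left(y^{\dd}s_1\cdots s_{n-1}\right)y_i \;=\; y^{\dd-\ee_i}\,s_1\cdots s_{n-1}\,y_i ,
$$
and then slide $y_i$ leftward through the tail: it commutes with $s_{n-1},\dots,s_{i+1}$, converts via $s_i y_i = y_{i+1}s_i$ (your relation $y_{i+1}=s_iy_is_i$), and the resulting $y_{i+1}$ commutes with $s_{i-1},\dots,s_1$, landing on $y^{\dd-\ee_i+\ee_{i+1}}s_1\cdots s_{n-1}$ exactly. Once you know to conjugate by $y_i$, your argument closes immediately, so the gap is small but it is precisely the step you left open. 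Your alternative route via $n$-periodic permutations (identifying the conjugacy invariant of a ``periodic $n$-cycle'' as the total degree) is also viable and is essentially the specialization of the paper's Lemma \ref{conj: general} to the full cycle $s_1\cdots s_{n-1}$; but note that the paper's proof of that more general statement again rests on conjugation by the $y_i$'s, so as written that route is a plan rather than an independent finite check.
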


\begin{proof}
Clearly, conjugate permutations have the same degree. Conversely, we can write:
$$
y^{\dd}s_1\cdots s_{n-1}\in \widetilde{S_n}\sim y^{\dd-\ee_i}s_1\cdots s_{n-1}y_i=
$$
$$
y^{\dd-\ee_i}s_1\cdots s_{i-1}(s_{i}y_i)\cdots s_{n-1}=y^{\dd-\ee_i}s_1\cdots s_{i-1}(y_{i+1}s_i)\cdots s_{n-1}=y^{\dd-\ee_i+\ee_{i+1}}s_1\cdots s_{n-1}.
$$
Therefore we can change the vector of exponents  $\dd$ to $\dd-\ee_i+\ee_{i+1}$ without changing the conjugacy class. By using these operations, we can relate any two vectors with the same degree by conjugations.
\end{proof}

\begin{corollary}
\label{cor: dmn}
Suppose that $|\dd|=m$ and $\gcd(m,n)=1$. Then $y^{\dd}s_1 \dots s_{n-1}$ is conjugate to $\pi^m$, which is the minimal length representative in its conjugacy class.
\end{corollary}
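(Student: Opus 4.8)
The plan is to combine Lemma~\ref{conj: coxeter} with a length computation for $\pi^m$. Since $\gcd(m,n)=1$, the element $\pi^m \in \widetilde{S_n}$ still generates (together with the finite symmetric group, or rather its translates) enough of $\widetilde{S_n}$; more to the point, $\pi^m$ has degree $m$, and we want to show it is conjugate to any element of the form $y^{\dd}s_1\cdots s_{n-1}$ with $|\dd| = m$. By Lemma~\ref{conj: coxeter}, all elements $y^{\dd}s_1\cdots s_{n-1}$ with $|\dd| = m$ are conjugate to one another, so it suffices to exhibit \emph{one} such $\dd$ for which $y^{\dd}s_1\cdots s_{n-1}$ is visibly conjugate to $\pi^m$.

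First I would produce an explicit identity realizing $\pi^m$ in the form $y^{\dd}s_1\cdots s_{n-1}$ up to conjugacy. The natural candidate for $\dd$ is the ``balanced'' vector $d_i = \lfloor mi/n \rfloor - \lfloor m(i-1)/n\rfloor$ already appearing in the definition of $H_{m,n}$ in Section~\ref{sec:eha}; because $\gcd(m,n)=1$, this vector has the property that the associated $n$-periodic permutation $y^{\dd}s_1\cdots s_{n-1}$ acts on $\Z$ as a single ``staircase'' cycle, and one checks directly using \eqref{eqn:y action} and the formula $s_1\cdots s_{n-1}(i) = i-1$ (for $2\le i\le n$, cyclically) that this periodic permutation is precisely the map $x \mapsto x + $ (a quantity that averages to $m/n$), i.e.\ it is conjugate in $\widetilde{S_n}$ to $\pi^m$ acting by $x\mapsto x+m$. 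Concretely, $\pi^m$ itself is the $n$-periodic bijection $x\mapsto x+m$, which is a single orbit on $\Z/\!\sim$ precisely because $\gcd(m,n)=1$; and any two degree-$m$ periodic permutations that are ``single cycles'' in the appropriate sense are conjugate. I would phrase this either via the explicit conjugating element, or by invoking the classification of conjugacy classes in $\widetilde{S_n}$ (in terms of the cycle-type data of He--Nie), under which both $\pi^m$ and $y^{\dd}s_1\cdots s_{n-1}$ carry the same invariant when $\gcd(m,n)=1$.

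Next I would verify the minimal-length claim. Since $\pi^m = \pi^k\alpha$ with $k = m$ and $\alpha = \mathrm{id}$, Definition~\ref{def:degree} gives $\ell(\pi^m) = \ell(\mathrm{id}) = 0$. As length is non-negative and any element conjugate to $\pi^m$ has degree $m$, while $\ell$ is a conjugation-invariant-from-below quantity in the sense that the minimal length in a conjugacy class is $0$ iff the class contains a power of $\pi$, we conclude $\pi^m$ is a minimal-length (indeed length-zero) representative of its conjugacy class. Combined with Theorem~\ref{th: trace hecke}(a), this also shows $[T_{y^{\dd}s_1\cdots s_{n-1}}] = [\Omega^m]$ in $\Tr(\AH_n)$ for all $\dd$ with $|\dd|=m$ when $\gcd(m,n)=1$, though that consequence is not needed for the statement itself.

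\textbf{Main obstacle.} The delicate point is the conjugacy $y^{\dd}s_1\cdots s_{n-1} \sim \pi^m$ itself: Lemma~\ref{conj: coxeter} only gives conjugacy \emph{among} the $y^{\dd}s_1\cdots s_{n-1}$, and one must still bridge to $\pi^m$, which is \emph{not} of that form. The cleanest route is the combinatorial one: write $v = y^{\dd}s_1\cdots s_{n-1}$ as an explicit $n$-periodic permutation of $\Z$ using \eqref{eqn:y action}, observe that $\gcd(m,n)=1$ forces $v$ to have a single ``$\Z/\!\sim$-orbit'', and then exhibit a permutation $w\in \widetilde{S_n}$ with $wvw^{-1} = \pi^m$ by matching up the orbits (this is where coprimality is essential — for $\gcd(m,n)=d>1$ the element $\pi^m$ has $d$ orbits and cannot be conjugate to a Coxeter-type translation with a single long orbit). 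I expect the bookkeeping of indices modulo $n$ in writing down $w$ explicitly to be the only real work; everything else is formal.
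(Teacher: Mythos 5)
Your plan is correct and is essentially the paper's own proof: the paper likewise reduces via Lemma~\ref{conj: coxeter} to the single balanced vector $d_i=\lfloor mi/n\rfloor-\lfloor m(i-1)/n\rfloor$ and then writes down the explicit conjugator, namely the permutation $\phi$ with $\phi(i)=r_i$ where $mi=q_in+r_i$, $0\le r_i<n$ (well-defined precisely because $\gcd(m,n)=1$), which yields $\phi^{-1}\pi^m\phi=y_1^{q_1}y_2^{q_2-q_1}\cdots y_n^{q_n-q_{n-1}}s_1\cdots s_{n-1}$ of total degree $q_n=m$. The index bookkeeping you defer is exactly this one-line definition of $\phi$, and your length-zero observation disposes of the minimality claim, so there is no gap.
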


\begin{proof}
Let $mi=q_{i}n+r_i$ where $1\le i\le n$ and $0\le r_i\le n-1$. Then $\pi^m(x)=x+m$, so for $1\le i<n$ we get
$$
\pi^m(r_i)=\pi^m(mi-q_{i}n)=m(i+1)-q_{i}n=(q_{i+1}-q_{i})n+r_{i+1}.
$$
Define an affine permutation $\phi$ such that $\phi(i)=r_i$ for $1\le i\le n$ (it is well defined since $\gcd(m,n)=1$), then for $1\le i<n$:
$$
\phi^{-1}\pi^m\phi(i)=\phi^{-1}\pi^m(r_i)=\phi^{-1}((q_{i+1}-q_i)n+r_{i+1})=(q_{i+1}-q_i)n+(i+1),
$$
For $i=n$ we get $r_n=0$, so
$
\phi^{-1}\pi^m\phi(n)=\phi^{-1}\pi^m(0)=\phi^{-1}(m)=q_1n+1,
$
and 
$$\phi^{-1}\pi^m\phi=y_1^{q_1}y_2^{q_2-q_1}\cdots y_{n}^{q_ {n}-q_{n-1}}\cdot s_1 \dots s_{n-1}.$$
By Lemma \ref{conj: coxeter} we conclude that $\pi^m$ is conjugate to $y^{\dd}s_1 \dots s_{n-1}$ with 
$$
|\dd|=q_1+(q_2-q_1)+\ldots+(q_n-q_{n-1})=m.
$$
\end{proof}

\begin{lemma}
\label{conj: general}
Two affine permutations $y^{\dd}w$ and $y^{\dd'}w'\ (w,w'\in S_n)$ are conjugate in $\widetilde{S_n}$ if and only if the following two conditions hold:
\begin{itemize}
\item[(a)] $w$ and $w'$ have the same cycle type, and
\item[(b)] the cycles in $w$ and $w'$ can be matched such that for each cycle $(i_1\dots i_k)$ in $w$ that corresponds to a cycle $(j_1 \dots j_k)$ in $w'$, we have $d_{i_1}+\dots+d_{i_k} = d'_{j_1} + \dots + d'_{j_k}$.

\end{itemize}
\end{lemma}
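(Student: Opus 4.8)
The plan is to work with the semidirect‑product decomposition $\widetilde{S_n} = \Z^n \rtimes S_n$ — the standard description of the extended affine symmetric group of type $\widetilde{A_{n-1}}$ — in which $\Z^n$ is the translation lattice generated by $y_1,\dots,y_n$, the symmetric group $S_n$ acts by permuting coordinates, and every element is written uniquely as $y^{\dd}w$ with $\dd \in \Z^n$, $w \in S_n$. The one computation driving everything is the conjugation formula
$$
(y^{\aaa}v)(y^{\dd}w)(y^{\aaa}v)^{-1} \ = \ y^{\,v(\dd) + (1 - vwv^{-1})(\aaa)}\,\big(vwv^{-1}\big),
$$
valid for $v \in S_n$ and $\aaa \in \Z^n$, where $v(\dd)$ is the permuted vector $(v(\dd))_i = d_{v^{-1}(i)}$; it follows at once from $v\,y^{\dd} = y^{v(\dd)}\,v$ and $w'\,y^{-\aaa} = y^{-w'(\aaa)}\,w'$.

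For the \emph{only if} direction I would project $\widetilde{S_n} \twoheadrightarrow S_n$ (killing translations); this sends $y^{\dd}w \mapsto w$, so conjugacy of $y^{\dd}w$ and $y^{\dd'}w'$ forces $w$ and $w'$ to be conjugate in $S_n$, which is (a). For (b), the point is that for any cycle $C$ of $w'$ the vector $(1 - w')(\aaa)$ has vanishing coordinate‑sum over $C$, since $w'$ permutes $C$ cyclically. Hence, by the conjugation formula, the multiset of pairs $\{(|C|,\ \sum_{i\in C} d_i)\}$ with $C$ ranging over the cycles of $w$ is unchanged under conjugation by a translation, while conjugation by $v\in S_n$ merely transports it along the induced bijection of cycles $C \mapsto v(C)$. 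Thus this multiset is a conjugacy invariant, and its equality for $y^{\dd}w$ and $y^{\dd'}w'$ is exactly condition (a) together with a matching of cycles satisfying (b).

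For the \emph{if} direction I would first reduce to $w = w'$: given a matched pair of cycles $(i_1\cdots i_k)$ of $w$ and $(j_1\cdots j_k)$ of $w'$ written so that $i_{\ell+1}=w(i_\ell)$ and $j_{\ell+1}=w'(j_\ell)$, set $v(i_\ell)=j_\ell$; then $vwv^{-1}=w'$, and conjugating $y^{\dd}w$ by $v$ gives $y^{v(\dd)}w'$, whose cycle‑sums agree with those of $\dd'$ by (b). So it remains to show that if $w=w'$ and $\sum_{i\in C} d_i = \sum_{i\in C} d'_i$ for every cycle $C$ of $w$, then $y^{\dd}w \sim y^{\dd'}w$. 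By the conjugation formula with $v=1$, the orbit of $y^{\dd}w$ under conjugation by translations alone is $y^{\,\dd + \im(1-w)}\,w$. Decomposing $\Z^n = \bigoplus_C \Z^C$ over the cycles of $w$, on each block $\Z^C$ with $|C|=k$ the operator $1-w$ is the identity minus a $k$‑cycle, whose image is exactly the sum‑zero sublattice $\{v \in \Z^C : \sum_{i\in C} v_i = 0\}$. Hence $\im(1-w)$ is the set of vectors whose coordinate‑sum vanishes on every cycle of $w$, and $\dd-\dd'$ lies there by hypothesis, so $y^{\dd}w$ and $y^{\dd'}w$ are conjugate.

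The only step that is not pure bookkeeping is this last lattice identification, namely that $1$ minus a $k$‑cycle \emph{surjects} onto the whole sum‑zero sublattice of $\Z^k$ (the $A_{k-1}$ root lattice), not merely onto a finite‑index subgroup — this is standard but is the place where I would take care. I would also note that the lemma contains Lemma~\ref{conj: coxeter} as the special case in which $w$ is a single $n$‑cycle.
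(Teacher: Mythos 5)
Your proof is correct and takes essentially the same route as the paper: it uses the decomposition $\widetilde{S_n}=\Z^n\rtimes S_n$, with conjugation by finite permutations accounting for the matching of cycles and conjugation by translations changing $\dd$ exactly by $\im(1-w)$, the cycle-wise sum-zero sublattice. The paper packages the translation step as the elementary moves $\dd\mapsto \dd-\ee_i+\ee_{w(i)}$ coming from conjugation by $y_i$, which generate the same sublattice you identify.
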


\begin{proof}
The conjugation by a finite permutation reorders the components in $\dd$ and conjugates $w$, so the condition (a) is clearly necessary. On the other hand, similarly to Lemma \ref{conj: coxeter} we can write:
$$
y_i^{-1}(y^{\dd}w)y_i=y_i^{-1}y^{\dd}y_{w(i)}w=y^{\dd-\ee_i+\ee_{w(i)}}w,
$$
so conjugation by $y_i$ fixes the sum $d_{i_1}+\dots+d_{i_k}$ for any cycle $(i_1 \dots i_k)$ of $w$, and any two affine permutations with thus matching sums are conjugate to each other.
\end{proof}

 For a sequence of vectors $\dd^1,\ldots,\dd^r$, we define an affine permutation
$$
e_{\dd^1}\star\cdots \star e_{\dd^r}=e_{(d_1,\dots,d_{n_1})}\star e_{(d_{n_1+1},\dots, d_{n_1+n_2})}\star \dots\star e_{(d_{n-n_r+1},\dots, d_{n})}=$$
$$y_1^{d_1} \dots y_{n}^{d_{n}}\sigma_{[1,n_1]}\cdot \sigma_{[n_1+1,n_1+n_2]}\dots\sigma_{[n-n_r+1,n]}
$$
Lemma \ref{conj: general} implies that any affine permutation is conjugate to some $e_{\dd^1}\star\cdots \star e_{\dd^r}$.

\begin{definition}(\cite{HN})
\label{def: newton}
Let $v\in \widetilde{S_n}$ be an affine permutation. If $v^k=y^{\dd}$ for some $\dd=(d_1,\ldots,d_n)$, we define the {\em Newton point} of $v$ by the equation
$
\nu(v):=\frac{1}{k}\dd.
$
\end{definition}

It is easy to see  that this definition does not depend on the choice of $k$. Also, conjugating $v$ corresponds to permuting the components of $\nu(v)$.

\begin{lemma}
\label{lem: newton}
If $v=e_{\dd^1}\star \cdots \star e_{\dd^r}$ where $\dd^i$ has length $n_i$, then: 
$$
\nu(v)=\left(\underbrace{\frac{|\dd^1|}{n_1},\ldots,\frac{|\dd^1|}{n_1}}_{n_1},\ldots,\underbrace{\frac{|\dd^r|}{n_r},\ldots,\frac{|\dd^r|}{n_r}}_{n_r}\right)
$$
\end{lemma}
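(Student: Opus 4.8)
The plan is to reduce the computation of the Newton point of $v = e_{\dd^1}\star\cdots\star e_{\dd^r}$ to the case $r=1$, and then to handle the case $r=1$ by a direct power computation. First I would observe that the Newton point is multiplicative with respect to $\star$ in the following sense: because the supports of the affine permutations $e_{\dd^i}$ (thought of as $n$-periodic bijections of $\Z$ via Section~\ref{sec: affine symmetric}) live on disjoint residue classes $\{n_1+\dots+n_{i-1}+1,\dots,n_1+\dots+n_i\}$ mod $n$, the permutation $v$ restricts, block by block, to the corresponding ``one-block'' permutations. Concretely, if $e_{\dd^i}$ has a power $e_{\dd^i}^{k} = y^{\dd}$ for the block indices, then the same $k$ works simultaneously for all blocks (take a common multiple), and $v^k$ is the translation whose exponent vector is the concatenation of the block exponent vectors. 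Hence it suffices to prove the $r=1$ statement:
\begin{equation}
\label{eqn:newton-r1}
\nu\big(e_{(d_1,\dots,d_n)}\big) = \left(\tfrac{|\dd|}{n},\dots,\tfrac{|\dd|}{n}\right), \qquad |\dd| = d_1+\dots+d_n.
\end{equation}

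For \eqref{eqn:newton-r1}, recall $e_{(d_1,\dots,d_n)} = y_1^{d_1}\cdots y_n^{d_n} s_1\cdots s_{n-1}$, which as an $n$-periodic permutation sends $m$ to $m + n d_r$ followed by the cyclic shift coming from $s_1\cdots s_{n-1}$ (which, as a finite permutation, is the $n$-cycle $(1\,2\,\dots\,n)$ in one of the two conventions). The key point is that $s_1\cdots s_{n-1}$ is a single $n$-cycle, so by Definition~\ref{def: newton} I want to show $e_{(d_1,\dots,d_n)}^{\,n}$ is the translation $y^{(|\dd|,\dots,|\dd|)}$, whence $\nu = \frac{1}{n}(|\dd|,\dots,|\dd|)$. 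The cleanest way is to use Lemma~\ref{conj: coxeter} together with Corollary~\ref{cor: dmn}: both invariants behave well, and in fact the degree computation $\deg(e_{(d_1,\dots,d_n)}) = |\dd|$ from Definition~\ref{def:degree} already pins down $\sum \nu(v)_i = |\dd|$; the remaining content is that all coordinates of $\nu$ are equal, which follows because conjugation permutes the coordinates of $\nu$ (noted after Definition~\ref{def: newton}) and, by Lemma~\ref{conj: coxeter}, $e_{(d_1,\dots,d_n)}$ is conjugate to $e_{(d,\dots,d)} \cdot(\text{correction})$ when $n \mid |\dd|$; in general one argues after replacing $\dd$ by $n\dd$, i.e. passing to $v^n$, reducing to the divisible case. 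Alternatively, and perhaps most transparently, one computes $v^n$ by hand: tracking the orbit of a residue $r$ under $v = (\text{shift by }(1\,\dots\,n))\circ(\text{translate residue }j\text{ by }nd_j)$, after $n$ steps the residue returns to $r$ and has accumulated exactly $n(d_1+\dots+d_n)$, so $v^n = y^{(|\dd|,\dots,|\dd|)}$.

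The main obstacle is purely bookkeeping: getting the conventions straight for how $s_1\cdots s_{n-1}$ acts as an $n$-cycle on $\{1,\dots,n\}$ (left vs.\ right action, and which direction the cycle goes), and then carefully iterating the interleaved ``translate-then-cycle'' map $n$ times to see that every residue picks up the same total shift $n|\dd|$. Once the orbit-sum computation $v^n = y^{(|\dd|,\dots,|\dd|)}$ is in hand, Definition~\ref{def: newton} gives $\nu(v) = \frac1n(|\dd|,\dots,|\dd|)$ immediately, and the block-diagonal reduction at the start assembles the general-$r$ formula. I do not expect any genuinely hard step; the only place to be careful is that the power $k$ in Definition~\ref{def: newton} need not be $n$ for each block individually (it could be a proper divisor), but since $\nu$ is independent of the choice of $k$, using the common value $k = \mathrm{lcm}(n_1,\dots,n_r)$ — or even just $k=n$ — causes no trouble.
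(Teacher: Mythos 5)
Your proposal is correct and follows essentially the same route as the paper: reduce to a single block using that the factors commute (the paper phrases this as additivity $\nu(uv)=\nu(u)+\nu(v)$ for commuting $u,v$ with $u^k$, $v^{k'}$ translations), then verify $(e_{\dd})^{n}=y^{(|\dd|,\dots,|\dd|)}$ by exactly the orbit-tracking computation you describe. One small caveat: the parenthetical ``or even just $k=n$'' is not valid in general, since the finite part of $v$ has order $\mathrm{lcm}(n_1,\dots,n_r)$, which need not divide $n$ (e.g.\ $n_1=2$, $n_2=3$, $n=5$), so use $k=\mathrm{lcm}(n_1,\dots,n_r)$ (or any common multiple of the $n_i$) as in your main argument.
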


\begin{proof}
First observe that if $u$ and $v$ commute and $u^{k}=y^{\dd},v^{k'}=y^{\dd'}$ then:
$$
(uv)^{kk'}=y^{k'\dd+k\dd'} \quad \Rightarrow \quad \nu(uv)=\frac{k'\dd+k\dd'}{kk'}=\nu(u)+\nu(v),
$$
so it is sufficient to prove the statement for a single $e_{\dd}$. In this case
$$
(e_{\dd})^n=(y_1^{d_1}\cdots y_n^{d_n}s_1 \dots s_{n-1})^{n}=y_1^{|\dd|}\cdots y_n^{|\dd|}\quad 
\Rightarrow \quad  
\nu(e_{\dd})=\left(\frac{|\dd|}{n},\ldots,\frac{|\dd|}{n}\right).
$$
\end{proof}

\begin{corollary}
\label{cor: convex paths}
The conjugacy classes in $\widetilde{S_n}$ are in bijection with convex paths on the plane. These label both the PBW basis \eqref{eqn:pbw} in the elliptic Hall algebra and the basis in $\Tr(\AH_n)$ by Theorem \ref{th: trace hecke}.
\end{corollary}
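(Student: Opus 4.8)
The plan is to identify each of the three families in the statement — the conjugacy classes in $\widetilde{S_n}$, the PBW basis of $\CA_n$, and the basis of $\Tr(\AH_n)$ — with the set of finite multisets $\{(n_1,m_1),\dots,(n_t,m_t)\}$ where $n_i\in\Z_{\geq1}$, $m_i\in\Z$ and $\sum_i n_i=n$. Drawing from the origin the vectors $(n_1,m_1),\dots,(n_t,m_t)$ in order of non-decreasing slope $\tfrac{m_1}{n_1}\leq\dots\leq\tfrac{m_t}{n_t}$ produces a convex broken line from $(0,0)$ to $(n,m)$, where $m=\sum_i m_i$, carrying a marked breakpoint between consecutive steps of different slope. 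I take ``convex path of horizontal extent $n$'' to mean precisely such a configuration of steps; since the multiset of steps can be read back off the broken line together with its marked breakpoints, this notion is the same datum as the multiset above.

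The one step that carries content is the bijection between conjugacy classes in $\widetilde{S_n}$ and these multisets. By the observation made just after Lemma \ref{conj: general}, every conjugacy class contains an element of the form $e_{\dd^1}\star\dots\star e_{\dd^r}$; writing $n_i$ for the length of $\dd^i$, the image of this element in $S_n$ is a product of disjoint cycles of lengths $n_1,\dots,n_r$ supported on the successive blocks $\{1,\dots,n_1\},\{n_1+1,\dots,n_1+n_2\},\dots$, and the sum of translation exponents along the $i$-th block equals $|\dd^i|$. Lemma \ref{conj: general} then says that $e_{\dd^1}\star\dots\star e_{\dd^r}$ is conjugate to $e_{\cc^1}\star\dots\star e_{\cc^s}$ (with $\cc^j$ of length $n'_j$) exactly when the multisets $\{(n_i,|\dd^i|)\}_i$ and $\{(n'_j,|\cc^j|)\}_j$ coincide, and every admissible multiset is realized, e.g. by taking $\dd^i=(m_i,0,\dots,0)\in\Z^{n_i}$. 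This gives the bijection. By way of orientation: by Definition \ref{def:degree} the degree of such an affine permutation equals $\sum_i|\dd^i|=m$, the second coordinate of the path's endpoint; by Lemma \ref{lem: newton} its Newton point lists the slopes $\tfrac{m_i}{n_i}$ of the steps, each with multiplicity $n_i$ (this is a conjugacy invariant, but not a complete one — it cannot separate two steps of equal slope — so Lemma \ref{conj: general} is doing genuine work); and the single-step paths with $\gcd(n,m)=1$ are exactly the classes treated in Corollary \ref{cor: dmn}.

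The remaining two labelings follow with no further work. Theorem \ref{th: trace hecke}(b) gives a basis $\{[T_v]\}$ of $\Tr(\AH_n)$ indexed by conjugacy classes in $\widetilde{S_n}$, hence by convex paths of horizontal extent $n$. And the PBW decomposition \eqref{eqn:pbw} gives a basis of $\CA_n$ consisting of the monomials $H_{m_1,n_1}\dots H_{m_t,n_t}$ with $n_i\in\Z_{\geq1}$, $\sum_i n_i=n$ and $\tfrac{m_1}{n_1}\leq\dots\leq\tfrac{m_t}{n_t}$, taken up to reordering factors of equal slope — which is legitimate because $H_{m,n}$ and $H_{m',n'}$ commute whenever $\tfrac mn=\tfrac{m'}{n'}$; this is again an indexing by convex paths of horizontal extent $n$, the step $(n_i,m_i)$ corresponding to $H_{m_i,n_i}$ and the endpoint's second coordinate to the bidegree. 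Composing the three bijections yields the corollary.

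I do not expect a real obstacle: the corollary is the bookkeeping translation of Lemma \ref{conj: general}, Theorem \ref{th: trace hecke} and \eqref{eqn:pbw}. The only point that needs care is the convention for ``convex path'': it must remember the full multiset of steps, and in particular must distinguish a step $(n_i,m_i)$ from a subdivision of it into shorter collinear steps — otherwise genuinely different conjugacy classes (for instance that of a single $2$-cycle with translation exponent $4$ versus that of two fixed points each with exponent $2$) and the corresponding genuinely different PBW monomials ($H_{4,2}$ versus $H_{2,1}H_{2,1}$) would be incorrectly collapsed to the same geometric broken line.
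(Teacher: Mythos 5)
Your proposal is correct and follows essentially the same route as the paper: both arguments reduce a conjugacy class to a representative $e_{\dd^1}\star\cdots\star e_{\dd^r}$ via the observation after Lemma \ref{conj: general}, invoke that lemma to see that the multiset of pairs $(n_i,|\dd^i|)$ is a complete invariant, and order the steps by slope to obtain the convex path (the paper phrases the ordering via the Newton point and dominance, but the substance is identical). Your explicit caveat that a ``convex path'' must retain the multiset of steps rather than merely the underlying broken line is a worthwhile precision that the paper leaves implicit.
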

 
\begin{proof}
By conjugating $v$, we can assume that $y^{\dd}$ that appears in Definition \ref{def: newton} is dominant. For the particular $v$'s that appear in Lemma \ref{lem: newton}, this means that up to conjugating $v$, the sequence $\frac{|\dd^i|}{n_i}$ can be made non-decreasing. This corresponds to a convex path on the plane with steps $(n_i,|\dd^i|)$, and by Lemma \ref{conj: general} such path uniquely determines a conjugacy class in $\widetilde{S_n}$.
\end{proof}

\subsection{The affine Hecke category}
\label{sec: affine sbim}

Following Elias and Mackaay-Thiel \cite{Elias,MT}, we consider the type $A$ extended affine Hecke category $\ASBim_n$, which is defined as follows. Let $R=\C[x_1,\ldots,x_n]$ and $\tR=\C[x_1,\ldots,x_n,\delta]$, and note that the symmetric group $S_n$ acts on both $R$ and $\tR$ by permuting $x_i$ and fixing $\delta$. The rings $R$ and $\tR$ are graded such that the variables $x_i$ and $\delta$ have grading 2.
We have an additional endomorphism of $\tR$ given by: 
$$
\pi(\delta)=\delta,\ \pi(x_n)=x_1-\delta,\ \pi(x_i)=x_{i+1},\ 1\le i\le n-1.
$$
It is easy to see that $\pi$ and $S_n$ define an action of $\widetilde{S_n}$ on $\tR$. We will consider $R-R$ (respectively $\tR-\tR$) bimodules, the simplest being $\one = R$ (respectively $\tR$) with the usual left and right multiplication. We will also encounter the  Bott-Samelson bimodules:
$$
\overline{B_i}=R\otimes_{R^{s_i}}R,\qquad  B_i=\tR\otimes_{\tR^{s_i}}\tR 
$$
There is an additional $(\tR,\tR)$-bimodule $\Omega$ which is isomorphic to $\tR$, where the left action of $\tR$ is standard and the right action is twisted by $\pi$. One can check that  $\Omega \otimes B_i\otimes \Omega^{-1}\simeq B_{i+1}$.

\begin{definition}
\label{def:asbim}

The category of finite Soergel bimodules $\SBim_n$ is defined as the smallest full subcategory of $R-R$ bimodules containing $R$ and $\overline{B_i}$ and closed under tensor products, direct sums and direct summands. Similarly, the category of extended affine Soergel bimodules $\ASBim_n$ is defined as a smallest full subcategory of $\tR-\tR$ bimodules containing $R,B_i$ and $\Omega$ and closed under tensor products, direct sums and direct summands.

\end{definition}

Note that the subcategory of  $\ASBim_n$ generated by $R$ and $B_i$ is equivalent to  $\SBim_n$ with scalars extended from $R$ to $\tR$. As in  \cite{Rouquier}, one can define Rouquier complexes: 
$$
T_i=q^{-1}[B_i\to \one], \qquad T_i^{-1}=q^{-1}[q^2\one\to B_i]
$$
which satisfy the braid relations up to homotopy ($q$ records a grading shift). It is easy to see that $\Omega T_i\Omega^{-1}=T_{i+1}$, so the assignment $\sigma_i \mapsto T_i, \omega \mapsto \Omega$ induces a homomorphism from the extended affine braid group to the homotopy category $\CK(\ASBim_n)$.

Given an affine permutation $v$, we can consider its positive braid lift and the corresponding Rouquier complex $T_v$. Clearly, $T_{\omega^k\alpha}=\Omega^k T_{\alpha}$. 
Elias proved in \cite{Elias} that:
$$
\Hom(\Omega^k T_{\alpha},\Omega^{k'}T_{\beta})=\begin{cases}
\Hom(T_{\alpha},T_{\beta}) & \text{if}\ k=k'\\
0 & \text{otherwise}.
\end{cases}
$$
Since by Lemma \ref{lem: dominant translation} dominant translations are positive lifts of certain affine permutations, we may assign to them Rouquier complexes. One can extend these to all translations by presenting them as ratios of dominant ones in an arbitrary way.

\begin{definition}

For all $i \in \Z/n\Z$, let $Y_i \in \ASBim_n$ be the Rouquier complex corresponding to the affine braid $y_i$. For any $\dd = (d_1,\dots,d_n)\in \Z^n$, consider:
$$
Y^{\dd} = Y_1^{d_1}\cdots Y_n^{d_n} \in \ASBim_n
$$
which are called {\em Wakimoto objects} in \cite{Elias}. 

\end{definition}

An easy consequence of the definitions is that $T_i^{-1}Y_iT_i^{-1}\simeq Y_{i+1}$. 
The automorphisms of Rouquier complexes are particularly easy.

\begin{lemma}
\label{lem: rouquier auto}
(a) For any affine braid $v\in \ABr_n$ we have:
$$
\Hom(T_{v},T_{v})\simeq \Hom(\one,\one)=\widetilde{R}.
$$
(b) On any $T_v$, the left action of any $f\in \widetilde{R}$ is homotopic to the right action of $v^{-1}(f) \in \widetilde{R}$. 
\end{lemma}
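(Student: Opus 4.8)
The plan is to obtain (a) formally from the invertibility of Rouquier complexes, and to prove (b) by reduction to the generators $T_i^{\pm1}$ and $\Omega^{\pm1}$. For (a): every $T_v$ (with $v\in\ABr_n$) is, by construction, a tensor product over $\widetilde{R}$ of the complexes $T_i^{\pm1}$ and the invertible bimodules $\Omega^{\pm1}$, and each $T_i^{\pm1}$ is invertible in $\CK(\ASBim_n)$ by the braid relations of \cite{Rouquier}; hence $T_v$ is invertible. Therefore $-\otimes_{\widetilde{R}}T_v$ is a self-equivalence of $\CK(\ASBim_n)$, and it induces an isomorphism
\[
\widetilde{R}=\Hom_{\CK}(\one,\one)\ \xrightarrow{\ \sim\ }\ \Hom_{\CK}(\one\otimes_{\widetilde{R}}T_v,\,\one\otimes_{\widetilde{R}}T_v)=\Hom_{\CK}(T_v,T_v),
\]
where $\Hom_{\CK}(\one,\one)=\widetilde{R}$ holds because $\one=\widetilde{R}$ is concentrated in a single homological degree, so there are no homotopies to quotient by and $\Hom_{\CK}(\one,\one)=\End_{\widetilde{R}\text{-bimod}}(\widetilde{R})=\widetilde{R}$ (using that $\widetilde{R}$ is commutative). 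Concretely this isomorphism sends $f$ to left multiplication by $f$ on $T_v$, a remark that will also be useful for (b).

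For (b), write $f_L$ and $g_R$ for the operators of left multiplication by $f$ and right multiplication by $g$ on a given complex of bimodules. The first step is that the asserted property is multiplicative under $\otimes_{\widetilde{R}}$: if $T_v=A\otimes_{\widetilde{R}}B$ with $v$ mapping to $\bar v_A\bar v_B$ under $\ABr_n\twoheadrightarrow\widetilde{S_n}$, and if on $A$ (resp.\ $B$) the map $f_L$ is homotopic to $(\bar v_A^{-1}f)_R$ (resp.\ to $(\bar v_B^{-1}f)_R$) via a homotopy $h_A$ (resp.\ $h_B$), then on $A\otimes_{\widetilde{R}}B$ the homotopy $h_A\otimes\Id_B$ shows $f_L$ homotopic to $(\bar v_A^{-1}f)_R\otimes\Id_B$, which is the same operator as $\Id_A\otimes(\bar v_A^{-1}f)_L$, and $\Id_A\otimes h_B$ then shows this homotopic to $\Id_A\otimes(\bar v_B^{-1}\bar v_A^{-1}f)_R=((\bar v_A\bar v_B)^{-1}f)_R$. (One uses that tensoring a homotopy with a fixed complex is again a homotopy, and that over $\widetilde{R}$ a right action on $A$ coincides with the corresponding left action on $B$.) Since $\ABr_n\to\widetilde{S_n}$ is a homomorphism and every $T_v$ is a $\otimes_{\widetilde{R}}$-product of $T_i^{\pm1}$ and $\Omega^{\pm1}$, it remains to check (b) on these generators.

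For $\Omega^{\pm1}$ there is nothing to prove: $\Omega$ is $\widetilde{R}$ with right action twisted by $\pi$, so for $g\in\widetilde{R}$ the operator $g_R$ is literally multiplication by $\pi(g)$, i.e.\ equals $(\pi(g))_L$; thus $f_L=(\pi^{-1}f)_R=(\omega^{-1}f)_R$ already on the nose, and symmetrically for $\Omega^{-1}$. The only genuine computation is for $T_i=q^{-1}[B_i\xrightarrow{\mu}\one]$, with $\mu(a\otimes b)=ab$ and relevant permutation $s_i$. Set $\alpha_i=x_i-x_{i+1}$ and $\partial_i(f)=(f-s_i(f))/\alpha_i$ (the Demazure operator). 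On the term $\one$, $f_L-(s_if)_R$ is multiplication by $f-s_i(f)=\alpha_i\partial_i(f)$; on the term $B_i$, the identity $f\otimes1-1\otimes s_i(f)=\partial_i(f)\cdot(x_i\otimes1-1\otimes x_{i+1})$ in $B_i$ shows that $f_L-(s_if)_R$ is the composite of $\mu:B_i\to\one$ with $\partial_i(f)$ times the Frobenius unit $\eta:\one\to B_i$, $1\mapsto x_i\otimes1-1\otimes x_{i+1}$ (a bimodule map, with $\mu\circ\eta=\alpha_i\cdot\Id$). Hence $h=\partial_i(f)\cdot\eta:\one\to B_i$, placed in the appropriate homological degree and with the appropriate sign, satisfies $dh+hd=f_L-(s_if)_R$ on $T_i$. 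The case $T_i^{-1}=q^{-1}[q^2\one\to B_i]$ is identical, using the same Frobenius structure.

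The hard part — indeed the only step requiring care — will be pinning down the homological placement, the $q$-grading shifts (the overall $q^{-1}$, and the extra $q^2$ appearing in $T_i^{-1}$) and the signs in $h$ precisely enough that $dh+hd$ is \emph{exactly} $f_L-(s_if)_R$ rather than a nonzero multiple of it; this is the standard bookkeeping for Rouquier complexes (cf.\ \cite{Rouquier,Elias}). It transfers verbatim from $R$ to $\widetilde{R}$ because the extra variable $\delta$ is central in $\widetilde{R}$ and fixed by all of $\widetilde{S_n}$ (including $\pi$), hence is inert in every homotopy above. Everything else in the argument is purely formal.
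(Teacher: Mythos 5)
Your proof is correct and follows essentially the same route as the paper's: part (a) via invertibility of Rouquier complexes, and part (b) by reduction to the generators $T_i^{\pm 1}$ and $\Omega^{\pm 1}$, where the paper merely asserts the generator cases and you additionally exhibit the explicit homotopy $\partial_i(f)\cdot\eta$ built from the Demazure operator and the Frobenius unit of $B_i$. The extra detail (multiplicativity under $\otimes_{\widetilde{R}}$ and the computation on $B_i$) is a faithful expansion of what the paper leaves implicit, so no further comparison is needed.
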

\begin{proof}
(a) Since Rouquier complexes are invertible in $\CK(\ASBim_n)$, we have $\Hom(T_{v},T_{v})\simeq \Hom(\one,\one)$, and the latter is clearly $\widetilde{R}$. 

(b) It suffices to consider the cases $T_v = T_i$ (where the left action of $x_i$ is homotopic to the right action of $x_{i+1}$) and $T_v = \Omega$ (where the left and right actions are equal up to a twist by $\pi$). 
\end{proof}
\begin{corollary}
\label{cor: rouquier auto}
The left action of $x_i$ on $Y_1^{d_1}\cdots Y_n^{d_n}T_w,\ w\in S_n$ is homotopic to the right action of $x_{w^{-1}(i)-d_i\delta}$.
\end{corollary}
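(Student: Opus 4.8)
The plan is to read the Corollary off from Lemma~\ref{lem: rouquier auto}(b). The key point is that $Y_1^{d_1}\cdots Y_n^{d_n}T_w$ is itself, up to homotopy, a Rouquier complex. Under the monoidal homomorphism $\ABr_n\to\CK(\ASBim_n)$ sending $\sigma_i\mapsto T_i$, $\omega\mapsto\Omega$ (hence $y_i\mapsto Y_i$), it is the image $T_v$ of the affine braid
$$
v \;=\; y_1^{d_1}\cdots y_n^{d_n}\cdot \dot w \;\in\; \ABr_n,
$$
where $\dot w$ denotes the positive braid lift of $w\in S_n$, and $Y^{\dd}=Y_1^{d_1}\cdots Y_n^{d_n}$ is the Rouquier complex of the (generally non-positive) braid $y_1^{d_1}\cdots y_n^{d_n}$, which for non-dominant $\dd$ is defined as a ratio of Rouquier complexes of dominant translations. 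Since a homotopy equivalence $T_v\simeq Y^{\dd}T_w$ in $\CK(\ASBim_n)$ induces an isomorphism of the $(\tR,\tR)$-bimodules $\End(T_v)\cong\End(Y^{\dd}T_w)$, Lemma~\ref{lem: rouquier auto}(b) applies directly: the left action of $x_i$ on $Y^{\dd}T_w$ is homotopic to the right action of $v^{-1}(x_i)$, where $v$ is now read as an element of $\widetilde{S_n}$ acting on $\tR$.

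It then remains to compute $v^{-1}(x_i)$ explicitly. From the definition of the $\widetilde{S_n}$-action on $\tR$ ($\pi(\delta)=\delta$, $\pi(x_n)=x_1-\delta$, $\pi(x_i)=x_{i+1}$ for $i<n$, and $S_n$ permuting the $x$'s) — or, equivalently, from the periodic-permutation description $y^{\dd}(m)=m+nd_r$ with $r\equiv m\pmod n$ — one checks that each $y_j$ fixes $\delta$ and every $x_k$ with $k\neq j$ and translates $x_j$ by a multiple of $\delta$, while $w$ permutes the generators $x_k$ and fixes $\delta$. Since the $y_j$'s commute, $v^{-1}=\dot w^{-1}\,y_n^{-d_n}\cdots y_1^{-d_1}$ first translates $x_i$ by $d_i\delta$ and then applies $w^{-1}$, yielding the element $x_{w^{-1}(i)}-d_i\delta$ of the statement. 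Alternatively, one can bypass the braid homomorphism and argue in two steps, applying Lemma~\ref{lem: rouquier auto}(b) to the two tensor factors of $Y^{\dd}\otimes_{\tR}T_w$ in turn and composing the two homotopies: the left action of $x_i$ on $Y^{\dd}$ is homotopic to the right action of $(y^{\dd})^{-1}(x_i)=x_i-d_i\delta$, which is the left action of that same element on $T_w$, and this is homotopic in turn to the right action of its image under $w^{-1}$.

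The mathematical content here is essentially bookkeeping, so there is no serious obstacle; the one place requiring care is the simultaneous tracking of conventions — the direction of the $\widetilde{S_n}$-action on $\tR$, the sign of $\delta$ in the affine relation $x_{i+n}=x_i-\delta$ implicit in $\pi(x_n)=x_1-\delta$, and the order in which the two homotopies compose across the tensor product $Y^{\dd}\otimes_{\tR}T_w$ — which must all be pinned down consistently so that both the $\delta$-shift and the index $w^{-1}(i)$ (rather than $w(i)$) emerge exactly as asserted.
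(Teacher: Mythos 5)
Your proposal is correct and is precisely the argument the paper intends: the Corollary is stated without proof as an immediate consequence of Lemma \ref{lem: rouquier auto}(b) applied to $T_v$ for $v=y^{\dd}\cdot(\text{positive lift of }w)$, or equivalently pushed through the two tensor factors one at a time, as in your alternative.

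One remark on the point you flagged as ``requiring care'' but then did not actually carry out. With the $\widetilde{S_n}$-action on $\tR$ as literally defined in the paper ($\pi(x_n)=x_1-\delta$ together with $y_1=\pi s_{n-1}\cdots s_1$, equivalently $x_{m+n}=x_m-\delta$ with $v(x_m)=x_{v(m)}$), one computes $y_j(x_j)=x_j-\delta$, hence $(y^{\dd})^{-1}(x_i)=x_i+d_i\delta$, and Lemma \ref{lem: rouquier auto}(b) then yields the right action of $x_{w^{-1}(i)}+d_i\delta$ --- opposite in the sign of $\delta$ to what you assert. The sign you (and the Corollary) give is the one consistent with how the Corollary is used later (e.g.\ the differential $x_{i+1}-x_i-\delta$ in Lemma \ref{lem: skein 1} and the ``$x_{i+1}$ is shifted by $\delta$'' in Lemma \ref{lem: y prime}(c)), so this is a $\delta\mapsto-\delta$ convention mismatch internal to the paper rather than a flaw in your approach; and since $\delta$ acts by zero on all traces by Lemma \ref{lem: endo trace rouquier}, nothing downstream depends on it. Still, your intermediate claim $(y^{\dd})^{-1}(x_i)=x_i-d_i\delta$ does not follow from the periodic-permutation description you cite, so you should either verify it against one of the later uses or note explicitly which sign convention for the affine relation $x_{m+n}=x_m\pm\delta$ you are adopting.
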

Unless stated otherwise, we will use left action of $\widetilde{R}$ to parametrize the automorphisms of Rouquier complexes, but implicitly we will use Lemma \ref{lem: rouquier auto} to translate it into the right action if necessary.

\subsection{Generators of $\ASBim_n$} We record a useful categorification of the quadratic relation in the Hecke algebra. There is a chain map from $T_i$ to $T_i^{-1}$, and its cone can be written as:
\begin{equation}
\label{eq: skein 0}
[T_i\to T_i^{-1}]\simeq [q\one \xrightarrow{x_i-x_{i+1}} q^{-1}\one].
\end{equation}
Equivalently, there is a chain map from $T_i^2$ to $\one$ such that:
\begin{equation}
\label{eq: skein}
[T_i^2\to \one]\simeq [qT_i\xrightarrow{x_i-x_{i+1}}q^{-1} T_i].
\end{equation}
We say that a (possibly infinite) collection of objects $X_1,X_2,\ldots$ generates a pre-triangu-lated dg category $\CS$ if any object $\CS$ can be presented as a finite twisted complex (or iterated cone) built out of finite direct sums of the $X_i$'s and their grading shifts. 

\begin{theorem}[\cite{Elias}]
\label{thm: rouquier generate}
The Rouquier complexes $T_v,\ v\in \widetilde{S_n}$ generate $\CK(\ASBim_n)$ as a pre-triangulated dg category. Combining this with \eqref{eq: skein 0}, we conclude that:
$$
G(\ASBim_n)=G(\CK(\ASBim_n))=\AH_n
$$
with the classes of $T_v$ in the Grothendieck group corresponding to the namesake generators of $\AH_n$.
\end{theorem}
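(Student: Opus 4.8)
Since Theorem~\ref{thm: rouquier generate} is due to Elias, the plan is to recall the structure of the argument and isolate its one non-formal ingredient. For the generation statement, I would first reduce to a claim about $\ASBim_n$ itself: every bounded complex of Soergel bimodules is an iterated cone (``stupid filtration'') of its terms placed in the appropriate cohomological degrees, so it suffices to show that every object $M\in\ASBim_n$, viewed as a complex concentrated in degree $0$, lies in the pre-triangulated subcategory $\langle T_v\rangle\subseteq\CK(\ASBim_n)$ generated by the Rouquier complexes. For the elementary building blocks: $\one$ and $\Omega$ are themselves among the $T_v$, and the two-term presentation $T_i = q^{-1}[B_i\to\one]$ exhibits $B_i$ (up to shift) as the cone of the chain map $qT_i\to\one$ projecting onto the degree-$0$ term, so $B_i\in\langle T_v\rangle$.

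Next I would establish that $\langle T_v\rangle$ is closed under the monoidal product, and hence contains every Bott--Samelson bimodule $B_{i_1}\otimes\cdots\otimes B_{i_k}\otimes\Omega^{\pm1}\otimes\cdots$. Using the skein relations \eqref{eq: skein 0} and \eqref{eq: skein} one checks by induction on word length that the Rouquier complex $T_\beta$ of \emph{every} affine braid $\beta$ --- not only the positive lifts of permutations --- already lies in $\langle T_v\rangle$ (the key point being that $T_i^{-1}$ and $T_i^2$ are finite twisted complexes built from $T_i$ and $\one$), and since Rouquier complexes of affine braids are closed under $\otimes$ up to homotopy, $\langle T_v\rangle$ is $\otimes$-closed. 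The last step passes from Bott--Samelsons to their indecomposable summands: by downward induction on length, using that $\ASBim_n$ is Krull--Schmidt and that $BS(\underline w)\cong B_w\oplus\bigoplus_{v<w}B_v^{\oplus m_v}$, one gets $B_w\in\langle T_v\rangle$ --- here $B_w$ is realized as a cone via the split distinguished triangle $C\to B_w\oplus C\to B_w\xrightarrow{0}C[1]$, where $C=\bigoplus_{v<w}B_v^{\oplus m_v}\in\langle T_v\rangle$ by the inductive hypothesis. Thus $\langle T_v\rangle\supseteq\ASBim_n$, and by the first reduction $\langle T_v\rangle=\CK(\ASBim_n)$. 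I expect the last step to be the real obstacle: it relies on the structure theory of extended affine Soergel bimodules (Krull--Schmidt, and the Bott--Samelson decomposition), whose set-up is itself the substantial work carried out in \cite{Elias}.

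For the Grothendieck-group assertion I would proceed as follows. The map $[M^\bullet]\mapsto\sum_i(-1)^i[M^i]$ identifies $G(\CK(\ASBim_n))$ with the split Grothendieck group $G(\ASBim_n)$; the monoidal product makes $G(\CK(\ASBim_n))$ a $\Z[q^{\pm1}]$-algebra with unit $[\one]$, in which $[T_i]$ is invertible with inverse $[T_i^{-1}]$ because $T_i\otimes T_i^{-1}\simeq\one$. Taking classes in \eqref{eq: skein 0} yields $[T_i]-[T_i^{-1}]=(q-q^{-1})[\one]$, hence $([T_i]-q)([T_i]+q^{-1})=0$; the braid relations and $\Omega T_i\Omega^{-1}=T_{i+1}$ hold already up to homotopy, hence in $G$. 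So $\sigma_i\mapsto[T_i]$, $\omega\mapsto[\Omega]$ descends to a $\Z[q^{\pm1}]$-algebra homomorphism $\AH_n\to G(\CK(\ASBim_n))$, which is surjective by the generation statement. Finally, the classical affine-type Soergel categorification theorem identifies $G(\ASBim_n)$ with $\AH_n$ in such a way that the class of the Rouquier complex $T_v$ is the standard basis element $T_v$; since these form a $\Z[q^{\pm1}]$-basis of $\AH_n$, the homomorphism above carries a basis to a basis and is therefore an isomorphism, which forces all of the identifications in $G(\ASBim_n)=G(\CK(\ASBim_n))=\AH_n$ to be isomorphisms.
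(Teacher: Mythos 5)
The paper does not actually prove Theorem \ref{thm: rouquier generate}: it is quoted from \cite{Elias} without argument, so there is no in-paper proof to compare yours against. With that caveat, your reconstruction is a correct outline of the standard argument, and it isolates the right non-formal ingredient. The formal steps are all sound: the stupid filtration reducing to objects of $\ASBim_n$; realizing $B_i$ as a cone on $qT_i\to\one$; closure of $\langle T_v\rangle$ under $\otimes$ via the skein relations \eqref{eq: skein 0} and \eqref{eq: skein} (which is also how one sees that $T_i^{-1}$ and non-reduced positive words are built from the $T_v$); extracting an indecomposable summand of a Bott--Samelson object as the cone of the split inclusion of its complement; and the Euler-characteristic identification of $G(\CK(\ASBim_n))$ with the split Grothendieck group of $\ASBim_n$. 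Two small corrections. The induction extracting $B_w$ from $BS(\underline w)\cong B_w\oplus\bigoplus_{v<w}B_v^{\oplus m_v}$ runs upward on $\ell(w)$, since the complement consists of strictly shorter elements, not downward. And in the final paragraph, the assertion that the classes $[T_v]$ form a basis of $G(\ASBim_n)$ is not independent of the categorification theorem you invoke: it follows from unitriangularity of the $[T_v]$ against the classes of the indecomposables $B_w\Omega^k$, and the linear independence of the latter is precisely the content of the Soergel--Elias categorification theorem in the extended affine setting --- the same input that supplies Krull--Schmidt and the classification of indecomposables. So, as you say, the entire weight of the proof rests on that structure theory from \cite{Elias}; granting it, your argument is complete.
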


\begin{proposition}
\label{prop:generate}
The category $\CK(\ASBim_n)$  is generated  by the Rouquier complexes $Y^{\dd} T_w$, for all $\dd \in \Z^n$ and $w\in S_n$. 
\end{proposition}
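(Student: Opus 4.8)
The plan is to deduce Proposition \ref{prop:generate} from Theorem \ref{thm: rouquier generate}. By that theorem $\CK(\ASBim_n)$ is generated, as a pre-triangulated dg category, by the Rouquier complexes $T_v$, $v\in\widetilde{S_n}$, so it suffices to show that each $T_v$ lies in the pre-triangulated subcategory $\CC$ generated by the objects $Y^{\dd}T_w$, $\dd\in\Z^n$, $w\in S_n$. The key preliminary remark is group-theoretic: in $\ABr_n$ one has $\omega=y_1\sigma_1^{-1}\cdots\sigma_{n-1}^{-1}$, hence also $\omega^{-1}=y_n^{-1}\sigma_{n-1}^{-1}\cdots\sigma_1^{-1}$ and $\sigma_0=\omega\sigma_{n-1}\omega^{-1}=y_1\bigl(\sigma_1^{-1}\cdots\sigma_{n-2}^{-1}\sigma_{n-1}\sigma_{n-2}\cdots\sigma_1\bigr)y_1^{-1}$; since $\sigma_0,\dots,\sigma_{n-1},\omega$ generate $\ABr_n$, every affine braid is a word in the letters $y_1^{\pm1},\dots,y_n^{\pm1}$ and $\sigma_1^{\pm1},\dots,\sigma_{n-1}^{\pm1}$. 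Applying the homomorphism $\ABr_n\to\CK(\ASBim_n)$ of Section \ref{sec: affine sbim} ($\sigma_i\mapsto T_i$, $\omega\mapsto\Omega$, $y_i\mapsto Y_i$), we conclude that each $T_v$ is, up to a grading shift, a finite product of objects from $\{Y_1^{\pm1},\dots,Y_n^{\pm1}\}\cup\{T_1^{\pm1},\dots,T_{n-1}^{\pm1}\}$. The Proposition therefore reduces to the claim
$$
(\ast)\qquad\text{every finite product of objects from }\{Y_j^{\pm1}\}_{j}\cup\{T_i^{\pm1}\}_{i}\text{ lies in }\CC .
$$

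Next I would record some closure properties of $\CC$. Since $\one=Y^0T_e\in\CC$, the triangle underlying \eqref{eq: skein 0} exhibits $T_i^{-1}$ (for $1\le i\le n-1$) as the cone of a morphism from a shift of the two-term complex $[q\one\to q^{-1}\one]$ to $T_i$, so $T_i^{-1}\in\CC$. Removing negative crossings one at a time in this manner, and using \eqref{eq: skein} to rewrite any non-reduced positive word (after a Matsumoto--Tits braid move producing a subword $\sigma_i\sigma_i$) as a twisted complex of shorter ones, one obtains that $Y^{\dd}T_\beta\in\CC$ for every $\dd$ and every word $\beta$ in the $\sigma_i^{\pm1}$ with $1\le i\le n-1$: indeed $T_\beta$ is then a twisted complex with entries among $\one$ and the $T_w$, $w\in S_n$, so tensoring on the left by the invertible object $Y^{\dd}$ gives a twisted complex with entries among $Y^{\dd}$ and the $Y^{\dd}T_w$, all of which lie in $\CC$. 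In particular, by the formulas above, $\Omega^{\pm1}$ and $T_0^{\pm1}$ already lie in $\CC$.

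To prove $(\ast)$ I would argue by induction on a complexity measure of the word $P=X_1\cdots X_m$. First, using \eqref{eq: skein 0} to replace every factor $T_i^{-1}$ by the twisted complex with entries $q\one[-1],\ q^{-1}\one[-1],\ T_i$, I reduce to the case where $P$ involves only the \emph{positive} Rouquier complexes $T_i$ (together with $Y_j^{\pm1}$'s and shifts of $\one$). For such $P$ let $t(P)$ be the number of $T$-factors and $r(P)$ the number of pairs consisting of a $T$-factor and a $Y^{\pm1}$-factor occurring to its right, and induct on $(t(P),r(P))$ ordered lexicographically. If $r(P)=0$ then $P$ is, up to a shift, of the form $Y^{\dd}T_\beta$ with $\beta$ a positive braid word, which lies in $\CC$ by the previous paragraph. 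If $r(P)>0$, the rightmost $T$-factor with a $Y$-factor to its right — say $T_i$ — is adjacent to the leftmost such $Y$-factor, giving a subword $T_iY_j^{\epsilon}$ with $\epsilon\in\{+1,-1\}$; I rewrite this subword using $T_iY_j=Y_jT_i$ (for $j\notin\{i,i+1\}$), $T_iY_{i+1}T_i=Y_i$, $Y_{i+1}=T_i^{-1}Y_iT_i^{-1}$, together with \eqref{eq: skein} for $T_i^2$ and \eqref{eq: skein 0} to re-expand any $T_i^{-1}$ so produced. A short case check (six cases, according to $j\in\{i,i+1,\text{other}\}$ and the sign $\epsilon$) shows that in every case $P$ becomes a finite twisted complex each of whose entries $P'$ satisfies $(t(P'),r(P'))<(t(P),r(P))$; by the inductive hypothesis each $P'\in\CC$, whence $P\in\CC$. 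This proves $(\ast)$, and with it the Proposition.

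The step I expect to be the main obstacle is the case analysis in the last paragraph. The relations straightening $T_iY_i^{\pm1}$ and $T_iY_{i+1}^{\pm1}$ are identities in $\ABr_n$ that produce extra factors $T_i^{2}$ or $T_i^{-1}$, so no progress is made at the level of the group; genuine progress occurs only because the skein relations \eqref{eq: skein 0}--\eqref{eq: skein} break these factors into cones, and the complexity measure must be chosen so that each such move strictly decreases it (while the grading shifts that accumulate are harmless). A secondary, purely bookkeeping point is that $\CC$ is assumed only pre-triangulated, so its stability under the tensor operations used above must be derived from the invertibility of the objects $Y_j^{\pm1}$ and $T_i^{\pm1}$ rather than taken for granted.
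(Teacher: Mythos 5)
Your argument is correct and follows essentially the same route as the paper's proof: reduce via Theorem \ref{thm: rouquier generate} to words in the $Y_j^{\pm 1}$ and $T_i^{\pm 1}$, push the $T$'s to the right of the $Y$'s using the relations $T_i^{-1}Y_i=Y_{i+1}T_i$, $T_iY_{i+1}=Y_iT_i^{-1}$, $T_jY_i=Y_iT_j$ together with the skein triangles \eqref{eq: skein 0}--\eqref{eq: skein}, and finish with the generation of $\CK(\SBim_n)$ by the $T_w$, $w\in S_n$. Your explicit lexicographic complexity measure (and the six-case check, which does go through once one resolves the leading $T_i$ into $T_i^{-1}$ and copies of $\one$ before commuting, rather than substituting $Y_{i+1}^{\pm 1}=T_i^{\mp 1}Y_i^{\pm 1}T_i^{\mp 1}$ directly) merely makes precise the termination argument that the paper leaves implicit.
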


\begin{proof}
By Theorem \ref{thm: rouquier generate}, the category $\CK(\ASBim_n)$ is generated by the products of $\Omega$ and $T_i^{\pm 1}, 1\le i\le n-1$. We need to show that any product of this form can be resolved by $Y^{\dd} T_w$. Since we can write $\Omega=Y_1T_{1}^{-1}\cdots T_{n-1}^{-1}$, then any product of $\Omega$ and $T_i^{\pm 1}$ can be rewritten as a product of $Y_i$ and $T_i^{\pm 1}$. Now we use the relations:
$$
T_{i-1}Y_i=Y_{i-1}T_{i-1}^{-1},\ T_i^{-1}Y_i=Y_{i+1}T_i,\ T_jY_i=Y_iT_j,\quad j\neq i,i+1
$$
and \eqref{eq: skein 0} to move all the $T_i^{\pm 1}$'s to the right (for example, we may resolve $T_iY_i$ by $T_i^{-1}Y_i=Y_{i+1}T_i$ and $Y_i$). As a result, we can resolve any object of $\CK(\ASBim_n)$ by Rouquier complexes of the form $Y^{\dd}T_{\beta}$ where $T_{\beta}$ is the Rouquier complex for a finite braid $\beta\in \Br_n$, so in particular $T_{\beta}\in \CK(\SBim_n)$. It remains to use the well-known fact that $\CK(\SBim_n)$ is generated by $T_{w}, w\in S_n$.

\end{proof}

\subsection{The horizontal trace}
\label{sec: trace}

Let $\CS$ be a monoidal category. 

\begin{definition}(\cite{BHLZ,BPW}) The {\em horizontal trace} $\Tr=\Tr(\CS)$ of $\CS$ is the category whose objects are the same as those of $\CS$, and whose morphisms are defined as follows:
\begin{equation}
\label{eqn:hom trace}
\Hom_{\Tr}(X,Y)=\bigoplus_{Z \in \emph{Ob } \CS}\Hom(ZX,YZ) \ / \sim
\end{equation}
where the equivalence relation identifies the compositions: 
\begin{equation}
\label{eq: trace relation}
ZX\xrightarrow{f}WX\xrightarrow{g}YZ\qquad \text{and} \qquad WX\xrightarrow{g} YZ\xrightarrow{f} YW
\end{equation}
for arbitrary maps $f:Z\to W$ and $g:WX\to YZ$.  

\end{definition}
Although the trace has the same objects as the original category, its morphism sets are quite different: on the one hand there are more morphisms because of the direct sum in \eqref{eqn:hom trace}, on the other hand there are fewer morphisms because of the equivalence relation \eqref{eq: trace relation}.

We will refer to \eqref{eq: trace relation} as the {\em trace relation}. Note that it does not define a dg category structure on $\Tr(\CS)$, even if $\CS$ was a dg category to begin with. In fact, \eqref{eq: trace relation} can be understood as the truncation of the more complicated {\em derived trace} of $\CS$ from \cite{GHW}, but the former notion will be enough for the purposes of this paper. Furthermore, even if $\CS$ is (pre)-triangulated, $\Tr(\CS)$ might not be closed under cones of morphisms. Below we will always take the pre-triangulated hull of $\Tr(\CS)$ and denote it the same way, see \cite{GHW} for more details. 
With this definition we get  $\Tr(\CS)=\Tr(\CK(\CS))$ \cite[Lemma 6.25]{GHW}.

There is a trace functor $\Tr:\CS\to \Tr(\CS)$ which sends an object $X$ to the namesake object in $\Tr(\CS)$. If $\CS$ has duals then 
$\Tr(XY)\simeq \Tr(YX)$ for all $X,Y\in \CS$. Therefore we have a commutative diagram:
\begin{equation}
\label{eq: K0 tr}
\begin{tikzcd}
G(\CS)  \arrow{d} \arrow{r} & G(\Tr(\CS))\\
\Tr(G(\CS)) \arrow{ur} & 
\end{tikzcd}
\end{equation}
where the bottom-most $\Tr$ is defined as in \eqref{eqn:trace algebra}. We henceforth set $\CS = \CK(\ASBim_n)$ and abbreviate $\Tr(\CS) = \Tr(\ASBim_n)$. In this case, the diagram \eqref{eq: K0 tr} becomes:
\begin{equation*}
\begin{tikzcd}
\AH_n  \arrow{d} \arrow{r} & G(\Tr(\ASBim_n))\\
\Tr(\AH_n) \arrow{ur} & 
\end{tikzcd}
\end{equation*}

First, we record two easy consequences of the definitions.

\begin{proposition}
We have $\Hom_{\Tr}(X,Y)=0$ unless $X$ and $Y$ have the same degree (the grading on objects of $\ASBim_n$ is inherited from the grading of Definition \ref{def:degree}).
\end{proposition}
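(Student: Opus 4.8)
The plan is to deduce the vanishing from two ingredients: first, that $\CK(\ASBim_n)$ is graded by the degree of Definition~\ref{def:degree}, with no nonzero morphisms between objects of distinct degree; and second, a short manipulation of the trace relation~\eqref{eq: trace relation} showing that the ``off-diagonal'' contributions one might a priori fear actually vanish. Since $\Hom_{\Tr}$ is additive in each variable, once one knows (see below) that every object of $\CK(\ASBim_n)$ decomposes as a finite direct sum of homogeneous objects, it suffices to prove the statement when $X$ and $Y$ are homogeneous, of degrees $a$ and $b$ with $a\neq b$.

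For the grading: by Theorem~\ref{thm: rouquier generate} every object of $\CK(\ASBim_n)$ is a finite twisted complex built from Rouquier complexes $T_v=\Omega^k T_\alpha$ (with $v=\pi^k\alpha$, $\alpha\in\widehat{S_n}$), which I call homogeneous of degree $k=\deg(v)$; since $\otimes$ adds degrees, a twisted complex all of whose constituents share a common degree $k$ is naturally homogeneous of degree $k$. The vanishing $\Hom(\Omega^kT_\alpha,\Omega^{k'}T_\beta)=0$ for $k\neq k'$ proved by Elias in~\cite{Elias} shows there are no nonzero morphisms in $\CK(\ASBim_n)$ between homogeneous objects of distinct degree; hence the differential of any twisted complex has no component altering the degree of the constituent $T_v$'s, and every object of $\CK(\ASBim_n)$ splits canonically as $\bigoplus_j C_j$ with $C_j$ homogeneous of degree $j$. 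Once the homogeneous case is settled, this decomposition passes to $\Tr(\ASBim_n)$ and its pre-triangulated hull in the same way, which justifies the reduction above.

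The core computation is as follows. An element of $\Hom_{\Tr}(\Tr X,\Tr Y)$ is the class of some morphism $f\colon ZX\to YZ$ in $\CK(\ASBim_n)$; write $Z=\bigoplus_j Z_j$ with $Z_j$ homogeneous of degree $j$. The component $Z_jX\to YZ_{j'}$ of $f$ carries a homogeneous object of degree $j+a$ to one of degree $j'+b$, so it is zero unless $j'=j+(a-b)$, and $a\neq b$ forces $j'\neq j$: thus $f$ is strictly off-diagonal. Writing $f=\sum_j g_j$, where $g_j$ is the composite $ZX\twoheadrightarrow Z_jX\to YZ_{j'}\hookrightarrow YZ$ (with $j'=j+a-b$) through the corresponding summands of $Z$, I would apply the trace relation~\eqref{eq: trace relation} with ``$f$'' there taken to be the inclusion $Z_{j'}\hookrightarrow Z$ and ``$g$'' there taken to be the map $ZX\twoheadrightarrow Z_jX\to YZ_{j'}$: it identifies $[g_j]$ with the class of the composite $Z_{j'}X\hookrightarrow ZX\twoheadrightarrow Z_jX\to YZ_{j'}$, whose middle two arrows $Z_{j'}\hookrightarrow Z\twoheadrightarrow Z_j$ compose to zero since $j\neq j'$. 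Hence $[g_j]=0$ for every $j$, so $[f]=0$; as $f$ and $Z$ were arbitrary, $\Hom_{\Tr}(\Tr X,\Tr Y)=0$.

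The only point I expect to require care is the bookkeeping in this last step --- matching the factorization of $g_j$ to the exact shape of~\eqref{eq: trace relation} so that the ``other'' composition that relation produces is exactly $Z_jX\to YZ_{j'}$ pre-composed with the zero map $Z_{j'}\hookrightarrow ZX\twoheadrightarrow Z_jX$. Everything else is formal, given Theorem~\ref{thm: rouquier generate} and the Elias vanishing recalled above; notably, the argument uses only the raw trace relation and not the existence of duals in $\CK(\ASBim_n)$.
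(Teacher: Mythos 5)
Your proof is correct and follows essentially the same route as the paper: both rest on the degree grading of $\CK(\ASBim_n)$ and the vanishing of morphisms between homogeneous objects of distinct degrees. The paper's version is terser---it simply asserts that $\Hom_{\Tr}(X,Y)$ is generated by $\Hom(T_wX,YT_w)$ for (homogeneous) $T_w$ and concludes at once---whereas you additionally handle non-homogeneous test objects $Z$ by splitting them into homogeneous summands and using the trace relation \eqref{eq: trace relation} to kill the off-diagonal components, thereby filling in a step the paper leaves implicit.
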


\begin{proof}
The space $\Hom_{\Tr}(X,Y)$ is generated by $\Hom(T_{w}X,YT_{w})$, which vanishes unless: 
$$
\deg(T_wX)=\deg(T_w)+\deg(X)=\deg(T_w)+\deg(Y)=\deg(YT_w),
$$
so $\deg(X)=\deg(Y)$.
\end{proof}

\begin{lemma}
\label{lem: full twist}
For all $X$ and $Y$ in $\ASBim_n$ we have 
$$
\Hom_{\Tr}(X,Y)=\Hom_{\Tr}(\Omega^nX,\Omega^nY).
$$
\end{lemma}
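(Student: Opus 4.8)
The plan is to show that $\Omega^n$ is an \emph{invertible central} object of $\CK(\ASBim_n)$ and that tensoring by such an object induces an autoequivalence of the horizontal trace; the lemma is then obtained by reading off the effect of this autoequivalence on morphism spaces.

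First I would make precise the sense in which $\Omega^n$ is central. Recall that $\Omega$ is the Rouquier complex of the affine braid $\omega$, and that the relation $\omega \sigma_i \omega^{-1} = \sigma_{i+1}$ (indices modulo $n$) exhibits $\omega^n$ as a central element of $\ABr_n$; concretely, $\Omega^n$ is the invertible $(\tR,\tR)$-bimodule $\tR$ with right action twisted by the algebra automorphism $\pi^n\colon x_i \mapsto x_i - \delta,\ \delta \mapsto \delta$. The step is to promote this to a \emph{natural} central structure: a natural isomorphism $c_M\colon \Omega^n M \xrightarrow{\ \sim\ } M\Omega^n$ of endofunctors of $\CK(\ASBim_n)$. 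Since $\pi^n$ preserves every subring $\tR^{s_i}$ and commutes with the $\widetilde{S_n}$-action on $\tR$, applying $\pi^n$ to all tensor factors gives isomorphisms $\Omega^n G \xrightarrow{\sim} G\Omega^n$ for the generating bimodules $G \in \{\tR, B_i, \Omega\}$ (for $\tR$ and $\Omega$ these are essentially identities, and for $B_i$ this is the usual conjugation symmetry $B_i \cong \Omega^{-n}B_i\Omega^n$); one then extends $c$ along tensor products via $c_{MN} = (Mc_N)(c_M N)$, and along direct sums, summands, complexes and mapping cones, naturality being automatic at each stage. The same construction applied to $\Omega^{-n}$ yields the inverse natural isomorphism.

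With $c$ in hand, I would define a functor $\Phi\colon \Tr(\ASBim_n) \to \Tr(\ASBim_n)$ which is $M \mapsto \Omega^n M$ on objects and which sends the class of $g\colon WX \to YW$ to the class of the composite
$$
W\Omega^n X \xrightarrow{\ (c_W^{-1})X\ } \Omega^n W X \xrightarrow{\ \Omega^n g\ } \Omega^n Y W ,
$$
regarded as a morphism $\Omega^n X \to \Omega^n Y$ in the trace via the object $W$. The one point needing care is that $\Phi$ respects the trace relation \eqref{eq: trace relation}: for $f\colon W_1 \to W_2$, the two composites identified by \eqref{eq: trace relation} are carried by $\Phi$ to composites that agree, \emph{because} $c_{W_2}\circ(\Omega^n f) = (f\Omega^n)\circ c_{W_1}$ (naturality of $c$ in $W$) together with the trace relation in the target. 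Functoriality in the two variables and the fact that the functor built analogously from $\Omega^{-n}$ is a two-sided inverse are then formal, so $\Phi$ is an autoequivalence, and it restricts to the asserted isomorphism $\Hom_{\Tr}(X,Y) \cong \Hom_{\Tr}(\Omega^n X, \Omega^n Y)$.

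The main obstacle I anticipate is the gluing in the second paragraph: verifying that the generator-level isomorphisms $\Omega^n G \cong G\Omega^n$ are compatible with all morphisms among Bott--Samelson bimodules, so that they assemble into an honest natural transformation, while correctly tracking the $\pi^n$-twists that tensoring by $\Omega^{\pm n}$ introduces on underlying bimodules. I would stress that no shortcut via cyclicity alone works here: $\Tr(\Omega^n X)\cong\Tr(X\Omega^n)$ and conjugation-invariance of $\Tr$ do not give the statement, since $\Omega$ itself is not central and conjugation by the central object $\Omega^n$ is trivial; the argument genuinely uses one-sided tensoring by the invertible \emph{central} object $\Omega^n$.
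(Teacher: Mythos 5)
Your proposal is correct and takes essentially the same route as the paper: the paper's proof simply cites \cite{Elias} for the fact that $\Omega^n$ lies in the Drinfeld center of $\ASBim_n$ (invertible with a half-braiding natural in the other factor) and \cite[Section 6.6]{GHW} for the fact that tensoring with such a central object induces an endofunctor of the horizontal trace, which here is invertible with inverse given by $\Omega^{-n}$. The one caveat is that the naturality of the half-braiding, which you construct by hand on generators and call ``automatic'' while also flagging it as the main obstacle, is exactly the nontrivial content of Elias's centrality result that the paper invokes; your trace-level argument (compatibility with the relation \eqref{eq: trace relation} via naturality of $c$ in the auxiliary object) matches what the cited general statement encapsulates.
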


\begin{proof}
The object $\Omega^n$ is in the Drinfeld center of the original category $\ASBim_n$ (\cite{Elias}), that is, $X\Omega^n\simeq \Omega^nX$ for all $X$ and the isomorphism is functorial in $X$. Tensoring by an object in the Drinfeld center of $\CS$ defines an endofunctor of $\Tr(\CS)$ \cite[Section 6.6]{GHW}, and in this case it is invertible with the inverse $\Omega^{-n}$, so the result follows.

In other words, since $\Omega^n$ is central and invertible, it commutes with the trace relation \eqref{eq: trace relation} and hence preserves the morphism spaces in the trace.
\end{proof}

We also record the action of $\widetilde{R}$ on the traces of Rouquier complexes.

\begin{lemma}
\label{lem: endo trace rouquier}
For any affine braid $v\in \ABr_n$ we have a natural action of the quotient: 
$$
R_v:=\widetilde{R}\Big/(\delta,x_1-v(x_1), \dots, x_n - v(x_n))
$$
on the trace $\Tr(T_v)$. Here, as above, $v$ acts on $\widetilde{R}$ via its projection to $\widetilde{S_n}$. In particular, the action of $\delta$ on $\Tr(T_v)$ is trivial for all $v$.

\end{lemma}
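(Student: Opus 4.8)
The plan is to identify, via Lemma \ref{lem: rouquier auto}(a), the ring $\tR=\End_{\CK(\ASBim_n)}(\one)$ with $\End_{\CK(\ASBim_n)}(T_v)$, and to apply the trace functor $\Tr$ to obtain a ring homomorphism $\tR\to\End_{\Tr(\ASBim_n)}(\Tr(T_v))$; this is the natural action in question, where $\tR$ acts on $T_v$ by left multiplication as in the running convention. It then suffices to show that this homomorphism kills $\delta$ and each $x_i-v(x_i)$, so that it factors through $R_v$.

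For the relations $x_i-v(x_i)\mapsto 0$ I would first record that, for any object $X$ and any $f\in\tR=\End(\one)$, left and right multiplication by $f$ induce the same endomorphism of $\Tr(X)$: in the trace relation \eqref{eq: trace relation} take $Z=W=\one$, $Y=X$, let the map there called $f$ be our $f\in\End(\one)$ and the map there called $g$ be the unitor $\one X\xrightarrow{\sim}X\one$; then \eqref{eq: trace relation} identifies $g\circ(f\otimes\mathrm{id}_X)$, which is left multiplication by $f$, with $(\mathrm{id}_X\otimes f)\circ g$, which is right multiplication by $f$. Combining this with Lemma \ref{lem: rouquier auto}(b) — left multiplication by $f$ on $T_v$ equals right multiplication by $v^{-1}(f)$ — we conclude that $f$ and $v^{-1}(f)$ act by the same operator on $\Tr(T_v)$. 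Hence the action factors through $\tR$ modulo the ideal generated by all $f-v^{-1}(f)$; since $\tR$ is generated by $x_1,\dots,x_n,\delta$ and $v(\delta)=\delta$, this ideal is generated by $x_1-v^{-1}(x_1),\dots,x_n-v^{-1}(x_n)$, and because $v$ acts linearly on the degree-two part of $\tR$ this coincides with $(x_1-v(x_1),\dots,x_n-v(x_n))$ (both ideals are radical, being generated by linear forms, and cut out the fixed locus on $\mathrm{Spec}\,\tR$ of $v$, equivalently of $v^{-1}$).

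The remaining, and only delicate, point is that $\delta$ acts by zero — this is \emph{not} implied by the previous paragraph, since $\delta\notin(x_i-v(x_i))_i$ in general (for instance when $v=s_1$). Here I would pass to a full twist. By Lemma \ref{lem: full twist}, tensoring by the central object $\Omega^{n}$ gives isomorphisms $\End_{\Tr}(\Tr(T_v))\cong\End_{\Tr}(\Tr(\Omega^{kn}T_v))=\End_{\Tr}(\Tr(T_{\omega^{kn}v}))$ for all $k\in\Z$, and these intertwine the action of $\delta$: in the twisted tensor product $\Omega^{kn}\otimes_{\tR}T_v$ one has $\delta m\otimes x=m\otimes\delta x$ because $\pi^{kn}(\delta)=\delta$, so left multiplication by $\delta$ on the $\Omega^{kn}$-factor agrees with left multiplication by $\delta$ on the $T_v$-factor. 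Now choose $k$ with $\nu(\omega^{kn}v)\neq 0$, which is possible since $\nu(\omega^{kn}v)=\nu(v)+k(1,\dots,1)$ (using $\omega^{n}=y^{(1,\dots,1)}$ in $\widetilde{S_n}$ and additivity of Newton points for commuting elements, as in the proof of Lemma \ref{lem: newton}), so at most one value of $k$ is excluded. Write $w=\omega^{kn}v$. Applying the second paragraph to $w$, the $\tR$-action on $\Tr(T_w)$ factors through $\tR/(x_i-w(x_i))_i$, and for this $w$ the element $\delta$ \emph{does} lie in that ideal: writing $w(x_i)=x_{\sigma(i)}+c_i\delta$ with $\sigma\in S_n$ the underlying finite permutation of $w$, the equation $\sum_i a_i\big(x_i-w(x_i)\big)=\delta$ can be solved with $a$ constant on the cycles of $\sigma$ exactly when the total shift $\sum_{i\in C}c_i$ around some cycle $C$ is nonzero, and that total shift equals, up to sign and the factor $|C|$, the value of $\nu(w)$ on $C$ — nonzero by our choice of $k$. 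Hence $\delta$ acts by zero on $\Tr(T_w)$, and by the $\delta$-equivariant isomorphism above, on $\Tr(T_v)$ as well. Combining the two paragraphs, the action of $\tR$ on $\Tr(T_v)$ factors through $\tR/(\delta,\,x_1-v(x_1),\dots,x_n-v(x_n))=R_v$.

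I expect the vanishing of $\delta$ to be the main obstacle: object-by-object it is simply false that the left-multiplication action of $\tR$ on $T_v$ kills $\delta$ after passing to the trace, and the only mechanism I see to force it is the centrality of $\Omega^{n}$ together with the identity $\pi^{n}(x_i)=x_i-\delta$ — which is exactly the relation that ``closes the annulus into a torus''. The remaining ingredients — functoriality of $\Tr$ producing the ring map $\tR\to\End_{\Tr}(\Tr(T_v))$, the $\delta$-equivariance of $\Omega^{n}\otimes(-)$ at the level of trace-morphism spaces, and the elementary identification of $\sum_{i\in C}c_i$ with an entry of $\nu(w)$ via Lemma \ref{lem: newton} — are routine and I would only sketch them.
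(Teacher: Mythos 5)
Your proposal is correct and follows essentially the same route as the paper: the trace relation together with Lemma \ref{lem: rouquier auto} forces the action to factor through $\widetilde{R}/(x_i-v(x_i))$, and the vanishing of $\delta$ is obtained from the Newton point (your cycle-sum computation is the same as the paper's trick of raising $v$ to a power that becomes a translation), with Lemma \ref{lem: full twist} handling the degenerate case. The only cosmetic differences are that the paper splits into the cases $\nu(v)\neq 0$ and $\nu(v)=0$ with a single twist by $\Omega^n$, whereas you choose a power $\Omega^{kn}$ uniformly, and you spell out the $\delta$-equivariance of the full-twist identification, which the paper leaves implicit.
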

 
\begin{proof}
By Lemma \ref{lem: rouquier auto} the left action of $\widetilde{R}$ on $T_v$ is homotopic to the right action twisted by $v$. The trace relation \eqref{eq: trace relation} identifies the left action and right action of $\widetilde{R}$, and hence factors through $\widetilde{R}/(x_i-v(x_i))$. It remains to prove that the action of $\delta$ vanishes.

Assume first that the Newton point $\nu(v)$ is nonzero. 
Suppose that $v$ projects to $y^{\dd}w\in \widetilde{S_n}$. By Corollary \ref{cor: rouquier auto} we have $x_i=x_{w^{-1}(i)}-d_i\delta$,
in particular, if $(y^{\dd}w)^m=y^{m\nu(v)}$ then $x_i=x_i-m\nu(v)_i\delta$. Since $\nu(v)\neq 0$, this equation forces $\delta=0$. 

Finally, assume that $\nu(v)=0.$ Then by Lemma \ref{lem: full twist} we have: 
$$
\Hom_{\Tr}(T_v,T_v)=\Hom_{\Tr}(\Omega^{n}T_v,\Omega^{n}T_v)
$$
and $\nu(\omega^{n}v)=\nu(v)+(1,\ldots,1)\neq 0$. Since $\delta$ acts by zero on $\Tr(\Omega^nT_v)$ (by the previous paragraph), we conclude that it also acts by zero on $\Tr(T_v)$.

\end{proof}

\section{Trace of the affine Hecke category: Generators}
\label{sec: skein gens}

In the remainder of the present paper we consider $\Tr(\ASBim_n)$, the horizontal trace of the affine Hecke category. Theorem \ref{thm: rouquier generate} and Proposition \ref{prop:generate} yield generators of the category $\CK(\ASBim_n)$, so their traces generate $\Tr(\ASBim_n)$. In this Section we choose a subset of these generators and prove that they still generate the trace category. In the next Section we will describe certain explicit exact sequences relating these generators. 

\subsection{Generation by $E_{\dd}$} Recall the objects $E_{\dd} \in \Tr(\ASBim_n)$ of \eqref{eqn:coxeter trace}.

\begin{lemma}
\label{lem: push through w}
Let $w$ be a permutation on the first, respectively last, $k$ strands (that is, in the subgroup of $S_n$ generated by $s_1,\ldots,s_{k-1}$, respectively $s_{n-k},\dots,s_{n-1}$).
Then for any $\dd \in \Z^n$, the object $Y^{\dd}T_w \in \CK(\ASBim_n)$ can be written as a twisted complex of $T_u Y^{\dd'}$'s, where $u$ goes over permutations on the first, respectively last, $k$ strands and $\dd'$ goes over $\Z^n$.
\end{lemma}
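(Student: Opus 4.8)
The plan is to push the Wakimoto factor $Y^{\dd}$ to the right of $T_w$, one Rouquier generator at a time, using the commutation relations between the $Y_i$'s and the $T_j$'s together with the skein relations \eqref{eq: skein 0}--\eqref{eq: skein}, while checking that no $T_j$ with a ``new'' index $j$ is ever produced. Throughout, let $I=\{1,\dots,k-1\}$ (the first-$k$-strands case) or $I=\{n-k,\dots,n-1\}$ (the last-$k$-strands case), let $W_P=\langle s_i:i\in I\rangle$ be the corresponding parabolic subgroup, and let $\CC\subseteq\CK(\ASBim_n)$ be the pre-triangulated subcategory generated by the objects $T_uY^{\dd'}$ with $u\in W_P$ and $\dd'\in\Z^n$. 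We must show that $Y^{\dd}T_w\in\CC$ for every $\dd$ and every $w\in W_P$.

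\emph{Step 1: local base relations.} Fix $i$. The relation $T_i^{-1}Y_iT_i^{-1}\simeq Y_{i+1}$ is equivalent to $Y_i=T_iY_{i+1}T_i$ and to $Y_{i+1}T_i=T_i^{-1}Y_i$; combining these (and their inverses) with the skein isomorphisms expressing $T_i^{-1}$ and $T_i^{2}$ as twisted complexes built from $\one$ and $T_i$, one checks by a short direct computation that each of $Y_i^{\pm1}T_i$ and $Y_{i+1}^{\pm1}T_i$ is a twisted complex of objects of the form $T_uY_i^cY_{i+1}^d$ with $u\in\{e,s_i\}$. For instance $Y_{i+1}T_i=T_i^{-1}Y_i$ is a twisted complex of $Y_i$ and $T_iY_i$, while $Y_iT_i=T_iY_{i+1}T_i^{2}$ simplifies to a twisted complex of $T_iY_{i+1}$ and $Y_i$ (using $T_iY_{i+1}T_i=Y_i$); the two cases with negative exponents are analogous.

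\emph{Step 2: moving one $T_i$ past $Y^{\dd}$.} Since $T_i$ commutes with $Y_j$ for $j\neq i,i+1$, we strip off all such $Y_j$-factors and reduce to the claim that $Y_i^aY_{i+1}^bT_i$ is a twisted complex of objects $T_uY^{\dd'}$ with $u\in\{e,s_i\}$; the stripped-off factors are reattached on the outside at the end, commuting back past any surviving $T_i$ because their indices avoid $i,i+1$. We prove the reduced claim by induction on $|a|+|b|$: using a Step-1 relation to remove the $Y$-generator adjacent to $T_i$ replaces $T_i$ by a twisted complex of terms of the shape $(\text{shorter }Y\text{-block})\cdot(\one\text{ or }T_i)\cdot(\text{one extra }Y)$; the $Y$-block sitting to the left of $T_i$ now has strictly smaller total degree, so the inductive hypothesis applies to it, and right-multiplication by a single $Y_j^{\pm1}$ merely shifts the right-hand exponent vector and keeps us inside $\CC$.

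\emph{Step 3: general $w$, and the main obstacle.} Induct on $\ell(w)$ for $w\in W_P$; the case $w=e$ is immediate since $T_e=\one$. Choosing a right descent $i\in I$ we write $w=w's_i$ with $T_w=T_{w'}T_i$; by the inductive hypothesis $Y^{\dd}T_{w'}$ is a twisted complex of $T_uY^{\dd_u}$ with $u\in W_P$, so $Y^{\dd}T_w$ is a twisted complex of the $T_uY^{\dd_u}T_i$, and by Step 2 each $Y^{\dd_u}T_i$ is a twisted complex of $T_{u'}Y^{\dd'}$ with $u'\in\{e,s_i\}\subseteq W_P$. Finally $T_uT_{u'}$ is the Rouquier complex of a positive braid in the parabolic braid group on $I$, so by the finite-type case of Theorem \ref{thm: rouquier generate} (equivalently, repeated use of \eqref{eq: skein}) it is a twisted complex of $T_v$ with $v\in W_P$; hence $Y^{\dd}T_w\in\CC$. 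The case of the last $k$ strands is identical. The one genuinely delicate point is the bookkeeping in Step 2: one must take as the induction parameter the total degree of the Wakimoto block still lying to the \emph{left} of $T_i$, and verify both that each application of a Step-1 relation strictly decreases it and that the unavoidable right-multiplications by single $Y_j^{\pm1}$'s never increase it. Everything else — the commutations with $Y_j$ for $j\neq i,i+1$, the tracking of grading shifts, and the collapse of $T_uT_{u'}$ to parabolic $T_v$'s — is routine.
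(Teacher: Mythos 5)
Your proof is correct and follows essentially the same route as the paper's: reduce to pushing a single Wakimoto generator $Y_j^{\pm 1}$ past a single $T_i$, treat the three cases $j\neq i,i+1$, $j=i$, $j=i+1$ via the relations $Y_i=T_iY_{i+1}T_i$ together with the skein resolutions \eqref{eq: skein 0}--\eqref{eq: skein}, and then assemble the general case by induction. The paper leaves the inductive bookkeeping (on the size of the Wakimoto block and on $\ell(w)$, plus the collapse of $T_uT_{u'}$ to parabolic $T_v$'s) implicit, whereas you spell it out; the content is the same.
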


\begin{proof}
It suffices to prove the Lemma for the product $Y_jT_i$ for all $i < k$, respectively $i > n-k$, and $j \in \{1,\dots,n\}$. Since $T_i^{-1}Y_iT_i^{-1}\simeq Y_{i+1}$, there are three possible cases:

1) If $j\neq i,i+1$ then $Y_jT_i\simeq T_iY_j$.

2) If $j=i$ then $Y_iT_i \simeq T_iY_{i+1}T_i^2$ which is an extension between two copies of $T_iY_{i+1}T_i \simeq Y_i$ and $T_iY_{i+1}$ (see \eqref{eq: skein}).

3) If $j=i+1$ then $Y_{i+1}T_{i} \simeq T_i^{-1}Y_i$ which is an extension between $T_iY_i$ and two copies of $Y_i$  (see \eqref{eq: skein 0}).
\end{proof}

\begin{lemma}
\label{lem: trace generate}
$\Tr(\ASBim_n)$ is generated by the objects $\Tr(Y^{\dd}T_w)$ where $w$ goes over subwords of the Coxeter element $s_1\cdots s_{n-1}$ and $\dd$ goes over $\Z^n$.
\end{lemma}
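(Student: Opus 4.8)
The strategy is to start from the generating set furnished by Theorem \ref{thm: rouquier generate} and Proposition \ref{prop:generate}, namely the objects $\Tr(Y^{\dd}T_w)$ with $\dd\in\Z^n$ and $w\in S_n$ arbitrary, and to show that each such object lies in the triangulated hull of the objects $\Tr(Y^{\dd'}T_u)$ with $u$ a subword of the Coxeter element $s_1\cdots s_{n-1}$. The main tool is the cyclic invariance $\Tr(AB)\simeq\Tr(BA)$ from property (b) of the derived trace, which lets us conjugate the permutation $w$ inside $S_n$ without changing the trace (up to the accompanying twist of the Wakimoto exponents), combined with Lemma \ref{lem: push through w}, which lets us trade a product $Y^{\dd}T_w$ for a twisted complex of $T_uY^{\dd'}$'s whenever $w$ is supported on an initial or terminal block of strands.

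First I would use conjugation to reduce $w$ to a convenient form. Since in the trace we may replace $T_w$ by $T_{vwv^{-1}}$ at the cost of twisting $\dd$ (using $T_i^{-1}Y_iT_i^{-1}\simeq Y_{i+1}$ and its relatives to move the Wakimoto part past $T_v$), and since every permutation is conjugate in $S_n$ to a product of disjoint cycles which we may take to be ``intervals'' $\sigma_{[a_1,b_1]}\cdots\sigma_{[a_r,b_r]}$ on consecutive blocks of strands, it suffices to treat the case where $w$ is of this block-Coxeter form. Then, using cyclic invariance once more to rotate the last block to the front, and Lemma \ref{lem: push through w} to push the $T$-part of that block to one side, an inductive argument on the number of blocks $r$ reduces everything to the single-block case, i.e. to objects $\Tr(Y^{\dd}T_{\cox_n})=E_{\dd}$ together with their products $E_{\dd^1}\star\cdots\star E_{\dd^r}$ — but since the statement here only asks for generation by $\Tr(Y^{\dd}T_w)$ with $w$ a \emph{subword} of $s_1\cdots s_{n-1}$, I only need to peel off one Coxeter block at a time and recognize the remainder, via Lemma \ref{lem: push through w} and \eqref{eq: skein 0}, as a twisted complex of objects of the desired shape on the remaining strands.

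Concretely, the induction runs as follows: given $\Tr(Y^{\dd}T_w)$ with $w\in S_n$, conjugate so that $w$ restricts to a permutation on the last $k$ strands for some $k<n$ (if $w$ is already an $n$-cycle, i.e. $w$ is itself a full Coxeter element up to conjugacy, then $\Tr(Y^{\dd}T_w)\simeq E_{\dd'}$ directly and there is nothing to do). Apply Lemma \ref{lem: push through w} to write $Y^{\dd}T_w$ as a twisted complex of $T_uY^{\dd'}$'s with $u$ on the last $k$ strands; inside the trace, cyclic invariance turns each $\Tr(T_uY^{\dd'})$ into $\Tr(Y^{\dd''}T_u)$ after moving the Wakimoto factor around the annulus, and now $u$ has strictly smaller support, so the inductive hypothesis — plus the use of \eqref{eq: skein 0} to resolve any $T_i^{\pm1}$ that is not a subword of the relevant Coxeter element — finishes the argument. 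The base case $k=1$ is trivial since then $w=\mathrm{id}$ and $Y^{\dd}$ is already of the required form (empty subword).

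The step I expect to be the main obstacle is making the conjugation-and-rotation bookkeeping precise inside $\Tr$: cyclic invariance $\Tr(AB)\simeq\Tr(BA)$ is only an isomorphism, not an equality, so one must check that the various twists of the Wakimoto exponents $\dd$ induced by sliding $Y_i$'s past $T_i$'s (governed by $T_{i-1}Y_i\simeq Y_{i-1}T_{i-1}^{-1}$, $T_i^{-1}Y_iT_i^{-1}\simeq Y_{i+1}$, etc.) and by conjugating $w$ genuinely land us back among objects of the form $Y^{\dd'}T_u$ with $u$ supported on a contiguous block, and that the resolutions from \eqref{eq: skein 0} are compatible with taking $\Tr$ (this is where property (a) of the derived trace — that $\Tr$ sends cones to cones — is essential). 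None of these are deep, but organizing the induction so the support of the permutation strictly decreases at each stage, without the Wakimoto twists secretly re-enlarging it, is the delicate part.
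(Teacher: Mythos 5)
Your reduction hinges on the claim that inside the trace one may replace $Y^{\dd}T_w$ by $Y^{\dd'}T_{vwv^{-1}}$ for an arbitrary $v\in S_n$, and this is the step that fails. First, $T_vT_wT_v^{-1}$ is \emph{not} isomorphic to $T_{vwv^{-1}}$ in general: Rouquier complexes multiply as $T_vT_w\simeq T_{vw}$ only when $\ell(vw)=\ell(v)+\ell(w)$, which is exactly why the paper has to introduce \emph{strong} conjugation (Lemma \ref{lem: strongly conjugate}) and invoke He--Nie \cite{HN} before it can conclude $\Tr(T_v)\simeq\Tr(T_{v'})$ for conjugate elements --- and even then only for minimal-length representatives. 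Second, the relation $Y_{i+1}\simeq T_i^{-1}Y_iT_i^{-1}$ is a \emph{twisted} conjugation: one has $T_iY_{i+1}T_i^{-1}\simeq Y_iT_i^{-2}$, not $Y_i$, so ``moving the Wakimoto part past $T_v$'' leaves behind factors of $T_i^{\pm 2}$ whose skein resolutions destroy the normal form $Y^{\dd'}T_u$ and re-enlarge the support you are trying to shrink. For the same reason, your base case assertion that an $n$-cycle $w$ gives $\Tr(Y^{\dd}T_w)\simeq E_{\dd'}$ ``directly'' is unproved; it is essentially the content of Theorems \ref{thm: HN} and \ref{thm: convex}, which this lemma is not allowed to presuppose. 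Note also that if your conjugation step were available, the whole lemma would follow in one line (a product of block Coxeter elements is already a subword of $s_1\cdots s_{n-1}$), which is a sign that all the difficulty has been pushed into that step.

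The paper's proof never conjugates $w$. Instead it picks a reduced expression of $w$ containing at most one occurrence of $s_1$ (a standard fact about $S_n$), writes $w=\alpha s_1\beta$ with $\alpha,\beta$ supported on the last $n-1$ strands, uses Lemma \ref{lem: push through w} to move $Y^{\dd}$ past $T_\alpha$ (producing a controlled twisted complex of $T_uY^{\dd'}$ with $u$ still on the last $n-1$ strands), and then applies cyclicity only in the honest form $\Tr(T_uZ)\simeq\Tr(ZT_u)$ to rotate $T_u$ to the back. This yields objects $Y^{\dd''}T_1T_{w'}$ with $w'$ on the last $n-1$ strands, and the induction then runs on the length $k$ of the already-built prefix $T_c$, $c$ a subword of $s_1\cdots s_k$, with the residual permutation supported on the last $n-k-1$ strands --- not on the support of $w$ itself. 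If you want to keep your strategy, you would need to restrict all conjugations to ones satisfying the length-additivity hypotheses of Lemma \ref{lem: strongly conjugate} and track the Wakimoto exponents through them, which in effect reproduces the machinery of Theorems \ref{thm: HN} and \ref{thm: convex} rather than giving an independent proof of this lemma.
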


\begin{proof}
By Proposition \ref{prop:generate} the horizontal trace is generated by the objects $\Tr(Y^{\dd}T_w)$ as $w$ goes over $S_n$. It is elementary to see that we can choose a reduced expression for $w$ which contains at most one copy of $s_{1}$. If it contains no copy of $s_{1}$, we proceed by induction, otherwise we write $w=\alpha s_{1}\beta$, where $\alpha$ and $\beta$ only use the last $n-1$ strands. Therefore $Y^{\dd}T_w=Y^{\dd}T_{\alpha}T_{1}T_{\beta}$, and by Lemma \ref{lem: push through w} we can resolve this object by $T_{u}Y^{\dd'}T_{1}T_{\beta}$ where $u$ only uses the last $n-1$ strands. The trace of $Y^{\dd}T_w$ can thus be resolved by $\Tr(T_{u}Y^{\dd'}T_{1}T_{\beta})\simeq \Tr(Y^{\dd'}T_{1}T_{\beta}T_{u})$ which itself can be resolved by $Y^{\dd''}T_{1}T_{w'}$ where $w'$ only uses the last $n-1$ strands and $\dd''$ runs over $\Z^n$. 

The paragraph above establishes the base case of the following inductive statement: the horizontal trace is generated by the objects $\Tr(Y^{\dd} T_c T_{w})$, where $c$ runs over subwords of $s_1 \dots s_k$ and $w$ only uses the last $n-k-1$ strands. To prove the induction step, either $w$ only uses the last $n-k-2$ strands (in which case the induction step is trivial), or we may write $T_w=T_{\alpha} T_{k+1}T_{\beta}$ where $\alpha$ and $\beta$ only use the last $n-k-2$ strands. Then $\alpha$ commutes with all subwords of $s_1 \dots s_k$, hence we can write:
$$
\Tr(Y^{\dd} T_c T_{w}) = \Tr(Y^{\dd} T_c T_{\alpha} T_{k+1}T_{\beta}) = \Tr(Y^{\dd}  T_{\alpha} T_c T_{k+1}T_{\beta})
$$
which by Lemma \ref{lem: push through w} can be resolved by:
$$ 
\Tr(T_u Y^{\dd'} T_c T_{k+1} T_{\beta})\simeq \Tr(Y^{\dd'} T_c T_{k+1} T_{\beta} T_u)
$$
where $u$ uses the last $n-k-2$ strands. Since $cs_{k+1}$ is a subword of $s_1 \dots s_{k+1}$, this establishes the induction step. Thus the proof is complete.

\end{proof}

We will now review the objects defined in \eqref{eqn:coxeter trace} and \eqref{eqn:general trace}.

\begin{definition}
For any $d_1,\dots,d_n \in \Z$, we define the object:
\begin{equation}
\label{eqn:coxeter object}
E_{(d_1,\ldots,d_n)} :=\Tr(Y_1^{d_1} \dots Y_n^{d_n}T_{1}\dots T_{n-1}) \in \Tr(\ASBim_n)
\end{equation}
More generally, given a composition $n_1+\dots+n_r = n$, we define the object:
\begin{equation}
\label{eqn:general object}
E_{(d_1,\dots,d_{n_1})}\star E_{(d_{n_1+1},\dots, d_{n_1+n_2})}\star \dots\star E_{(d_{n-n_r+1},\dots, d_{n})} = 
\end{equation}
$$
= \Tr\left(Y_1^{d_1} \dots Y_{n_1}^{d_{n_1}}T_{[1,n_1]}\cdot Y_{n_1+1}^{d_{n_1+1}}\dots Y_{n_1+n_2}^{d_{n_1+n_2}}T_{[n_1+1,n_1+n_2]}\dots 
Y_{n-n_r+1}^{d_{n-n_r+1}}\dots Y_{n}^{d_{n}}T_{[n-n_r+1,n]}\right)
$$
where $T_{[a,b]} = T_a \dots T_{b-1}$, by analogy with \eqref{eqn:notation sigma}.
\end{definition}

We will call $|\dd|=d_1+\dots+d_n$ the {\em degree} of the objects \eqref{eqn:coxeter object} and \eqref{eqn:general object}. It is easy to see that it agrees with the degree of an affine braid of Definition \ref{def:degree}.

By Lemma \ref{lem: trace generate}  the trace is generated by the objects  \eqref{eqn:general object}. By Lemma \ref{lem: endo trace rouquier} the action of $\widetilde{R}$ on $E_{(d_1,\dots,d_{n_1})}\star E_{(d_{n_1+1},\dots, d_{n_1+n_2})}\star \dots\star E_{(d_{n-n_r+1},\dots, d_{n})}$ factors through
$$
R_{n_1,\dots,n_r} = \widetilde{R} \Big / (\delta = 0,x_1=\ldots=x_{n_1},x_{n_1+1} = \dots = x_{n_1+n_2},\ldots,x_{n-n_r+1}=\ldots=x_{n})
$$
We will henceforth work only with the quotient above, and in particular, identify the left and right action of $x_i$ for all $i \in \{1, \dots, n\}$. 

\begin{remark}
Note that this quotient of $\widetilde{R}$ precisely matches the support condition for the sheaves \eqref{eqn:general sheaf} on the commuting stack. Indeed, the sheaf $\CE_{\dd}$ arises from \eqref{eqn:def of u}, with all the eigenvalues of the $X$ matrix being equal. Similarly, $\CE_{\dd^1} \star \dots \star \CE_{\dd^r}$ arises from the analogue of \eqref{eqn:def of u} where the eigenvalues of the $X$ matrix are equal in blocks of sizes $n_1$, $n_2$, \dots, $n_r$.

\end{remark}

\subsection{Generation by convex paths}

In this section we prove a categorical version of Theorem \ref{th: trace hecke}. For this, we will need some notations. 

\begin{definition}
We will call {\em strong conjugation} the transitive closure of the following relation on $\widetilde{S_n}$:
$v\approx v'$ if $\ell(v)=\ell(v')$, $v'=xvx^{-1}$ and
either $\ell(xv)=\ell(x)+\ell(v)$ or $\ell(vx^{-1})=\ell(x)+\ell(v)$.
\end{definition}

\begin{lemma}
\label{lem: strongly conjugate}
Suppose that $v$ and $v'$ are strongly conjugate ($v\approx v'$) and $T_v, T_{v'}$ are the Rouquier complexes for the positive braid lifts of $v,v'$. Then $\Tr(T_v)\simeq \Tr(T_{v'})$ and the positive braid lifts of $v$ and $v'$ are conjugate in $\ABr_n$. 
\end{lemma}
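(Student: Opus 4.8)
The plan is to reduce immediately to a single elementary step of strong conjugation, since strong conjugation is by definition the transitive closure of the relation $\approx$, and both conclusions (the homotopy equivalence of traces and conjugacy of braid lifts) are transitive. So assume $v' = xvx^{-1}$ with $\ell(v)=\ell(v')$ and, say, $\ell(xv)=\ell(x)+\ell(v)$ (the case $\ell(vx^{-1})=\ell(x)+\ell(v)$ is symmetric, or follows by replacing $v$ with $v^{-1}$). Writing $x = \pi^k \alpha$ with $\alpha \in \widehat{S_n}$, and using that $\deg$ and $\ell$ are insensitive to the $\pi$-part while $\Omega^{\pm n}$ is central (Lemma \ref{lem: full twist}) and $\Omega T_i \Omega^{-1} = T_{i+1}$, I would first strip off the $\pi$-powers and reduce to the case where $x, v, v'$ lie in $\widehat{S_n}$ — here one must be slightly careful that conjugating by $\pi$ permutes the simple reflections cyclically, but this only relabels indices and does not change lengths or the additivity hypothesis.

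Next, the length-additivity $\ell(xv) = \ell(x) + \ell(v)$ means that concatenating a reduced word for $x$ with a reduced word for $v$ yields a reduced word for $xv$; consequently the positive braid lift satisfies $\beta_{xv} = \beta_x \beta_v$ in $\ABr_n$, and likewise, since $\ell(v' x) = \ell(xv) = \ell(x) + \ell(v) = \ell(v') + \ell(x)$, we get $\beta_{v'x} = \beta_{v'}\beta_x$. From $v' = xvx^{-1}$, i.e. $v' x = x v$, we conclude $\beta_{v'}\beta_x = \beta_x \beta_v$ in $\ABr_n$, that is $\beta_{v'} = \beta_x \beta_v \beta_x^{-1}$. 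This proves the second assertion — the positive braid lifts are conjugate in $\ABr_n$ — and hence, applying the monoidal functor $\ABr_n \to \CK(\ASBim_n)$, $\sigma_i \mapsto T_i$, $\omega \mapsto \Omega$, we obtain $T_{v'} \simeq T_x T_v T_x^{-1}$ in $\CK(\ASBim_n)$ (where $T_x^{-1}$ denotes the Rouquier complex of $\beta_x^{-1}$).

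Finally, I would invoke property (b) of the horizontal trace (Subsection \ref{sec: trace}): the natural isomorphisms $\Tr(AB) \simeq \Tr(BA)$. Applying this with $A = T_x T_v$ and $B = T_x^{-1}$ gives $\Tr(T_{v'}) \simeq \Tr(T_x T_v T_x^{-1}) \simeq \Tr(T_x^{-1} T_x T_v) \simeq \Tr(T_v)$, using that $T_x^{-1} T_x \simeq \one$. This completes the proof. The only genuine subtlety — the step I expect to require the most care — is the bookkeeping in the reduction to $\widehat{S_n}$: one must track how the $\pi$-powers (equivalently, the central elements $\Omega^n$ and the index-shifting automorphism $\Omega(-)\Omega^{-1}$) interact with the length-additivity hypothesis, and check that the definition of $T_v$ for general $v \in \widetilde{S_n}$ via $T_{\pi^k \alpha} = \Omega^k T_\alpha$ is compatible with multiplication of braid lifts in the length-additive situation. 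Everything else is a formal consequence of the braid-group functoriality and the defining properties of $\Tr$.
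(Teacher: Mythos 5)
Your proposal is correct and follows essentially the same route as the paper: length-additivity gives $T_xT_v=T_{xv}=T_{v'x}=T_{v'}T_x$, hence the positive braid lifts are conjugate, and the isomorphism of traces then follows from $\Tr(XY)\simeq\Tr(YX)$. The extra bookkeeping you flag about stripping off $\pi$-powers is harmless but not emphasized in the paper, which simply invokes the multiplicativity of positive lifts in the extended affine setting (citing \cite{HN}).
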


\begin{proof}
Suppose that $\ell(v)=\ell(v')$, $v'=xvx^{-1}$ and
 $\ell(xv)=\ell(x)+\ell(v)=\ell(v'x).$ Then:
$$
T_{x}T_{v}=T_{xv}=T_{v'x}=T_{v'}T_{x} \qquad \Rightarrow \qquad T_{v'}=T_{x}T_{v}T_{x}^{-1}.
$$
Similarly, if $\ell(vx^{-1})=\ell(x)+\ell(v)$ then:
$$
T_{v}T_{x^{-1}}=T_{vx^{-1}}=T_{x^{-1}v'}=T_{x^{-1}}T_{v'} \quad \Rightarrow \quad T_{v'}=T_{x^{-1}}^{-1}T_{v}T_{x^{-1}}.
$$
See also \cite[Lemma 5.1]{HN}.
\end{proof}

\begin{theorem}
\label{thm: HN}
a) The category $\Tr(\ASBim_n)$ is generated by $\Tr(T_v)$ where $v$ runs over minimal length elements in conjugacy classes in $\widetilde{S_n}$, and $T_v$ is the Rouquier complex for its positive braid lift.

b) If $v,v'$ are minimal length elements in the same conjugacy class then $\Tr(T_v)\simeq \Tr(T_{v'})$.
\end{theorem}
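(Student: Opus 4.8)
The plan is to derive both statements from the combinatorics of minimal-length elements in the extended affine symmetric group, due to He and Nie \cite{HN}, fed into the two categorical tools already in place: Lemma \ref{lem: strongly conjugate}, which says that strong conjugation does not change the trace of a Rouquier complex, and the skein cone relation \eqref{eq: skein}.

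I would dispose of part (b) first. By the reduction theory of He and Nie \cite{HN}, any two minimal-length elements $v,v'$ of a fixed conjugacy class of $\widetilde{S_n}$ are strongly conjugate, that is, connected by a chain of elementary strong conjugations as in the definition preceding Lemma \ref{lem: strongly conjugate}. Applying that Lemma to each link of the chain yields $\Tr(T_v)\simeq \Tr(T_{v'})$, which is (b).

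For part (a) I would start from Theorem \ref{thm: rouquier generate}: the Rouquier complexes $T_v$, $v\in\widetilde{S_n}$, generate $\CK(\ASBim_n)$, and since the trace functor takes cones to cones, the objects $\Tr(T_v)$ generate $\Tr(\ASBim_n)$. It then suffices to show, by induction on $\ell(v)$, that each $\Tr(T_v)$ can be built by iterated cones from shifts of the objects $\Tr(T_u)$ with $u$ of minimal length in its conjugacy class. If $v$ already has minimal length this is immediate (in particular when $\ell(v)=0$, which is the base case). Otherwise, by He--Nie \cite{HN} there is an element strongly conjugate to $v$ admitting a length-decreasing elementary conjugation; using Lemma \ref{lem: strongly conjugate} to replace $v$ by it, we may assume there is a simple reflection $s$ (possibly $s=s_0$) with $\ell(svs)=\ell(v)-2$. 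Write $w=svs$. A short length computation gives $\ell(sv)=\ell(vs)=\ell(v)-1$, which makes all the relevant products of affine permutations length-additive; consequently $T_v\simeq T_sT_wT_s$ and $T_wT_s\simeq T_{ws}=T_{sv}$ in $\CK(\ASBim_n)$. Using $\Tr(AB)\simeq\Tr(BA)$ we obtain $\Tr(T_v)\simeq\Tr(T_wT_s^2)$. Now \eqref{eq: skein} exhibits $T_s^2$ as built by a single cone from $\one$ and a shift of $T_s$ (the relation \eqref{eq: skein} is stated for the finite simple reflections, but conjugating by a power of $\Omega$ transports it to $T_{s_0}$, and for this argument only the shape of the triangle matters, not the precise attaching map); tensoring on the left by $T_w$, the object $T_wT_s^2$ is built from $T_w$ and a shift of $T_wT_s=T_{sv}$, and applying $\Tr$ we conclude that $\Tr(T_v)$ is built from $\Tr(T_{svs})$ and a shift of $\Tr(T_{sv})$. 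Since $\ell(svs)=\ell(v)-2$ and $\ell(sv)=\ell(v)-1$, the inductive hypothesis applies to both, which finishes (a).

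The only genuinely external ingredient is the He--Nie combinatorics for the extended affine Weyl group: every element is strongly conjugate to one admitting a length-decreasing elementary conjugation, and minimal-length elements of a conjugacy class are all strongly conjugate. I expect the only delicate point on our side to be the bookkeeping verifying that the products $T_v=T_sT_{svs}T_s$ and $T_{svs}T_s=T_{sv}$ are honest reduced products of Rouquier complexes (equivalently, that the corresponding identities in $\widetilde{S_n}$ are length-additive), together with checking that \eqref{eq: skein} survives conjugation by $\Omega$ so as to apply when $s=s_0$; neither should pose a real obstacle.
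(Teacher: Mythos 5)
Your argument is correct and is essentially the paper's own proof: part (b) is exactly He--Nie plus Lemma \ref{lem: strongly conjugate}, and part (a) is the same induction on $\ell(v)$ using Theorem \ref{thm: rouquier generate}, the cyclicity $\Tr(XY)\simeq\Tr(YX)$, and the skein triangle \eqref{eq: skein} applied to $\Tr(T_sT_{svs}T_s)\simeq\Tr(T_s^2T_{svs})$. The only cosmetic differences are that the cone in \eqref{eq: skein} contributes \emph{two} shifted copies of $T_{sv}$ rather than one (harmless for the induction), and that your explicit insertion of length-preserving strong conjugations before the length-dropping step is just the careful reading of the same He--Nie result the paper invokes.
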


\begin{proof}
a) Recall that by Theorem \ref{thm: rouquier generate}, $\Tr(\ASBim_n)$ is generated by the objects $\Tr(T_v)$ for all $v\in \widetilde{S_n}$. Assume that $v$ is not of minimal length in its conjugacy class. Then by \cite[Theorem 2.10]{HN} there exists a simple reflection $s_i$ such that $\ell(s_ivs_i)=\ell(v)-2$. Let $v'=s_ivs_i$, then $T_v=T_iT_{v'}T_{i}$ and: 
$$
\Tr(T_v)=\Tr(T_iT_{v'}T_{i})\simeq \Tr(T_i^2T_{v'})
$$
which can be resolved by $\Tr(T_{v'})$ and two copies of $T_iT_{v'}=T_{s_iv'}$. Since $\ell(v'),\ell(s_iv')<\ell(v)$ we can proceed by induction on the length of $v$.

b) Suppose that $v,v'$ are two minimal length elements in the same conjugacy class. Then by \cite[Theorem 2.10]{HN} they are strongly conjugate, and by Lemma \ref{lem: strongly conjugate} we get $\Tr(T_v)\simeq \Tr(T_{v'}).$
\end{proof}

For all $(m,n) \in \Z \times \N$, define: 
$$
P_{m,n}:=E_{(d_1(m,n),\ldots,d_n(m,n))}, \qquad \text{where} \qquad d_i(m,n)=\left\lfloor\frac{mi}{n}\right\rfloor-\left\lfloor\frac{m(i-1)}{n}\right\rfloor
$$

\begin{theorem}
\label{thm: convex}
Let $v$ be a minimal length element in its conjugacy class. Then there exists a sequence of pairs $(m_1,n_1),\ldots,(m_r,n_r)$ such that: 
\begin{equation}
\label{eqn:objects}
\Tr(T_v)=P_{m_1,n_1}\star \cdots \star P_{m_r,n_r} \qquad \text{and} \qquad \frac{m_1}{n_1}\le \frac{m_2}{n_2}\le \cdots\le \frac{m_r}{n_r}
\end{equation}
Such a sequence is unique up to permutation of those $P_{(m_i,n_i)}$ of the same slope $\frac{m_i}{n_i}$.
\end{theorem}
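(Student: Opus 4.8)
The plan is to treat existence and uniqueness separately, reducing existence to a statement purely inside the affine braid group. By Theorem~\ref{thm: HN}(b), the isomorphism class of $\Tr(T_v)$ depends only on the conjugacy class $C=C(v)$, and by Corollary~\ref{cor: convex paths} (together with Lemmas~\ref{conj: general} and~\ref{lem: newton}), the class $C$ is encoded by a convex path whose steps $(n_1,m_1),\dots,(n_r,m_r)$, $\tfrac{m_1}{n_1}\le\dots\le\tfrac{m_r}{n_r}$, record the cycle type $(n_1,\dots,n_r)$ and cycle sums $m_i$ of $C$. On the other hand, unwinding~\eqref{eqn:general object}, the object $P_{m_1,n_1}\star\dots\star P_{m_r,n_r}$ is $\Tr(\beta)$ for the explicit affine braid $\beta=Y^{\dd^1}\sigma_{[1,n_1]}\cdots Y^{\dd^r}\sigma_{[n-n_r+1,n]}\in\ABr_n$, where $\dd^j=(d_1(m_j,n_j),\dots,d_{n_j}(m_j,n_j))$ is the balanced vector defining $P_{m_j,n_j}$. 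The image of $\beta$ in $\widetilde{S_n}$ is $e_{\dd^1}\star\dots\star e_{\dd^r}$, which has cycle type $(n_1,\dots,n_r)$ and cycle sums $m_i$, hence lies in $C$. Existence therefore reduces to the following \emph{conjugacy claim}: the braid $\beta$ is conjugate in $\ABr_n$ to the positive braid lift of a minimal-length element $v'$ of $C$. Granting it, cyclicity of the trace ($\Tr(XY)\simeq\Tr(YX)$, property (b)) shows that conjugate affine braids have isomorphic Rouquier traces, so $P_{m_1,n_1}\star\dots\star P_{m_r,n_r}=\Tr(\beta)\simeq\Tr(T_{v'})\simeq\Tr(T_v)$ by Theorem~\ref{thm: HN}(b), with the sequence $(m_i,n_i)$ equal to the convex-path data of $C$.

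To prove the conjugacy claim I would first reduce to $r=1$. Writing $\beta=\beta_1\star\dots\star\beta_r$ with $\beta_j=Y^{\dd^j}\sigma_{[1,n_j]}$ on the $j$-th block, and using that $\star\colon\ABr_{n_1}\times\dots\times\ABr_{n_r}\to\ABr_n$ is a group homomorphism, any block-wise conjugacy $\beta_j=g_j\gamma_j g_j^{-1}$ in $\ABr_{n_j}$ assembles into the conjugacy $\beta=(g_1\star\dots\star g_r)(\gamma_1\star\dots\star\gamma_r)(g_1\star\dots\star g_r)^{-1}$ in $\ABr_n$. Because the slopes are non-decreasing, $\ell$ is additive along $\star$ on the dominant chamber and the $\star$-product of minimal-length elements of the blocks is minimal-length in its $\widetilde{S_n}$-class (the He--Nie minimal-length theory, reflecting the convexity in Corollary~\ref{cor: convex paths}); so the $\star$-product of the positive braid lifts $\gamma_j=T_{v'_j}$ is again the positive braid lift of a minimal-length element of $C$. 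This reduces us to a single block $(n,m)$. When $\gcd(n,m)=1$, Corollary~\ref{cor: dmn} identifies the minimal-length element as $\pi^m$, with positive braid lift $\Omega^m$, and a direct telescoping using only $\Omega=Y_1T_1^{-1}\cdots T_{n-1}^{-1}$ and $T_i^{-1}Y_iT_i^{-1}\simeq Y_{i+1}$ shows that $\beta=Y^{\dd^1}\sigma_{[1,n]}$ is conjugate to $\Omega^m$ in $\ABr_n$ (for $m=1$: cycling $Y_n$ around and applying $T_{n-1}Y_n=Y_{n-1}T_{n-1}^{-1}$, etc., gives $\Tr(Y_nT_1\cdots T_{n-1})\simeq\Tr(Y_1T_1^{-1}\cdots T_{n-1}^{-1})=\Tr(\Omega)$). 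For general $\gcd(n,m)$, the minimal-length element of the class with cycle type $(n)$ and cycle sum $m$ is, by the same combinatorics, of the form $\Omega^{a}T_c$ with $c$ a short word (e.g. $\pi^2 s_1$ when $(n,m)=(4,2)$), and its positive lift is reached from $\beta$ by the analogous braid manipulations absorbing the surplus Coxeter generators into the Wakimoto factors.

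For uniqueness, the convex path is a complete invariant of the conjugacy class by Corollary~\ref{cor: convex paths}; conversely, the objects $\Tr(T_v)$ attached to minimal-length representatives of distinct conjugacy classes are pairwise non-isomorphic, since their classes in $\bigoplus_n G(\Tr(\ASBim_n))$ are the images under~\eqref{eq: K0 tr} of the basis of $\bigoplus_n\Tr(\AH_n)$ from Theorem~\ref{th: trace hecke}, which one separates using the surjection of Theorem~\ref{thm:main} and the PBW basis~\eqref{eqn:pbw} of $\CA$. Hence the multiset $\{(n_i,m_i)\}$ is determined by $\Tr(T_v)$. Finally, $P_{m,n}\star P_{m',n'}\simeq P_{m',n'}\star P_{m,n}$ whenever $\tfrac mn=\tfrac{m'}{n'}$: for equal-slope blocks the block-interchange braid is realized inside $\ABr_n$ up to conjugation (equivalently, invoke cyclicity of $\Tr$ on the $\star$-product), which is exactly the asserted indeterminacy up to permutation of equal-slope pieces.

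I expect the conjugacy claim to be the main obstacle, with two delicate points: (a) the explicit $\ABr_n$-conjugacy for a single non-coprime block, which first requires isolating the minimal-length representative via He--Nie's combinatorics and then a careful computation with the affine braid relations to absorb the extra Coxeter generators into the Wakimoto factors; and (b) the assertion that minimality of length survives $\star$ precisely when the slopes are arranged non-decreasingly. The distinguishing step in the uniqueness argument is comparatively routine, but it does pass through the Grothendieck group and the elliptic Hall algebra rather than remaining at the categorical level.
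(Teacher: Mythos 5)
Your overall architecture for existence is sound (reduce to a conjugacy in $\ABr_n$, then use cyclicity of $\Tr$ and Theorem~\ref{thm: HN}(b)), but the route is genuinely different from the paper's and the key step is left unproven. The paper does \emph{not} argue algebraically inside $\ABr_n$: it closes the positive braid lift $\beta(v)$ to a link $L_v$ in the thickened torus, observes that $\ell(v)$ is the number of crossings, that crossings between distinct components are positive (forcing $\tfrac{m_i}{n_i}\le\tfrac{m_j}{n_j}$ for $i<j$), and that minimality of $\ell(v)$ forces each component to have the minimal number $\gcd(m_i,n_i)-1$ of self-crossings in its homology class, hence to be isotopic to the standard convex realization, whose underlying annular braid is conjugate to $P_{m_i,n_i}$ (citing Mellit). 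Your proposed algebraic substitute has two genuine gaps, both of which you flag but neither of which you close. First, the assertion that ``$\ell$ is additive along $\star$ on the dominant chamber and the $\star$-product of minimal-length elements is minimal-length in its class precisely when the slopes are non-decreasing'' is essentially the entire content of the theorem, not a citable fact: length is badly non-additive under $\star$ in general (already $e_{(1)}\star e_{(0)}=y_1=\pi s_1$ has length $1$ while both factors have length $0$), and the He--Nie results quoted in the paper (Theorem~\ref{th: trace hecke}, \cite[Theorem 2.10]{HN}) do not supply this. Second, the single-block non-coprime case is only gestured at; the explicit conjugation of $Y^{\dd}\sigma_{[1,n]}$ to the positive lift of the minimal-length representative when $\gcd(m,n)>1$ is exactly the computation you would need to carry out, and ``absorbing the surplus Coxeter generators'' is not an argument.

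Your uniqueness argument also overshoots and, in its ``conversely'' direction, is circular. The paper's uniqueness claim is purely combinatorial: the convex path data $(m_i,n_i)$ is determined by the conjugacy class of $v$ (Corollary~\ref{cor: convex paths}), full stop. You instead try to prove that the \emph{objects} $\Tr(T_v)$ for distinct conjugacy classes are non-isomorphic by passing to $\bigoplus_n G(\Tr(\ASBim_n))$ and invoking the surjection of Theorem~\ref{thm:main} together with the basis of $\Tr(\AH_n)$. A surjection \emph{onto} $G(\Tr(\ASBim_n))$ cannot separate elements of $G(\Tr(\ASBim_n))$; what you would need is injectivity of $\Tr(\AH_n)\to G(\Tr(\ASBim_n))$, which is not established in the paper and is closely tied to the open Conjecture~\ref{conj:iso}. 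Fortunately this stronger statement is not needed: drop it and keep only the combinatorial uniqueness, plus the observation (which you make correctly) that equal-slope factors may be permuted.
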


\begin{remark}
Such sequences $(m_i,n_i)$ correspond to convex paths on the lattice $\Z^2$ which label the basis in $\Tr(\AH_n)$, see Corollary \ref{cor: convex paths}.
\end{remark}

\begin{proof}
The uniqueness follows from Corollary \ref{cor: convex paths}, since such $(m_i,n_i)$ are uniquely determined by the conjugacy class of $v$ in $\widetilde{S_n}$.

To prove existence, we use the topological interpretation of closed annular braids as in the Introduction. Let $\beta(v)\in \ABr_n$ be the positive braid lift of $v$, then $\ell(v)$ equals the number of crossings in $\beta(v)$. The closure of $\beta(v)$ is a link $L_v$ in the thickened torus with several components (representing some homology classes $(m_i,n_i)\in H^2(\mathbb{T},\Z)\simeq \Z^2$), and all crossings between different components are positive. This implies $\frac{m_i}{n_i}\le \frac{m_j}{n_j}$ for $i<j$.

Since $v$ has minimal length in its conjugacy class, each component of $L_v$ has minimal possible number $\gcd(m_i,n_i)-1$ of self-intersections in its homology class $(m_i,n_i)$. This can be seen by gluing the torus from a square, drawing $\gcd(m_i,n_i)$ parallel straight lines of slope $\frac{m_i}{n_i}$ on it and connecting them by a Coxeter braid with $\gcd(m_i,n_i)-1$ crossings. Such a curve intersects the horizontal side of the square in $n_i$ points, and hence is a closure of an annular braid on $n_i$ strands which is conjugate to $P_{m_i,n_i}$ in $\ABr_n$. See also \cite[Section 6]{Mellit}.
\end{proof}

\begin{remark}
As a warning to the reader, it is not true that $P_{m,n}$ represent positive braid lifts of permutations, they are only conjugate to them. For example, suppose that $\gcd(m,n)=1$, then $\Omega^m$ is the unique positive braid lift of a minimal length element in its conjugacy class. It has length $0$ and corresponds to the $(m,n)$ torus knot. Thus, while: 
$$
Y_1^{d_1(m,n)}\cdots Y_n^{d_n(m,n)}T_1\cdots T_{n-1}
$$
is not equal to $\Omega^m$, we are claiming that they are conjugate in the affine braid group similarly to Corollary \ref{cor: dmn}. 

For $m=1$ we get $d_i(m,n)=0$ for $i<n$ and $d_n(m,n)=1$, so we need to consider the element
$$
Y_nT_1\cdots T_{n-1}=T_{n-1}^{-1}\cdots T_1^{-1}\Omega T_1\cdots T_{n-1}
$$
which is conjugate to $\Omega$.
\end{remark}

By combining Theorems \ref{thm: HN} and \ref{thm: convex} we get the following.

\begin{corollary}
The category $\Tr(\ASBim_n)$ is generated by the objects \eqref{eqn:objects}. 

\end{corollary}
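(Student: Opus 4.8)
The proof is immediate from the two preceding theorems, so the plan is simply to chain them together. First I would invoke Theorem \ref{thm: HN}(a): the pre-triangulated dg category $\Tr(\ASBim_n)$ is generated by the collection $\{\Tr(T_v)\}$, where $v$ ranges over a fixed set of minimal-length representatives of the conjugacy classes of $\widetilde{S_n}$ (one per class; by part (b) the choice of representative is immaterial up to isomorphism). Since generation is transitive for pre-triangulated hulls --- if every member of one family lies in the hull of a second family, then generation by the first implies generation by the second --- it suffices to observe that each $\Tr(T_v)$ in this collection is among the objects listed in \eqref{eqn:objects}.

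That observation is exactly Theorem \ref{thm: convex}: for $v$ of minimal length in its conjugacy class, there is a sequence $(m_1,n_1),\dots,(m_r,n_r)$ with $n_1+\cdots+n_r=n$ and $\frac{m_1}{n_1}\le\cdots\le\frac{m_r}{n_r}$ such that $\Tr(T_v)=P_{m_1,n_1}\star\cdots\star P_{m_r,n_r}$, and each such star product is precisely an object of the form \eqref{eqn:objects} (these star products of traces of Rouquier complexes of affine braids are well defined as objects, cf. the discussion of \eqref{eqn:unconstructed product before tr}). Substituting into the generation statement of Theorem \ref{thm: HN}(a) then yields that the objects \eqref{eqn:objects} generate $\Tr(\ASBim_n)$, which is the claim.

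There is no genuine obstacle here: the substantive content is already packaged in Theorems \ref{thm: HN} and \ref{thm: convex}, whose proofs rest, respectively, on the He--Nie theory of minimal-length elements in affine conjugacy classes together with the strong-conjugation argument of Lemma \ref{lem: strongly conjugate}, and on the combinatorics of closed annular braids (realizing each homology class $(m_i,n_i)$ by a Coxeter-braided family of parallel lines of slope $m_i/n_i$). The only point worth making explicit in the write-up is the transitivity of the generation relation, and even that is strictly unnecessary since the identification in Theorem \ref{thm: convex} is an equality of objects rather than mere membership in a hull.
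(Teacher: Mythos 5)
Your argument is exactly the paper's: the corollary is obtained by combining Theorem \ref{thm: HN}(a) (generation by $\Tr(T_v)$ for $v$ of minimal length in its conjugacy class) with Theorem \ref{thm: convex} (each such $\Tr(T_v)$ equals one of the objects \eqref{eqn:objects}). Your additional remarks on transitivity of generation are correct but, as you note, unnecessary since Theorem \ref{thm: convex} gives an identification of objects.
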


\section{Trace of the affine Hecke category: Relations}

\subsection{Exact sequences}

We begin by collecting several useful exact sequences in the affine Hecke category and its trace. 

\begin{lemma}
\label{lem: skein 1}
There are chain maps $\phi: Y_iT_i\to T_{i}Y_{i+1}$ and $\psi : T_iY_i\to Y_{i+1}T_i$, whose cones have the following form:
\begin{align}
&[Y_iT_i \xrightarrow{\phi} T_{i}Y_{i+1}]\simeq [qY_i\xrightarrow{x_{i+1}-x_i-\delta} q^{-1}Y_{i}] \label{eqn:align 1} \\
&[T_iY_i \xrightarrow{\psi} Y_{i+1}T_i]\simeq [qY_i\xrightarrow{x_i-x_{i+1}} q^{-1}Y_{i}] \label{eqn:align 2}
\end{align} 
Furthermore, $\Hom(Y_iT_i,T_{i}Y_{i+1})$ and $\Hom(Y_iT_i, T_{i}Y_{i+1})$ are free rank 1 $\widetilde{R}$-modules  spanned by $\phi$ and $\psi$ respectively.
\end{lemma}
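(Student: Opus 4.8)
The plan is to produce both chain maps by transporting a single elementary chain map along the Wakimoto objects. From the relation $T_i^{-1}Y_iT_i^{-1}\simeq Y_{i+1}$ recorded above one gets isomorphisms $Y_iT_i^{-1}\simeq T_iY_{i+1}$ and $T_i^{-1}Y_i\simeq Y_{i+1}T_i$ in $\CK(\ASBim_n)$. Let $c\colon T_i\to T_i^{-1}$ be the chain map of \eqref{eq: skein 0}, so that $\Cone(c)\simeq[q\one\xrightarrow{x_i-x_{i+1}}q^{-1}\one]$. I would then define $\phi$ to be the composite $Y_iT_i\xrightarrow{Y_i\otimes c}Y_iT_i^{-1}\xrightarrow{\sim}T_iY_{i+1}$ and $\psi$ the composite $T_iY_i\xrightarrow{c\otimes Y_i}T_i^{-1}Y_i\xrightarrow{\sim}Y_{i+1}T_i$, where $Y_i\otimes c$ and $c\otimes Y_i$ denote the images of $c$ under the functors $Y_i\otimes(-)$ and $(-)\otimes Y_i$.

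Since these functors are triangulated and the final isomorphisms leave the cone unchanged, $\Cone(\phi)\simeq Y_i\otimes\Cone(c)$ and $\Cone(\psi)\simeq\Cone(c)\otimes Y_i$; by \eqref{eq: skein 0} each is a two-term complex $[qY_i\to q^{-1}Y_i]$, and only the differential needs to be pinned down. The functor $Y_i\otimes(-)$ converts multiplication by $x_i-x_{i+1}$ on $\one$ into the \emph{right} action of $x_i-x_{i+1}$ on $Y_i$, whereas $(-)\otimes Y_i$ converts it into the \emph{left} action; the latter already yields \eqref{eqn:align 2}. To match \eqref{eqn:align 1}, I would rewrite the right action in terms of the left action using Corollary \ref{cor: rouquier auto} for $Y_i=Y^{\ee_i}$ (i.e.\ $\dd=\ee_i$, $w=e$): on $Y_i$ the right action of $x_i$ equals the left action of $x_i+\delta$, while the right action of $x_{i+1}$ equals the left action of $x_{i+1}$, so the differential becomes left multiplication by $x_{i+1}-x_i-\delta$ (up to an overall sign, which is immaterial for a cone). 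The asymmetry between \eqref{eqn:align 1} and \eqref{eqn:align 2} is exactly the asymmetry between tensoring on the left and on the right, and is what produces the extra $-\delta$.

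For the $\Hom$-module statement, the key input is the vanishing $\Hom^\bullet_{\CK(\ASBim_n)}(T_i,\one)=0$ (equivalently, since $T_i$ is invertible, $\Hom^\bullet(\one,T_i^{-1})=0$). This is immediate from $T_i=q^{-1}[B_i\xrightarrow{m}\one]$: the induced two-term complex $\Hom^\bullet(T_i,\one)=\big[\Hom(\one,\one)\xrightarrow{m^*}\Hom(B_i,\one)\big]$ carries $\mathrm{id}_\one$ to $m$, and $m$ generates the free rank-one $\tR$-module $\Hom_{\tR-\tR}(B_i,\one)$, so $m^*$ is an isomorphism and the complex is contractible. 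Granting this, I would apply the triangulated functor $\Hom^\bullet(Y_iT_i,-)$ to the triangle $Y_iT_i\xrightarrow{\phi}T_iY_{i+1}\to\Cone(\phi)\to Y_iT_i[1]$. Because $\Cone(\phi)$ is an iterated extension of shifts of $Y_i$ and $\Hom^\bullet(Y_iT_i,Y_i)\cong\Hom^\bullet(T_i,\one)=0$ (tensoring by $Y_i^{-1}$ on the left is an equivalence), it follows that $\Hom^\bullet(Y_iT_i,\Cone(\phi))=0$, whence $\phi_*\colon\Hom^\bullet(Y_iT_i,Y_iT_i)\xrightarrow{\ \sim\ }\Hom^\bullet(Y_iT_i,T_iY_{i+1})$. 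By Lemma \ref{lem: rouquier auto}(a) the source is $\tR$, concentrated in cohomological degree $0$, and $\phi_*(\mathrm{id})=\phi$; as $\phi$ is a morphism of $\tR$-bimodule complexes, $\phi_*$ is $\tR$-linear, so $\Hom(Y_iT_i,T_iY_{i+1})$ is a free rank-one $\tR$-module on $\phi$. The argument for $\psi$ is verbatim, now tensoring by $Y_i^{-1}$ on the right so that $\Hom^\bullet(T_iY_i,Y_i)\cong\Hom^\bullet(T_i,\one)=0$.

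The computation is mostly bookkeeping; the point requiring care is keeping track of the side on which $\tR$ acts as one passes through the tensor products, so as to correctly extract the $-\delta$ in \eqref{eqn:align 1} from Corollary \ref{cor: rouquier auto}. The only genuinely non-formal ingredient is the vanishing $\Hom^\bullet(T_i,\one)=0$, which is standard for Rouquier complexes of positive braid lifts.
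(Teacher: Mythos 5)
Your proposal is correct and follows essentially the same route as the paper's proof: both transport the canonical skein map (the paper uses $T_i^2\to\one$ tensored with $T_iY_{i+1}$ on the left, resp. $Y_{i+1}T_i$ on the right, which via $Y_i=T_iY_{i+1}T_i$ is the same as your $T_i\to T_i^{-1}$ tensored with $Y_i$), and both extract the $-\delta$ in \eqref{eqn:align 1} from Corollary \ref{cor: rouquier auto}. The only cosmetic difference is in the $\Hom$ statement, where the paper reduces to the standard isomorphism $\Hom(T_i^2,\one)\simeq\widetilde{R}$ while you rederive it from the vanishing $\Hom^{\bullet}(T_i,\one)=0$ and the long exact sequence; both arguments are valid and rest on the same facts about $B_i$.
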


\begin{proof}
We have $Y_i=T_{i}Y_{i+1}T_{i}$, so $Y_iT_i=T_{i}Y_{i+1}T_{i}^2$. 
Therefore
$$
\Hom(Y_iT_i,T_{i}Y_{i+1})\simeq \Hom(T_{i}Y_{i+1}T_{i}^2,T_{i}Y_{i+1})\simeq
\Hom(T_i^2,\one)\simeq \widetilde{R}.
$$
The second isomorphism follows from the fact that $T_{i}Y_{i+1}$ is invertible. By \eqref{eq: skein} there is a canonical map $T_i^2\to \one$ (which spans $\Hom(T_i^2,\one)\simeq \widetilde{R}$) with cone 
$[qT_i\xrightarrow{x_i-x_{i+1}} q^{-1}T_i]$. By tensoring it with $T_{i}Y_{i+1}$ on the left, we get the desired statement. Note that the left action of $(x_i-x_{i+1})$ on $T_i$ is homotopic to the left action of $(x_{i+1}-x_i-\delta)$ on $T_{i}Y_{i+1}\cdot T_{i}=Y_i$, due to Corollary \ref{cor: rouquier auto}. We thus conclude \eqref{eqn:align 1}.

Similarly, $T_iY_i=T_i^2Y_{i+1}T_i$, so by tensoring the canonical map $T_i^2\to \one$ with $Y_{i+1}T_i$ on the right yields \eqref{eqn:align 2}.
\end{proof}

\begin{remark}
\label{rem: skein 1 commute}
If $j\neq i,i+1$, then $Y_j$ commutes with $Y_i,Y_{i+1}$ and $T_i$. Furthermore, Rouquier canonicity \cite[Proposition 4.1]{Elias} implies that there is a commutative diagram of isomorphisms:
$$
\begin{tikzcd}
\Hom(Y_jT_i^2,Y_j)\arrow{r} \arrow{d} & \Hom(T_i^2,\one)\\
\Hom(T_i^2Y_j,Y_j) \arrow{ur} &
\end{tikzcd}
$$
where the rightward arrows use the fact that $Y_j$ is invertible, and the vertical arrow uses $Y_jT_i\simeq T_iY_j$. 

In short, we can say that $Y_j$ commutes with the skein map $T_i^2\to \one$, and by a similar argument, $Y_j$ commutes with the maps $\phi$ and $\psi$ from Lemma \ref{lem: skein 1}.
\end{remark}

\begin{theorem}
\label{lem: cone alpha}
For any $1 \leq i < n$ and all integers $d_1,\dots,d_n$, there is a chain map:
\begin{equation}
\label{eqn:cone alpha}
E_{(d_1,\dots,d_i,d_{i+1},\dots,d_n)} \rightarrow E_{(d_1,\dots,d_{i}-1,d_{i+1}+1,\dots,d_n)}
\end{equation}
with cone homotopy equivalent to: 
$$
\left[qE_{(d_1,\dots,d_i)}\star E_{(d_{i+1},\dots,d_n)}\xrightarrow{x_i-x_{n}} q^{-1}E_{(d_1,\dots,d_i)}\star E_{(d_{i+1},\dots,d_n)}\right].
$$
\end{theorem}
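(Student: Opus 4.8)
The plan is to realize the arrow \eqref{eqn:cone alpha} as the trace of a localized copy of the skein chain map $\psi\colon T_iY_i\to Y_{i+1}T_i$ from Lemma \ref{lem: skein 1}, after first rewriting $E_{\dd}$ so that the subword $T_iY_i$ is exposed. Writing $T_{[1,i]}=T_1\cdots T_{i-1}$ and $T_{[i+1,n]}=T_{i+1}\cdots T_{n-1}$, I would start from $E_{\dd}=\Tr(Y_1^{d_1}\cdots Y_n^{d_n}T_1\cdots T_{n-1})$, peel off one Wakimoto factor as $Y^{\dd}=Y_i\,Y^{\dd-\ee_i}$ (the $Y_j$'s commute), apply trace cyclicity $\Tr(AB)\simeq\Tr(BA)$ to move this $Y_i$ to the far right, and then slide it leftward past $T_{n-1},\dots,T_{i+1}$, each of which commutes with $Y_i$. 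This produces an isomorphism $E_{\dd}\simeq\Tr\big(Y^{\dd-\ee_i}\,T_{[1,i]}\,(T_iY_i)\,T_{[i+1,n]}\big)$ in $\Tr(\ASBim_n)$.

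Next I would tensor the chain map $\psi$ on the left by $Y^{\dd-\ee_i}T_{[1,i]}$ and on the right by $T_{[i+1,n]}$ inside $\CK(\ASBim_n)$, and apply $\Tr$; by the functoriality of the trace this yields a morphism in $\Tr(\ASBim_n)$ whose target is $\Tr\big(Y^{\dd-\ee_i}T_{[1,i]}\,(Y_{i+1}T_i)\,T_{[i+1,n]}\big)$. Sliding $Y_{i+1}$ leftward past $T_{i-1},\dots,T_1$ (each of which commutes with $Y_{i+1}$) and using $Y^{\dd-\ee_i}Y_{i+1}=Y^{\dd-\ee_i+\ee_{i+1}}$ identifies this target with $E_{(d_1,\dots,d_i-1,d_{i+1}+1,\dots,d_n)}$. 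Composing these identifications with $\Tr(\psi)$ gives the map \eqref{eqn:cone alpha}.

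To compute the cone, recall from \eqref{eqn:align 2} that $\Cone(\psi)\simeq[qY_i\xrightarrow{x_i-x_{i+1}}q^{-1}Y_i]$. Since tensoring by bimodules and the trace functor both carry cones to cones, the cone of \eqref{eqn:cone alpha} is homotopy equivalent to $[qC\xrightarrow{x_i-x_{i+1}}q^{-1}C]$ with $C=\Tr\big(Y^{\dd-\ee_i}T_{[1,i]}\,Y_i\,T_{[i+1,n]}\big)$. Sliding $Y_i$ past $T_{[i+1,n]}$ and applying cyclicity once more (together with $Y_iY^{\dd-\ee_i}=Y^{\dd}$) identifies $C$ with $\Tr(Y^{\dd}T_{[1,i]}T_{[i+1,n]})=E_{(d_1,\dots,d_i)}\star E_{(d_{i+1},\dots,d_n)}$, in the notation of \eqref{eqn:general object}. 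Finally, by Lemma \ref{lem: endo trace rouquier} and the discussion following \eqref{eqn:general object}, the $\widetilde{R}$-action on $E_{(d_1,\dots,d_i)}\star E_{(d_{i+1},\dots,d_n)}$ factors through $R_{i,n-i}$, where $x_{i+1}=\dots=x_n$; hence the differential $x_i-x_{i+1}$ equals $x_i-x_n$ there, which is exactly the cone asserted in the statement.

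The one point I expect to require genuine care is identifying this differential: one must keep track of which copy of $\widetilde{R}$ the differential coming out of Lemma \ref{lem: skein 1} acts through once it has been pushed past the surrounding tensor factors and carried through the cyclic identifications. Here I would invoke Lemma \ref{lem: rouquier auto}(b) and Corollary \ref{cor: rouquier auto} to trade left actions on interior factors for right actions (and back), together with the vanishing of $\delta$ on the trace; the total affine permutation of $Y^{\dd}T_{[1,i]}T_{[i+1,n]}$ is $y^{\dd}$ times the product of the cycles $(1\,\cdots\,i)$ and $(i+1\,\cdots\,n)$, which is precisely what forces the $\widetilde{R}$-action to descend to $R_{i,n-i}$ and makes the reduction $x_i-x_{i+1}=x_i-x_n$ legitimate. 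All the remaining ingredients — the commutations $Y_jT_k\simeq T_kY_j$, the cyclic moves in the trace, and the passage of cones through $\Tr$ — are formal.
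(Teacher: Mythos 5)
Your proposal is correct and follows essentially the same route as the paper's proof: expose $T_iY_i$ via cyclicity and the commutations $Y_iT_j\simeq T_jY_i$, apply the skein map $\psi$ of Lemma \ref{lem: skein 1} tensored with the surrounding factors, and identify the cone $[qY_i\xrightarrow{x_i-x_{i+1}}q^{-1}Y_i]$ with two copies of $E_{(d_1,\dots,d_i)}\star E_{(d_{i+1},\dots,d_n)}$. The only cosmetic difference is that you justify $x_i-x_{i+1}=x_i-x_n$ via the quotient $R_{i,n-i}$ from Lemma \ref{lem: endo trace rouquier}, whereas the paper invokes Corollary \ref{cor: rouquier auto} directly; these amount to the same thing.
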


\begin{proof}
Let $\dd = (d_1,\dots,d_n)$. We have: 
$$
E_{\dd}=\Tr(Y^{\dd}T_1\cdots T_{n-1}) = 
\Tr(Y_iY^{\dd-\ee_i}T_1\cdots T_{n-1}) \simeq
$$
$$
\simeq \Tr(Y^{\dd-\ee_i}T_1\cdots T_{n-1}Y_i)=
\Tr(Y^{\dd-\ee_i}T_1\cdots T_{i-1}(T_{i}Y_i)T_{i+1}\cdots T_{n-1}).
$$
By Lemma \ref{lem: skein 1} there is a chain map 
from $E_{\dd}$ to:
$$
\Tr(Y^{\dd-\ee_i}T_1\cdots T_{i-1}(Y_{i+1}T_i)T_{i+1}\cdots T_{n-1}) = E_{\dd-\ee_i+\ee_{i+1}}
$$
The cone of this map is isomorphic to two copies of: 
$$
\Tr(Y^{\dd-\ee_i}T_1\cdots T_{i-1}\one(Y_{i})T_{i+1}\cdots T_{n-1})=
\Tr(Y^{\dd-\ee_i}T_1\cdots T_{i-1} \one T_{i+1}\cdots T_{n-1}Y_i) \simeq
$$
$$
\simeq \Tr(Y_iY^{\dd-\ee_i}T_1\cdots T_{i-1}\one T_{i+1}\cdots T_{n-1}) =
E_{(d_1,\ldots,d_i)}\star E_{(d_{i+1},\ldots,d_n)}.
$$
By Lemma \ref{lem: skein 1}, the connecting map between the aforementioned two copies is equal to $x_i-x_{i+1}$ on the copy of $\one$ in the middle, which is equivalent to the right action of $x_i-x_n$ (by Corollary \ref{cor: rouquier auto}).
\end{proof}

\begin{remark}
The analogous statement to Theorem \ref{lem: cone alpha} holds if the map \eqref{eqn:cone alpha} is $\star$-multiplied with other $E$'s on both the left and the right. Indeed, $Y_i$ and $Y_{i+1}$ commute with all other $Y_j$ and all $T_k, k\neq i-1,i,i+1$, so the same proof works.
\end{remark}

\begin{remark}

Motivated by the geometry of $\Comm_n$ and \cite[Theorem 5.25]{GW}, we expect that the map \eqref{eqn:cone alpha} vanishes in $\Tr(\ASBim_n)$. As a consequence, $E_{(d_1,\dots,d_i,d_{i+1},\dots, d_n)}$ and  $E_{(d_1,\dots,d_{i}-1,d_{i+1}+1,\dots,d_n)}$ would be shown to be direct summands of: 
$$
\left[qE_{(d_1,\dots,d_i)}\star E_{(d_{i+1},\dots,d_n)}\xrightarrow{x_i-x_{n}} q^{-1}E_{(d_1,\dots,d_i)}\star E_{(d_{i+1},\dots,d_n)}\right].
$$
\end{remark}

\subsection{Exact triangles} We introduce the following notations to keep track of more complicated exact triangles. First, we define $\alpha_i=\ee_i-\ee_{i+1}$. Given a vector $\dd = (d_1,\ldots,d_n)$, we define $\dd[i,j]=(d_i,\ldots,d_j)$. Finally, for $\aaa=(a_1,\ldots,a_n)$ and $\bb=(b_1,\ldots,b_m)$ we let: 
$$
E_{\aaa}\bstar E_{\bb}=\left[qE_{\aaa}\star E_{\bb}\xrightarrow{x_n-x_{n+m}} q^{-1}E_{\aaa}\star E_{\bb}\right]
$$

\begin{lemma}
\label{lem: bstar}
One can extend $\bstar$ to an associative operation on objects $E_{\aaa}$.
\end{lemma}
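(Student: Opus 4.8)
The plan is to realize every iterate of $\bstar$ as one Koszul-type complex and then to deduce associativity from the basis-independence of Koszul complexes.

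First I would unwind the binary operation. For $\aaa$ of length $n$ and $\bb$ of length $m$, Theorem~\ref{lem: cone alpha}, applied with index $i=n$ to the concatenated vector $(\aaa,\bb)\in\Z^{n+m}$, identifies $E_\aaa\bstar E_\bb$ with the mapping cone of the chain map $E_{(\aaa,\bb)}\to E_{(\aaa,\bb)-\ee_n+\ee_{n+1}}$; thus $E_\aaa\bstar E_\bb$ is obtained from $E_\aaa\star E_\bb$ by turning the endomorphism $x_n-x_{n+m}$ into a differential by the usual cone construction (here $x_n,x_{n+m}$ are the last variables of the two blocks, acting through the $R_{n,m}$-action of Lemma~\ref{lem: endo trace rouquier}). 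With this in mind, for vectors $\aaa^1,\dots,\aaa^r$ of lengths $n_1,\dots,n_r$ and $N_j=n_1+\dots+n_j$, I would \emph{define}
$$
E_{\aaa^1}\bstar\cdots\bstar E_{\aaa^r}\ :=\ \mathrm{Kos}\Bigl(x_{N_1}-x_{N_r},\ \dots,\ x_{N_{r-1}}-x_{N_r}\,;\ E_{\aaa^1}\star\cdots\star E_{\aaa^r}\Bigr),
$$
where $\mathrm{Kos}(f_1,\dots,f_k\,;\,M)$ is the Koszul complex of the pairwise commuting endomorphisms $f_1,\dots,f_k$ of $M$, i.e. the iterated cone turning each $f_i$ in turn into a differential, with the standard signs that make the total differential square to zero and with each step contributing the grading shift that appears in the binary $\bstar$. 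The object $E_{\aaa^1}\star\cdots\star E_{\aaa^r}$ is unambiguous since the braid-level product $\star$ is strictly associative, and all the chain maps and multiplication operators involved are defined on the explicit Rouquier complexes at hand. This will be the ``canonical'' iterated $\bstar$.

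Next I would show that any parenthesization of the binary $\bstar$ recovers this complex up to a natural isomorphism. If $A$ is an iterated $\bstar$ of the blocks $1,\dots,s$ and $B$ an iterated $\bstar$ of the blocks $s+1,\dots,r$, then unwinding the definition — using that the chain maps of Lemma~\ref{lem: skein 1} commute with $\star$-multiplication by further $E$'s on either side (Remark~\ref{rem: skein 1 commute} and the remark following Theorem~\ref{lem: cone alpha}), and that the block variables are restrictions of the ambient ones — gives
$$
A\bstar B\ =\ \mathrm{Kos}\Bigl(S_A\cup S_B\cup\{x_{N_s}-x_{N_r}\}\,;\ E_{\aaa^1}\star\cdots\star E_{\aaa^r}\Bigr),
$$
where $S_A$ and $S_B$ are the differences occurring inside $A$ and $B$. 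An easy induction on $r$ then shows that, whatever the parenthesization, the $r-1$ differences so produced — regarded as vectors $\ee_j-\ee_k$ in the rank-$(r-1)$ sublattice $L=\{\cc\in\Z^r:c_1+\dots+c_r=0\}$ — form a $\Z$-basis of $L$: in the inductive step $S_A$ is a $\Z$-basis of $L\cap(\Z\ee_1\oplus\cdots\oplus\Z\ee_s)$, $S_B$ a $\Z$-basis of $L\cap(\Z\ee_{s+1}\oplus\cdots\oplus\Z\ee_r)$, and adjoining $\ee_s-\ee_r$ gives a $\Z$-basis of $L$. Since the Koszul complex of a finite sequence of commuting endomorphisms is, up to isomorphism, invariant under an invertible $\Z$-linear change of the sequence, each such iterate of $\bstar$ is canonically isomorphic to the complex displayed above; in particular $(E_\aaa\bstar E_\bb)\bstar E_\cc\simeq E_\aaa\bstar(E_\bb\bstar E_\cc)$, and coherence of the associativity isomorphisms is automatic, since each of them is a relabelling of one fixed Koszul complex.

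The step I expect to be the main obstacle is justifying the displayed formula for $A\bstar B$: one must check that $\star$-multiplying an already-formed $\bstar$-complex by another $E$ and then forming the next cone really introduces exactly one new Koszul edge $x_{N_s}-x_{N_r}$ while leaving the previously introduced edges intact, with no higher or twisted differentials. Here I would rely on the commutation statements quoted above together with Lemma~\ref{lem: endo trace rouquier}: because the $R$-action on these traces factors through the block-collapsed ring $R_{n_1,\dots,n_r}$, the various operators $x_i-x_j$ genuinely commute as endomorphisms, so the iterated cones assemble into an honest Koszul complex. Everything else — the lattice bookkeeping and the change-of-basis invariance of Koszul complexes — is routine.
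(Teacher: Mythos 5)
Your proposal is correct and takes essentially the same route as the paper: each parenthesization is realized as a Koszul complex on $E_{\aaa^1}\star\cdots\star E_{\aaa^r}$ whose defining commuting operators differ by a unimodular change of variables, which in the paper is exactly the one-line observation $(x_n-x_{n+m})+(x_{n+m}-x_{n+m+k})=(x_n-x_{n+m+k})$ for the ternary case. Your additional lattice-basis bookkeeping for arbitrary $r$-fold parenthesizations is a harmless strengthening of the paper's change-of-variables argument, not a different method.
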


\begin{proof}
Suppose that $\aaa=(a_1,\ldots,a_n),\bb=(b_1,\ldots,b_m)$ and $\cc=(c_1,\ldots,c_k)$. Then $(E_{\aaa}\bstar E_{\bb})\bstar E_{\cc}$ is the four term Koszul complex built of $E_{\aaa}\star E_{\bb}\star E_{\cc}$  with differentials given by $(x_{n}-x_{n+m})$ and $(x_{n+m}-x_{n+m+k})$. The other composition $E_{\aaa}\bstar (E_{\bb}\bstar E_{\cc})$ is a similar complex with differentials given by $(x_{n+m}-x_{n+m+k})$ and $(x_n-x_{n+m+k})$. Since
$$
(x_{n}-x_{n+m})+(x_{n+m}-x_{n+m+k})=(x_{n}-x_{n+m+k}),
$$
the two complexes are isomorphic via a simple change of variables.
\end{proof}

With these notations in hand, we can compactly write Theorem \ref{lem: cone alpha} as the exact triangle:
$$
\begin{tikzcd}
E_{\dd}\arrow{rr}  & & E_{\dd-\alpha_i} \arrow{dl}\\
 & E_{\dd[1,i]}\bstar E_{\dd[i+1,n]} \arrow{ul} &  
\end{tikzcd}
$$
where $\dd[i,j] = (d_i,\dots,d_j)$ for any $\dd = (d_1,\dots,d_n)$ and all $i\leq j$.

\begin{lemma}
(a) The following diagram commutes up to homotopy:
$$
\begin{tikzcd}
E_{\dd} \arrow{r}\arrow{d}& E_{\dd-\alpha_i}\arrow{d}\\
E_{\dd-\alpha_j}\arrow{r} & E_{\dd-\alpha_i-\alpha_j}
\end{tikzcd}
$$
hence there is a well-defined (up to a homotopy) map $E_{\dd}\to E_{\dd-\alpha_i-\alpha_j}$.

(b) Suppose that $i<j$. The diagram in (a) fits as the central rhombus in the following diagram:
$$
\begin{tikzcd}
E_{\dd[1,i]}\bstar E_{\dd[i+1,n]} \arrow{r} \arrow[bend right]{ddrr}& E_{\dd} \arrow{dl} \arrow{dr} & E_{\dd[1,j]}\bstar E_{\dd[j+1,n]} \arrow{l} \arrow[bend left]{ddll}\\
E_{\dd-\alpha_i} \arrow{u} \arrow{dr} & E_{\dd[1,i]}\bstar E_{\dd[i+1,j]}\bstar E_{\dd[j+1,n]} \arrow{ul} \arrow{ur}& E_{\dd-\alpha_j} \arrow{u} \arrow{dl} \\
E_{\dd[1,j]-\alpha_i}\bstar E_{\dd[j+1,n]} \arrow{u} \arrow{ur}& E_{\dd-\alpha_i-\alpha_j} \arrow{r} \arrow{l}& E_{\dd[1,i]}\bstar E_{\dd[i+1,n]-\alpha_j} \arrow{u} \arrow{ul}
\end{tikzcd}
$$
in which all oriented triangles are exact, and all oriented quadrilaterals are commutative.
\end{lemma}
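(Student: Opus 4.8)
We address the two parts separately; both reduce to Theorem~\ref{lem: cone alpha} together with the exact sequences of Lemma~\ref{lem: skein 1}.

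\emph{Part (a).} Recall that the map $E_{\dd}\to E_{\dd-\alpha_i}$ of Theorem~\ref{lem: cone alpha} is produced by cycling $Y_i$ around the trace and threading it into the word $Y^{\dd-\ee_i}T_1\cdots T_{n-1}$ so that it occupies the slot $T_iY_i$, then applying the chain map $\psi$ of Lemma~\ref{lem: skein 1} there; likewise $E_{\dd}\to E_{\dd-\alpha_j}$ uses the slot $T_jY_j$. The plan for $|i-j|\ge 2$ is to thread $Y_i$ and $Y_j$ \emph{simultaneously} into the word, which is possible since $Y_i$ commutes past $Y_j$ and past all $T_k$ with $k>i$; the two factors $T_iY_i$ and $T_jY_j$ then involve disjoint tensor slots, so by Remark~\ref{rem: skein 1 commute} the skein map at $i$ commutes with the one at $j$ and the square commutes on the nose, up to the canonical cyclic identifications (which is the source of the homotopy). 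For $j=i+1$ the two skein maps both touch the strand $i+1$, and I would prove homotopy-commutativity by restricting to the strands $i,i+1,i+2$, i.e. to $\CK(\ASBim_3)$: there both composites $E_{\dd}\to E_{\dd-\alpha_i-\alpha_{i+1}}$ are chain maps of equal homological and internal degree between the relevant objects $Y^{\dd'}T_1T_2$, the pertinent $\Hom$-space is a free rank one $\widetilde R$-module (as in Lemma~\ref{lem: skein 1}), and Rouquier canonicity (\cite[Proposition~4.1]{Elias}; cf. Remark~\ref{rem: skein 1 commute}) then forces the two composites to agree up to homotopy. This produces the well-defined map $E_{\dd}\to E_{\dd-\alpha_i-\alpha_j}$.

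\emph{Part (b).} The plan is to realize the nine objects of the diagram as convolutions built from a single list of pieces, namely the triple products $E_{\dd''[1,i]}\star E_{\dd''[i+1,j]}\star E_{\dd''[j+1,n]}$ as $\dd''$ ranges over $\dd,\dd-\alpha_i,\dd-\alpha_j,\dd-\alpha_i-\alpha_j$, glued by the degree-one differentials $x_\bullet-x_\bullet$ supplied by the $\bstar$'s and by the skein maps of Theorem~\ref{lem: cone alpha}. Once this is set up, the three exact triangles at the apex $E_{\dd}$ and at $E_{\dd-\alpha_i},E_{\dd-\alpha_j},E_{\dd-\alpha_i-\alpha_j}$ are exactly the triangles of Theorem~\ref{lem: cone alpha} presenting $E_{\dd[1,i]}\bstar E_{\dd[i+1,n]}$, $E_{\dd[1,j]}\bstar E_{\dd[j+1,n]}$, $E_{\dd[1,j]-\alpha_i}\bstar E_{\dd[j+1,n]}$ and $E_{\dd[1,i]}\bstar E_{\dd[i+1,n]-\alpha_j}$ as cones; the two long bent arrows, together with the two triangles meeting the central object $E_{\dd[1,i]}\bstar E_{\dd[i+1,j]}\bstar E_{\dd[j+1,n]}$, are the instances of the $\star$-multiplied form of Theorem~\ref{lem: cone alpha} (the first Remark after it) obtained by splitting the block $[i+1,n]$ at position $j$, respectively the block $[1,j]$ at position $i$. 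These two splittings exhibit the central object as a cone in two different ways, and the associativity of $\bstar$ (Lemma~\ref{lem: bstar}) identifies them. Exactness of every oriented triangle is then automatic; commutativity of the oriented quadrilaterals reduces, for the central rhombus, to part (a), and for the others to the naturality of the construction of Theorem~\ref{lem: cone alpha} under $\star$-multiplication and under the skein maps of Lemma~\ref{lem: skein 1}, i.e. to the fact that applying $\psi$ at one position commutes with applying a $\bstar$-Koszul differential in a disjoint block.

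\emph{Main obstacle.} The substantive difficulty is bookkeeping rather than conceptual: one must carry the grading shifts $q^{\pm1}$ and the signs through the (up to) four-term twisted complex assembled above, and --- more delicately --- verify in the case $j=i+1$ of part (a) that the homotopies can be chosen coherently, so that the quadrilaterals of (b) adjacent to the central rhombus commute up to homotopy at the level of chain maps, not merely after passage to the homotopy category (the latter would not suffice for the downstream use of these diagrams in controlling the objects $G_i$ of Theorem~\ref{thm:rel 2}). A more formal alternative is to deduce the outer hexagon of (b) from the square of part (a) by two applications of the octahedral axiom in the pre-triangulated hull of $\Tr(\ASBim_n)$; I prefer the explicit twisted-complex route because it keeps the morphisms, and not only the objects, under control.
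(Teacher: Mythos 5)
Your overall strategy is the paper's: thread both $Y_i$ and $Y_j$ into the trace word so that the two skein maps of Lemma \ref{lem: skein 1} occupy disjoint tensor slots $(T_iY_i)$ and $(T_jY_j)$, then check that the identifications used in Theorem \ref{lem: cone alpha} (cycling around the trace, commuting Wakimoto objects past $T_k$'s, and Remark \ref{rem: skein 1 commute}) are compatible with both skein maps; part (b) is then the explicit listing of all nine vertices as such words, with $\bstar$-associativity from Lemma \ref{lem: bstar}.

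The one genuine issue is your special-casing of $j=i+1$ in part (a). First, the case split is unnecessary: even when $j=i+1$, the two skein maps act on the disjoint tensor factors $(T_iY_i)$ and $(T_{i+1}Y_{i+1})$ of the word $Y^{\dd-\ee_i-\ee_j}T_1\cdots(T_iY_i)(T_{i+1}Y_{i+1})\cdots T_{n-1}$, so they commute strictly by the interchange law; the only commutations the subsequent identifications require are that $Y_i$ commute with $T_j$, $Y_j$ and the skein map at position $j$ (needs $i\neq j,j+1$) and that $Y_{j+1}$ commute with $T_i$, $Y_i$ and the skein map at position $i$ (needs $j+1\neq i,i+1$) --- both hold for every $i<j$, including $j=i+1$. (The $Y_{i+1}$ created by $\psi_i$ only ever travels left past $T_1,\dots,T_{i-1}$, so it never meets position $j$.) Second, the fallback argument you give for $j=i+1$ is not complete as written: Lemma \ref{lem: skein 1} establishes rank-one freeness only for the specific spaces $\Hom(Y_iT_i,T_iY_{i+1})$ and $\Hom(T_iY_i,Y_{i+1}T_i)$, not for $\Hom$ between products of Wakimoto objects and Coxeter-type Rouquier complexes, so the rank-one claim you invoke would itself need proof; and even granting it, two degree-zero chain maps in a rank-one space agree only up to a scalar in $\C$, which you would still have to pin down to be $1$. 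I would drop that branch entirely and run the uniform argument. For part (b), your plan identifies the right ingredients but should be fleshed out exactly as in the uniform argument above: write each of the eight non-central vertices as the trace of the word with $(T_iY_i)$ replaced by $(Y_i)$ or $(Y_{i+1}T_i)$ and $(T_jY_j)$ replaced by $(Y_j)$ or $(Y_{j+1}T_j)$, so that every arrow is either a skein map in one slot or the inclusion/projection of a cone, and exactness and commutativity become tautological.
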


\begin{proof} Part (a) is a special case of (b).
Assuming without loss of generality $i<j$, we can write: 
$$
E_{\dd}\simeq \Tr(Y^{\dd-\ee_i-\ee_j}T_{1}\cdots (T_iY_i)\cdots (T_jY_j)\cdots T_{n-1})
$$
Clearly, the maps from skein exact sequences from Lemma \ref{lem: skein 1} applied at positions $i$ and $j$ commute with each other, so we just need to check that various isomorphisms used in the proof of Theorem \ref{lem: cone alpha} commute with them as well. Indeed, we can write all the entries in the diagram in (b) as follows:

1) $E_{\dd[1,i]}\bstar E_{\dd[i+1,n]}$ is built of two copies of 
$
\Tr(Y^{\dd-\ee_i-\ee_j}T_{1}\cdots (Y_i)\cdots (T_jY_j)\cdots T_{n-1})
$;

2) $E_{\dd[1,j]}\bstar E_{\dd[j+1,n]}$ is built of two copies of 
$
\Tr(Y^{\dd-\ee_i-\ee_j}T_{1}\cdots (T_iY_i)\cdots (Y_j)\cdots T_{n-1})
$;

3) $E_{\dd-\alpha_i}=
\Tr(Y^{\dd-\ee_i-\ee_j}T_{1}\cdots (Y_{i+1}T_i)\cdots (T_jY_j)\cdots T_{n-1})
$;

4) $E_{\dd[1,i]}\bstar E_{\dd[i+1,j]}\bstar E_{\dd[j+1,n]}$ is built of four copies (see Lemma \ref{lem: bstar}) of: 
$$
\Tr(Y^{\dd-\ee_i-\ee_j}T_{1}\cdots (Y_{i})\cdots (Y_j)\cdots T_{n-1});
$$

5) $ E_{\dd-\alpha_j}=\Tr(Y^{\dd-\ee_i-\ee_j}T_{1}\cdots (T_iY_i)\cdots (Y_{j+1}T_j)\cdots T_{n-1})
$;

6) $E_{\dd[1,j]-\alpha_i}\bstar E_{\dd[j+1,n]}$ is built of two copies of 
$
\Tr(Y^{\dd-\ee_i-\ee_j}T_{1}\cdots (Y_{i+1}T_i)\cdots (Y_{j})\cdots T_{n-1})
$;

7) $E_{\dd-\alpha_i-\alpha_j}=\Tr(Y^{\dd-\ee_i-\ee_j}T_{1}\cdots (Y_{i+1}T_{i})\cdots (Y_{j+1}T_j)\cdots T_{n-1})$;

8) $E_{\dd[1,i]}\bstar E_{\dd[i+1,n]-\alpha_j}$ is built of two copies of 
$
\Tr(Y^{\dd-\ee_i-\ee_j}T_{1}\cdots (Y_{i})\cdots (Y_{j+1}T_j)\cdots T_{n-1})
$.

It remains to notice that $Y_i$ commutes with $T_j$ and $Y_j$, and $Y_{j+1}$ commutes with $T_i$ and $Y_i$. Furthermore, by Remark \ref{rem: skein 1 commute}, $Y_i$ commutes with the skein maps involving $T_j$ and $Y_j$, and $Y_{j+1}$ commutes with the skein maps involving $T_i$ and $Y_i$.  
\end{proof}

\begin{corollary}
\label{cor: skein 1}
If $\dd-\dd'$ is a non-negative linear combination of $\alpha_i$'s, then there is a well defined (up to a homotopy) map $E_{\dd}\to E_{\dd'}$, and its cone is filtered by $\overline{\star}$-products of smaller $E_{\aaa}$.
\end{corollary}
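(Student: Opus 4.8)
The plan is to proceed by induction on the total number $N=\sum_i c_i$ of elementary steps needed to pass from $\dd$ to $\dd'$; that is, write $\dd-\dd'=\sum_i c_i\alpha_i$ with all $c_i\ge 0$ and induct on $N$. When $N=0$ there is nothing to prove (take the identity map, whose cone is zero), and $N=1$ is precisely Theorem~\ref{lem: cone alpha}. For the inductive step I would choose any index $i$ with $c_i>0$ and construct a map $E_{\dd}\to E_{\dd'}$ as the composite $E_{\dd}\to E_{\dd-\alpha_i}\to E_{\dd'}$, where the first arrow is the map of Theorem~\ref{lem: cone alpha} and the second is furnished by the induction hypothesis applied to $\dd-\alpha_i$ and $\dd'$ (note that $(\dd-\alpha_i)-\dd'$ is again a non-negative combination of the $\alpha_j$'s, now requiring only $N-1$ steps).

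The next step is to verify that the homotopy class of this composite does not depend on the choices made: neither on the index $i$ removed first, nor, more generally, on the order in which the elementary steps are performed. Each elementary map is itself well-defined up to homotopy by Lemma~\ref{lem: skein 1} and Theorem~\ref{lem: cone alpha}, and the essential input is part~(a) of the Lemma immediately preceding this Corollary: for $i\ne j$ the square relating $E_{\dd}$, $E_{\dd-\alpha_i}$, $E_{\dd-\alpha_j}$ and $E_{\dd-\alpha_i-\alpha_j}$ commutes up to homotopy. Since any two monotone sequences of elementary moves from $\dd$ to $\dd'$ are related by a finite chain of transpositions of moves sitting in distinct slots, iterating this square shows that all such composites lie in one homotopy class. (Moves in a single slot, i.e.\ several copies of the same $\alpha_i$, form a linear chain and raise no ordering question.)

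For the statement about the cone, I would apply the octahedral axiom to the factorization $E_{\dd}\xrightarrow{f_1}E_{\dd-\alpha_i}\xrightarrow{f_2}E_{\dd'}$, which yields an exact triangle
\[
\Cone(f_1)\longrightarrow \Cone(f_2f_1)\longrightarrow \Cone(f_2)\longrightarrow \Cone(f_1)[1],
\]
exhibiting $\Cone(f_2f_1)$ with a two-step filtration whose graded pieces are $\Cone(f_1)$ and $\Cone(f_2)$. By Theorem~\ref{lem: cone alpha}, $\Cone(f_1)\simeq E_{\dd[1,i]}\bstar E_{\dd[i+1,n]}$, a $\bstar$-product of two objects $E_{\aaa}$ each having fewer than $n$ entries; by the induction hypothesis $\Cone(f_2)$ carries a finite filtration whose graded pieces are $\bstar$-products of such shorter $E_{\aaa}$'s. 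Concatenating the two filtrations — and using the associativity of $\bstar$ from Lemma~\ref{lem: bstar} so that iterated $\bstar$-products are unambiguous — produces the asserted filtration of $\Cone(E_{\dd}\to E_{\dd'})$.

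I expect the only genuine subtlety to be the coherence bookkeeping of the second paragraph: one must check that the pairwise homotopies provided by part~(a) of the preceding Lemma can be assembled consistently across an arbitrary permutation of the elementary moves, not merely two at a time. All the real computational content — the identification of the cones of the skein maps, and the commutation of $Y_j$ with the skein maps at other positions (Remark~\ref{rem: skein 1 commute}) — has already been carried out in Lemma~\ref{lem: skein 1}, Theorem~\ref{lem: cone alpha} and the preceding Lemma, so apart from this point the argument is a formal induction inside a pre-triangulated category.
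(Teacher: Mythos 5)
Your argument is correct and is precisely the one the paper intends: the corollary is stated there without a written proof, as an immediate consequence of Theorem \ref{lem: cone alpha} and part (a) of the preceding lemma, and your induction on the number of elementary steps, with the octahedral axiom supplying the concatenated filtration of the cone, fills in exactly those details. The coherence worry in your final paragraph is unnecessary for the statement as phrased: well-definedness of the homotopy \emph{class} of the composite needs only the pairwise homotopy-commutativity of the squares, since any two orderings of the elementary moves differ by a chain of adjacent transpositions and composition with fixed maps preserves homotopy classes.
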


\subsection{Twisted lattices and more exact sequences}

Recall that the affine symmetric group has $n!$ different lattices in it which are conjugate to the standard lattice generated by $y_i$ under the action of $S_n$. We can mimic this construction in the affine Hecke category.

\begin{lemma}
\label{lem: y prime}
Define $Y'_i=T_i^{-1}Y_iT_i,\ Y'_{i+1}=T_i^{-1}Y_{i+1}T_i$. Then the following statements hold:
\begin{itemize}
\item [(a)] We have $Y_iY_{i+1}\simeq Y'_iY'_{i+1}$.

\item [(b)] There is a canonical map $Y_i\to Y'_{i+1}$ with cone:
$$
[Y_i\to Y'_{i+1}]\simeq [qY_{i+1}T_{i}\xrightarrow{x_i-x_{i+1}} q^{-1}Y_{i+1}T_{i}]
$$
\item [(c)] There is a canonical map $Y'_{i}\to Y_{i+1}$ with cone:
$$
[Y'_i\to Y_{i+1}]\simeq  [qY_{i+1}T_{i}\xrightarrow{x_{i}-x_{i+1}-\delta} q^{-1}Y_{i+1}T_{i}]
$$
\item [(d)] The objects $Y'_i,Y'_{i+1}$ commute with $Y_j$ for $j\neq i,i+1$.
\end{itemize}

\end{lemma}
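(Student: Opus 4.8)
The plan is to reduce everything to the basic Rouquier relation $T_i^{-1}Y_i\simeq Y_{i+1}T_i$ together with a single application of the skein cofiber sequence \eqref{eq: skein}, tensored on the appropriate side. First I would record the two identities
$$Y'_i = T_i^{-1}Y_iT_i \simeq Y_{i+1}T_i^2, \qquad Y'_{i+1} = T_i^{-1}Y_{i+1}T_i \simeq T_i^{-2}Y_i,$$
both immediate from $T_i^{-1}Y_i\simeq Y_{i+1}T_i$. Part (a) is then formal: $Y'_iY'_{i+1}\simeq Y_{i+1}T_i^2\cdot T_i^{-2}Y_i\simeq Y_{i+1}Y_i\simeq Y_iY_{i+1}$, using that the $Y_j$'s pairwise commute. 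Part (d) is equally formal: for $j\neq i,i+1$ the object $T_i$ commutes with $Y_j$ and $Y_i$ commutes with $Y_j$, so conjugating $Y_i$ (resp. $Y_{i+1}$) by $T_i$ does not affect commutation with $Y_j$.

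For parts (b) and (c) I would use that, by the computation in the proof of Lemma \ref{lem: skein 1}, $\Hom(T_i^2,\one)$ is a free rank one $\tR$-module whose canonical generator is the map appearing in \eqref{eq: skein}, with cone $[qT_i\xrightarrow{x_i-x_{i+1}}q^{-1}T_i]$. For (b), tensoring this map on the right by $T_i^{-2}Y_i$ produces precisely a map $Y_i\to T_i^{-2}Y_i\simeq Y'_{i+1}$; its cone is $[qT_i\xrightarrow{x_i-x_{i+1}}q^{-1}T_i]$ tensored on the right by $T_i^{-2}Y_i$, i.e. $[qY_{i+1}T_i\to q^{-1}Y_{i+1}T_i]$, and since the multiplication operator stays on the left-most tensor factor the connecting map is simply left multiplication by $x_i-x_{i+1}$. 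For (c), tensoring the same map on the \emph{left} by $Y_{i+1}$ produces a map $Y_{i+1}T_i^2\simeq Y'_i\to Y_{i+1}$, with cone $Y_{i+1}\otimes[qT_i\xrightarrow{x_i-x_{i+1}}q^{-1}T_i]=[qY_{i+1}T_i\to q^{-1}Y_{i+1}T_i]$; now the multiplication operator no longer sits on the left-most factor, and to rewrite the connecting map in the left-action convention one must commute $x_i-x_{i+1}$ past $Y_{i+1}$, which by Corollary \ref{cor: rouquier auto} (with $w$ the identity and $\dd=\ee_{i+1}$) replaces it by left multiplication by $x_i-x_{i+1}-\delta$ — exactly the asymmetric $\delta$ in the statement.

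The main obstacle is this last bookkeeping point, the asymmetric appearance of $\delta$ in (c), and it is the same phenomenon encountered in Lemma \ref{lem: skein 1}: the connecting map of each cone is inherited from the skein relation on $T_i$, and whether it picks up the twist $x_{i+1}\mapsto x_{i+1}-\delta$ depends only on whether the extra Wakimoto factor is attached on the same side as that $T_i$. Once this is handled via Lemma \ref{lem: rouquier auto} and Corollary \ref{cor: rouquier auto}, the remaining steps are formal, relying only on the invertibility of Rouquier complexes in $\CK(\ASBim_n)$ and the commutation relations among the $T_i$'s and $Y_j$'s.
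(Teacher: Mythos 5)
Your proposal is correct and follows essentially the same route as the paper: rewrite $Y'_i\simeq Y_{i+1}T_i^2$ and $Y_i\simeq T_iY_{i+1}T_i$, tensor the skein map \eqref{eq: skein} (equivalently \eqref{eq: skein 0}) on the appropriate side, and use Corollary \ref{cor: rouquier auto} to account for the $\delta$-twist when the Wakimoto factor sits to the left of the $T_i$ carrying the connecting map. The only cosmetic differences are that the paper proves (a) via centrality of $\Omega^n=Y_1\cdots Y_n$ rather than by multiplying the two explicit expressions, and uses \eqref{eq: skein 0} tensored with $Y_{i+1}T_i$ for (b) where you use \eqref{eq: skein} tensored with $T_i^{-2}Y_i$; these are equivalent.
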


\begin{proof}
(a) The product $Y_1\cdots Y_n=\Omega^n$ is central, so it commutes with $T_i$. On the other hand, $Y_j$ commutes with $T_i$ for $j\neq i,i+1$, so $Y_iY_{i+1}$ commutes with $T_i$ and thus:
$$
Y'_iY'_{i+1}=T_i^{-1}Y_iY_{i+1}T_{i}=Y_{i}Y_{i+1}.
$$ 

(b) We have $Y_i=T_{i}Y_{i+1}T_{i}$, which by \eqref{eq: skein 0} has a canonical map to $Y'_{i+1}=T_{i}^{-1}Y_{i+1}T_{i}$ with cone homotopy equivalent to $[qY_{i+1}T_{i} \xrightarrow{x_i-x_{i+1}} q^{-1}Y_{i+1}T_{i}]$ (note that we multiply $\one$ by $Y_{i+1}T_i$ on the right, so the variables do not change). 

(c) We have $Y'_{i}=Y_{i+1}T_{i}^2$, which by \eqref{eq: skein} has a canonical map to $Y_{i+1}$ with cone   homotopy equivalent to $[qY_{i+1}T_{i} \xrightarrow{x_{i}-x_{i+1}-\delta} q^{-1}Y_{i+1}T_i]$
(note that we multiply $T_i$ by $Y_{i+1}$ on the left, so $x_{i+1}$ is shifted by $\delta$). Part (d) is clear.
\end{proof}

Under the functor predicted by Problem \ref{prob:main}, the morphisms described in Lemma \ref{lem: y prime} should correspond to the morphisms induced by \cite[Proposition 2.28]{Hecke}. 

\begin{lemma}
\label{lem: y swap}
a) If $a\ge b$, then there is a canonical map $Y^{a}_{i}Y^{b}_{i+1} \to Y'^b_{i}Y'^{a}_{i+1}$, whose cone is filtered by objects of the form:
$$
[qY^{a-k}_{i}Y^{b+k}_{i+1}T_i\xrightarrow{x_i-x_{i+1}+(a-b-k)\delta} q^{-1}Y^{a-k}_{i}Y^{b+k}_{i+1}T_i], \quad k=1,\ldots, a-b.
$$
b) If $a\le b$, then there is a canonical map $Y'^{b}_{i}Y'^{a}_{i+1} \to Y^a_{i}Y^{b}_{i+1}$, whose cone is filtered by objects of the form:
$$
[q Y_i^{b-k} Y_{i+1}^{a+k} T_i \xrightarrow{x_i-x_{i+1}-k\delta} q^{-1}Y_i^{b-k} Y_{i+1}^{a+k} T_i], \quad k = 1, \dots, b-a
$$
\end{lemma}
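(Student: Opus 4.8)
\emph{Setup of the plan.} The idea is to obtain both statements by composing the single-step maps of Lemma \ref{lem: y prime} and tracking how the resulting cones transform under tensoring with (invertible) Wakimoto objects and Rouquier complexes; the only nontrivial input beyond Lemma \ref{lem: y prime} is Corollary \ref{cor: rouquier auto}, which produces all the $\delta$-shifts.

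\emph{Part (a).} Set $m=a-b\ge 0$. Since $Y_i'$ and $Y_{i+1}'$ commute (being $T_i$-conjugates of the commuting pair $Y_i,Y_{i+1}$) and $Y_i'Y_{i+1}'=Y_iY_{i+1}$ by Lemma \ref{lem: y prime}(a), one has $Y_i^aY_{i+1}^b=(Y_i^bY_{i+1}^b)\,Y_i^m$ and $(Y_i')^b(Y_{i+1}')^a=(Y_i^bY_{i+1}^b)\,(Y_{i+1}')^m$, with the \emph{same} block $Y_i^bY_{i+1}^b$ on the left in both. So it suffices to build a map $Y_i^m\to (Y_{i+1}')^m$, describe its cone, and then left-tensor everything by $Y_i^bY_{i+1}^b$. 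I would take the composite of $m$ copies of the canonical map $Y_i\to Y_{i+1}'$ of Lemma \ref{lem: y prime}(b), applied one factor at a time; then the cone of $Y_i^m\to(Y_{i+1}')^m$ is filtered by the $m$ objects $Y_i^{m-j}\otimes\Cone(Y_i\to Y_{i+1}')\otimes(Y_{i+1}')^{j-1}$, $j=1,\dots,m$. Using $Y_{i+1}T_i\cdot Y_{i+1}'=Y_{i+1}^2T_i$ repeatedly gives $Y_{i+1}T_i\cdot(Y_{i+1}')^{j-1}=Y_{i+1}^jT_i$, so each such object equals $[qY_i^{m-j}Y_{i+1}^jT_i\to q^{-1}Y_i^{m-j}Y_{i+1}^jT_i]$, and Corollary \ref{cor: rouquier auto} rewrites the multiplication sitting between $Y_i^{m-j}$ and $Y_{i+1}^jT_i$ as the left action of $x_i-x_{i+1}+(m-j)\delta$ (the shift $+(m-j)\delta$ coming from the $m-j$ factors $Y_i$ on the left). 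Finally, left-tensoring by $Y_i^bY_{i+1}^b$ turns $Y_i^{m-j}Y_{i+1}^j$ into $Y_i^{a-j}Y_{i+1}^{b+j}$ and leaves the differential unchanged (the two $b\delta$-shifts from Corollary \ref{cor: rouquier auto} cancel). Writing $k=j$ recovers the asserted filtration.

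\emph{Part (b).} Set $m=b-a\ge 0$ and run the mirror argument with Lemma \ref{lem: y prime}(c) in place of (b). The correct bookkeeping is now $(Y_i')^b(Y_{i+1}')^a=(Y_i^aY_{i+1}^a)\,(Y_i')^m$ and $Y_i^aY_{i+1}^b=(Y_i^aY_{i+1}^a)\,Y_{i+1}^m$, again with the common block on the left, so it suffices to construct $(Y_i')^m\to Y_{i+1}^m$. Composing $m$ copies of the canonical map $Y_i'\to Y_{i+1}$ of Lemma \ref{lem: y prime}(c) and using $Y_{i+1}T_i\cdot Y_i'=Y_iY_{i+1}T_i$, the cone is filtered by objects $[qY_i^{m-j}Y_{i+1}^jT_i\to q^{-1}Y_i^{m-j}Y_{i+1}^jT_i]$ with differential $x_i-x_{i+1}-j\delta$ (here $-\delta$ comes from the cone in Lemma \ref{lem: y prime}(c) and $-(j-1)\delta$ from the $j-1$ factors $Y_{i+1}$ on the left, via Corollary \ref{cor: rouquier auto}); left-tensoring by $Y_i^aY_{i+1}^a$ then yields $[qY_i^{b-k}Y_{i+1}^{a+k}T_i\to q^{-1}Y_i^{b-k}Y_{i+1}^{a+k}T_i]$ with differential $x_i-x_{i+1}-k\delta$, $k=1,\dots,m$.

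\emph{The main obstacle.} The delicate point — and the reason the order of the factors in both reductions matters — is that $T_i$ does \emph{not} commute past $Y_i$ or $Y_{i+1}$: pulling $T_i$ through such a factor forces in a $T_i^{\pm2}$ and hence, via \eqref{eq: skein 0} and \eqref{eq: skein}, spurious extra terms in the cone. One must therefore always keep the inert block $Y_i^{\min(a,b)}Y_{i+1}^{\min(a,b)}$ at the far left, where it is only ever tensored on the outside and never commuted past a Rouquier complex; the sole $T_i$-relations used are $T_iY_{i+1}'=Y_{i+1}T_i$ and $T_iY_i'=Y_iT_i$, both immediate from $Y_\bullet'=T_i^{-1}Y_\bullet T_i$. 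With the factors organized this way, the remainder — the grading shifts $q^{\pm1}$ and the $\delta$-shifts dictated by Corollary \ref{cor: rouquier auto} — is routine bookkeeping.
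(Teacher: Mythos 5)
Your proposal is correct and follows essentially the same route as the paper: compose the one-step maps of Lemma \ref{lem: y prime}(b) (resp.\ (c)) to handle the exponent difference $m=a-b$ (resp.\ $b-a$), read off the filtration of the cone from the individual cones using $Y_{i+1}T_i\cdot Y'_{i+1}=Y_{i+1}^2T_i$ and Corollary \ref{cor: rouquier auto} for the $\delta$-shifts, and then tensor on the left with the inert block $(Y_iY_{i+1})^{\min(a,b)}=(Y'_iY'_{i+1})^{\min(a,b)}$. The only cosmetic difference is that the paper first proves the special case $b=0$ (resp.\ $a=0$) and then tensors, whereas you factor out the common block from the start; the bookkeeping is identical.
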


\begin{proof}
a) We will first deal with the case $a\ge 0$ and $b=0$. By Lemma \ref{lem: y prime}(b), we have a chain of maps:
$$
Y^{a}_i\to Y^{a-1}_iY'_{i+1}\to Y^{a-2}_{i}Y'^2_{i+1}\to \dots \to Y'^{a}_{i+1}.
$$

The cone of their composition is filtered by the cones of the individual maps:
$$
[Y^{a-k+1}_{i}Y'^{k-1}_{i+1}\to Y^{a-k}_{i}Y'^{k}_{i+1}]=Y^{a-k}_{i}[Y_{i}\to Y'_{i+1}]Y'^{k-1}_{i+1} \simeq
$$
$$
\simeq Y^{a-k}_{i}[qY_{i+1}T_i\xrightarrow{x_i-x_{i+1}} q^{-1}Y_{i+1}T_i]Y'^{k-1}_{i+1}=[qY^{a-k}_{i}Y^{k}_{i+1}T_i\xrightarrow{x_i-x_{i+1}+(a-k)\delta} q^{-1}Y^{a-k}_{i}Y^{k}_{i+1}T_i]
$$
for all $k \in \{1,\dots,a\}$. In the last equality above, we used the equalities:
\begin{equation}
\label{eqn:a certain isomorphism}
Y_{i+1}T_iY'^{k-1}_{i+1} = Y_{i+1}Y^{k-1}_{i+1}T_i=Y^{k}_{i+1}T_i.
\end{equation}
In the case of general $a\geq b$, the discussion above for the numbers $a-b$ and $0$ implies that there is a map $Y_i^{a-b} \rightarrow Y'^{a-b}_{i+1}$ whose cone is filtered by objects of the form
$$
[qY^{a-b-k}_{i}Y^{k}_{i+1}T_i\xrightarrow{x_i-x_{i+1}+(a-b-k)\delta} q^{-1}Y^{a-b-k}_{i}Y^{k}_{i+1}T_i],\ k=1,\ldots, a-b.
$$
If we tensor the objects above on the left with $(Y_iY_{i+1})^{b}=(Y'_iY'_{i+1})^{b}$ (the equality holds due to Lemma \ref{lem: y prime}(a)), then we conclude that there is a map $Y_i^a Y_{i+1}^b \to Y'^b_i Y'^a_{i+1}$ whose cone is filtered by maps between the objects:
$$
(Y_iY_{i+1})^{b}Y^{a-b-k}_{i}Y^{k}_{i+1}T_i=Y^{a-k}_iY^{b+k}_{i+1}T_i.
$$
as $k$ goes from 1 to $a-b$.

b) Let us first deal with the case $a = 0, b \geq 0$. By Lemma \ref{lem: y prime}(c), we have a chain of maps:
$$
Y'^{b}_i\to Y_{i+1}Y'^{b-1}_i\to Y^2_{i+1}Y'^{b-2}_{i}\to \ldots \to Y^{b}_{i+1}
$$
The cone of their composition is filtered by the cones of the individual maps:
$$
[Y^{k-1}_{i+1}Y'^{b-k+1}_{i}\to Y^{k}_{i+1}Y'^{b-k}_{i}]=Y^{k-1}_{i+1}[Y'_{i}\to Y_{i+1}]Y'^{b-k}_{i} \simeq
$$
$$
\simeq Y^{k-1}_{i+1}[qY_{i+1}T_i\xrightarrow{x_i-x_{i+1}-\delta} q^{-1}Y_{i+1}T_i]Y'^{b-k}_{i}=[qY^{k}_{i+1}Y^{b-k}_{i}T_i \xrightarrow{x_i-x_{i+1}-k\delta} q^{-1}Y^{b-k}_{i}Y^{k}_{i+1}T_i]
$$
for all $k \in \{1,\dots, b\}$ (we used \eqref{eqn:a certain isomorphism} in the last equality above). The remaining argument in part (b) is similar to that of part (a).

\end{proof}

Lemma \ref{lem: y swap} is the fundamental instance of the following more general result.

\begin{lemma}
\label{lem: y swap long}
Consider an arbitrary sequence of integers $\dd=(d_1,\ldots,d_n) \in \Z^n$. \newline 

a) If $d_i\ge d_{i+1}$, then there is a chain map: 
\begin{equation}
\label{eqn:map of y's 1}
Y_1^{d_1}\cdots Y_i^{d_i}Y_{i+1}^{d_{i+1}}\cdots Y_n^{d_n}\to T_i^{-1}Y_1^{d_1}\cdots Y_i^{d_{i+1}}Y_{i+1}^{d_{i}}\cdots Y_n^{d_n}T_i\
\end{equation}
whose cone is filtered by the two-term complexes:
\begin{equation}
\label{eqn:cone map of y's 1}
\left[qY_1^{d_1}\cdots Y_i^{d_i-k}Y_{i+1}^{d_{i+1}+k}\cdots Y_n^{d_n}T_i\xrightarrow{x_i-x_{i+1}+(d_i-d_{i+1}-k)\delta} q^{-1}Y_1^{d_1}\cdots Y_i^{d_i-k}Y_{i+1}^{d_{i+1}+k}\cdots Y_n^{d_n}T_i\right]
\end{equation}
for $k=1,\ldots, d_i-d_{i+1}$. \newline

b) If $d_i\le d_{i+1}$, then there is a chain map: 
\begin{equation}
\label{eqn:map of y's 2}
T_i^{-1}Y_1^{d_1}\cdots Y_i^{d_{i+1}}Y_{i+1}^{d_{i}}\cdots Y_n^{d_n}T_i \to Y_1^{d_1}\cdots Y_i^{d_i}Y_{i+1}^{d_{i+1}}\cdots Y_n^{d_n}
\end{equation}
whose cone is filtered by the two-term complexes:
\begin{equation}
\label{eqn:cone map of y's 2}
\left[qY_1^{d_1}\cdots Y_i^{d_{i+1}-k}Y_{i+1}^{d_{i}+k}\cdots Y_n^{d_n}T_i\xrightarrow{x_i-x_{i+1}-k\delta} q^{-1}Y_1^{d_1}\cdots Y_i^{d_{i+1}-k}Y_{i+1}^{d_{i}+k}\cdots Y_n^{d_n}T_i\right]
\end{equation}
for $k=1,\ldots, d_{i+1}-d_{i}$.

\end{lemma}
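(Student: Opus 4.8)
The plan is to reduce both parts to the special case already treated in Lemma \ref{lem: y swap}, namely the case $d_j=0$ for all $j\neq i,i+1$, by factoring out the Wakimoto objects that are inert for this construction. Fix $i$ and set $Z=\prod_{j\neq i,i+1}Y_j^{d_j}\in\CK(\ASBim_n)$. Since all the $Y_j$ commute with one another, and since $Y_j$ with $j\neq i,i+1$ commutes with $T_i$ (hence with $Y'_i=T_i^{-1}Y_iT_i$ and $Y'_{i+1}=T_i^{-1}Y_{i+1}T_i$), we may rewrite the source of \eqref{eqn:map of y's 1} as $Z\cdot Y_i^{d_i}Y_{i+1}^{d_{i+1}}$ and its target as $Z\cdot Y'^{d_{i+1}}_iY'^{d_i}_{i+1}$; likewise for \eqref{eqn:map of y's 2}. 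Because $Z$ is a product of invertible Rouquier complexes, $X\mapsto Z\otimes X$ is a dg auto-equivalence of $\CK(\ASBim_n)$, hence sends cones to cones and carries filtrations by two-term complexes to filtrations by two-term complexes.

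Applying this auto-equivalence to the map and the filtered cone supplied by Lemma \ref{lem: y swap}(a) (taken with $a=d_i\geq b=d_{i+1}$) yields \eqref{eqn:map of y's 1} together with a filtration of its cone by $Z$ tensored with the two-term complexes $[qY_i^{d_i-k}Y_{i+1}^{d_{i+1}+k}T_i\to q^{-1}Y_i^{d_i-k}Y_{i+1}^{d_{i+1}+k}T_i]$, $k=1,\dots,d_i-d_{i+1}$; commuting the $Y$-factors past one another identifies $Z\cdot Y_i^{d_i-k}Y_{i+1}^{d_{i+1}+k}T_i$ with $Y_1^{d_1}\cdots Y_i^{d_i-k}Y_{i+1}^{d_{i+1}+k}\cdots Y_n^{d_n}T_i$, exactly as in \eqref{eqn:cone map of y's 1}. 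Part (b) is handled identically, applying $Z\otimes(-)$ to Lemma \ref{lem: y swap}(b) with $a=d_i\leq b=d_{i+1}$.

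The one thing that deserves care is that the differentials of the cone pieces --- left multiplications by $x_i-x_{i+1}+c\,\delta$ for the indicated integers $c$ --- survive the tensoring with $Z$ with precisely the stated $\delta$-shifts. Tensoring on the left turns left multiplication on $Y_i^{d_i-k}Y_{i+1}^{d_{i+1}+k}T_i$ into multiplication ``in the middle'' of $Z\otimes(-)$, and to recognize this as left multiplication on $Z\cdot Y_i^{d_i-k}Y_{i+1}^{d_{i+1}+k}T_i$ one uses that on $Z$ the left and right actions of $x_i$, $x_{i+1}$, $\delta$ agree up to homotopy: this is Corollary \ref{cor: rouquier auto} (the permutation part of $Z$ is trivial and its $i$-th and $(i+1)$-st exponents vanish, so no extra shift appears) together with Lemma \ref{lem: rouquier auto}(b) and the $\widetilde{S_n}$-invariance of $\delta$. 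Closely related is the bookkeeping point that $Y_j$ with $j\neq i,i+1$ commutes not merely with the objects but with the skein maps of Lemma \ref{lem: y prime}(b),(c) and their iterates entering the proof of Lemma \ref{lem: y swap}, so that the tensored map literally equals the composite built inside the ambient product (cf. Remark \ref{rem: skein 1 commute} and Lemma \ref{lem: y prime}(d)). I expect this verification to be the main --- and quite mild --- obstacle; the remainder is formal.
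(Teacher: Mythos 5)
Your proposal is correct and follows essentially the same route as the paper: reduce to Lemma \ref{lem: y swap} by isolating the inert factors $Y_j^{d_j}$ ($j\neq i,i+1$), use that they commute with $T_i$ (hence with $Y_i'$, $Y_{i+1}'$) and the identity $Y_i'^aY_{i+1}'^b=T_i^{-1}Y_i^aY_{i+1}^bT_i$ to put the map in the stated form. Your extra care that the differentials $x_i-x_{i+1}+c\delta$ survive tensoring with $Z$ (via Corollary \ref{cor: rouquier auto} and Remark \ref{rem: skein 1 commute}) is a point the paper leaves implicit, but it is the same argument.
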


\begin{proof} Let us prove case (a), and leave case (b) as an analogous exercise to the interested reader. By Lemma \ref{lem: y swap} in case (a) we get a map:
$$
Y_1^{d_1}\cdots Y_i^{d_i}Y_{i+1}^{d_{i+1}}\cdots Y_n^{d_n}\to Y_1^{d_1}\cdots Y'^{d_{i+1}}_iY'^{d_{i}}_{i+1}\cdots Y_n^{d_n}
$$
whose cone is filtered by the two-term complexes:
$$
\left[qY_1^{d_1}\cdots Y_i^{d_i-k}Y_{i+1}^{d_{i+1}+k}T_i\cdots Y_n^{d_n}\xrightarrow{x_i-x_{i+1}+(d_i-d_{i+1}-k)\delta} q^{-1}Y_1^{d_1}\cdots Y_i^{d_i-k}Y_{i+1}^{d_{i+1}+k}T_{i}\cdots Y_n^{d_n}\right]
$$
However, note that:
\begin{equation}
\label{eqn:remark}
Y'^a_{i}Y'^{b}_{i+1}=T_i^{-1}Y^a_iY^b_{i+1}T_i
\end{equation}
for all integers $a,b$. Together with the fact that $T_i$ commutes with $Y_j$ for $j\neq i,i+1$, we conclude that there exists a map \eqref{eqn:map of y's 1} whose cone is filtered by the complexes \eqref{eqn:cone map of y's 1}. 

\end{proof}

\subsection{Commutators in the Trace} We will now use Lemma \ref{lem: y swap long} to prove Theorem \ref{thm:rel 2}.

\begin{theorem}
\label{thm: one step commutation}

For any $\dd = (d_1,\dots,d_n) \in \Z^n$ and $k \in \Z$, there exists a collection of objects $G_0,\dots,G_n \in \ASBim_n$ with the following properties:

\begin{itemize}
    
    \item $\emph{Tr}(G_0)=E_{(k)} \star E_{\dd}$ and $\emph{Tr}(G_n)=E_{\dd} \star E_{(k)}$ 
    
    \item For all $i\in \{1,\ldots,n\}$ there exist chain maps in $\CK(\ASBim_n)$:
    $$
    \begin{cases}
    G_{i-1}\xrightarrow{\varphi_i} G_i & \text{if}\ k \geq d_i\\
    G_{i-1}\xleftarrow{\overline{\varphi}_i} G_i & \text{if}\ k \leq d_i
    \end{cases}
    $$
    which are mutually inverse isomorphisms if $k = d_i$.
    
    \item $\emph{Tr}(\emph{Cone}(\varphi_i))$ and $\emph{Tr}(\emph{Cone}(\overline{\varphi}_i))$ are filtered by:
    \begin{equation}
    \label{eqn:cone top 2}
    [\C_{q} \xrightarrow{0} \C_{q^{-1}}] \cdot \begin{cases}
    E_{(d_1,\ldots,d_{i-1},k-a,d_{i}+a,d_{i+1},\ldots,d_n)},\ 1\le a\le k-d_i &  \text{if}\ k>d_i\\
    E_{(d_1,\ldots,d_{i-1},d_i-a,k+a,d_{i+1},\ldots,d_n)},\ 1\le a\le d_i-k &  \text{if}\ k<d_i
    \end{cases}
    \end{equation}
    respectively. Above, $q$ denotes the internal grading on Soergel bimodules.
    
\end{itemize}
\end{theorem}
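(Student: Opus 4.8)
The plan is to mirror, inside $\CK(\ASBim_{n+1})$, the telescoping construction of the objects $g_i$ and of the maps between them from \cite{Hecke}. Everything takes place on $n+1$ strands: by definition $E_{(k)}\star E_{\dd}=\Tr(Y_1^kY_2^{d_1}\cdots Y_{n+1}^{d_n}T_2\cdots T_n)$ and $E_{\dd}\star E_{(k)}=\Tr(Y_1^{d_1}\cdots Y_n^{d_n}T_1\cdots T_{n-1}Y_{n+1}^k)$, so the task amounts to ``weaving'' the $k$-labelled strand across the Coxeter braid of $E_{\dd}$, one strand at a time.

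I would set $G_0$ to be the Rouquier complex of $Y_1^kY_2^{d_1}\cdots Y_{n+1}^{d_n}T_2\cdots T_n$ and define $G_i$ recursively: $G_i$ is obtained from $G_{i-1}$ by applying Lemma \ref{lem: y swap long} at the position currently occupied by the $k$-labelled strand — part (a) if $k\ge d_i$, part (b) if $k\le d_i$ — and then absorbing the fixed side Coxeter braids. The two exponents interchanged are $(k,d_i)$ and $(d_i,k)$, so the swap moves the ``$k$'' one step to the right at the cost of conjugating by the single Rouquier generator $T_i$; since conjugation is invisible to $\Tr$, repeated use of the trace/cyclicity relation together with the braid identity $T_iT_{i+1}T_i^{-1}=T_{i+1}^{-1}T_iT_{i+1}$ identifies $\Tr(G_i)$ with the claimed $E$-product, in particular $\Tr(G_0)=E_{(k)}\star E_{\dd}$ and $\Tr(G_n)=E_{\dd}\star E_{(k)}$. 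The maps $\varphi_i$ (or $\overline{\varphi}_i$) are literally the maps of Lemma \ref{lem: y swap long}, post-composed with the identity on the side braids; when $k=d_i$ this Lemma supplies an isomorphism (the filtered cone is empty, the range $1\le a\le k-d_i$ being vacuous), and the maps in the two directions are mutually inverse.

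The remaining work is the cone computation. Write $\dd_a=(d_1,\ldots,d_{i-1},k-a,d_i+a,d_{i+1},\ldots,d_n)$. By Lemma \ref{lem: y swap long}, for $k>d_i$ the complex $\Cone(\varphi_i)$ is filtered, for $1\le a\le k-d_i$, by the two-term complexes
$$
\Big[\,q\,Y^{\dd_a}\,T_i\xrightarrow{\,x_i-x_{i+1}+(k-d_i-a)\delta\,}q^{-1}\,Y^{\dd_a}\,T_i\,\Big]
$$
tensored with the (fixed) side Coxeter braids of $G_{i-1}$. The crucial point — secured by the choice of $G_{i-1}$ in the preceding step — is that the braid $Y^{\dd_a}T_i\cdot(\text{side braids})$ is conjugate to the full Coxeter braid $Y^{\dd_a}T_1\cdots T_n$ on $n+1$ strands: the side Coxeter braid of $G_{i-1}$ is arranged to lack exactly the transposition $T_i$, which the swap reinstates. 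Consequently $\Tr(Y^{\dd_a}T_i\cdot(\text{side braids}))=E_{\dd_a}$, and the underlying periodic permutation is a single $(n+1)$-cycle, so by Lemma \ref{lem: endo trace rouquier} the $\widetilde R$-action on this trace factors through a quotient in which all $x_j$ coincide; in particular $\delta$ and $x_i-x_{i+1}$ act by zero there. Hence the displayed complex traces to $[\C_q\xrightarrow{0}\C_{q^{-1}}]\otimes E_{\dd_a}$, which is exactly \eqref{eqn:cone top 2}. The case $k<d_i$ is identical, using part (b) of Lemma \ref{lem: y swap long}.

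The main obstacle is precisely this last bookkeeping. One must organize the recursive definition of the $G_i$ — including the conjugating $T$'s produced by Lemma \ref{lem: y swap long} — so that after each swap the side Coxeter braid of $G_i$ differs from a full Coxeter braid on $n+1$ strands by exactly one transposition, namely the one reinstated by the next swap (and so that for $i=n$ everything degenerates correctly to $E_{\dd}\star E_{(k)}$). Equivalently, one must verify that the telescope of braid conjugations actually transports $E_{(k)}\star E_{\dd}$ to $E_{\dd}\star E_{(k)}$ through objects whose traces are the claimed $E$-products, with each successive cone a single generator and not a $\bstar$-product. This is the categorical reflection of the geometry of the partial flag commuting stacks used in \cite{Hecke}, and carrying it out — together with checking that all the $\varphi_i$ are honest chain maps in $\CK(\ASBim_{n+1})$ rather than merely maps after applying $\Tr$ — is the bulk of the proof.
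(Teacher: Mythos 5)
Your proposal follows essentially the same route as the paper: the objects $G_i$ live on $n+1$ strands, the chain maps $\varphi_i$, $\overline{\varphi}_i$ come from Lemma \ref{lem: y swap long} applied at the position of the $k$-labelled strand, and the cone pieces are identified with the claimed $E$'s by braid cyclicity, their differentials $x_i-x_{i+1}+m\delta$ being killed on the trace by Lemma \ref{lem: endo trace rouquier}. The bookkeeping you flag as the remaining bulk is resolved in the paper simply by the closed formula $G_i=T_1^{-1}\cdots T_i^{-1}\,Y_1^{d_1}\cdots Y_i^{d_i}Y_{i+1}^{k}Y_{i+2}^{d_{i+1}}\cdots Y_{n+1}^{d_n}\,T_i\cdots T_1T_2\cdots T_n$, combined with the braid identities $T_iT_{i-1}\cdots T_1\cdot T_2\cdots T_n=T_1T_2\cdots T_n\cdot T_{i-1}\cdots T_1$ and $T_1\cdots T_n\cdots T_1=T_n\cdots T_1\cdots T_n$.
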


\begin{proof}

Define the following sequence of objects in $\CK(\ASBim_n)$: 
$$
G_i=T_1^{-1}\cdots T_{i}^{-1}Y_1^{d_1}\cdots Y_{i}^{d_{i}}Y_{i+1}^{k}Y_{i+2}^{d_{i+1}}\cdots Y_{n+1}^{d_n}T_{i}\cdots T_{1}T_2\cdots T_{n} 
$$
for all $i \in \{0,\dots,n\}$. We have $\Tr(G_0) = E_{(k)} \star E_{\dd}$ by definition, while the identity $T_1 T_2 \dots T_n \dots T_2 T_1 = T_n \dots T_2 T_1 T_2 \dots T_n$ in the braid group implies that:
\begin{align*}
\Tr(G_n) &= \Tr(T_1^{-1}\cdots T_{n}^{-1}Y_{1}^{d_{1}}\cdots Y_{n}^{d_n}Y_{i+1}^{k} T_{n} \dots T_2 T_1 T_2 \dots T_n) = \\ &= \Tr(T_1^{-1}\cdots T_{n}^{-1}Y_{1}^{d_{1}}\cdots Y_{n}^{d_n}Y_{i+1}^{k} T_{1} T_2  \dots T_n \dots T_2 T_1) = \\
&= \Tr(Y_{1}^{d_{1}}\cdots Y_{n}^{d_n}Y_{i+1}^{k} T_{1} T_2  \dots T_n) = E_{\dd} \star E_{(k)}
\end{align*}
This establishes (a). As for (b), we note that Lemma \ref{lem: y swap long} gives us chain maps  $\varphi_i : G_{i-1} \to G_{i}$ if $d_{i}\le k$, and $\overline{\varphi}_i : G_{i}\to G_{i-1}$ if $d_{i}\ge k$, whose cones are filtered by two copies of: 
\begin{equation}
\label{eqn:big 1}
\left\{ T_1^{-1}\cdots T_{i-1}^{-1}Y_1^{d_1}\cdots Y_{i-1}^{d_{i-1}}Y_{i}^{k - a}Y_{i+1}^{d_{i} + a} Y_{i+2}^{d_{i+1}} \cdots Y_{n+1}^{d_n}T_{i} T_{i-1}\cdots T_{1}T_2\cdots T_{n}\right\}_{1 \leq a \leq k-d_i}
\end{equation}
if $d_i \leq k$, and:
\begin{equation}
\label{eqn:big 2}
\left\{ T_1^{-1}\cdots T_{i-1}^{-1}Y_1^{d_1}\cdots Y_{i-1}^{d_{i-1}}Y_{i}^{d_i-a}Y_{i+1}^{k+a} Y_{i+2}^{d_{i+1}} \cdots Y_{n+1}^{d_n}T_{i}T_{i-1}\cdots T_{1}T_2\cdots T_{n} \right\}_{1 \leq a \leq d_i-k}
\end{equation}
if $d_i \geq k$. Since $T_{i}T_{i-1}\cdots T_{1}T_2\cdots T_{n}=T_{1}T_2\cdots T_{n}T_{i-1}\cdots T_1$ holds in the braid group, then:
$$
\Tr\left[T_1^{-1}\cdots T_{i-1}^{-1}Y_1^{s_1} \cdots Y_{n+1}^{s_{n+1}}T_{i}T_{i-1}\cdots T_{1}T_2\cdots T_{n}\right] = \Tr\left[Y_1^{s_1} \cdots Y_{n+1}^{s_{n+1}}T_{1}T_2\cdots T_{n}\right] = E_{(s_1,\dots,s_{n+1})}
$$
for all integers $s_1,\dots,s_{n+1}$. Thus, the traces of the objects \eqref{eqn:big 1} and \eqref{eqn:big 2} are precisely the $E$'s that appear in \eqref{eqn:cone top 2}. As for the differential maps between the two copies of said $E$'s, by Lemma \ref{lem: y swap long} they are given by $x_i - x_{i+1} + m\delta$ for various integers $m$. As Lemma \ref{lem: endo trace rouquier} ensures that $x_i=x_{i+1}$ and $\delta=0$ on $E_{\dd}$, these differential maps vanish.
\end{proof}

\begin{remark}
Note that the projections of $E_{(k)}\star E_{\dd}$ and $E_{\dd}\star E_{(k)}$ to $\widetilde{S_n}$ are, respectively:
$$
y_1^{k}y_2^{d_1}\cdots y_{n}^{d_{n-1}}s_2\cdots s_{n-1} \quad \text{and} \quad y_1^{d_1}\cdots y_{n-1}^{d_{n-1}}y_{n}^{k}s_1\cdots s_{n-2}
$$ 
which are conjugate in the affine symmetric group. The conjugating element is $s_1\cdots s_{n-1}$, which explains the structure of the proof of Theorem \ref{thm: one step commutation}: indeed, these elements are no longer conjugate in the affine Hecke category, but at each step we conjugate by one simple reflection and apply Lemma  \ref{lem: y swap long} to collect correction terms. 
\end{remark}

\begin{remark}
\label{rem: one step commutator with the product}
A very similar proof applies for the commutation relation between $E_{(k)}$ and a $\star$ product of $E_{\dd}$. The definition of $G_i$ now reads:
$$
G_i=T_1^{-1}\cdots T_{i}^{-1}Y_1^{d_1}\cdots Y_{i}^{d_{i}}Y_{i+1}^{k}Y_{i+2}^{d_{i+1}}\cdots Y_{n+1}^{d_n}T_{i}\cdots T_{1}T_{u}
$$
where $u$ is a subword of $s_2\cdots s_n$. The chain maps and their cones are computed as in the proof of Theorem \ref{thm: one step commutation}, and the only nontrivial computation is:
$$
T_i\cdots T_1 T_{u} T_1^{-1}\cdots T_{i-1}^{-1}=T_i\cdots T_1 T_{v} T_{w} T_1^{-1}\cdots T_{i-1}^{-1}=
T_{\overline{v}}T_i\cdots T_1  T_1^{-1}\cdots T_{i-1}^{-1} T_{w}=T_{\overline{v}} T_i T_{w},
$$
where $u=vw$ with $v$ being a product of $s_j, 2\le j\le i$, $w$ being a product of $s_j, j\ge i+1$, and $\overline{v}$ being obtained from $v$ by decreasing all indices by 1. We leave the details as an exercise to the reader, and provide a particular instance of this computation below.

\end{remark}

\begin{example}
For example, for $[E_{(d_1,d_2)}\star E_{(d_3,d_4)},E_{(k)}]$, we have $T_u=T_2T_4$ and thus:
\begin{align*}
\Tr(G_0) &= \Tr(Y_1^{k}Y_2^{d_1}Y_3^{d_2}Y_4^{d_3}Y_5^{d_4}T_2T_4)=E_{(k)}\star E_{(d_1,d_2)}\star E_{(d_3,d_4)}  \\
\Tr(G_1) &=\Tr(T_1^{-1}Y_1^{d_1}Y_2^{k}Y_3^{d_2}Y_4^{d_3}Y_5^{d_4}T_1T_2T_4) \\
\Tr(G_2) &=\Tr(T_1^{-1}T_2^{-1}Y_1^{d_1}Y_2^{d_2}Y_3^{k}Y_4^{d_3}Y_5^{d_4}T_2T_1T_2T_4)=\Tr(Y_1^{d_1}Y_2^{d_2}Y_3^{k}Y_4^{d_3}Y_5^{d_4}T_2T_1T_2T_4T_1^{-1}T_2^{-1})\\
&=\Tr(Y_1^{d_1}Y_2^{d_2}Y_3^{k}Y_4^{d_3}Y_5^{d_4}T_1T_4)=E_{(d_1,d_2)}\star E_{(k)}\star E_{(d_3,d_4)} \\
\Tr(G_3) &=\Tr(T_1^{-1}T_2^{-1}T_3^{-1}Y_1^{d_1}Y_2^{d_2}Y_3^{d_3}Y_4^{k}Y_5^{d_4}T_3T_2T_1T_2T_4)\\
&=\Tr(Y_1^{d_1}Y_2^{d_2}Y_3^{d_3}Y_4^{k}Y_5^{d_4}T_3T_2T_1T_2T_4T_1^{-1}T_2^{-1}T_3^{-1})=
\Tr(Y_1^{d_1}Y_2^{d_2}Y_3^{d_3}Y_4^{k}Y_5^{d_4}T_1T_3T_4T_3^{-1})\\
\Tr(G_4)&=\Tr(T_1^{-1}T_2^{-1}T_3^{-1}T_4^{-1}Y_1^{d_1}Y_2^{d_2}Y_3^{d_3}Y_4^{d_4}Y_5^{k}T_4T_3T_2T_1T_2T_4)\\
&=\Tr(Y_1^{d_1}Y_2^{d_2}Y_3^{d_3}Y_4^{d_4}Y_5^{k}T_4T_3T_2T_1T_2T_4T_1^{-1}T_2^{-1}T_3^{-1}T_4^{-1})\\
&=\Tr(Y_1^{d_1}Y_2^{d_2}Y_3^{d_3}Y_4^{d_4}Y_5^{k}T_1T_3)=E_{(d_1,d_2)}\star E_{(d_3,d_4)}\star E_{(k)}.
\end{align*}

\end{example}

\subsection{Application: skein product on cocenters of $\AH_n$}

We can summarize the preceding results as follows. By \eqref{eq: K0 tr} we have a surjective map:
$$
\Tr(G(\ASBim_n))=\Tr(\AH_n)\to G(\Tr(\ASBim_n)).
$$

\begin{theorem}
\label{thm:surj}

Let $\widetilde{\CA}$ be the algebra from Definition \ref{def:a}. There is a surjective algebra homomorphism: 
$$
\widetilde{\CA}\Big|_{(q_1,q_2)\rightarrow (q^{-2},q^{2})}\to \bigoplus_{n=0}^{\infty} \Tr(\AH_n)
$$
where the multiplication in the right hand side is given by the skein product $\AH_n\times \AH_k\to \AH_{n+k}$. 
\end{theorem}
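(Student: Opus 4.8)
The plan is to build the homomorphism directly on generators. Define a map $\Phi$ by
$$\Phi\big(\EE_{(d_1,\dots,d_n)}\big)\ =\ q^{1-n}\,\big[\,y_1^{d_1}\cdots y_n^{d_n}\,T_1\cdots T_{n-1}\,\big]\ \in\ \Tr(\AH_n),$$
where $y_i=T_{i-1}^{-1}\cdots T_1^{-1}\,\Omega\,T_{n-1}\cdots T_i\in\AH_n$ and $[w]$ denotes the image of $w$ in the cocenter, and extend it multiplicatively to $\widetilde{\CA}\big|_{(q_1,q_2)\to(q^{-2},q^2)}$ (so that $q_1q_2$ acts as $1$). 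The target $\bigoplus_{n}\Tr(\AH_n)$ is an algebra under the skein product $\AH_n\times\AH_k\to\AH_{n+k}$, as in the statement. Since $\widetilde{\CA}$ is the free algebra on the symbols $\EE_\dd$ modulo \eqref{eqn:rel a 1}--\eqref{eqn:rel a 2}, to know that $\Phi$ is a well-defined algebra homomorphism it is enough to check that those two families of relations hold for the images; afterwards one must check surjectivity. Writing $\langle(e_1,\dots,e_\ell)\rangle:=\big[y_1^{e_1}\cdots y_\ell^{e_\ell}T_1\cdots T_{\ell-1}\big]$, so that $\Phi(\EE_\dd)=q^{1-\ell}\langle\dd\rangle$ for $\dd$ of length $\ell$, the normalising exponent $q^{1-\ell}$ is the one that makes the bookkeeping below close (cf. the grading discussion in the Introduction and in \cite{GNR}).

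\textbf{Relations.} The cone statements of Theorems \ref{thm:rel 1} and \ref{thm:rel 2} are proved — through Lemmas \ref{lem: skein 1} and \ref{lem: y swap long} — by exhibiting genuine chain maps of Rouquier complexes inside $\CK(\ASBim_n)$, whose cones are filtered by two-term complexes $[\,qX\xrightarrow{\,x_a-x_b+m\delta\,}q^{-1}X\,]$ with $X$ a Rouquier complex. Passing to the Grothendieck group $G(\CK(\ASBim_n))=\AH_n$, each such complex contributes $(q^{-1}-q)[X]$; and projecting further to the cocenter — where the cyclic isomorphisms $\Tr(XY)\simeq\Tr(YX)$ used in those proofs become the identities $[xy]=[yx]$, so the Rouquier complexes appearing there acquire the classes $\langle\dd'\rangle$ claimed for them — I would read off the cocenter identity
$$\langle\dd\rangle-\langle\dd-\alpha_i\rangle\ =\ (q-q^{-1})\,\langle\dd[1,i]\rangle\star\langle\dd[i+1,n]\rangle$$
from Theorem \ref{thm:rel 1}, and from Theorem \ref{thm:rel 2} the identity obtained from \eqref{eqn:rel a 2} by replacing each $\EE$ with the corresponding $\langle\,\cdot\,\rangle$, the coefficient $q_2-1$ with $q-q^{-1}$, and dropping the $k=d_i$ terms (the sign in each summand matching the direction $G_{i-1}\to G_i$ or $G_{i-1}\leftarrow G_i$ of the relevant chain map). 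Multiplying each generator by its normalisation $q^{1-\ell}$ turns the first identity into \eqref{eqn:rel a 1}, and the second into \eqref{eqn:rel a 2}, at $(q_1,q_2)=(q^{-2},q^2)$; the only arithmetic involved is $q^{-1}(q-q^{-1})=1-q^{-2}$ and $q(q-q^{-1})=q^2-1$. (One can also verify \eqref{eqn:rel a 1} directly in $\Tr(\AH_n)$, using the quadratic relation in the form $T_iy_i=y_{i+1}T_i+(q-q^{-1})y_i$ together with cyclicity of the cocenter.) Hence $\Phi$ is a well-defined algebra homomorphism.

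\textbf{Surjectivity.} By Theorem \ref{th: trace hecke}, $\bigoplus_n\Tr(\AH_n)$ has a $\Z[q^{\pm1}]$-basis given by the $[T_v]$ for $v$ a minimal-length representative of a conjugacy class in some $\widetilde{S_n}$, and by Corollary \ref{cor: convex paths} these are indexed by convex lattice paths with steps $(n_1,m_1),\dots,(n_r,m_r)$ of non-decreasing slope. The topological argument in the proof of Theorem \ref{thm: convex} shows that the positive braid lift of such a $v$ is conjugate in $\ABr_n$ to $\big(y^{\dd^1}T_{\cox_{n_1}}\big)\star\cdots\star\big(y^{\dd^r}T_{\cox_{n_r}}\big)$, where $\dd^j=(d_1(m_j,n_j),\dots,d_{n_j}(m_j,n_j))$ is the balanced exponent vector. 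Conjugate affine braids have conjugate images in $\AH_n$, hence equal cocenter classes, so
$$[T_v]\ =\ \langle\dd^1\rangle\star\cdots\star\langle\dd^r\rangle\ =\ q^{\sum_j(n_j-1)}\,\Phi\big(\EE_{\dd^1}\cdots\EE_{\dd^r}\big),$$
which lies in the image of $\Phi$ since $q$ is invertible. As the $[T_v]$ span, $\Phi$ is surjective. Compatibility with the natural map $\Tr(\AH_n)=\Tr(G(\ASBim_n))\to G(\Tr(\ASBim_n))$ is immediate, since that map carries $\langle\dd\rangle$ to the class of $E_{\dd}$ of \eqref{eqn:coxeter object}.

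\textbf{Main obstacle.} I expect the delicate part to be the grading bookkeeping in the \textbf{Relations} step: keeping track of all shifts and signs, pinning down the exponent $q^{1-\ell}$, and correctly accounting for how the internal $q$-grading and the homological grading of $\CK(\ASBim_n)$ are collapsed together under $(q_1,q_2)\to(q^{-2},q^2)$. A related subtlety is that Theorems \ref{thm:rel 1} and \ref{thm:rel 2} are phrased for objects of $\Tr(\ASBim_n)$, whereas the relations we need are identities in the cocenter $\Tr(\AH_n)$; since $\Tr(\AH_n)$ only surjects onto $G(\Tr(\ASBim_n))$ and is not equal to it, one must extract these identities from the underlying chain maps in $\CK(\ASBim_n)$ (as above) rather than from the Grothendieck group of the trace category.
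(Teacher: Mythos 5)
Your proposal is correct and follows essentially the same route as the paper: the same assignment $\EE_{\dd}\mapsto q^{1-n}[y^{\dd}T_1\cdots T_{n-1}]$, the same derivation of relations \eqref{eqn:rel a 1}--\eqref{eqn:rel a 2} by decategorifying the chain maps underlying Theorems \ref{thm:rel 1} and \ref{thm:rel 2} (correctly noting that one must work with the skein exact sequences in $\CK(\ASBim_n)$ together with cyclicity $[xy]=[yx]$, rather than in $G(\Tr(\ASBim_n))$), and the same normalization arithmetic. The only cosmetic difference is in surjectivity, where you invoke the He--Nie basis and the conjugacy argument of Theorem \ref{thm: convex} instead of decategorifying the generation statement of Lemma \ref{lem: trace generate}; both arguments appear in the paper and are equally valid.
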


\begin{proof}
The generators $E_{\dd^1}\star \dots\star E_{\dd^r}$ from Lemma \ref{lem: trace generate}, being the traces of objects in $\CK(\ASBim_n)$,  correspond to some explicit elements of $\Tr(\AH_n)$.
The proof of Lemma \ref{lem: trace generate} and earlier Proposition \ref{prop:generate} use \underline{only} the skein exact sequences from $\ASBim_n$ and the relation $\Tr(XY)\simeq \Tr(YX)$, so they correspond to relations in $\Tr(\AH_n)$. In particular, the classes $E_{\dd^1}\star \dots\star E_{\dd^r}$ generate $\Tr(\AH_n)$ over $\Z[q^{\pm 1}]$. Similarly, the exact sequences from Theorem \ref{lem: cone alpha} and  Theorem \ref{thm: one step commutation} lead to relations in $\Tr(\AH_n)$ which match the relations \eqref{eqn:rel a 1} and \eqref{eqn:rel a 2} in the algebra $\widetilde{\CA}$ after specialization $(q_1,q_2) \mapsto (q^{-2},q^{2})$, upon the substitution: 
$$
E_{(d_1,\dots,d_n)} = q^{n-1} \EE_{(d_1,\dots,d_n)}.
$$
\end{proof}

\begin{corollary}
There is a surjective map $\widetilde{\CA}\Big|_{(q_1,q_2)\rightarrow (q^{-2},q^{2})} \to \bigoplus_{n=0}^{\infty} G(\Tr(\ASBim_n))$.
\end{corollary}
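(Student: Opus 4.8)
The plan is to deduce the statement by composing two surjections that are already available: the surjective algebra homomorphism of Theorem~\ref{thm:surj} and the natural map from the cocenter of $\AH_n$ to the Grothendieck group of the trace category.

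First I would recall the commutative diagram~\eqref{eq: K0 tr} with $\CS=\CK(\ASBim_n)$: the trace functor $\Tr\colon\ASBim_n\to\Tr(\ASBim_n)$ descends to Grothendieck groups (it sends cones to cones, by property~(a) of the derived trace recalled in the Introduction), and since $\Tr(XY)\simeq\Tr(YX)$ it annihilates all commutators; hence it factors through a well-defined $\Z[q^{\pm1}]$-linear map
$$
\Tr(\AH_n)=\Tr\big(G(\ASBim_n)\big)\longrightarrow G\big(\Tr(\ASBim_n)\big).
$$

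Next I would check that this map is surjective. By Theorem~\ref{thm: gens} the category $\Tr(\ASBim_n)$ is generated by the objects $E_{\dd^1}\star\cdots\star E_{\dd^r}$ of~\eqref{eqn:general object}, so $G(\Tr(\ASBim_n))$ is spanned over $\Z[q^{\pm1}]$ by their classes. But $[E_{\dd^1}\star\cdots\star E_{\dd^r}]$ is by construction the image, under the map above, of the class of the affine braid $Y^{\dd^1}T_{\cox_{n_1}}\star\cdots\star Y^{\dd^r}T_{\cox_{n_r}}$ in $\Tr(\AH_n)$; hence the map is onto.

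Finally I would compose with the surjection $\widetilde{\CA}\big|_{(q_1,q_2)\to(q^{-2},q^2)}\twoheadrightarrow\bigoplus_n\Tr(\AH_n)$ of Theorem~\ref{thm:surj}, which (by its proof) is compatible with the natural maps $\Tr(\AH_n)\to G(\Tr(\ASBim_n))$. The composite is the asserted surjection. I do not expect a genuine obstacle here: the only points needing care are the well-definedness of the natural map $\Tr(\AH_n)\to G(\Tr(\ASBim_n))$ (already recorded in~\eqref{eq: K0 tr}) and the generation statement (Theorem~\ref{thm: gens}). Upgrading the conclusion to an \emph{algebra} homomorphism would additionally require the as-yet-unconstructed product~\eqref{eqn:unconstructed product} on $\bigoplus_n G(\Tr(\ASBim_n))$, but that is not needed for surjectivity as phrased.
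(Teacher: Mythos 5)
Your proposal is correct and follows exactly the route the paper intends: the corollary is obtained by composing the surjection of Theorem \ref{thm:surj} with the natural map $\Tr(\AH_n)\to G(\Tr(\ASBim_n))$ from \eqref{eq: K0 tr}, whose surjectivity the paper records at the start of that subsection (and which you justify, correctly, via the generation statement). Your closing remark that only a surjective map, not an algebra homomorphism, is claimed is also consistent with the statement as phrased.
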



\begin{thebibliography}{99}

\bibitem{BHLZ} A. Beliakova, K. Habiro, A. Lauda, M. Zivkovi\^c. Trace decategorification of categorified quantum $\mathfrak{sl}_2$. Math. Ann., 367(1-2):397--440, 2017.  

\bibitem{BPW} A. Beliakova, K. Putyra, S. Wehrli. Quantum link homology
via trace functor I. Invent. Math., 215(2):383--492, 2019.

\bibitem{Bez} R. Bezrukavnikov. On two geometric realizations of an affine Hecke algebra. Publ. Math. Inst. Hautes \'Etudes Sci. 123 (2016), 1--67.

\bibitem{BezR} R. Bezrukavnikov, S. Riche. Affine braid group actions on derived categories of Springer resolutions. Ann. Sci. \'Ec. Norm. Sup\'er. (4) 45 (2012), no. 4, 535--599.

\bibitem{BB} A. Bjorner, F. Brenti. Combinatorics of Coxeter groups. Graduate Texts in Mathematics, 231. Springer, New York, 2005.

\bibitem{BHMPS} J. Blasiak, M. Haiman, J. Morse, A. Pun, G. H. Seelinger, A proof of the Extended Delta Conjecture, ar$\chi$iv:2102.08815

\bibitem{BS} I. Burban, O. Schiffmann. On the Hall algebra of an elliptic curve, I. Duke Math. J. 161 (2012), no. 7, 1171--1231

\bibitem{CLLSS} S. Cautis, A. Lauda, A. Licata, P. Samuelson, J. Sussan. The elliptic Hall algebra and the deformed Khovanov Heisenberg category. Selecta Math. (N.S.) 24 (2018), no. 5, 4041--4103. 

\bibitem{Elias} B. Elias. Gaitsgory's central sheaves via the diagrammatic Hecke category. arXiv:1811.06188 

\bibitem{FHHSY} B. Feigin, K. Hashizume, A. Hoshino, J. Shiraishi, S. Yanagida,  A commutative algebra on degenerate $\mathbb{CP}^1$ and Macdonald polynomials, J. Math. Phys. 50 (2009), no. 9, 095215, 42 pp

\bibitem{FO} B. Feigin, A. Odesskii, Vector bundles on an elliptic curve and Sklyanin algebras. Topics in quantum groups and finite-type invariants, 65--84, Amer. Math. Soc. Transl. Ser. 2, 185, Adv. Math. Sci., 38, Amer. Math. Soc., Providence, RI, 1998


\bibitem{FT} B. Feigin, A. Tsymbaliuk. Equivariant $K$-theory of Hilbert schemes via shuffle algebra. Kyoto J. Math. 51 (2011), no. 4, 831--854.

\bibitem{F} D. Fratila, Addendum to: Olivier Schiffmann, ``Drinfeld realization of the elliptic Hall algebra", J. Algebraic Combin. 35 (2012), no. 2, 263--267

\bibitem{GHW} E. Gorsky, M. Hogancamp, P. Wedrich. Derived traces of Soergel categories. Int. Math. Res. Not. IMRN 2021, DOI https://doi.org/10.1093/imrn/rnab019

\bibitem{GNR} E. Gorsky, A. Negu\cb{t}, J. Rasmussen. Flag Hilbert schemes, colored projectors and Khovanov-Rozansky homology. Adv. Math. 378 (2021), 107542.

\bibitem{GN} E. Gorsky, A. Negu\cb{t}. Refined knot invariants and Hilbert schemes. J. Math. Pures Appl. (9) 104 (2015), no. 3, 403--435.

\bibitem{GW} E. Gorsky, P. Wedrich. Evaluations of annular Khovanov--Rozansky homology. arXiv:1904.04481

\bibitem{HN} X. He, S. Nie. Minimal length elements of extended affine Weyl groups. Compos. Math. 150 (2014), no. 11, 1903--1927. 

\bibitem{KV} M. Kapranov, E. Vasserot. The cohomological Hall algebra of a surface and factorization cohomology. arXiv:1901.07641

\bibitem{MS} H. Morton, P. Samuelson. The HOMFLYPT skein algebra of the torus and the elliptic Hall algebra. Duke Math. J. 166 (2017), no. 5, 801--854.

\bibitem{MT} M. Mackaay, A.-L. Thiel. Categorifications of the extended affine Hecke algebra and the affine $q$-Schur algebra $\widehat{S}(n,r)$ for $3\le r<n$. Quantum Topol. 8 (2017), no. 1, 113--203.

\bibitem{Mellit} A. Mellit. Toric braids and $(m,n)$-parking functions. Duke Math. J. 170 (2021), no. 18, 4123--4169.

\bibitem{Flag} A. Negu\cb{t}. Moduli of flags of sheaves and their K-theory, Algebr. Geom. 2 (2015), no. 1, 19--43

\bibitem{Shuf} A. Negu\cb{t}. The shuffle algebra revisited. Int. Math. Res. Not. IMRN 2014, no. 22, 6242--6275

\bibitem{q-AGT} A. Negu\cb{t}. The $q$--AGT--W relations via shuffle algebras. Comm. Math. Phys. 358 (2018), no. 1, 101--170

\bibitem{W} A. Negu\cb{t}. $W$-algebras associated to surfaces, ar$\chi$iv:1710.03217 

\bibitem{Hecke} A. Negu\cb{t}. Hecke correspondences for smooth moduli spaces of sheaves. arXiv: 1804.03645.

\bibitem{Wheel} A. Negu\cb{t}. Shuffle algebras for quivers and wheel conditions. arXiv: 2108.08779.

\bibitem{OR1} A. Oblomkov, L. Rozansky. A. Oblomkov, L. Rozansky. Affine braid group, JM elements and knot homology. Transform.
Groups 24 (2019), no. 2, 531–544.

\bibitem{OR2} A. Oblomkov, L. Rozansky. HOMFLYPT homology of Coxeter links. arXiv:1706.00124

\bibitem{OR3} A. Oblomkov, L. Rozansky. Soergel bimodules and matrix factorizations. arXiv:2010.14546

\bibitem{PZ} J. Peng, Y. Zhao. A Serre Relation in the $K$-theoretic Hall algebra of surfaces.  arXiv:2012.07897 

\bibitem{PS} M. Porta, F. Sala. Two-dimensional categorified Hall algebras. arXiv:1903.07253

\bibitem{Rouquier} R. Rouquier. Categorification of the braid groups. arXiv:math/0409593 

\bibitem{S} O. Schiffmann, Drinfeld realization of the elliptic Hall algebra. J. Algebraic Combin. 35 (2012), no. 2, 237--262

\bibitem{SV} O. Schiffmann, E. Vasserot. Hall algebras of curves, commuting varieties and Langlands duality. Math. Ann. 353 (2012), no. 4, 1399--1451

\bibitem{SV2} O. Schiffmann, E. Vasserot. The elliptic Hall algebra and the $K$-theory of the Hilbert scheme of $\mathbb{A}^2$. Duke Math. J. 162 (2013), no. 2, 279--366.

\bibitem{VV} M. Varagnolo, E. Vasserot. K-theoretic Hall algebras, quantum groups and super quantum groups. Sel. Math. New Ser. 28, 7 (2022)

\bibitem{Zhao1} Y. Zhao. On the $K$-theoretic Hall algebra of a surface.  Int. Math. Res. Not. IMRN 2021, no. 6, 4445--4486. 

\bibitem{Zhao2} Y. Zhao. The Feigin-Odesskii Wheel Conditions and Sheaves on Surfaces. arXiv:1909.07870

\bibitem{Zhao3} Y. Zhao.  A Categorical Quantum Toroidal Action on Hilbert Schemes. arXiv:2009.11267

\end{thebibliography}
\end{document}